\pgfplotsset{compat=1.15}
\theoremstyle{plain} 
\tikzset{use path/.code=\tikz@addmode{\pgfsyssoftpath@setcurrentpath#1}}
\def\Resleq{\ThisStyle{\mathrel{%
  \stackinset{r}{.75pt+.15\LMpt}{t}{.1\LMpt}{\rule{.3pt}{1.1\LMex+.2ex}}{\SavedStyle\leqslant}%
}}}
\def\Accordleq{\ThisStyle{\mathrel{%
  \stackinset{r}{.75pt+.15\LMpt}{t}{.1\LMpt}{\rule{.3pt}{1.1\LMex+.2ex}}{\SavedStyle\preccurlyeq}%
}}}
\def\Accordlneq{\ThisStyle{\mathrel{%
  \stackinset{r}{.75pt+.15\LMpt}{t}{.1\LMpt}{\rule{.3pt}{1.1\LMex+.2ex}}{\SavedStyle\prec}%
}}}
\definecolor{midnightblue}{rgb}{0.1, 0.1, 0.44}
\definecolor{plum}{rgb}{0.56, 0.27, 0.52}
\definecolor{Plum}{rgb}{0.56, 0.27, 0.52}
\definecolor{patriarch}{rgb}{0.5, 0.0, 0.5}
\definecolor{darkgreen}{rgb}{0.0, 0.2, 0.13}
\definecolor{darkcerulean}{rgb}{0.03, 0.27, 0.49}
\definecolor{jade}{rgb}{0.0, 0.66, 0.42}
\newcommand{\barredUpsilon}{\addbar{0.05}{}{\Upsilon}}
\newcommand{\Anti}{\operatorname{Anti}}
\newcommand{\ArExt}{\operatorname{ArExt}}
\newcommand{\OvExt}{\operatorname{OvExt}}
\newcommand{\s}{\operatorname{\pmb{s}}}
\renewcommand{\t}{\operatorname{\pmb{t}}}
\newcommand{\addbar}[3]{{\vphantom{#3}\mathpalette\add@bar{{#1}{#2}{#3}}}}
\newcommand{\add@bar}[2]{\add@@bar{#1}#2}
\newcommand{\add@@bar}[4]{%
  \begingroup
  \sbox\z@{$\m@th#1#4$}%
  \ooalign{%
    \hidewidth\kern#2\wd\z@\add@@@bar{#1}{#3}\hidewidth\cr
    \box\z@\cr
  }%
  \endgroup
}
\newcommand{\add@@@bar}[2]{%
  \sbox\tw@{$\m@th#1\newmcodes@\if\relax#2\relax-\else\bm{-}\fi$}%
  \raisebox{\dimexpr(\ht\z@-\ht\tw@)/2}{\usebox\tw@}%
}
\newcommand\precdot{\mathrel{\ooalign{$\prec$\cr
  \hidewidth\raise0ex\hbox{$\cdot\mkern0.5mu$}\cr}}}
\renewcommand{\mod}{\operatorname{mod}}
\newcommand{\rep}{\operatorname{rep}}
\newcommand{\add}{\operatorname{add}}
\newcommand{\Ker}{\operatorname{Ker}}
\newcommand{\Hom}{\operatorname{Hom}}
\newcommand{\Ext}{\operatorname{Ext}}
\newcommand{\ind}{\operatorname{ind}}
\newcommand{\proj}{\operatorname{proj}}
\newcommand{\Res}{\operatorname{Res}}
\newcommand{\ResOrd}{\operatorname{\pmb{Res}}}
\newcommand{\opQ}{\operatorname{\mathbf{Q}}}
\newcommand{\opR}{\operatorname{\mathbf{R}}}
\newcommand{\opResAc}{\operatorname{\pmb{\mathscr{R}}}}
\newcommand{\Prj}{\operatorname{Prj}}
\newcommand{\MM}{\operatorname{M}}
\newcommand{\NP}{\operatorname{N}_{\proj}}
\newcommand{\Syg}{\operatorname{Syg}}
\newcommand{\Surf}{\operatorname{\pmb{\mathcal{S}}}}
\newcommand{\tc}{t_{\operatorname{cell}}}
\renewcommand{\sc}{s_{\operatorname{cell}}}
\renewcommand{\Ker}{\operatorname{Ker}}
\newcommand{\cl}{\operatorname{\mathsf{cl}}}
\newcommand{\JI}{\operatorname{JIrr}}
\newcommand{\col}{\operatorname{\mathsf{col}}}
\newcommand{\D}{\mathbb{D}}
\newcommand{\Ups}{\barredUpsilon}
\newcommand{\Accord}{\operatorname{\pmb{\mathscr{A}}}}
\newcommand{\KE}{\operatorname{\mathbf{KE}}}
\newcommand{\CoZ}{\operatorname{Co-\mathbf{Z}}}
\newcommand{\V}{\operatorname{\mathbf{V}}}
\newcommand{\X}{\operatorname{\mathbf{X}}}
\newcommand{\All}{\operatorname{\mathbf{All}}}
\newcommand{\Smov}{\operatorname{\mathbf{S}}}
\newcommand{\W}{\operatorname{\mathbf{W}}}
\newcommand{\AR}{\operatorname{AR}}
\newcommand{\bleq}{\mathrel{\mathpalette\bleqinn\relax}}
\newcommand{\bleqinn}[2]{%
  \ooalign{%
    \raisebox{.2ex}{$#1\blacktriangleleft$}\cr
    $#1\leqslant$\cr
  }%
}
\author[B.~Dequêne]{Benjamin Dequêne}
\address[B.~Dequêne]{School of Mathematics, University of Leeds}
\email{B.D.Dequene@leeds.ac.uk}
\author[M.~Schoonheere]{Michael Schoonheere}
\address[M.~Schoonheere]{LAMFA, Université de Picardie Jules Verne}
\email{michael.schoonheere@u-picardie.fr}
\title[Resolving subcategories for gentle algebras II]{Resolving subcategories for gentle algebras II: Resolving subcategories for gentle trees}
\date{\today}
\declaretheorem[numberwithin=section,name=Theorem,
refname={Theorem,Theorems},
Refname={Theorem,Theorems}]{theorem}
\declaretheorem[numberlike=theorem,name=Lemma,
refname={Lemma,Lemmas},
Refname={Lemma,Lemmas}]{lemma}
\declaretheorem[numberlike=theorem,name=Proposition,
refname={Proposition,Propositions},
Refname={Proposition,Propositions}]{prop}
\declaretheorem[numberlike=theorem,name=Corollary,
refname={Corollary,Corollaries},
Refname={Corollary,Corollaries}]{cor}
\declaretheorem[style=definition,numberlike=theorem,name=Definition,
refname={Definition,Definitions},
Refname={Definition,Definitions}]{definition}
\declaretheorem[style=definition,numberlike=theorem,name=Convention,
refname={Convention,Conventions},
Refname={Convention,Conventions}]{conv}
\declaretheorem[style=definition,numberlike=theorem,name=Algorithm,
refname={Algorithm,Algorithms},
Refname={Algorithm,Algorithms}]{algo}
\declaretheorem[style=definition,numberlike=theorem,name=Example,
refname={Example,Examples},
Refname={Example,Examples}]{ex}
\declaretheorem[style=remark,numberlike=theorem,name=Remark,
refname={Remark,Remarks},
Refname={Remark,Remarks}]{remark}
\definecolor{aquamarine}{rgb}{0.5, 1.0, 0.83}
\newcommand{\new}[1]{\textit{\textbf{\color{patriarch}{#1}}}}
\definecolor{dark-green}{RGB}{14,150,2}
\newcommand{\gpoint}{{\color{dark-green}{\circ}}}
\newcommand{\rpoint}{\textcolor{red}{\bullet}}
\definecolor{darkgray}{rgb}{0.66, 0.66, 0.66}
\definecolor{darkpink}{rgb}{0.91, 0.33, 0.5}
\newcommand{\gsquare}{\color{dark-green}{\pmb{\square}}}
\newcommand{\rsquare}{\color{red}{{\blacksquare}}}
\newcommand{\osquare}{\color{orange}{\pmb{\boxtimes}}}
\newcommand{\psquare}{\color{darkpink}{\pmb{\times}}}
\definecolor{mypurple}{rgb}{0.63, 0.36, 0.94}
\DeclareRobustCommand{\bbDelta}{{\mathpalette\bb@Delta\relax}}
\newcommand{\bb@Delta}[2]{%
  \begingroup
  \sbox\z@{$\m@th#1\Delta$}%
  \dimendef\Dht=6 \dimendef\Dwd=8
  \setlength{\Dwd}{\wd\z@}%
  \setlength{\Dht}{\ht\z@}%
  \begin{picture}(\Dwd,\Dht)
  \put(0,0){$\m@th#1\Delta$}
  \put(.42\Dwd,.7\Dht){\line(10,-26){.25\Dht}}
  \end{picture}%
  \endgroup
}
\begin{document}

\begin{abstract}
This paper is the second part of a series that intends to study the resolving subcategories for gentle algebras over an algebraically closed field $\mathbb{K}$. 

As in the first part, we continue to focus on gentle quivers $(Q,R)$, where $Q$ is a directed tree, known as gentle trees. In our previous work, via a modified surface model for gentle algebras with finite global dimension, we studied the join-irreducible elements of the lattice of resolving subcategories of $\mathbb{K}Q/\langle R \rangle - \text{mod}$, which happen to be those generated by a non-projective indecomposable object. 

In this paper, we notice that this lattice is not semidistributive in general and, accordingly, introduce a so-called upper join-decomposition, replacing the canonical one.  Together with the techniques we develop in our geometric model, it allows us to describe the resolving subcategories of any gentle tree. These same techniques let us explicitly construct the resolving subcategory generated by any collection of indecomposable $\mathbb{K}Q/\langle R\rangle$-modules.
\end{abstract}
	\maketitle

    
\tableofcontents

	\section{Introduction}
	\label{sec:Intro}
	\pagestyle{plain}

The notion of a resolving subcategory dates back, at least, to \cite{Auslander1969}, who used this notion to describe the subcategory of modules of Gorenstein dimension 0. These subcategories contain the projective modules and are closed under taking sums, summands, extensions, and kernels of epimorphisms. 
The later works \cite{Auslander1991, Adachi2022} provide a more precise study of the functorially finite resolving subcategories. They prove that functorially finite resolving subcategories are in bijection with hereditary cotorsion pairs. This bijection preserves inclusion, and thus, the study of the poset of resolving subcategories gives another way to study the poset of hereditary cotorsion pairs. In these two papers, under some stronger assumptions on the categories, they prove that resolving subcategories are in bijection with tilting (resp. silting) objects in the case of abelian (resp. extriangulated) categories. Such objects appear naturally in the context of exceptional sequences in \cite{Ringel91} and their characteristic tilting, which is deeply linked with quasi-hereditary algebras.
To obtain such information on the tilting objects, one must compute all the resolving subcategories. A way to achieve this is to calculate the smallest subcategory containing a set of objects $\mathcal{X}$. Takahashi \cite{T09} addressed this issue, providing an algorithm to compute such a resolving subcategory. This category is called the resolving closure of $\mathcal{X}$. The algorithm given works in the broad setup of the module category of an algebra of finite representation type and thus does not benefit from the combinatorial insights offered by the algebra. 

Gentle algebras are algebras with a strong combinatorial structure. They arise from quivers with relations, appearing first in \cite{Assem1981,Assem1982}. The combinatorial behavior of gentle algebras is widely studied: the extensions of indecomposable objects have at most two indecomposable summands \cite{BS80,WW85,GP68,SW83,DS87}. The hom-spaces \cite{BR87} and extensions \cite{S99,BDMTY19,CPS21} are endowed with a basis coming from string combinatorics. To complete the combinatorics, geometric models consisting of dissected surfaces are in bijection with the module category over the path algebra, the derived category, and the two-term homotopy category of projectives \cite{BCS21,OPS18,APS19,PPP18,PPP182}. These models offer more straightforward categorical computations.

In a previous work \cite{DS251} we started the computation of the lattice of resolving subcategories of $\mod{A}$ for $A$ a gentle algebra arising from a disc or equivalently gentle algebras whose underlying quiver is a tree. The first step was to prove the following result :

\begin{theorem}[\cite{DS251}]
The join-irreducible elements in the poset of resolving subcategories in $\rep(Q,R)$ ordered by inclusion are precisely given by the smallest resolving subcategories containing  $X$, for $X$ an indecomposable non-projective object.
\end{theorem}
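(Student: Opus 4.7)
For the first direction, I will show that every join-irreducible resolving subcategory is the resolving closure of a single indecomposable non-projective object. Denote by $\mathcal{R}(\mathcal{X})$ the smallest resolving subcategory containing a collection $\mathcal{X}$ of modules. Given any resolving subcategory $\mathcal{R}$ with set of isomorphism classes of indecomposable non-projective objects $\{X_i\}_{i \in I}$, the minimality of each $\mathcal{R}(X_i)$ gives $\mathcal{R}(X_i) \subseteq \mathcal{R}$, hence $\bigvee_{i \in I} \mathcal{R}(X_i) \subseteq \mathcal{R}$. Conversely, the right-hand side is itself a resolving subcategory containing every $X_i$ and every projective, so it contains $\mathcal{R}$. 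Because a gentle tree with finite global dimension has finite representation type, the index set $I$ is finite; iterating the defining property of join-irreducibility on this finite join then forces $\mathcal{R} = \mathcal{R}(X_{i_0})$ for some $i_0$.

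For the second direction, I have to prove that each $\mathcal{R}(X)$ is itself join-irreducible, i.e.\ that $X \in \mathcal{R}_1 \vee \mathcal{R}_2$ forces $X \in \mathcal{R}_1 \cup \mathcal{R}_2$. The plan is to use Takahashi's iterative construction of the resolving closure: starting from $(\mathcal{R}_1 \cup \mathcal{R}_2) \cup \proj$, one repeatedly closes under finite direct sums, summands, extensions, and kernels of epimorphisms. Translating each closure operation into the surface model of the gentle tree---where indecomposables correspond to arcs on a dissected surface and each operation admits a geometric interpretation---I would argue by induction on the construction step that every indecomposable summand produced at a given stage is explicitly read off from arcs already present at the preceding stage. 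Tracking the arc of $X$ backwards through this process should then force it to have been present from the outset, and therefore to lie in $\mathcal{R}_1 \cup \mathcal{R}_2$.

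The main obstacle will be controlling the kernel-of-epimorphism step. The extension step is tractable via classical string combinatorics: an extension between two string modules has at most two indecomposable summands, obtained by explicit string surgery, and the associated arcs are readily described. Kernels are more subtle, as they interact in less straightforward ways with the tree structure; the geometric machinery developed for gentle trees in the first part of the series is designed precisely to make this step manageable, by describing the arc of each kernel summand in terms of those of the epimorphism. Using this description, the technical heart of the argument will be to rule out the creation of the arc of $X$ out of arcs that are strictly smaller in the partial order induced by the surface model---a step that I expect to require a careful combinatorial case analysis on the configurations of arcs compatible with the gentle tree.
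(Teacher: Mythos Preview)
This theorem is quoted from the companion paper \cite{DS251} and is not proved in the present paper, so there is no proof here to compare against directly. Your first direction is correct and is the standard argument: any resolving subcategory is the finite join of the $\Res(X_i)$ over its non-projective indecomposables, and join-irreducibility collapses the join to a single term.

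The second direction has a genuine gap. Your reformulation is (charitably read) correct: to show $\Res(X)$ is join-irreducible it suffices to show that whenever $\mathcal{R}_1,\mathcal{R}_2 \subsetneq \Res(X)$ are resolving, the arc of $X$ never appears in the Takahashi closure of $\mathcal{R}_1 \cup \mathcal{R}_2$. Equivalently, you must show that $\add\bigl(\proj(Q,R)\cup\{Y : Y \Resleq X,\ Y\neq X\}\bigr)$ is already resolving. But your inductive scheme does not establish this. The sentence ``every indecomposable summand produced at a given stage is explicitly read off from arcs already present'' is trivially true and does not help: the entire point of this paper is that Takahashi's procedure \emph{does} create genuinely new indecomposables (see the $\V$-, $\X$-, $\CoZ$-moves), so ``tracking the arc of $X$ backwards'' does not force it to have been present at stage~$0$. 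What you actually need is an invariant preserved by every extension and every kernel step which separates $X$ from all $Y \Accordlneq X$; your proposal does not supply one, and the promised ``careful combinatorial case analysis'' is the whole theorem, not a plan for proving it.

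The hint in the present paper (just before Theorem~\ref{thm:Monoareallthejoinirred}) points to a different route taken in \cite{DS251}: one first uses the acyclicity of the Auslander--Reiten quiver of a gentle tree to prove that the Res-preorder $\Resleq$ is antisymmetric, so that $(\pmb{\ind\setminus\proj}(Q,R),\Resleq)$ is an honest poset and the map $\mathscr{R}\mapsto \pmb{\ind\setminus\proj}(\mathscr{R})$ is an order-embedding of $\ResOrd(Q,R)$ into its lattice of order ideals. Join-irreducibility of $\Res(X)$ then comes from a structural/order-theoretic argument rather than from an arc-by-arc case analysis on the surface. If you want to pursue your approach, the missing ingredient is precisely an a~priori order (coming from the AR~quiver, or from a homological length function) with respect to which $X$ is strictly maximal in $\Res(X)$ and which is not increased by extensions or kernels; without that, the induction has no foothold.
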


We call monogeneous any resolving subcategory of the form $\Res(X)$ for $X$ an indecomposable non-projective object. We computed all the monogeneous resolving subcategories using the geometric model in the following way. The dissected disc is endowed with a so-called set of red points on the boundary: one in each cell of the dissected surface. These points are the endpoints of the accordions, which are curves on the dissected disc in bijection with indecomposable modules of the associated gentle algebra. To each accordion $\delta$ we associate the set $\NP(\delta)$ of accordions in bijection with indecomposable projective modules appearing in the projective resolution of $\MM(\delta)$ or having  nontrivial higher extension with $\MM(\delta)$. The set of endpoints of curves in $\NP(\delta)$ are colored red, orange, or green according to a combinatorial rule explained in \cref{def:colourendpoints}. We can now state the main result of the previous article.

\begin{theorem}[\cite{DS251}]
Let $(Q,R)$ be a gentle tree, and $\Surf(Q,R) = (\pmb{\Sigma}, \mathcal{M}, \Delta^{\gpoint})$ be the dissected surface associated with $(Q,R)$. For any accordion $\delta$ the monogeneous resolving closure  $\Res(\MM(\delta))$ is the additive closure of the union of the indecomposable projectives and of $\{\MM(\varsigma) \mid \varsigma \in \opResAc(\delta)\}$ where the set $\opResAc(\delta)$ is the set of accordions whose sources are colored either red or orange and targets are colored either orange or green.
\end{theorem}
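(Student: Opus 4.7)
The plan is to establish the two inclusions separately. Write $\mathcal{C}$ for the additive closure of the indecomposable projectives together with $\{\MM(\varsigma) \mid \varsigma \in \opResAc(\delta)\}$, which is the candidate for $\Res(\MM(\delta))$.

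For the inclusion $\Res(\MM(\delta)) \subseteq \mathcal{C}$, I would verify directly that $\mathcal{C}$ is a resolving subcategory containing $\MM(\delta)$. One first checks that $\delta$ itself satisfies the colouring constraint of \cref{def:colourendpoints}, so that $\MM(\delta) \in \mathcal{C}$. The projectives lie in $\mathcal{C}$ by construction, and closure under sums and direct summands is automatic. The substantive point is closure under extensions and kernels of epimorphisms: given indecomposable non-projective $\MM(\varsigma_1), \MM(\varsigma_2) \in \mathcal{C}$ and a short exact sequence $0 \to N \to \MM(\varsigma_2) \to \MM(\varsigma_1) \to 0$, I would argue that each indecomposable summand $\MM(\eta)$ of $N$ again has its source endpoint coloured red or orange and its target endpoint coloured orange or green. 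Using the string-combinatorial description of $\Hom$ and $\Ext^1$ transported to the geometric model, one can read off the endpoints of $\eta$ as endpoints of $\varsigma_1$ and $\varsigma_2$ at the relevant crossing cell, and check that the colours are preserved. An analogous analysis handles kernels of surjections.

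For the reverse inclusion $\mathcal{C} \subseteq \Res(\MM(\delta))$, I would proceed by induction on a complexity measure tied to $\NP(\varsigma)$, for example the minimal number of resolving-type operations (summand, extension, kernel of epimorphism) needed to produce $\MM(\varsigma)$ from $\MM(\delta)$ and projectives. The base case $\varsigma = \delta$ is immediate. For the inductive step, each $\varsigma \in \opResAc(\delta)$ different from $\delta$ should be exhibited as an indecomposable summand of a middle term or a kernel arising from accordions of smaller complexity, already known to lie in $\Res(\MM(\delta))$. The geometric realization of extensions via arc-surgeries on the dissected disc, combined with the red/orange/green colouring at endpoints, should make these resolutions explicit; the colouring rule is precisely the constraint that records which surgeries remain inside $\Res(\MM(\delta))$.

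The main obstacle will be the combinatorial bookkeeping around the colouring rule when extensions and kernels decompose into several accordions: one must track how the endpoint colours of each component relate to those of the input accordions and verify that no indecomposable summand escapes the source-red-or-orange, target-orange-or-green constraint. This requires a careful case analysis on the crossing patterns of $\varsigma_1$ and $\varsigma_2$ in $\Surf(Q,R)$, distinguishing the situations in which the relevant red point of a cell sits between the two accordions from those in which it does not. Once this invariance is in place on both sides—and the geometric surgery construction provides the inductive step for the second inclusion—the theorem follows.
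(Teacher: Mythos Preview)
The theorem you are asked to prove is not proved in this paper: it is quoted from the companion paper \cite{DS251} (see the introduction and \cref{thm:res_clo_1} in \cref{ss:RecallMono}), so there is no proof here to compare against directly. That said, the background section does indicate the shape of the argument in \cite{DS251}, and your two-inclusion strategy is exactly the right template.

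A few remarks on how your outline lines up with the tools the paper imports. For the inclusion $\Res(\MM(\delta)) \subseteq \mathcal{C}$, the paper's \cref{thm:equivres} is the key reduction: for gentle trees one only needs to check that $\mathcal{C}$ contains the projectives, is closed under extensions of \emph{indecomposables}, and is closed under syzygies of indecomposables (rather than arbitrary kernels of epimorphisms). This sharpens your ``substantive point'' considerably, since the extensions in question are then either arrow or overlap extensions with at most two indecomposable middle terms (\cref{prop:geom_ext}), and syzygies are read off geometrically. Your case analysis on crossing patterns is the right idea, but \cref{lem:conditionsResAc} (also imported from \cite{DS251}) records the precise constraints on endpoints of accordions in $\opResAc'(\delta)$, and these constraints are what make the closure checks go through. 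For the reverse inclusion, your inductive surgery argument is again the correct picture; the paper alludes to \cite[Lemmas~6.12 and~6.13]{DS251} for the explicit description of syzygy accordions, which furnish the building blocks.

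In short: your plan is sound and matches the expected approach, but the actual execution lives in \cite{DS251}, and the ``combinatorial bookkeeping'' you flag as the main obstacle is indeed the bulk of that paper's \S6.
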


We aim at describing all the resolving subcategories through these monogeneous resolving subcategories. Even though the lattice is not semi-distributive, we obtain an upper join decomposition (see \cref{def:upperjoindec} ) of all the elements of the lattice as joins of join-irreducible objects.

\begin{theorem} \label{thm:Resclosalgosucceedintro}
Let $(Q,R)$ be a gentle tree, and $\Surf(Q,R) = (\pmb{\Sigma}, \mathcal{M}, \Delta^{\gpoint}$) be its associated $\gpoint$-dissected marked disc. For any antichain $\mathfrak{\mathfrak{D}}$ in $(\Accord, \Accordleq)$, \cref{algo:resclosalgo} ends, and returns the unique antichain $\mathfrak{E}$ such that \[\opResAc(\mathfrak{D}) = \opResAc(\mathfrak{E}) = \bigcup_{\delta \in \mathfrak{E}} \opResAc(\delta).\]
\end{theorem}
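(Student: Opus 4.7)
The plan is to establish the three claims of the theorem --- termination, the compound equality $\opResAc(\mathfrak{D}) = \opResAc(\mathfrak{E}) = \bigcup_{\delta \in \mathfrak{E}} \opResAc(\delta)$, and uniqueness of the output --- by interpreting \cref{algo:resclosalgo} as a concrete procedure for computing the upper join-decomposition of the resolving closure generated by $\mathfrak{D}$. For termination, I would rely on finiteness: since $(Q,R)$ is a gentle tree, the set $\Accord$ is finite, so in particular the subset $\{\eta \in \Accord : \opResAc(\eta) \subseteq \opResAc(\mathfrak{D})\}$, which bounds the candidate generators for $\mathfrak{E}$, is finite. I would extract a monovariant from the algorithm (for instance, a lexicographic pair recording the total $\Accordleq$-height of the working antichain together with the number of unprocessed candidates) that strictly varies at each step within a finite range.

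For the compound equality, I would maintain as a loop invariant that $\bigcup_{\eta \in \mathfrak{D}_i} \opResAc(\eta) = \opResAc(\mathfrak{D})$ for every intermediate antichain $\mathfrak{D}_i$ produced by the algorithm. This reduces to verifying a local monotonicity: whenever the algorithm substitutes $\delta$ by a $\Accordleq$-successor $\delta'$ with $\opResAc(\delta') \subseteq \opResAc(\mathfrak{D})$, the inclusion $\opResAc(\delta) \subseteq \opResAc(\delta')$ must hold. This is a combinatorial statement about how the red/orange/green coloring of the endpoints of an accordion transforms as one moves up the $\Accordleq$-order, and should follow from the surface-model description of $\NP(\delta)$ and $\opResAc(\delta)$ recalled in the introduction, together with the order-theoretic results from our previous paper. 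The second equality $\opResAc(\mathfrak{E}) = \bigcup_{\delta \in \mathfrak{E}} \opResAc(\delta)$ is then the defining formula on antichains.

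For uniqueness, I would give an intrinsic characterization of $\mathfrak{E}$: it is the antichain of $\Accordleq$-maximal elements in $C := \{\delta \in \Accord : \opResAc(\delta) \subseteq \opResAc(\mathfrak{D})\}$. Any antichain $\mathfrak{E}'$ satisfying the required equality must be contained in $C$, and the union condition together with the antichain constraint forces each $\Accordleq$-maximal element of $C$ to appear in $\mathfrak{E}'$. The main obstacle I anticipate is precisely this local monotonicity of $\opResAc$ along $\Accordleq$: the coloring rule of \cref{def:colourendpoints} depends on fine interactions between $\delta$ and the cells of $\Delta^{\gpoint}$, so controlling how $\opResAc(\delta)$ evolves as $\delta$ moves up the order may require a careful case analysis on the endpoints and their surrounding local geometry. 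Once this geometric lemma is secured, the rest of the proof becomes a straightforward verification that each elementary operation of \cref{algo:resclosalgo} respects the invariant and drives the working antichain towards the maximal antichain in $C$.
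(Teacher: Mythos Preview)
Your proposal has a genuine gap: the loop invariant you want to maintain,
\[
\bigcup_{\eta \in \mathfrak{D}_i} \opResAc(\eta) \;=\; \opResAc(\mathfrak{D}),
\]
is already false at $i=0$. For a general antichain $\mathfrak{D}$ the union $\bigcup_{\delta \in \mathfrak{D}} \opResAc(\delta)$ is typically \emph{not} a resolving set, and is a strict subset of the closure $\opResAc(\mathfrak{D})$; producing the special antichain $\mathfrak{E}$ for which this union \emph{is} resolving is exactly what the algorithm is doing. Relatedly, your sentence ``the second equality $\opResAc(\mathfrak{E}) = \bigcup_{\delta \in \mathfrak{E}} \opResAc(\delta)$ is then the defining formula on antichains'' is incorrect: $\opResAc(-)$ is the closure operator, not the union, and the equality you cite is the content of the theorem, not a definition.

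Your description of the algorithm as ``substituting $\delta$ by a $\Accordleq$-successor $\delta'$'' also misses the mechanism. Step~(4) is a join in the antichain lattice with the sets $\mathfrak{K}(\delta,\eta)$ produced by the moves of \cref{sec:Moves}; these moves take a \emph{pair} $(\delta,\eta)$ and typically add a third, $\Accordleq$-incomparable accordion (the $\KE$-, $\CoZ$-, $\X$-completions), rather than lifting a single generator along $\Accordleq$. The ``local monotonicity of $\opResAc$ along $\Accordleq$'' you plan to prove is therefore not the relevant lemma.

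The paper's argument runs differently. The correct invariant is $\opResAc(\mathfrak{D}_i) = \opResAc(\mathfrak{D})$, which holds because every move outputs accordions already in $\opResAc(\delta,\eta) \subseteq \opResAc(\mathfrak{D})$. Termination comes from the sequence $(\mathfrak{D}_i)$ being $\bleq$-increasing in the finite lattice $\Anti(\Accord',\Accordleq)$. The substantive step is the converse at termination: one must show that if $\mathfrak{D}_i$ is stable under all moves on pairs from $\mathfrak{D}_i$, then the ideal $\langle \mathfrak{D}_i\rangle$ is already resolving, i.e.\ $\bigcup_{\delta \in \mathfrak{D}_i} \opResAc(\delta) = \opResAc(\mathfrak{D})$. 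This is \cref{prop:moveclosedandRes,prop:moveclosedandantichain}, and it rests on the exhaustive case analysis of pairwise configurations carried out in \cref{sec:Matchable,sec:Moves} (matchability, exclusivity conditions \ref{0E}--\ref{4E}, and the seven moves). Uniqueness then comes from \cref{cor:maingeommotivation}, which is the upper join-decomposition of \cref{prop:decompJI}; your characterisation of $\mathfrak{E}$ as the $\Accordleq$-maximal elements of $\{\delta : \opResAc(\delta) \subseteq \opResAc(\mathfrak{D})\}$ is correct and is precisely that statement.
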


This paper is organized as follows: \cref{sec:Res} contains all the preliminaries. In \cref{ss:RecallResolv}. We define the notion of resolving subcategories and state the general algorithm for computing resolving closures. In \cref{ss:RecallGentleGeom}, we recall what a gentle algebra is and some tools from their geometric models that will be used in computations. \cref{ss:recalltrees} recalls some specificities of gentle trees. Then, the results of \cite{DS251} on the monogeneous resolving subcategories are stated in \cref{ss:RecallMono}. In \cref{sec:ResPoset} we study closure operators (\cref{ss:closop}) and decompositions in lattices through their join-irreducible (\cref{ss:latticeJI}) and then apply it to the lattice of resolving subcategories in \cref{ss:rescat} and describe results for the resolving order relation in  \cref{ss:geometric}. In \cref{sec:Matchable} we go through the difficulty of fixing a common coloration on the sets of vertices of two sets of neighbouring projectives. To obtain such a coloration we define in \cref{ss:Matchable} the notion of matchability. It follows naturally from the definition that matchability has a strong and a weak case. We study the second one in \cref{ss:weakmatchabilityconfig}. We attend to the case were colorations are non matchable and reduce it to the matchable case in \cref{ss:nomatchablemoves}.  In \cref{sec:Moves}, we list all the operations required to transform any antichain into the canonical one that describes the resolving subcategory. To do so, we refine information on the colorations and endpoints of curves in \cref{ss:exclusive}. The rest of the section properly lists the operations on antichains we must study. Then, in \cref{sec:TreePart2}, we compute all the resolving closures of sets of indecomposable objects. \cref{ss:Algo} states the transformation of antichains into the upper join decomposition and \cref{ss:examplePart1} gives a detailed set of examples.

\begin{conv} \label{conv:orders} Throughout this paper, we fix the notations of main order relations enumerated in \cref{table:orders}.
\begin{table}[!ht]
\centering
\begin{tabular}{|c|c|c|}
\hline
$\preccurlyeq_v$ & \begin{tabular}{c}
   Counter-clockwise order around \\
   the marked point $v \in \mathcal{M}_{\rpoint}$ on\\
   $\rpoint$-arcs with $v$ as an endpoint
\end{tabular}  & p. \pageref{order:clock} \\ \hline
$\Resleq$ & Res-relation on $\pmb{\ind \setminus \proj}(Q,R)$ & p. \pageref{order:resindec} \\ \hline
$\bleq$ & \begin{tabular}{c}
   Order on antichains induced \\
   by the inclusion order on \\
   order ideals of a given poset
\end{tabular} & p. \pageref{order:antichain} \\ \hline
$\Accordleq$ & Res-relation on $\Accord'(\pmb{\Sigma}, \mathcal{M}, \Delta^{\gpoint})$  & p. \pageref{def:GeomResrelation}  \\ \hline 
\end{tabular}
\caption{\label{table:orders} Main order relations used in this paper. The last column provides the page where we define each term.}
\end{table}
\end{conv}
	
	\section{Background}
	\label{sec:Res}
	\pagestyle{plain}

In this section, we recall some relevant definitions, notations, and results from our previous work that will be useful in the following. We allow ourselves not to recall all the precise terminology we used previously, and we refer the reader to \cite{DS251} for more details if needed.

\subsection{Resolving subcategories}
\label{ss:RecallResolv}

Let $\mathscr{C}$ be a Krull--Schmidt abelian category with enough projective objects.
\begin{conv} \label{conv:add}
A subcategory is said to be \emph{additively closed} if it is closed under direct sums and summands.
\end{conv}

\begin{definition} \label{def:resolv}
A full subcategory $\mathscr{R} \subseteq \mathscr{C}$ is called \new{resolving} if it satisfies the following conditions: 
\begin{enumerate}[label=$(\mathsf{R \arabic*})$, itemsep=1mm]
\setcounter{enumi}{-1}
\item \label{R0} $\mathscr{R}$ is additively closed
\item \label{R1} $\mathscr{R}$ generates $\mathscr{C}$,
\item \label{R2} $\mathscr{R}$ is closed under extensions, and,
\item \label{R3} $\mathscr{R}$ is closed under taking kernels of epimorphisms
\end{enumerate}
\end{definition}

We can simplify some conditions to check that a subcategory is resolving under the hypotheses we enforce on $\mathscr{C}$.

\begin{lemma} \label{lem:othercharactresolv}
Let $\mathscr{R} \subseteq \mathscr{C}$ be a full additive subcategory. We have the following equivalences:
\begin{enumerate}[label=$(\roman*)$, itemsep=1mm]
\item \label{ires} $\mathscr{R}$ satisfies \ref{R1} if, and only if, $\proj(\mathscr{C}) \subseteq \mathscr{R}$;
\item $\mathscr{R}$ is resolving if, and only if,  $\mathscr{R}$ satisfies \ref{R0}, \ref{R1} \ref{R2}, and $\mathscr{R}$ is closed under taking syzygies.
\end{enumerate}
\end{lemma}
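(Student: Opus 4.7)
My plan is to prove \ref{ires} first by a direct splitting argument, and then leverage \ref{ires} together with a pullback construction to establish the second equivalence.

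For \ref{ires}, I read \ref{R1} as saying that every object of $\mathscr{C}$ receives an epimorphism from some object of $\mathscr{R}$. Assuming this and taking $P \in \proj(\mathscr{C})$, any epimorphism $R \twoheadrightarrow P$ with $R \in \mathscr{R}$ splits, so $P$ is a direct summand of $R$, and the assumed additivity forces $P \in \mathscr{R}$. Conversely, since $\mathscr{C}$ has enough projectives, the inclusion $\proj(\mathscr{C}) \subseteq \mathscr{R}$ immediately provides an epimorphism onto every object from an object of $\mathscr{R}$, giving \ref{R1}.

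For the second equivalence, the forward direction is routine: \ref{R0}--\ref{R2} are part of the definition, and syzygy closure follows by applying \ref{R3} to the kernel of any projective cover, observing that the projective cover itself lies in $\mathscr{R}$ by \ref{ires}. The substantial step is the converse: assuming \ref{R0}--\ref{R2} and closure under syzygies, deduce \ref{R3}. Given a short exact sequence $0 \to K \to R' \to R \to 0$ with $R', R \in \mathscr{R}$, choose an epimorphism $P \twoheadrightarrow R$ with $P$ projective, and form the pullback $E$ along $R' \twoheadrightarrow R$. This yields two short exact sequences
\[0 \to \Syg(R) \to E \to R' \to 0, \qquad 0 \to K \to E \to P \to 0.\]
In the first, $\Syg(R) \in \mathscr{R}$ by syzygy closure and $R' \in \mathscr{R}$ by assumption, so \ref{R2} gives $E \in \mathscr{R}$. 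In the second, the quotient $P$ is projective, so the sequence splits and $K$ becomes a direct summand of $E$; \ref{R0} then puts $K$ in $\mathscr{R}$, as required.

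No step presents a serious difficulty; the one ingredient worth highlighting is the pullback, which is the standard device that replaces an arbitrary epimorphism in $\mathscr{R}$ by an epimorphism from a projective while keeping the kernel $K$ unchanged, thereby reducing \ref{R3} to \ref{R2} combined with syzygy closure.
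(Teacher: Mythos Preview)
The paper states this lemma without proof; it appears in the background section where results are recalled from \cite{DS251} and the general literature. Your argument is correct and is precisely the standard one: the splitting trick for \ref{ires}, and for the nontrivial implication in (ii) the pullback of the given epimorphism along a projective presentation of $R$, which simultaneously exhibits $E$ as an extension of $R'$ by a syzygy of $R$ (hence in $\mathscr{R}$ by \ref{R2} and syzygy closure) and as a split extension of $P$ by $K$ (hence $K \in \mathscr{R}$ by \ref{R0}).

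One small remark: when you write ``choose an epimorphism $P \twoheadrightarrow R$ with $P$ projective'' and then call its kernel $\Syg(R)$, strictly speaking the syzygy is the kernel of a projective \emph{cover}. Either take $P$ to be the projective cover (which exists since $\mathscr{C}$ is Krull--Schmidt with enough projectives), or observe that for an arbitrary projective $P$ the kernel is $\Syg(R) \oplus Q$ with $Q$ projective, which is still in $\mathscr{R}$ by \ref{ires} and \ref{R0}. Either way the argument goes through unchanged.
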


As resolving subcategories are additively closed and must contain all the projective objects, they are characterized by their indecomposable non-projective objects. For any resolving subcategory $\mathscr{R} \subset \mathscr{C}$, we write $\pmb{\ind \setminus \proj}(\mathscr{R})$ for the set of indecomposable non-projective objects in $\mathscr{R}$ up to isomorphism.

By the fact that resolving subcategories are closed under arbitrary intersections, given any $\mathcal{X} \subseteq \pmb{\ind \setminus \proj}(\mathscr{C})$, we can define $\Res(\mathcal{X})$ the \new{resolving closure} of $\mathcal{X}$ as the smallest resolving subcategory containing  $\mathcal{X}$. For $X,Y \in \ind(Q,R)$, we write $\Res(X)$ for $\Res(\{X\})$ and $\Res(X,Y)$ for $\Res(\{X,Y\})$. We define an analogous operator on subsets of $\pmb{\ind \setminus \proj}(\mathscr{C})$, denoted $(-)^{\Res}$, which returns $\pmb{\ind \setminus \proj}(\Res(\mathcal{X}))$ for any $\mathcal{X} \subseteq \pmb{\ind \setminus \proj}(\mathscr{C})$. One easily checks that $(-)^{\Res}$ is a closure operator on $\pmb{\ind \setminus \proj}(\mathscr{C})$.

Using \cite[Theorem 2.11]{DS251} allows one to check if an additive subcategory is resolving by doing some calculations on indecomposable representations. We update the algorithm of Takahashi \cite{T09} by only focusing on indecomposable non-projective objects in $\mathscr{C}$ (see \cite[Algorithm 2.14]{DS251}). Even though this algorithm is less effective than the one Takahashi gave, we support the fact that we earn in precision to construct step by step the set of non-projective indecomposable objects $\mathcal{X}^{\Res}$ that generates additively $\Res(\mathcal{X})$, for all collections $\mathcal{X} \subseteq \pmb{\ind \setminus \proj}(\mathscr{C})$. We have already discussed some conditions to ensure that this algorithm terminates.

In the following, we restrict ourselves to the study of module categories of a subfamily of gentle algebras, refining the algorithm in this setting to facilitate explicit calculations. Consequently, it will allow us to describe all the resolving subcategories of those categories explicitly.

\subsection{Geometric model for gentle quivers}
\label{ss:RecallGentleGeom}

Fix an algebraically closed field $\mathbb{K}$.
Let $(Q,R)$ be a gentle quiver. The category $\rep(Q,R)$ of finite-dimensional representations of $(Q,R)$ (over $\mathbb{K}$) is abelian, Krull--Schmidt, and has enough projective and injective objects. Write $\ind(Q,R)$ for the (isomorphism classes of) indecomposable representations of $(Q,R)$. We can describe $\rep(Q,R)$ using string combinatorics \cite{BR87,CB89} by focusing on representation-finite gentle quivers. This allows us to describe morphisms, and thus syzygies, as extensions between objects in $\ind(Q,R)$. In particular, Schröer \cite{S99}, \c{C}anak\c{c}i, Pauksztello, and Schroll \cite{CPS21} and Brüstle, Douville Mousavand, Thomas , Y\i ld\i r\i m \cite{BDMTY19} give an explicit basis of $\Ext^1(M,N)$ in terms of arrow and overlap extensions, for any $(M,N) \in \ind(Q,R)^2$. 

The article \cite{BCS21} introduced a geometric model (see also \cite{OPS18,PPP18}) for gentle quivers. Let us first state some conventions and definitions.
\begin{conv}
We recall that whenever we talk about \new{surfaces}, we always mean an oriented compact surface $\pmb{\Sigma}$ with a finite number (which may be zero) of open discs removed. Write $\partial \pmb{\Sigma}$ for the boundary of $\pmb{\Sigma}$. Such a surface is determined, up to homeomorphism, by its genus and by the number of connected components of $\partial \pmb{\Sigma}$; we will refer to these connected components as the \new{boundary components} of $\pmb{\Sigma}$.
\end{conv}

\begin{definition} \label{def:marksurf}
A \new{marked surface} is a pair $(\pmb{\Sigma},\mathcal{M})$, where $\pmb{\Sigma}$ is a surface and $\mathcal{M}$ is a finite set of \new{marked points} of $\pmb{\Sigma}$ such that $\mathcal{M}$ admits a bipartition $\{\mathcal{M}_{\gpoint},\mathcal{M}_{\rpoint}\}$ such that, on each boundary component of $\pmb{\Sigma}$: 
\begin{enumerate}[label = $\bullet$, itemsep=0.1em]
	\item there is at least one marked point of each color;
	\item marked points in $\mathcal{M}_{\gpoint}$ and $\mathcal{M}_{\rpoint}$ alternate.
\end{enumerate}
\end{definition}

\begin{definition} \label{def:arcsanddissec} Let $(\pmb{\Sigma},\mathcal{M})$ be a marked surface.
\begin{enumerate}[label=$\bullet$,itemsep=1mm]
\item A \new{$\gpoint$-arc} is a curve on $(\pmb{\Sigma},\mathcal{M})$ joining two points in $\mathcal{M}_{\gpoint}$; it is a continuous map from the closed interval $[0,1]$ to $\pmb{\Sigma}$ with endpoints in $\mathcal{M}_{\gpoint}$, and such that the image of its interior is disjoint from $\mathcal{M}$.

\item A $\gpoint$-arc is said to be \new{simple} if it does not intersect itself (except perhaps at its endpoints). 

\item A \new{$\gpoint$-dissection} of $(\pmb{\Sigma}, \mathcal{M})$ is a collection $\Delta^{\gpoint}$ of pairwise non-intersecting simple $\gpoint$-arcs which cut the surface into polygons (that is to say, into simply-connected regions), called the \new{cells} of the dissection. The triplet $(\pmb{\Sigma}, \mathcal{M}, \Delta^{\gpoint})$  is called a  \new{$\gpoint$-dissected marked surface}.


\item We define \new{$\rpoint$-arcs} and \new{$\rpoint$-dissection} similarly.

\item Given a $\gpoint$-dissection or a $\rpoint$-dissection $\Delta$ of a marked surface $(\pmb{\Sigma}, \mathcal{M})$, we denote by $\pmb{\Gamma}(\Delta)$ the set of cells of $(\pmb{\Sigma}, \mathcal{M},\Delta)$. 
\end{enumerate}
\end{definition}

The articles \cite{BCS21,OPS18,PPP18} gave a one-to-one correspondence from gentle quivers to $\gpoint$-dissected marked surfaces, uniquely determined up to oriented homeomorphisms and homotopies of $\gpoint$-arcs. We write $\Surf(Q,R) = (\pmb{\Sigma}, \mathcal{M}, \Delta^{\gpoint})$ for the $\gpoint$-dissected marked surface associated to the gentle quiver $(Q,R)$. Moreover, we can translate information from $\rep(Q,R)$ to the geometric model and vice-versa.

\begin{theorem}[\cite{BCS21,OPS18,PPP182}] \label{thm:GeomandRep} Let $(\pmb{\Sigma}, \mathcal{M}, \Delta^{\gpoint})$ a $\gpoint$-dissected marked surface such that $\mathcal{M}_{\gpoint} \subset \partial \pmb{\Sigma}$, and let $(\opQ(\Delta^{\gpoint}),\opR(\Delta^{\gpoint}))$ be its associated gentle quiver. Then we have a one-to-one correspondence from indecomposable representations of  $(\opQ(\Delta^{\gpoint}),\opR(\Delta^{\gpoint}))$, up to isomorphism, to a subfamily of $\rpoint$-curves, called \emph{accordions} on $(\pmb{\Sigma}, \mathcal{M}, \Delta^{\gpoint})$, up to homotopy.
\end{theorem}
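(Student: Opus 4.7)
The plan is to derive this correspondence from the classical string combinatorics of Butler--Ringel, transported onto the geometric model. Since $(Q,R) = (\opQ(\Delta^{\gpoint}), \opR(\Delta^{\gpoint}))$ is gentle, its indecomposables split into string modules $\MM(w)$ and band modules; on the geometric side, the former are to be realised as simple $\rpoint$-arcs (accordions) while the latter correspond to closed $\rpoint$-curves, both defined up to homotopy. Under the hypothesis $\mathcal{M}_{\gpoint} \subset \partial \pmb{\Sigma}$, the statement focuses on the string part and its realisation as the accordion part on the surface.

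First I would make explicit the dictionary between quiver and surface: each vertex of $Q_0$ corresponds to a cell of $\pmb{\Gamma}(\Delta^{\gpoint})$, each arrow to a $\gpoint$-arc shared by two adjacent cells, and each generator of $R$ to a ``forbidden'' consecutive pair of $\gpoint$-arcs meeting at a $\gpoint$-marked point. In these terms, a string in $(Q,R)$ becomes a sequence of adjacent cells that turns admissibly around each interior $\gpoint$-point it crosses. Next, to a string $w$ I would associate the $\rpoint$-arc $\delta_w$ which, starting from a marked point of $\mathcal{M}_{\rpoint}$ in the first cell of $w$, successively crosses exactly the $\gpoint$-arcs listed by the letters of $w$, and ends at a point of $\mathcal{M}_{\rpoint}$ in the last cell. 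One checks directly that $\delta_w$ satisfies the conditions to be an accordion, and that the assignment $w \mapsto \delta_w$ is well-defined up to homotopy. Conversely, any accordion $\delta$ yields a walk by recording its intersections with $\Delta^{\gpoint}$ in order, and the accordion condition guarantees that this walk is a valid string for $(Q,R)$.

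The main obstacle lies in two coherence verifications. First, homotopy of $\rpoint$-arcs with fixed endpoints must correspond exactly to equality of strings up to the symmetry $w \sim w^{-1}$, which realises the isomorphism $\MM(w) \cong \MM(w^{-1})$; this is essentially a bigon argument on the surface, reducing any homotopy to elementary local moves that preserve the crossing sequence with $\Delta^{\gpoint}$. Second, the geometric admissibility condition at an interior $\gpoint$-marked point must coincide exactly with the avoidance of the zero relations in $R$; this is a purely local check, comparing the two possible ways an accordion can turn around a $\gpoint$-point with the forbidden composable pairs of arrows cut out by the gentle relation structure. Once both checks are in place, bijectivity follows from the Butler--Ringel classification of indecomposables on the representation side and from the classification of simple curves up to homotopy on the geometric side.
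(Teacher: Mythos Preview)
The paper does not prove this theorem: it is stated as a background result, attributed to \cite{BCS21,OPS18,PPP182}, and used without further argument. There is therefore no ``paper's own proof'' to compare against.

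Your sketch follows the standard route taken in those references (Butler--Ringel string combinatorics, transported to the surface), and the overall strategy is sound. One concrete correction: your dictionary between $(Q,R)$ and $(\pmb{\Sigma},\mathcal{M},\Delta^{\gpoint})$ is inverted. In the model used here (and in the cited sources), the vertices of $Q$ correspond to the $\gpoint$-arcs of $\Delta^{\gpoint}$, not to the cells; the arrows correspond to angles, i.e.\ pairs of adjacent $\gpoint$-arcs meeting at a common $\gpoint$-marked point, and the relations in $R$ record the forbidden consecutive angles. With this dictionary, an accordion $\delta$ records a string by listing the $\gpoint$-arcs it crosses (these are the vertices of the string), and the accordion condition of \cref{def:accordions} is exactly what forces consecutive crossings to be joined by an arrow of $Q$ while avoiding the relations. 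Once this is fixed, your two coherence checks (homotopy versus string inversion, and the local admissibility versus the gentle relations) are the right verifications, and the argument goes through as you outline.
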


\begin{definition}[{\cite{HKK17,OPS18}}] \label{def:accordions}
	    A $\rpoint$-arc $\delta$ in $(\pmb{\Sigma}, \mathcal{M},\Delta^{\gpoint})$ is an \new{accordion} if it satisfies the following conditions: whenever~$\delta$ enters a cell of~$\Delta^{\gpoint}$ by crossing an $\gpoint$-arc~$\eta$,
		    \begin{enumerate}[label=$(\alph*)$,itemsep=0.5em]
			\item if it leaves the cell, it leaves it by crossing an $\gpoint$-arc~$\zeta$ adjacent to~$\eta$;
			\item the relevant segments of the arcs~$\eta$,~$\zeta$ and~$\delta$ bound a disk that does not contain the unique marked point in~$\mathcal{M}_{\rpoint}$ belonging to the cell.
		    \end{enumerate}
		See \cref{fig:rulesclosedaccord} for a picture illustrating the rules $(a)$ and $(b)$. 
		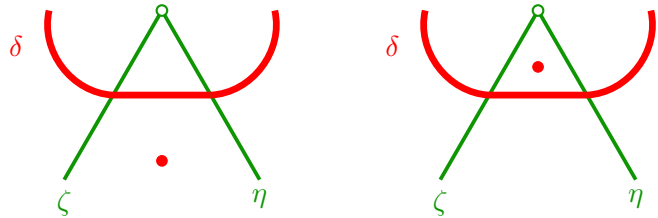
\begin{figure}[!ht] 
			\begin{center}
			    \begin{tikzpicture}[mydot/.style={
						circle,
						thick,
						fill=white,
						draw,
						outer sep=0.5pt,
						inner sep=1pt
					}]
					
				\begin{scope}[xshift=0cm]
					\tkzDefPoint(0,0){O}\tkzDefPoint(0,1.5){1}
					\tkzDefPointsBy[rotation=center O angle 120](1,2){2,3}
					\tkzDefMidPoint(1,2)
					\tkzGetPoint{i}
					\tkzDefMidPoint(1,3)
					\tkzGetPoint{j}
					\tkzDefPoint(-1.5,1.5){4}
					\tkzDefPoint(1.5,1.5){5}
					
					\draw[line width=0.5mm,dark-green](1) edge (2);
					\draw[line width=0.5mm,dark-green](1) edge (3);
					\draw[line width=0.9mm,red] (i) edge (j);
					\draw[line width=0.9mm, bend right=50,red] (4) edge (i);
					\draw[line width=0.9mm,bend left=50,red] (5) edge (j);
					\tkzDrawPoints[size=4,color=dark-green,mydot](1);
					
					\tkzDefPoint(0,-0.5){6}
					\tkzDrawPoints[size=4,color=red](6);
					
					\tkzLabelPoint[below left=0.3](4){{\large $\color{red}{\delta}$}}
					\tkzLabelPoint[below](3){{\large $\color{dark-green}{\eta}$}}
					\tkzLabelPoint[below](2){{\large $\color{dark-green}{\zeta}$}}
				\end{scope} 
				\begin{scope}[xshift=5cm]
					\tkzDefPoint(0,0){O}\tkzDefPoint(0,1.5){1}
					\tkzDefPointsBy[rotation=center O angle 120](1,2){2,3}
					\tkzDefMidPoint(1,2)
					\tkzGetPoint{i}
					\tkzDefMidPoint(1,3)
					\tkzGetPoint{j}
					\tkzDefPoint(-1.5,1.5){4}
					\tkzDefPoint(1.5,1.5){5}
					
					\draw[line width=0.5mm,dark-green](1) edge (2);
					\draw[line width=0.5mm,dark-green](1) edge (3);
					\draw[line width=0.9mm,red] (i) edge (j);
					\draw[line width=0.9mm, bend right=50,red] (4) edge (i);
					\draw[line width=0.9mm,bend left=50,red] (5) edge (j);
					
					\tkzDrawPoints[size=4,color=dark-green,mydot](1);
					
					\tkzDefPoint(0,0.75){6}
					\tkzDrawPoints[size=4,color=red](6);
					
					\tkzLabelPoint[below left=0.3](4){{\large $\color{red}{\delta}$}}
					\tkzLabelPoint[below](3){{\large $\color{dark-green}{\eta}$}}
					\tkzLabelPoint[below](2){{\large $\color{dark-green}{\zeta}$}}
				\end{scope}
				\end{tikzpicture}
				\caption{\label{fig:rulesclosedaccord} Drawings representing the rules that an accordion must satisfy: on the left, $\delta$ satisfies the rule $(a)$ and $(b)$; on the right, $\delta$ does not satisfy the rule $(b)$.} \end{center} \end{figure}
	\end{definition} 

    \begin{remark} \label{rem:BCSOPS} In \cite{HKK17,OPS18}, those $\rpoint$-arcs are called \emph{graded arcs}. We use here the terminology from \cite{PPP18}
    \end{remark}

Write $\Accord = \mathscr{A}(\pmb{\Sigma}, \mathcal{M}, \Delta^{\gpoint})$ for the set of accordions on $(\pmb{\Sigma}, \mathcal{M}, \Delta^{\gpoint})$. Recall that:
\begin{enumerate}[label=$\bullet$, itemsep=1mm]
    \item for any $M \in \ind(Q,R)$, we write $\gamma_{(M)}$ for its associated accordion up to homotopy; and,
    \item for any $\delta \in \Accord$, we denote by $\MM(\delta)$ its associated indecomposable representation up to isomorphism.
\end{enumerate} We can also describe morphisms and extensions between indecomposable representations as geometric configurations between accordions. We recall the precise statements.

\begin{prop}[\cite{BR87}] \label{prop:HomCrossing} Let $(\pmb{\Sigma},\mathcal{M},\Delta^{\gpoint})$ be a $\gpoint$-dissected marked surface with $\mathcal{M}_{\gpoint} \subset \partial{\pmb{\Sigma}}$. For any pair $(\delta, \eta) \in \Accord$, a basis of elements of $\Hom(\MM(\delta), \MM(\eta))$ is given by crossings as depicted in \cref{fig:HomCrossing}.
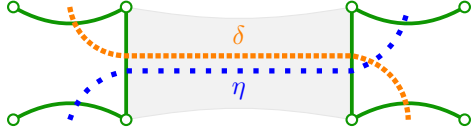
\begin{figure}[ht!]
    \centering
    \begin{tikzpicture}[mydot/.style={
				circle,
				thick,
				fill=white,
				draw,
				outer sep=0.5pt,
				inner sep=1pt
			}, fl/.style={->,>=latex}]
			\tkzDefPoint(0,0){O}\tkzDefPoint(0,1.5){1} 
			\tkzDefPoint(-1.5,1.5){2} 
			\tkzDefPoint(-1.5,0){3} 
			
			\tkzDefPoint(3,0){4}\tkzDefPoint(3,1.5){5} 
			\tkzDefPoint(4.5,1.5){6} 
			\tkzDefPoint(4.5,0){7} 
			
			\filldraw [fill=gray,opacity=0.1] (O) to (1) to [bend right=10] (5) to (4) to [bend right=10] cycle;
			
			\draw[line width=0.5mm,dark-green](1) edge (O);
			\draw[line width=0.5mm,dark-green, bend left=30](1) edge (2);
			\draw[line width=0.5mm,dark-green,bend right=30](O) edge (3);
			
			\draw[line width=0.5mm,dark-green](4) edge (5);
			\draw[line width=0.5mm,dark-green, bend left=30](6) edge (5);
			\draw[line width=0.5mm,dark-green,bend right=30](7) edge (4);
			
			\draw[line width=0.7mm,orange,densely dotted](-0.75,1.5) to [bend right=40] (0,0.85) to [bend left=0] node[above]{$\delta$}  (3,0.85) to [bend left=40] (3.75,0);
			\draw[line width=0.7mm,blue, bend left=30, loosely dotted](-0.75,0) to [bend left=30]  (0,0.65) to [bend left=0] node[below]{$\eta$} (3,0.65) to [bend right=30] (3.75,1.5);
			
			\tkzDrawPoints[size=4,color=dark-green,mydot](O,1,2,3,4,5,6,7);
		\end{tikzpicture}
    \caption{\label{fig:HomCrossing} Illustration of a crossing corresponding to a basis element of $\Hom(\MM(\delta), \MM(\eta))$. The shaded part is a part where all the segments of $\delta$ and all the ones of $\eta$, given by the cutting of $\pmb{\Sigma}$ with $\pmb{\Gamma}(\Delta^{\gpoint})$, are homotopic.}
\end{figure}
\end{prop}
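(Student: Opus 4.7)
The plan is to leverage the classical Butler--Ringel description of $\Hom$-spaces between string modules and translate it into the geometric language of accordions. Concretely, I would first recall that for a gentle algebra, $\Hom(\MM(\delta),\MM(\eta))$ admits a basis of so-called \emph{graph maps}, each of which is determined by a common substring $w$ that appears as a factor quotient substring of the string of $\MM(\delta)$ and as a sub-substring of the string of $\MM(\eta)$, subject to precise boundary conditions at the two ends of $w$ controlling how the ambient strings extend beyond $w$ (via deletion or addition of formal arrows).

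The next step is to dictionary-translate each ingredient to the surface. Under the bijection of \cref{thm:GeomandRep}, the string of $\MM(\delta)$ is read off from the ordered sequence of $\gpoint$-arcs that $\delta$ crosses (together with the turning data encoded by the accordion condition), and similarly for $\eta$. A common substring $w$ therefore corresponds to a maximal sequence of $\gpoint$-arcs $\eta_1,\ldots,\eta_k$ that both $\delta$ and $\eta$ cross, traversing the same cells between consecutive $\eta_i$; up to homotopy, this is exactly the shaded region in \cref{fig:HomCrossing} in which the relevant segments of $\delta$ and $\eta$ are homotopic.

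The Butler--Ringel boundary conditions then translate directly into the local shape of the picture at each end of the shaded band. At each extremity, the two arcs must leave the common region on opposite sides of the last $\gpoint$-arc they share: either one of them terminates at a marked point inside a cell while the other continues through an adjacent $\gpoint$-arc, or they exit through two distinct $\gpoint$-arcs incident to a common $\gpoint$-vertex; in every admissible configuration, this produces precisely one transverse crossing of $\delta$ and $\eta$, and the forbidden configurations (where the condition fails) correspond to the two curves leaving tangentially and representing the zero map.

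The main obstacle is verifying this end-condition correspondence rigorously: one has to match, case by case, the purely algebraic conditions on the letters just before and just after $w$ in the strings (arrow versus inverse arrow, and the zero relations preventing composition) with the geometric constraints at the endpoints of the shaded band, taking into account the relations $R$ visible through the dissection $\Delta^{\gpoint}$. Once this matching is done, the assignment $\{\text{crossings up to homotopy}\} \leftrightarrow \{\text{graph maps}\}$ is injective by the uniqueness of the homotopy class of the common band, surjective because any graph map reconstructs its shaded region, and the resulting morphisms are linearly independent as they already are in Butler--Ringel's basis. This yields the desired basis of $\Hom(\MM(\delta),\MM(\eta))$.
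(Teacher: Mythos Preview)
Your proposal is sound, but note that the paper does not actually prove this proposition: it is stated in the background section as a recall of a known result from the geometric-model literature (the references \cite{BCS21,OPS18,PPP18} are given just before the statement), and no argument is supplied. Your outline---reducing to the Butler--Ringel basis of graph maps and translating the substring and boundary conditions into the picture of a shared homotopy band with prescribed exit directions---is exactly the standard route by which such statements are established in those sources, so there is nothing to compare against here.
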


\begin{conv}
Whenever we say that two arcs cross, they cross in their relative interior.
\end{conv}
	
\begin{prop}[\cite{DS251}]\label{prop:geo_mor}
Let $(\pmb{\Sigma},\mathcal{M},\Delta^{\gpoint})$ be a  $\gpoint$-dissected marked surface with $\mathcal{M}_{\gpoint} \subset \partial{\pmb{\Sigma}}$. Consider $p \in \mathbb{N}^*$ and $\delta_1,\ldots, \delta_p \in \Accord$. Let $\eta \in \Accord$ such that there exists a minimal epimorphism \[ \begin{tikzcd}
	{\displaystyle f: \bigoplus_{i=1}^p \MM(\delta_i)} & \MM(\eta)
	\arrow[two heads, from=1-1, to=1-2]
\end{tikzcd}\]
such that $f_{|\MM(\delta_i)}$ is given by exactly one crossing between $\delta_i$ and $\eta$. Then \[\Ker(f) = \bigoplus_{i=0}^p \MM(\kappa_i)\] where $\kappa_0,\ldots,\kappa_p \in \Accord$  are constructed from $\delta_1, \ldots, \delta_p$, and $\eta$ as depicted in \cref{fig:kerepi}. Note that $\kappa_0$ is defined in a dual way to $\kappa_p$ and thus the drawing of $\kappa_p$ is dual to the one of $\kappa_0 $.
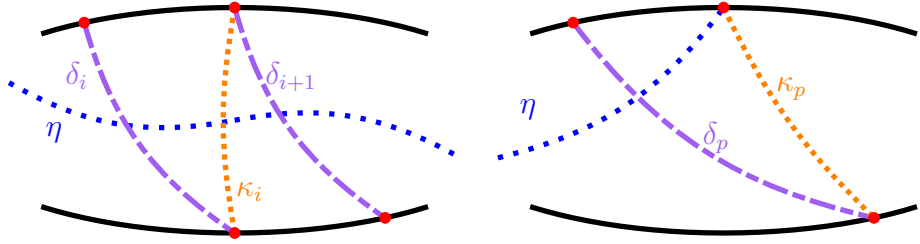
\begin{figure}[!ht]
\centering 
     \begin{tikzpicture}[mydot/.style={
					circle,
					thick,
					fill=white,
					draw,
					outer sep=0.5pt,
					inner sep=1pt
				}, scale = 1]
		\tikzset{
		osq/.style={
        rectangle,
        thick,
        fill=white,
        append after command={
            node [
                fit=(\tikzlastnode),
                orange,
                line width=0.3mm,
                inner sep=-\pgflinewidth,
                cross out,
                draw
            ] {}}}}
		\draw [line width=0.7mm,domain=50:130] plot ({4*cos(\x)}, {1.5*sin(\x)});
        \draw [line width=0.7mm,domain=230:310] plot ({4*cos(\x)}, {1.5*sin(\x)});
		\foreach \X in {0,1}
		{
		\tkzDefPoint(4*cos(pi/6*\X +pi/2),1.5*sin(pi/6*\X + pi/2)){\X};
		};
		\foreach \X in {2,3}
		{
		\tkzDefPoint(4*cos(pi/6*(\X-2) +3*pi/2),1.5*sin(pi/6*(\X-2) + 3*pi/2)){\X};
		};
		
		\draw[line width=0.7mm ,bend right=20,blue, loosely dotted](-3,0.5) edge (0,0);
		
		\draw[line width=0.7mm ,bend left=20,blue, loosely dotted](0,0) edge (3,-0.5);
		
		\draw[line width=0.7mm ,bend right=10,orange,dotted](0) edge (2);

		\draw [line width=0.7mm, mypurple,dash pattern={on 10pt off 2pt on 5pt off 2pt}, bend right=20] (1) edge (2);
		\draw [line width=0.7mm, mypurple,dash pattern={on 10pt off 2pt on 5pt off 2pt}, bend right=20] (0) edge (3);
		
		\foreach \X in {0,1,2,3}
		{
		\tkzDrawPoints[fill =red,size=4,color=red](\X);
		};
		
		

		\tkzDefPoint(-2.4,0.1){gammaM};
		\tkzLabelPoint[blue](gammaM){\Large $\eta$}
		\tkzDefPoint(0.2,-0.7){gammaP};
		\tkzLabelPoint[orange](gammaP){\Large $\kappa_i$}
		\tkzDefPoint(-2.1,0.9){deltav};
		\tkzLabelPoint[mypurple](deltav){\Large $\delta_i$}
		\tkzDefPoint(0.75,0.9){deltaw};
		\tkzLabelPoint[mypurple](deltaw){\Large $\delta_{i+1}$}
		
		\begin{scope}[xshift = 6.5cm]
		\draw [line width=0.7mm,domain=50:130] plot ({4*cos(\x)}, {1.5*sin(\x)});
        \draw [line width=0.7mm,domain=230:310] plot ({4*cos(\x)}, {1.5*sin(\x)});
		\foreach \X in {0,1}
		{
		\tkzDefPoint(4*cos(pi/6*\X +pi/2),1.5*sin(pi/6*\X + pi/2)){\X};
		};
		\foreach \X in {2,3}
		{
		\tkzDefPoint(4*cos(pi/6*(\X-2) +3*pi/2),1.5*sin(pi/6*(\X-2) + 3*pi/2)){\X};
		};
		
		\draw[line width=0.7mm ,bend right=20,blue, loosely dotted](-3,-0.5) edge (0);
		
		\draw [line width=0.7mm, mypurple,dash pattern={on 10pt off 2pt on 5pt off 2pt}, bend right=20] (1) edge (3);
		
		\draw[line width=0.7mm ,bend right=10,orange,dotted](0) edge (3);
		
		\foreach \X in {0,1,3}
		{
		\tkzDrawPoints[fill =red,size=4,color=red](\X);
		};
		
		

		\tkzDefPoint(-2.6,0.4){gammaM};
		\tkzLabelPoint[blue](gammaM){\Large $\eta$}
		\tkzDefPoint(0.9,0.7){gammaP};
		\tkzLabelPoint[orange](gammaP){\Large $\kappa_p$}
		\tkzDefPoint(-0.1,0.1){deltav};
		\tkzLabelPoint[mypurple](deltav){\Large $\delta_p$}
		\end{scope}
    \end{tikzpicture}
\caption{\label{fig:kerepi} The two types of accordions $\kappa_i$ representing the indecomposable summands of $\Ker(f)$.}
\end{figure}
\end{prop}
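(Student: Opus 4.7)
The plan is to reduce the statement to the string-combinatorics description of kernels in $\rep(Q,R)$ and then transport the answer back to the geometric model via \cref{thm:GeomandRep}. Recall that by \cite{BCS21,OPS18}, each accordion $\delta$ corresponds to an indecomposable string module $\MM(\delta)$ with a canonical string presentation, and each crossing as in \cref{prop:HomCrossing} corresponds to a graph map in the sense of \cite{CB89}. Under this dictionary, the hypothesis that $f_{|\MM(\delta_i)}$ is given by exactly one crossing between $\delta_i$ and $\eta$ translates to: each component is a single graph map, and the crossings are arranged sequentially along $\eta$ without overlap by minimality of $f$.

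First, I would order the crossings of $\eta$ with the $\delta_i$ along $\eta$, say from one endpoint of $\eta$ to the other. Because $f$ is a minimal epimorphism, these crossings tile $\eta$ so that consecutive graph maps cover the string of $\eta$ without redundancy. The kernel of a single graph-map epimorphism has two string summands (the two ``leftover'' pieces of $\delta$ lying on either side of the crossing), and for the compound minimal epimorphism above, the right leftover of $\delta_i$ glues with the left leftover of $\delta_{i+1}$ into a single indecomposable string summand. This is essentially the kernel computation carried out in the string model in \cite{S99,CPS21,BDMTY20}.

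Next, I would identify these kernel strings with accordions geometrically. The middle summand $\kappa_i$ for $1 \leq i \leq p-1$ is obtained by concatenating the segment of $\delta_i$ on one side of its crossing with $\eta$, a short arc inside the disk bounded by the two consecutive crossings and the intermediate segment of $\eta$, and the segment of $\delta_{i+1}$ on the corresponding side; this yields exactly the arc shown on the left of \cref{fig:kerepi}. For the boundary summands $\kappa_0$ and $\kappa_p$, one replaces one of the $\delta$-segments with the remaining segment of $\eta$ up to its endpoint, which produces the dual arcs depicted on the right of \cref{fig:kerepi}.

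The main verification, and what I expect to be the principal obstacle, is checking that each $\kappa_i$ satisfies the accordion conditions $(a)$ and $(b)$ of \cref{def:accordions} at the new turning points introduced by the construction. Condition $(a)$ is immediate because the two $\gpoint$-arcs crossed by $\kappa_i$ near the turn are adjacent in the cell containing the swap, by construction. Condition $(b)$ requires showing that the relevant local disk does not contain the $\rpoint$-marked point of the cell, which follows from combining the accordion property of $\delta_i$, $\delta_{i+1}$, and $\eta$ with the geometric form of the crossing dictated by \cref{prop:HomCrossing}. Once each $\kappa_i$ is confirmed to be an accordion, the isomorphism $\Ker(f) \cong \bigoplus_{i=0}^p \MM(\kappa_i)$ follows from the string-level computation above, and can be cross-checked at the level of dimension vectors using that crossings form a basis for Hom.
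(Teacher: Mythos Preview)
The paper does not actually prove this proposition: it appears in the background section (\cref{sec:Res}) as a recalled result from the geometric-model literature \cite{BCS21,OPS18,PPP18}, stated without proof. Your sketch, reducing to the string-combinatorics computation of kernels of graph maps and then transporting back via the accordion/string dictionary, is exactly the route one would take to establish it, and the references you cite (\cite{CB89,S99,CPS21,BDMTY20}) are the right ones for the algebraic side.

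One small point worth tightening: in your middle paragraph you assert that by minimality the crossings ``tile $\eta$ without redundancy'', but you should be explicit that minimality of the epimorphism forces the images of the $f_{|\MM(\delta_i)}$ to cover the top of the string of $\eta$ with no proper sub-sum already surjecting; this is what guarantees the consecutive overlaps glue cleanly into the $\kappa_i$ rather than producing further summands. Apart from that, your verification of the accordion conditions $(a)$ and $(b)$ at the new turns is the correct local check, and once that is done the identification follows.
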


\begin{prop}[\cite{BDMTY19,CPS21}]\label{prop:geom_ext}
Let $(\pmb{\Sigma}, \mathcal{M},\Delta^{\gpoint})$ be a  $\gpoint$-dissected marked surface with $\mathcal{M}_{\gpoint} \subset \partial \pmb{\Sigma}$. Let $\delta, \eta \in \Accord$. We distinguish two types of extensions from $\MM(\delta)$ to $\MM(\eta)$ which are:

\begin{enumerate}[label = $\bullet$, itemsep =0.5em]
    \item \new{Overlap extensions}: whenever we have a non-split short exact sequence \[\begin{tikzcd}
	 \MM(\eta) & {E_1 \oplus E_2} & \MM(\delta) 
	\arrow[tail, from=1-1, to=1-2]
	\arrow[two heads, from=1-2, to=1-3]
\end{tikzcd},\] where $E_1,E_2 \in \ind_\mathbb{K}(Q,R)$, then $\delta$ and $\eta$ are intersecting each other, and $\gamma_{(E_1)}$ and $\gamma_{(E_2)}$ can be obtained from $\delta$ and $\eta$ as pictured in \cref{fig:overlapextaccord}. By abuses of notations with strings of $(Q,R)$, we write $\OvExt(\delta,\eta)$ for the union of all sets $\{\gamma_{(E_1)}, \gamma_{(E_2)}\}$ over all the isomorphism classes of short exact sequences of the above shape.
    \begin{figure}[!ht]
\centering 
    \begin{tikzpicture}[mydot/.style={
					circle,
					thick,
					fill=white,
					draw,
					outer sep=0.5pt,
					inner sep=1pt
				}, scale = 1]
		\tikzset{
		osq/.style={
        rectangle,
        thick,
        fill=white,
        append after command={
            node [
                fit=(\tikzlastnode),
                orange,
                line width=0.3mm,
                inner sep=-\pgflinewidth,
                cross out,
                draw
            ] {}}}}
		\draw [line width=0.7mm,domain=50:130] plot ({4*cos(\x)}, {1.5*sin(\x)});
        \draw [line width=0.7mm,domain=230:310] plot ({4*cos(\x)}, {1.5*sin(\x)});
		\foreach \X in {0,1}
		{
		\tkzDefPoint(4*cos(pi/6*\X +pi/2),1.5*sin(pi/6*\X + pi/2)){\X};
		};
		\foreach \X in {2,3}
		{
		\tkzDefPoint(4*cos(pi/6*(\X-2.1) +3*pi/2),1.5*sin(pi/6*(\X-2.1) + 3*pi/2)){\X};
		};
		
		\draw[line width=0.7mm ,bend right=10,orange, dotted](0) edge (2);
		
		\draw[line width=0.7mm ,bend right=15,blue, loosely dotted](1) edge (3);
		
		\draw [line width=0.7mm, mypurple,dash pattern={on 10pt off 2pt on 5pt off 2pt}, bend right=20] (1) edge (0);
		\draw [line width=0.7mm, mypurple,dash pattern={on 10pt off 2pt on 5pt off 2pt}, bend right=20] (3) edge (2);
		\foreach \X in {0,...,3}
		{
		\tkzDrawPoints[circle,fill =red,size=4,color=red](\X);
		};
		

		\tkzDefPoint(0.1,0.5){gammaM};
		\tkzLabelPoint[orange](gammaM){\Large $\delta$}
		\tkzDefPoint(-1.2,0.2){gammaP};
		\tkzLabelPoint[blue](gammaP){\Large $\eta$}
		\tkzDefPoint(-1.2,1.2){deltav};
		\tkzLabelPoint[mypurple](deltav){\Large $\gamma_{(E_1)}$}
		\tkzDefPoint(0.2,-0.75){deltaw};
		\tkzLabelPoint[mypurple](deltaw){\Large $\gamma_{(E_2)}$}
    \end{tikzpicture}
\caption{\label{fig:overlapextaccord} Illustration of an overlap extension.}
\end{figure}
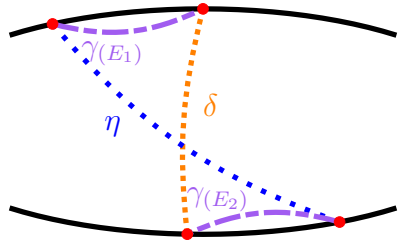
    \item \new{Arrow extension} : 
    whenever we have a non-split short exact sequence  \[\begin{tikzcd}
	 \MM(\eta) & {E} & \MM(\delta) 
	\arrow[tail, from=1-1, to=1-2]
	\arrow[two heads, from=1-2, to=1-3]
\end{tikzcd},\] where $E \in \ind(Q,R)$, then we can construct $\gamma_{(E)}$ from $\delta$ and $\eta$ as described in \cref{fig:arrowextaccord}. Similarly to overlap extensions, by abuses of notations with strings of $(Q,R)$, we write $\ArExt(\delta,\eta)$ for the set of all the accordions $\gamma_{(E)}$ over all the isomorphism classes of short exact sequences of the above shape.
    \begin{figure}[!ht]
\centering 
    \begin{tikzpicture}[mydot/.style={
					circle,
					thick,
					fill=white,
					draw,
					outer sep=0.5pt,
					inner sep=1pt
				}, scale = 1]
		\tikzset{
		osq/.style={
        rectangle,
        thick,
        fill=white,
        append after command={
            node [
                fit=(\tikzlastnode),
                orange,
                line width=0.3mm,
                inner sep=-\pgflinewidth,
                cross out,
                draw
            ] {}}}}
            
        \draw [line width=0.7mm,domain=-10:10] plot ({4*cos(\x)}, {1.5*sin(\x)});
		\draw [line width=0.7mm,domain=20:50] plot ({4*cos(\x)}, {1.5*sin(\x)});
		\draw [line width=0.7mm,domain=80:100] plot ({4*cos(\x)}, {1.5*sin(\x)});
		\draw [line width=0.7mm,domain=110:130] plot ({4*cos(\x)}, {1.5*sin(\x)});
		\draw [line width=0.7mm,domain=170:190] plot ({4*cos(\x)}, {1.5*sin(\x)});
        \draw [line width=0.7mm,domain=250:290] plot ({4*cos(\x)}, {1.5*sin(\x)});
		\foreach \X in {0,1}
		{
		\tkzDefPoint(4*cos(pi/2*(\X-0.3) +pi/3),1.5*sin(pi/2*(\X-0.3) + pi/3)){\X};
		};
		\foreach \X in {2,3}
		{
		\tkzDefPoint(4*cos(pi/6*(\X-3) +3*pi/2),1.5*sin(pi/6*(\X-3) + 3*pi/2)){\X};
		};
		
		\tkzDefPoint(4,0){4};
		\tkzDefPoint(0,1.5){5};
		\tkzDefPoint(-4,0){6};
		\tkzDefPoint(4*cos(3*pi/2-pi/11),1.5*sin(3*pi/2-pi/11)){7};
		\tkzDefPoint(4*cos(3*pi/2+pi/11),1.5*sin(3*pi/2+pi/11)){8};
		
		\filldraw[gray,opacity=0.1] (4) to [bend left=20] (5) to [bend left=20] (6) to [bend left=10] (7) to [bend right=10] (8) to [bend left=10] cycle;
		
		\draw[line width=0.5mm,bend left = 20,dark-green] (4) to (5) to (6);
		\draw[line width=0.5mm,bend left = 10,dark-green,dashed] (6) to (7);
		\draw[line width=0.5mm,bend left = 10,dark-green,dashed] (8) to (4);
		
		\draw[line width=0.7mm ,bend right=10,orange, dotted](0) edge (3);
		
		\draw[line width=0.7mm ,bend left=10,blue, loosely dotted](1) edge (3);
		
		\draw [line width=0.7mm, mypurple,dash pattern={on 10pt off 2pt on 5pt off 2pt}, bend left=20] (0) edge (1);

		\foreach \X in {0,1,3}
		{
		\tkzDrawPoints[circle,fill =red,size=4,color=red](\X);
		};
		
		\foreach \X in {4,...,8}
		{
		\tkzDrawPoints[mydot,size=6,color=dark-green,thick,fill=white](\X);
		};

		\tkzDefPoint(0.9,0.1){gammaM};
		\tkzLabelPoint[orange](gammaM){\Large $\delta$}
		\tkzDefPoint(-1.5,0.3){gammaP};
		\tkzLabelPoint[blue](gammaP){\Large $\eta$}
		\tkzDefPoint(-0.1,1.2){deltav};
		\tkzLabelPoint[mypurple](deltav){\Large $\gamma_{(E)}$}
    \end{tikzpicture}
\caption{\label{fig:arrowextaccord} Illustration of an arrow extension. The shaded part corresponds to one cell of $\pmb{\Gamma}(\Delta^{\gpoint})$.}
\end{figure}
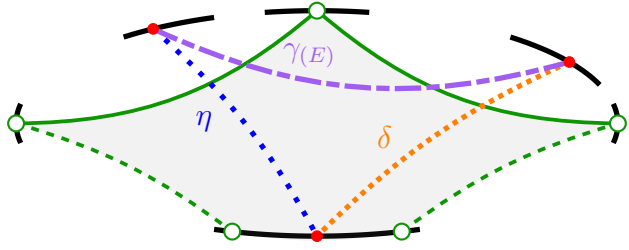
\end{enumerate}
\end{prop}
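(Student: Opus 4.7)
The plan is to reduce the geometric statement to the combinatorial classification of extensions between string modules, and then translate through the string-to-accordion dictionary of \cite{BCS21,OPS18,PPP18}. Concretely, Schröer \cite{S99}, \c{C}anak\c{c}i--Pauksztello--Schroll \cite{CPS21}, and Brüstle--Douville--Mousavand--Thomas--Y\i ld\i r\i m \cite{BDMTY20} provide an explicit basis of $\Ext^{1}(\MM(\delta),\MM(\eta))$ indexed by two species of data on the underlying strings: \emph{arrow extensions}, which concatenate two strings through a single arrow of $Q$ (possibly adjusted by an inverse factor on one side), and \emph{overlap extensions}, which resolve a shared common substring into the two strings obtained by swapping tails. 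Since $\Ext^{1}(\MM(\eta),\MM(\delta))$ thus decomposes into a sum of short exact sequences indexed by these two types of data, it suffices to identify the accordion associated with each basis element.

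For the arrow-extension case, I would fix a basis element produced by an arrow $a$ of $(Q,R)$ connecting one end of the string of $\MM(\delta)$ to one end of the string of $\MM(\eta)$. Under the dictionary of \cref{thm:GeomandRep}, an end of a string corresponds to the endpoint of the associated accordion, and an arrow between two adjacent letters corresponds to two consecutive $\gpoint$-arcs of $\Delta^{\gpoint}$ bounding a common cell; concatenation of the underlying strings through $a$ then glues the two accordions $\delta$ and $\eta$ inside that cell in a way that respects the accordion rules $(a)$ and $(b)$ of \cref{def:accordions}. This is precisely the configuration depicted in \cref{fig:arrowextaccord}, so the single indecomposable middle term $E$ has $\gamma_{(E)}$ equal to the concatenated arc. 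One must here check that the resulting curve is indeed an accordion (i.e. satisfies the rule about not enclosing the $\rpoint$-marked point of the cell), which follows from the non-splitness of the chosen extension.

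For the overlap-extension case, I start with a basis element indexed by a common substring of the strings of $\MM(\delta)$ and $\MM(\eta)$. Via the geometric dictionary, a common substring translates into a common homotopy segment shared by $\delta$ and $\eta$ inside $(\pmb{\Sigma},\mathcal{M},\Delta^{\gpoint})$; in particular $\delta$ and $\eta$ must cross in their interior, and the two crossings (at the ends of the common substring) frame an overlap region. The two new strings produced by the overlap swap correspond exactly to the two accordions $\gamma_{(E_{1})}$ and $\gamma_{(E_{2})}$ obtained by cutting and reconnecting $\delta$ and $\eta$ at the endpoints of this overlap, as pictured in \cref{fig:overlapextaccord}; this realizes the non-split sequence with middle term $E_{1}\oplus E_{2}$.

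The main technical obstacle is to verify that in each case the resulting curves really satisfy both conditions of \cref{def:accordions} and are honest $\rpoint$-arcs (not contractible, distinct from $\delta$ and $\eta$), and that every basis element of $\Ext^{1}$ is obtained in exactly one of the two ways; this is a local combinatorial check inside each cell of $\pmb{\Gamma}(\Delta^{\gpoint})$ traversed by the common data, and it is where the interplay between the gentle relations $R$ and the colouring of $\mathcal{M}$ plays its role. Modulo this local verification, the statement reduces to the string-combinatorial classification already recorded in the cited references.
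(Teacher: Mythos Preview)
Your approach is correct and is exactly how one would establish this result: combine the string-combinatorial basis of $\Ext^{1}$ from \cite{S99,CPS21,BDMTY20} with the string-to-accordion dictionary of \cite{BCS21,OPS18,PPP18}. However, you should be aware that the paper does \emph{not} prove this proposition at all. It appears in the background section (\S\ref{ss:RecallGentleGeom}) as a recall of known results from the cited literature, and is stated without proof; the authors simply attribute the description to the references and move on. So there is nothing to compare against, and your sketch is in fact more detailed than what the paper provides.
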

\begin{prop}[\cite{CPS21},\cite{BDMTY19}]
Let $(Q,R)$ be a gentle tree. All the extensions between indecomposable representations are either overlap extensions or arrow extensions.
\end{prop}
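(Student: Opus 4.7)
The approach is to reduce the classification of extensions to the well-established classification between string modules, using the fact that gentle trees admit no band modules.

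First, I would invoke the standard representation-theoretic description of indecomposable modules for a gentle algebra $\mathbb{K}Q/\langle R\rangle$: every indecomposable is either a string module or a band module, where band modules are indexed by certain cyclic walks (bands) in the quiver $Q$ subject to the gentle relations in $R$. Such a band requires a cyclic walk in the underlying graph of $Q$. Since $Q$ is by hypothesis a directed tree, its underlying graph has no cycles, so there are no bands and therefore no band modules. Consequently every indecomposable representation in $\ind(Q,R)$ is a string module, and by the geometric correspondence recalled in \cref{thm:GeomandRep} it is represented by an accordion of $\Surf(Q,R)$.

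Next, I would apply the explicit Ext-basis theorems for string modules from \cite{S99,BDMTY20,CPS21}: between any two string modules $M$ and $N$, the space $\Ext^1(M,N)$ admits a combinatorial basis indexed by pairs of the following two types — \emph{arrow extensions} (obtained by gluing the two strings at a common arrow endpoint in a way compatible with the relations) and \emph{overlap extensions} (obtained by identifying a common substring of the two strings and producing the two resulting summands). Since every element of $\Ext^1(\MM(\delta),\MM(\eta))$ is a $\mathbb{K}$-linear combination of such basis elements, and each basis element corresponds to either an arrow extension or an overlap extension in the sense of \cref{prop:geom_ext}, every indecomposable middle term of a non-split short exact sequence between indecomposables must arise from one of these two constructions.

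Finally, I would translate the combinatorial description of the basis into the geometric language of the model: arrow extensions correspond to the configuration of \cref{fig:arrowextaccord}, in which the two accordions share a common endpoint on a boundary segment of a cell of $\Delta^{\gpoint}$, and overlap extensions correspond to \cref{fig:overlapextaccord}, in which the two accordions intersect transversally in the interior of $\pmb{\Sigma}$. The fact that these two geometric operations exhaust all extensions then follows directly from the string-module Ext-basis together with the non-existence of bands. The main technical step, which is the point where one must be careful, is checking that the combinatorial basis elements of \cite{S99,CPS21} are in bijection with the geometric configurations of \cref{fig:arrowextaccord,fig:overlapextaccord} in the tree case — but this is precisely the content of the geometric dictionaries of \cite{BCS21,OPS18}, so no essential difficulty arises beyond bookkeeping.
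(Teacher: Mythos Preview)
The paper does not provide its own proof of this proposition; it is simply cited from \cite{CPS21} and \cite{BDMTY19} as a known result. Your sketch is essentially the argument one would extract from those references: no cycles in $Q$ means no bands, hence every indecomposable is a string module, and the Ext-basis theorems then say that $\Ext^1$ between string modules is spanned by arrow and overlap extension classes.

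There is one logical slip worth flagging. You write that since every element of $\Ext^1$ is a $\mathbb{K}$-linear combination of arrow and overlap basis elements, ``every indecomposable middle term of a non-split short exact sequence between indecomposables must arise from one of these two constructions.'' This does not follow in general: a nontrivial linear combination of two distinct basis extension classes yields a short exact sequence whose middle term need not coincide with the middle term of either basis class. What saves you in the tree case is that $\Ext^1(M,N)$ is at most one-dimensional for any two indecomposables (two strings in a tree share at most one maximal common substring, and there is at most one compatible concatenating arrow), so every nonzero extension class is a scalar multiple of a single basis element and hence genuinely is an arrow or overlap extension. The paper records this one-dimensionality explicitly in \cref{ss:recalltrees}, immediately after the proposition, and uses it to draw exactly the conclusion you are aiming for. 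Your argument becomes complete once you insert this step.
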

As we will use the geometric model to compute resolving subcategories and substantially use minimal projective resolutions, we naturally assume that $\mathbb{K}Q/\langle R \rangle$ has a finite global dimension. It implies that $\mathcal{M} \subset \partial \pmb{\Sigma}$. The $\rpoint$-dissection $\Prj(\Delta^{\gpoint})$ given by the projective accordions is homeomorphic to $\Delta^{\gpoint}$. To handle in a easier way the computations of resolving subcategories, we decide to work on $(\pmb{\Sigma}, \mathcal{M}, \Prj(\Delta^{\gpoint}))$ rather than on $\Surf(Q,R)$.

We recall a result that allows one to read an accordion of $(\pmb{\Sigma}, \mathcal{M}, \Delta^{\gpoint})$ directly in the projective dissection $\Prj(\Delta^{\gpoint})$. 

Before that, we recall a notation. Given $C \in \pmb{\Gamma}(\Prj(\Delta^{\gpoint}))$, we can order $\rpoint$-points in $\partial C$ by traveling in the counter-clockwise direction on the boundary of $C$ from the unique $\gpoint \in \partial C$. We denote by $m_C$ the last $\rpoint$-point in $ \partial C$ we go through. 

\begin{prop} \label{prop:arcsaccordions} Let $(\pmb{\Sigma}, \mathcal{M}, \Delta^{\gpoint})$ be a $\gpoint$-dissected marked surface such that $M \subset \partial \pmb{\Sigma}$. A $\rpoint$-arc $\delta$ is an accordion if and only if either $\delta \in \Prj(\Delta^{\gpoint})$ or the following assertions hold: 
\begin{enumerate}[label=$(\alph*)$, itemsep=1mm]
    \item whenever $\delta$ enters a cell $C$ of $\Prj(\Delta^{\gpoint})$ by crossing an $\rpoint$-arc $\mu$,
    \begin{enumerate}[label=$(a \arabic*)$, itemsep=1mm]
        \item if it leaves $C$, it leaves it by crossing an $\rpoint$-arc $\nu$ adjacent to $\mu$;
        \item otherwise, its endpoint in $\partial C$ is either $m_C$ or the non-common endpoint of an $\rpoint$-arc adjacent to $\mu$, or 
    \end{enumerate}
    \item if $\delta$ is contained in a cell $C$, then:
    \begin{enumerate}[label=$(b \arabic*)$, itemsep=1mm]
        \item its endpoints are the distinct endpoints of a pair of adjacent projective accordions; or,
        \item one of its endpoints must be $m_C$. 
    \end{enumerate}
\end{enumerate}
\end{prop}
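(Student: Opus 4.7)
The plan is to verify this characterization by directly translating the original definition of an accordion (\cref{def:accordions}), phrased in terms of the $\gpoint$-dissection $\Delta^{\gpoint}$, into equivalent conditions phrased in terms of the dual dissection $\Prj(\Delta^{\gpoint})$. Since $\Prj(\Delta^{\gpoint})$ is itself made of accordions, the case $\delta \in \Prj(\Delta^{\gpoint})$ is immediate. For the remaining case, the key observation is that $\Prj(\Delta^{\gpoint})$ is a $\rpoint$-dissection homeomorphic to $\Delta^{\gpoint}$ with the roles of $\gpoint$- and $\rpoint$-points swapped: cells of $\Delta^{\gpoint}$ contain one $\rpoint$-point each, while cells of $\Prj(\Delta^{\gpoint})$ contain one $\gpoint$-point each, and adjacent $\gpoint$-arcs sharing a $\gpoint$-endpoint in $\Delta^{\gpoint}$ correspond under the duality to adjacent projective accordions sharing a $\rpoint$-endpoint in $\Prj(\Delta^{\gpoint})$.

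The core of the argument is to analyze how $\delta$ interacts with both dissections simultaneously. When $\delta$ enters a cell $C$ of $\Prj(\Delta^{\gpoint})$ through $\mu$ and leaves through $\nu$, the original accordion rule $(a)$ of \cref{def:accordions} translates into the requirement that $\mu$ and $\nu$ share a $\rpoint$-endpoint on $\partial C$, that is, $\mu$ and $\nu$ are adjacent $\rpoint$-arcs. The original "avoidance" rule $(b)$, stating that the bounded disk does not contain the $\rpoint$-marked point of the traversed cell of $\Delta^{\gpoint}$, becomes automatic once we work in $\Prj(\Delta^{\gpoint})$: the disk bounded by $\mu$, $\nu$ and the segment of $\delta$ lies inside the cell $C$, and the corresponding $\rpoint$-marked point under the duality is on the other side of the $\gpoint$-point $v \in C$. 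This produces condition $(a1)$ and, after applying the same reasoning to both endpoints when $\delta \subseteq C$, sets up the framework for conditions $(b1)$-$(b2)$.

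What remains is handling the endpoint cases $(a2)$, $(b1)$, $(b2)$, when $\delta$ either enters $C$ without leaving or is entirely contained in $C$. Every endpoint of $\delta$ lies in $\mathcal{M}_{\rpoint} \subset \partial \pmb{\Sigma}$ and hence sits on the boundary of some cell of $\Prj(\Delta^{\gpoint})$. The original accordion rule, applied at the endpoint, forces the local configuration of $\delta$ near that $\rpoint$-point to avoid the adjacent $\rpoint$-points of $\Delta^{\gpoint}$-cells it crosses near the boundary; a case check on the combinatorics of $\partial C$ then shows that the legal endpoints, given an incoming $\mu$, are precisely the non-common endpoint of a $\rpoint$-arc adjacent to $\mu$, or $m_C$. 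The main obstacle is justifying the distinguished role of $m_C$: one must verify that the counter-clockwise convention from the unique $\gpoint$-point in $\partial C$ identifies $m_C$ as the unique $\rpoint$-endpoint on $\partial C$ whose surrounding geometry is always compatible with rule $(b)$ of \cref{def:accordions}, regardless of the incoming $\mu$ and, in the $\delta \subset C$ case, without requiring any incoming crossing at all. Once this careful endpoint analysis is done, combining it with the adjacency translation from the previous step yields the equivalence in both directions.
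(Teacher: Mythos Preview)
The paper does not actually prove \cref{prop:arcsaccordions}. It appears in the Background section (\cref{sec:Res}) as a recalled result, introduced by ``We recall a result that allows one to read an accordion of $(\pmb{\Sigma}, \mathcal{M}, \Delta^{\gpoint})$ directly in the projective dissection $\Prj(\Delta^{\gpoint})$,'' and no proof is supplied here; the argument presumably lives in the companion paper \cite{DS251}. So there is no proof in this paper to compare against.

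That said, your strategy is the natural one: exploit the homeomorphism between $\Delta^{\gpoint}$ and $\Prj(\Delta^{\gpoint})$ to translate the local rules of \cref{def:accordions} from $\gpoint$-cells to $\rpoint$-cells. Two places in your sketch deserve more care. First, your claim that rule $(b)$ of \cref{def:accordions} ``becomes automatic'' in the dual dissection is too quick: the avoidance condition on the $\rpoint$-point in the $\gpoint$-cell does not simply vanish under duality, it is what forces the adjacency in $(a1)$ to be on the correct side, and it is also what singles out $m_C$ rather than the first $\rpoint$-point after the $\gpoint$-point. You should make explicit how the position of the unique $\gpoint$-point in a cell $C$ of $\Prj(\Delta^{\gpoint})$ relative to the entering arc $\mu$ encodes the original rule $(b)$. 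Second, your endpoint analysis correctly flags $m_C$ as the delicate case but stops short of the actual verification; the point is that $m_C$ is the $\rpoint$-vertex of $C$ adjacent along $\partial\pmb{\Sigma}$ to the unique $\gpoint$-point of $C$ on the clockwise side, so any $\rpoint$-arc landing at $m_C$ from inside $C$ automatically satisfies the local accordion condition in the $\gpoint$-cell it last traverses. Spelling this out, together with the dual argument for the ``non-common endpoint of an adjacent arc'' case, would complete the proof.
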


\label{order:clock} As $\mathcal{M}_{\rpoint} \subset \partial \Sigma$, for any $v \in \mathcal{M}_{\rpoint}$, we can consider a total order $\preccurlyeq_v$ on the subset of accordions of $(\pmb{\Sigma}, \mathcal{M},\Prj(\Delta^{\gpoint}))$ as follows. For $(\delta,\eta) \in \Accord^2$ such that $v$ is their common endpoint, we write $\eta \preccurlyeq_v \delta$ whenever $\eta$ follows $\delta$ with respect to the counter-clockwise orientation around $v$.

For any accordion, we also introduced the notion of neighboring projective accordions of any non-projective accordion $\delta$ to be the projective accordions $\eta$ such that either $\MM(\eta)$ appears as a summand of a projective representation in the minimal projective resolution of $\MM(\delta)$, or $\MM(\delta)$ and $\MM(\eta)$ are such that there exists $i>0$ such that $\Ext^i(\MM(\delta),\MM(\eta))\neq0$. Denote by $\NP(\delta)$ the set of neighboring projective accordions of $\delta$. We can characterize them combinatorially.

\begin{prop}[\cite{BCS21}\cite{OPS18},\cite{LGH24}]\label{prop:CombidescripNproj}
Let $(Q,R)$ be a representation-finite gentle quiver. Let $\delta$ be an accordion of $\Surf(Q,R)$. Then $\NP(\delta)$ is the subset of $\Prj(\Delta^{\gpoint})$ made of all accordions $\eta$ satisfying at least one of the following conditions:
\begin{enumerate}[label=$(\roman*)$,itemsep=0.2em]
    \item the curve $\eta$ crosses $\delta$: they correspond to projective accordions such that $\OvExt(\delta,eta) \neq \varnothing$;
    \item the curve $\eta$ is part of the border of a cell crossed by $\delta$ containing one of its endpoints and is a part of the path of projective curves that links the last projective crossed to the endpoint of $\delta$: they correspond to projective accordions such that $\MM(\eta)$ is a summand of a projective representation in the minimal projective resolution of $\MM(\delta)$; or,
    \item the curve $\eta$ and $\delta$ have a common endpoint $v$, and, at the vertex $v$, all the projective accordions are smaller than $\delta$ with respect to $\preccurlyeq_v$: they are projective accordions such that either $\ArExt(\delta,\eta) \neq \varnothing$, or $\Ext^i(\MM(\delta), \MM(\eta)) \neq 0$, for some $i > 1$.
\end{enumerate}
\end{prop}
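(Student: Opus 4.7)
The plan is to prove the equivalence by showing that each of the three geometric conditions (i), (ii), (iii) puts $\eta$ into $\NP(\delta)$, and conversely that every reason for $\eta$ to lie in $\NP(\delta)$ forces one of these three configurations. The forward direction is mostly a direct translation of \cref{prop:geom_ext}: if $\delta$ and $\eta$ cross then $\OvExt(\eta,\delta)\neq\varnothing$ produces a non-split short exact sequence witnessing $\Ext^1(\MM(\delta),\MM(\eta))\neq 0$, giving (i); condition (ii) is by definition a reason to belong to $\NP(\delta)$; and condition (iii) is precisely the geometric criterion describing either $\ArExt(\eta,\delta)\neq \varnothing$ or a higher $\Ext$, where the order $\preccurlyeq_v$ encodes which accordions at a shared endpoint are ``past'' $\delta$ and thus support an arrow extension or iterated-syzygy extension.

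For the harder converse, I would first build the minimal projective cover of $\MM(\delta)$ geometrically. Using \cref{prop:arcsaccordions} to track $\delta$ across consecutive cells of $\Prj(\Delta^{\gpoint})$, the indecomposable summands of the projective cover are exactly the projective accordions bordering the cells that contain the two endpoints of $\delta$, reached by following the unique path of projective $\rpoint$-arcs from the last $\rpoint$-arc crossed by $\delta$ to its endpoint. This is exactly the configuration described by (ii) for the zeroth syzygy. Iterating with \cref{prop:geo_mor}: the kernel of the minimal epimorphism from this cover is a direct sum of accordions $\kappa_0,\ldots,\kappa_p$, so one can recursively identify the projectives in subsequent terms of the resolution as the analogous paths attached to these $\kappa_i$, and a chase through the adjacency pattern shows that in aggregate these are precisely the projective accordions captured by (ii) with respect to $\delta$ itself.

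The last task is the higher-extension part: if $\Ext^i(\MM(\delta),\MM(\eta))\neq 0$ with $\eta$ projective and $i\geq 1$, I would use $\Ext^i(\MM(\delta),\MM(\eta))\cong \Ext^1(\Syg^{i-1}\MM(\delta),\MM(\eta))$, where $\Syg^{i-1}\MM(\delta)$ decomposes into modules of the form $\MM(\kappa)$ produced iteratively by \cref{prop:geo_mor}. The $i=1$ case of the dichotomy of \cref{prop:geom_ext} forces either a crossing of $\kappa$ with $\eta$ or a common endpoint at which $\eta\preccurlyeq_v\kappa$ holds; then a geometric transport argument — relating each successive $\kappa$ back to the portion of $\delta$ it came from, and using that syzygy accordions inherit their endpoints from the cell-path of $\delta$ — shows the corresponding configuration with $\delta$ itself falls under (i) or (iii).

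The main obstacle I expect is precisely this transport step: one must verify rigorously that a common-endpoint or crossing configuration between $\eta$ and a syzygy accordion $\kappa$ of $\delta$ translates, via the explicit shape of $\kappa$ in \cref{fig:kerepi}, into one of the three conditions stated relative to $\delta$. This requires a careful case analysis of how endpoints of $\kappa$ relate to endpoints and traversed cells of $\delta$, and an induction on $i$ to ensure no projective accordion in $\NP(\delta)$ is missed and none outside the three conditions is spuriously included.
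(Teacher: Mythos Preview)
The paper does not supply its own proof of this proposition: it is stated in the background section with citations to \cite{BCS21}, \cite{OPS18}, and \cite{LGH24}, and no argument is given. So there is nothing to compare your proposal against in this paper.

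That said, your sketch is a reasonable outline of how one would assemble such a proof from the cited sources. The forward direction is indeed a direct reading of \cref{prop:geom_ext} and the definition of $\NP(\delta)$. For the converse, your strategy of building the minimal projective resolution geometrically via \cref{prop:geo_mor} and then transporting $\Ext$-configurations from syzygy accordions back to $\delta$ is the natural one. The obstacle you flag---the transport step---is genuine but tractable: the key observation is that the syzygy accordions $\kappa$ of $\delta$ are contained in the source and target cells of $\delta$ and share an endpoint with $\delta$, so a projective accordion $\eta$ that crosses or shares an endpoint with some $\kappa$ must lie in the boundary of one of these cells, which forces it into case (ii) or (iii) relative to $\delta$. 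A clean way to organize this is to note that $\NP(\kappa)\subseteq\NP(\delta)$ for every indecomposable non-projective summand $\kappa$ of any syzygy of $\delta$, which is essentially what the paper exploits later (cf.\ the discussion around \cref{def:sygaccord} and the proof of \cref{lem:upsourceuptargetexist}).
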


\begin{ex} \label{ex:Nproj1} Let $(Q,R)$ be the gentle tree of \cref{fig:treeex}.
\begin{figure}[!ht]
    \centering
     \scalebox{0.6}{\begin{tikzpicture}[->]
		\node (a) at (0,0) {$1$};
		\node (b) at (1,0) {$2$};
		\node (c) at (2,0) {$3$};
		\node (d) at (3,1) {$4$};
		\node (e) at (4,1) {$5$};
		\node (f) at (5,1) {$6$};
		\node (g) at (6,0) {$7$};
		\node (h) at (5,-1) {$8$};
		\node (i) at (7,0) {$9$};
		\node (j) at (8,0) {$10$};
		\node (k) at (9,0) {$11$};
		\node (l) at (10,0) {$12$};
		\node (m) at (11,-1) {$13$};
		\node (n) at (12,-1) {$14$};
		\node (o) at (13,0){$15$};
		\node (p) at (14,-1){$16$};
		\node (q) at (14,1){$17$};
		\node (r) at (15,1){$18$};
		
		\draw (a) -- (b);
		\draw (b) -- (c);
		\draw ([yshift=-1mm]d.west)--([yshift=1mm]c.east);
		\draw (f) -- (e);
		\draw (e) -- (d);
		\draw ([yshift=1mm]g.west)--([yshift=-1mm]f.east);
		\draw ([yshift=1mm]h.east) -- ([yshift=-1mm]g.west);
		\draw (i) to (g) ;
		\draw (i) -- (j) ;
		\draw (j) -- (k);
		\draw (l) to (k);
		\draw[<-] ([yshift=1mm]m.west) -- ([yshift=-1mm]l.east);
		\draw[<-] (n) to (m);
		\draw[<-]([yshift=-1mm]o.west) -- ([yshift=1mm]n.east);
		\draw ([yshift=-1mm]q.west)--([yshift=1mm]o.east);
		\draw[<-] ([yshift=1mm]p.west)--([yshift=-1mm]o.east);
		\draw (q)--(r);

		\draw[dashed,-] ([xshift=-.3cm,yshift=-.15cm]d.south) arc[start angle = -110, end angle = -15, x radius=.6cm, y radius =.6cm];
		\draw[dashed,-] ([xshift=-.3cm,yshift=.1cm]e.south) arc[start angle = -165, end angle = -15, x radius=.3cm, y radius =.3cm];
		\draw[dashed,-] ([xshift=-.5cm,yshift=0.2cm]f.south) arc[start angle = -165, end angle = -60, x radius=.6cm, y radius =.6cm];
		\draw[dashed,-] ([xshift=-.2cm,yshift=-.3cm]g.west) arc[start angle = 225, end angle = 135, x radius=.5cm, y radius =.5cm];
		\draw[dashed,-] ([xshift=.3cm,yshift=.1cm]n.north) arc[start angle = 45, end angle = 190, x radius=.4cm, y radius =.4cm];
		\draw[dashed,-] ([xshift=.3cm,yshift=-.15cm]m.north) arc[start angle = 10, end angle = 110, x radius=.5cm, y radius =.5cm];
		\draw[dashed,-] ([xshift=.2cm,yshift=-.3cm]o.east) arc[start angle = -45, end angle = 45, x radius=.5cm, y radius =.5cm];
		\end {tikzpicture}}
        \caption{ \label{fig:treeex} An example of a gentle quiver, which is a gentle tree (see \cref{def:gentle trees})}
\end{figure}
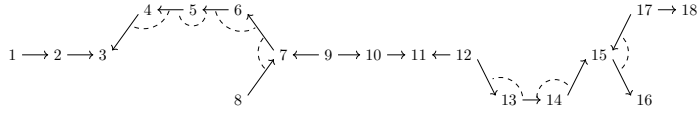
In \cref{fig:Nprojex1}, we represent the projective dissection as a $\rpoint$-dissection of a marked disc (drawn as thick red curves), corresponding to $\Surf(Q,R)$. We consider an accordion $\delta \in \Accord'$ (drawn as a dotted blue curve). The densely dotted thick red lines correspond to the accordions forming the set of its neighboring projective accordions $\NP(\delta)$. \qedhere
\begin{figure}[!ht]
\centering 
    \begin{tikzpicture}[mydot/.style={
					circle,
					thick,
					fill=white,
					draw,
					outer sep=0.5pt,
					inner sep=1pt
				}, scale = 1.1]
		\draw[line width=0.7mm,black] (0,0) ellipse (4cm and 1.5cm);
		\foreach \X in {0,1,...,37}
		{
		\tkzDefPoint(4*cos(pi/19*\X),1.5*sin(pi/19*\X)){\X};
		};
		
		\draw[line width=0.9mm ,bend left =60,red, densely dashdotted](1) edge (5);
		\draw[line width=0.9mm ,bend left =60,red, densely dashdotted](3) edge (5);
		\draw[line width=0.9mm ,bend right =30,red, densely dashdotted](5) edge (35);
		\draw[line width=0.9mm ,bend left =30,red,densely dashdotted](33) edge (35);
		\draw[line width=0.9mm ,bend left =30,red](35) edge (37);
		\draw[line width=0.9mm ,bend left =30,red, densely dashdotted](31) edge (33);
		\draw[line width=0.9mm ,bend left =30,red, densely dashdotted](7) edge (31);
		\draw[line width=0.9mm ,bend left =30,red, densely dashdotted](7) edge (29);
		\draw[line width=0.9mm ,bend left =30,red, densely dashdotted](9) edge (29);
		\draw[line width=0.9mm ,bend left =30,red, densely dashdotted](11) edge (29);
		\draw[line width=0.9mm ,bend left =30,red, densely dashdotted](11) edge (27);
		\draw[line width=0.9mm ,bend right =30,red, densely dashdotted](13) edge (11);
		\draw[line width=0.9mm ,bend left =30,red, densely dashdotted](13) edge (15);
		\draw[line width=0.9mm ,bend left =30,red, densely dashdotted](17) edge (23);
		\draw[line width=0.9mm ,bend left =30,red, densely dashdotted](15) edge (17);
		\draw[line width=0.9mm ,bend left =30,red, densely dashdotted](19) edge (23);
		\draw[line width=0.9mm ,bend left =30,red,densely dashdotted](21) edge (23);
		\draw[line width=0.9mm ,bend left =30,red](25) edge (27);

		\draw[line width=0.7mm ,bend right=10,blue, loosely dashed](5) edge (23);
		\foreach \X in {0,2,...,36}
		{
		\tkzDrawPoint[size=4,color=dark-green,mydot](\X);
		};
		\foreach \X in {1,3,...,37}
		{
		\tkzDrawPoints[fill =red,size=4,color=red](\X);
		};
		\tkzDefPoint(-1.5,0.7){gamma};
		\tkzLabelPoint[blue](gamma){\Large $\delta$}
    \end{tikzpicture}
\caption{\label{fig:Nprojex1} An example of $\NP(\delta)$.}
\end{figure}
\end{ex}

\subsection{Gentle trees and resolving poset}
\label{ss:recalltrees}
We focus on a specific subfamily of gentle quivers.

\begin{definition} \label{def:gentle trees}
A \new{gentle tree} is a gentle quiver $(Q,R)$ such that $Q$ is a tree.
\end{definition}

If $(Q,R)$ is a gentle tree, then, by setting $\Surf(Q,R) = (\pmb{\Sigma}, \mathcal{M}, \Delta^{\gpoint})$, we have that $\mathcal{M} \subset \partial \pmb{\Sigma}$, and $\pmb{\Sigma}$ is homeomorphic to a disc. Moreover, indecomposable representations are characterized by their vertex support, which is equivalent to saying that accordions on $\Surf(Q,R)$ are characterized by the $\gpoint$-arcs of $\Delta^{\gpoint}$ they cross. In addition, homomorphism spaces between indecomposable representations are at most one-dimensional, which means that two arbitrary accordions can cross at most once up to homotopy.

\begin{conv}
In a gentle tree, we identify the indecomposable representations with their vertex support, and, for calculating examples, we represent them using conventions for strings as in \cite{PPP18}. We choose not to represent the arrows and only keep the sequence of vertices that the string passes through, as there is no ambiguity about the arrows that we have to keep track of.
\end{conv}

We can state that extension spaces between indecomposable objects are also of dimension at most one. Therefore, for all the non-split exact sequences \[\begin{tikzcd}
	X & E & Y
	\arrow["k",tail, from=1-1, to=1-2]
	\arrow["f",two heads, from=1-2, to=1-3]
\end{tikzcd}\] where $X,Y \in \ind(Q,R)$, the representation $E$ is precisely given either by an arrow extension or by an overlap extension. 

In this setting, we also prove a simpler criterion that allows us to check if a subcategory is resolving by focusing only on its indecomposable objects.

\begin{theorem}
\label{thm:equivres}
Let $(Q,R)$ be a gentle tree. Let $\mathscr{C}$ be an additive subcategory of $\rep(Q,R)$. Then $\mathscr{D}$ is resolving if and only if $\mathscr{D}$ satisfies the following assertions:
\begin{enumerate}[label=$(\mathsf{Res} \arabic*)$, itemsep=1mm]
    \item \label{Res1} $\proj(Q,R) \subset \mathscr{D}$;
    \item \label{Res2} for any short exact sequence \[\begin{tikzcd}
	X & E & Y
	\arrow["k",tail, from=1-1, to=1-2]
	\arrow["f",two heads, from=1-2, to=1-3]
\end{tikzcd},\] if $X,Y \in \ind(\mathscr{D})$, then $E \in \mathscr{D}$; and,
    \item \label{Res3} $\mathscr{D}$ is closed under syzygies of indecomposable representations.
\end{enumerate}
\end{theorem}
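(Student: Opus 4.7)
The plan is to prove $(\Rightarrow)$ by direct unpacking of the definition, and $(\Leftarrow)$ by reducing closure under general extensions to closure under extensions between indecomposables, which is (Res2) itself.

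For $(\Rightarrow)$, if $\mathscr{D}$ is resolving, then Lemma \ref{lem:othercharactresolv}\ref{ires} gives (Res1); axiom (Res2) is the specialization of (R2) to indecomposable $X, Y$; and (Res3) is the specialization to indecomposables of the syzygy-closure condition furnished by Lemma \ref{lem:othercharactresolv}(ii).

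For $(\Leftarrow)$, I would invoke Lemma \ref{lem:othercharactresolv}(ii), so that it suffices to check (R0), (R1), (R2), and closure under syzygies. Here (R0) is the standing additivity assumption, (R1) is exactly (Res1), and closure under syzygies follows from (Res3) together with the Krull--Schmidt fact that $\Omega$ distributes over direct sums, so $\Omega(\bigoplus_i M_i) \cong \bigoplus_i \Omega(M_i) \in \mathscr{D}$ whenever each $\Omega(M_i)$ is. This reduces the whole problem to verifying (R2): given a SES $0 \to Y \to E \to X \to 0$ with $X, Y \in \mathscr{D}$, show $E \in \mathscr{D}$.

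To prove this, I would decompose $X = \bigoplus_i X_i$ and $Y = \bigoplus_j Y_j$ into indecomposables and write the extension class as $\xi = (\xi_{ij}) \in \bigoplus_{i,j} \Ext^1(X_i, Y_j)$. The crucial structural inputs from the gentle-tree setting are twofold: each $\Ext^1(X_i, Y_j)$ is at most one-dimensional, and by the proposition preceding the statement every nontrivial indec-indec extension is either an arrow or an overlap extension, whose middle term has at most two indecomposable summands. Using pullback along $X_i \hookrightarrow X$ and pushout along $Y \twoheadrightarrow Y_j$, and exploiting the one-dimensionality of each $\Ext^1(X_i, Y_j)$ to absorb scalars into automorphisms of $X$ and $Y$, I expect to show that the original SES is isomorphic to a direct sum of indec-indec SESes (indexed by a matching of pairs $(i,j)$ with $\xi_{ij} \neq 0$) together with trivial SESes on the unmatched summands of $X$ and $Y$. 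Each nontrivial summand SES has middle term in $\mathscr{D}$ by (Res2), and the unmatched summands lie in $\mathscr{D}$ by additivity; hence $E \in \mathscr{D}$.

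The hard part will be justifying the SES decomposition rigorously, especially when overlap extensions are present (producing two summands from a single pair) or when several indec summands of $X$ or $Y$ could a priori pair with a common partner. I expect to handle this either by careful bookkeeping with automorphisms of $X$ and $Y$ to put the matrix $(\xi_{ij})$ into diagonal normal form, or by a direct combinatorial argument in the accordion model (\cref{prop:geo_mor}, \cref{prop:geom_ext}): every indecomposable summand of $E$ is realized geometrically either as an arrow or overlap configuration between an accordion of some $X_i$ and some $Y_j$, or as an accordion of some $X_i$ or $Y_j$ itself, and thus lies in $\mathscr{D}$ via (Res2) and additivity.
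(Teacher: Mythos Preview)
The paper does not actually prove this theorem. It is stated in the Background section (\S2.3) as a recall from the authors' companion paper \cite{DS251}; the sentence immediately preceding it (``Using \cite[Theorem 2.11]{DS251} allows one to check if an additive subcategory is resolving by doing some calculations on indecomposable representations'') signals that the argument lives there. So there is no in-paper proof to compare your proposal against.

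On the substance of your sketch: the $(\Rightarrow)$ direction and the reductions for (R0), (R1), and syzygy-closure in $(\Leftarrow)$ are correct and routine. You have correctly isolated the real content as reducing (R2) to extensions with indecomposable ends, and you are right to flag this as the hard part. Two cautions. First, this reduction is \emph{not} a general Krull--Schmidt fact; it genuinely needs the gentle-tree hypotheses recorded just before the theorem (that $\dim\Hom$ and $\dim\Ext^1$ between indecomposables are at most one, and that every indec--indec extension is arrow or overlap). Second, your ``diagonalize $(\xi_{ij})$ via automorphisms'' plan needs care: $\operatorname{Aut}(X)$ and $\operatorname{Aut}(Y)$ are not full general linear groups, so the available row/column operations come only from radical maps in $\Hom(X_i,X_j)$ and $\Hom(Y_k,Y_\ell)$. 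Making the normal-form argument go through requires ordering the summands along the acyclic Auslander--Reiten quiver (noted just after the theorem) and performing a triangular elimination; the one-dimensionality of the Hom- and Ext-spaces is what makes the bookkeeping close up. Your alternative geometric route via \cref{prop:geo_mor,prop:geom_ext} is also viable and closer in spirit to how \cite{DS251} operates throughout.
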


 \label{order:resindec} We can also note that its Auslander--Reiten quiver $\AR(Q,R)$ is acyclic. This allows us to endow $\pmb{\ind \setminus \proj}(Q, R)$ with an order relation defined by \[\forall X,Y \in \pmb{\ind \setminus \proj}(Q,R),\ X \Resleq Y \Longleftrightarrow \Res(X) \subseteq \Res(Y).\] By comparing the finite complete lattice $(\ResOrd(Q,R), \subseteq)$ of resolving subcategories of $(Q,R)$ with the ideals of $(\pmb{\ind \setminus \proj}(Q,R), \Resleq)$, we get an injective map from the former to the latter. This fact allows us to show the result about resolving subcategories that can be realized as the resolving closure of a non-projective indecomposable representation of $(Q,R)$. We call these resolving subcategories \new{monogeneous} resolving subcategories.

\begin{theorem}[\cite{DS251}] \label{thm:Monoareallthejoinirred} Let $(Q,R)$ be a gentle tree. Then, the join-irreducible elements in $(\ResOrd(Q,R), \subseteq)$ are exactly given by the monogeneous resolving subcategories.
\end{theorem}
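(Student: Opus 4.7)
The statement is an equivalence; I plan to prove each direction separately.

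\textbf{Join-irreducible $\Rightarrow$ monogeneous.} First I would observe that any resolving subcategory $\mathscr{R}$ is recovered as $\mathscr{R} = \Res(\pmb{\ind \setminus \proj}(\mathscr{R})) = \bigvee_{Y \in \pmb{\ind \setminus \proj}(\mathscr{R})} \Res(Y)$. Since $\rep(Q,R)$ is representation-finite for a gentle tree, this is a finite join of monogeneous resolving subcategories. The bottom $\add \proj(Q,R)$ is the empty join and not join-irreducible; otherwise the indexing set is nonempty, and join-irreducibility forces some $\Res(Y) = \mathscr{R}$, so $\mathscr{R}$ is monogeneous.

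\textbf{Monogeneous $\Rightarrow$ join-irreducible.} Fix $X \in \pmb{\ind \setminus \proj}(Q,R)$; I will show $\Res(X)$ admits a unique maximal proper sub-resolving subcategory. Set $S = \pmb{\ind \setminus \proj}(\Res(X)) \setminus \{X\}$ and $\mathscr{R}^{\star} = \Res(S)$. For any resolving $\mathscr{R} \subsetneq \Res(X)$, we have $X \notin \mathscr{R}$ (else $\Res(X) \subseteq \mathscr{R}$), so $\pmb{\ind \setminus \proj}(\mathscr{R}) \subseteq S$ and hence $\mathscr{R} \subseteq \mathscr{R}^{\star}$. Thus the whole claim reduces to proving $\mathscr{R}^{\star} \subsetneq \Res(X)$, equivalently $X \notin \Res(S)$. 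Note that since $\Resleq$ is antisymmetric and $\pmb{\ind \setminus \proj}(\Res(X)) = \{Z : Z \Resleq X\}$, each $Y \in S$ is strictly $<_{\Resleq} X$.

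To prove $X \notin \Res(S)$, my plan is to build $\Res(S)$ inductively from $S \cup \proj(Q,R)$ via the resolving operations identified in \cref{thm:equivres} --- syzygies of indecomposables and extensions --- and verify that $X$ is never produced. The syzygy step is immediate from antisymmetry of $\Resleq$: if $X$ were an indecomposable summand of $\Omega Y$ for some newly available $Y$, then $X \in \Res(Y)$ would give $X \Resleq Y$, while $Y \Resleq X$ already holds (as $Y \in \Res(S) \subseteq \Res(X)$), forcing $Y = X$, which contradicts that $X$ is new at the current stage.

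The extension step is the main obstacle. Suppose $X$ arises as an indecomposable summand of the middle term $E$ of a short exact sequence $0 \to Y_1 \to E \to Y_2 \to 0$ with $Y_1, Y_2$ previously available; here antisymmetry alone is not enough, because an extension of two elements can a priori produce an indecomposable that is not $\Resleq$-below either input individually. By \cref{prop:geom_ext}, such an extension in a gentle tree is either an arrow or overlap extension, each with an explicit geometric description in terms of the accordions $\gamma_{Y_1}, \gamma_{Y_2}$. My plan is to argue by case analysis, using the geometric characterization of $\NP(\gamma_X)$ from \cref{prop:CombidescripNproj}, that $\gamma_X$ cannot be produced from accordions strictly $\Accordleq$-below it: $\gamma_X$ is the $\Accordleq$-maximum among accordions indexing $\pmb{\ind \setminus \proj}(\Res(X))$, and the arrow/overlap extension constructions of \cref{prop:geom_ext} are incompatible with producing this maximum from strictly smaller inputs in a gentle tree (leveraging, in particular, that the Auslander--Reiten quiver of a gentle tree is acyclic). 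This geometric case analysis is the technical heart of the argument.
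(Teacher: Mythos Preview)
The present paper does not prove this theorem: it is recalled from the companion paper \cite{DS251} (note the citation attached to the theorem statement and its placement in the background section~\ref{ss:recalltrees}). So there is no ``paper's own proof'' here to compare against; the argument lives entirely in the earlier work.

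That said, let me comment on your proposal. The direction ``join-irreducible $\Rightarrow$ monogeneous'' is correct and standard: in a representation-finite setting every resolving subcategory is a finite join of monogeneous ones, and join-irreducibility forces equality with one of them.

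For ``monogeneous $\Rightarrow$ join-irreducible'', your reduction to showing $X \notin \Res(S)$ with $S = \pmb{\ind\setminus\proj}(\Res(X))\setminus\{X\}$ is the right reformulation, and your treatment of the syzygy step via antisymmetry of $\Resleq$ is fine. But the extension step is not a proof: you identify it as ``the technical heart'' and then only \emph{plan} a geometric case analysis without carrying it out. The difficulty is real. Given $Y_1, Y_2 \Resleq X$ with $Y_i \neq X$, there is no a priori reason an arrow or overlap extension cannot produce $X$ as a summand; ruling this out requires genuinely engaging with the combinatorics of $\NP(\gamma_X)$, the coloration of \cref{def:colourendpoints}, and the explicit description of $\opResAc(\gamma_X)$ from \cref{thm:res_clo_1}. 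Merely citing \cref{prop:geom_ext} and \cref{prop:CombidescripNproj} and invoking acyclicity of the AR quiver does not close the gap --- acyclicity gives you that $\Resleq$ is a partial order, which you already used, but says nothing further about how extensions interact with that order. As written, your second direction is a strategy outline, not a proof.
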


These results motivated us to describe all the monogeneous resolving subcategories. 

\subsection{Monogeneous resolving subcategories}
\label{ss:RecallMono}

Any set $\mathfrak{B} \subset \Accord$ is said to be \new{resolving} whenever the category additively generated by $\{\MM(\delta) \mid \delta \in \mathfrak{B}\}$ is a resolving subcategory of $\rep(Q,R)$. Such a resolving set is said to be \new{monogeneous} if the associated resolving subcategory is also monogenenous. In this section, we recall the geometric description of all the monogeneous resolving sets of $\Accord$.

To do so, we use our variant of the geometric model, and we first define a suitable coloration of $\NP(\delta)_0$ relative to a given $\delta \in \Accord'$.

\begin{definition}
\label{def:colourendpoints}
Let $(Q,R)$ be a gentle tree and $\Surf(Q,R) = (\pmb{\Sigma}, \mathcal{M}, \Delta^{\gpoint})$. Let $\delta \in \Accord$. We define a \new{coloration} of $\NP(\delta)_0$ to be a partition of $\NP(\delta)_0$ as  follows :
\begin{enumerate}[label=\arabic*),itemsep=1mm]
    \item  As $\delta$ cuts the disc into two connected parts,  on one side of the curve, points of $\NP(\delta)_0$ are colored red $\rsquare$; on the other side, they are green $\gsquare$. In the following, we arbitrarily consider the red dots above $\delta$ and the green ones below.
    \item The left endpoint of the curve is called the \emph{source} and is colored red $\rsquare$ if all the accordions in $\NP(\delta)$ sharing the same endpoint admit a green coloration $\gsquare$ at the other end. They are colored orange $\osquare$ otherwise.
    \item The right endpoint of the curve is called the \emph{target}, and is colored dually.
    \item In cells larger than a triangle, the coloration is changed: all the intermediate points except the second to last are colored orange $\osquare$.
    \item The last intermediate point on the boundary of a large cell is finally colored in pink $\psquare$.
\end{enumerate}
We denote by:
\begin{enumerate}[label=$\bullet$,itemsep=1mm]
    \item $\NP(\delta)_{0}^{{\rsquare}}$ the set of points in $\NP(\delta)_0$ colored in red $\rsquare$;
    \item $\NP(\delta)_{0}^{{\osquare}}$ the set of points in $\NP(\delta)_0$ colored in orange $\osquare$;
    \item $\NP(\delta)_{0}^{{\gsquare}}$ the set of points in $\NP(\delta)_0$ colored in green $\gsquare$;
    \item $\NP(\delta)_{0}^{{\psquare}}$ the set of points in $\NP(\delta)_0$ colored in pink $\psquare$.
\end{enumerate}
We can encode the choice of such a coloration as a map $\col_\delta : \NP(\delta)_0 \longrightarrow \{{\rsquare},{\gsquare},{\osquare},{\psquare}\}$.
\end{definition}

\begin{lemma}[\cite{DS251}] \label{lem:uniqcolandlaterality}
Let $(Q,R)$ be a gentle tree and $\Surf(Q,R) = (\pmb{\Sigma}, \mathcal{M}, \Delta^{\gpoint})$. Let $\delta \in \Accord$. Then there exists a unique coloration of $\NP(\delta)_0$ up to exchanging the ${\rsquare}$ and ${\gsquare}$ points. 
\end{lemma}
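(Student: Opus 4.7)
The plan is to identify the sole source of ambiguity in \cref{def:colourendpoints} --- the choice of which side of $\delta$ is declared red --- and verify that every other colour is forced once that choice is fixed. Since $\pmb{\Sigma}$ is a disc and $\delta$ is a simple $\rpoint$-arc with both endpoints on $\partial\pmb{\Sigma}$, the complement $\pmb{\Sigma}\setminus\delta$ has exactly two connected components $A$ and $B$, and the two coloration choices obtained by labelling $A$ or $B$ as the red side are precisely exchanged by the involution swapping $\rsquare$ with $\gsquare$.

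I would partition $\NP(\delta)_0$ into four disjoint classes, each controlled by exactly one rule of the definition: (i) the source of $\delta$, handled by rule 2; (ii) the target of $\delta$, handled by rule 3; (iii) the intermediate vertices on the boundary of a cell of $\Prj(\Delta^{\gpoint})$ larger than a triangle that $\delta$ crosses, handled by rules 4 and 5; and (iv) all remaining vertices, which lie strictly in $A \cup B$ and are coloured red or green by rule 1. Disjointness and exhaustiveness of this partition follow from the geometric description of $\NP(\delta)$ in \cref{prop:CombidescripNproj} combined with \cref{prop:arcsaccordions}: any vertex in $\NP(\delta)_0$ is either an endpoint of $\delta$, an intermediate vertex on the boundary of a large cell that $\delta$ traverses, or the other endpoint of a projective accordion adjacent to $\delta$ at an endpoint --- and these possibilities are mutually exclusive.

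Uniqueness up to swap is then checked class by class. Vertices in class (iv) receive red or green according to the side of $\delta$ they lie in, which is determined once $A$ is declared red. Vertices in class (iii) are assigned orange or pink based only on their position in the traversal of $\delta$ through the large cell, so these colours are invariant under the swap. For classes (i) and (ii), rules 2 and 3 are self-dual under the $\rsquare\leftrightarrow\gsquare$ exchange: the rule ``source is red if all neighbouring other-endpoints are green, orange otherwise'' maps under the swap to ``source is green if all neighbouring other-endpoints are red, orange otherwise'', which is exactly what the rule produces once the swap has been applied to its inputs.

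The main technical obstacle is the partition step: it requires checking that the other endpoints of projective accordions in $\NP(\delta)$ sharing a vertex with $\delta$ always fall into class (iii) or (iv), so that rules 2 and 3 operate on data already fixed by rules 1, 4, 5 and the procedure is well-founded. This reduces to an inspection of the three cases of \cref{prop:CombidescripNproj}, together with the fact that in a gentle tree two accordions cross at most once, which prevents any vertex of $\NP(\delta)_0$ from playing two of the four roles above.
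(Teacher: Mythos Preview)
The paper does not prove this lemma: it is stated in the background section with the citation \cite{DS251} and no argument is given here, since it is a result recalled from the first paper in the series. There is therefore nothing in the present paper to compare your proposal against.

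That said, your argument is a sensible reconstruction. The only genuine choice in \cref{def:colourendpoints} is the labelling of the two sides of $\delta$, and you correctly isolate this and then check that rules 4--5 are side-independent while rules 2--3 are equivariant under the $\rsquare\leftrightarrow\gsquare$ swap. One point to be careful with: the definition is written as an ordered procedure (rule~1 assigns red/green, rules~2--3 colour the endpoints based on that, rules~4--5 then \emph{change} certain colours to orange/pink), so the input to rules~2--3 is the preliminary red/green assignment from rule~1, not the final colours after rules~4--5. Your well-foundedness discussion should be phrased accordingly: you do not need the other endpoints to already carry their final colour, only their rule-1 colour, and that is immediate from the side they lie on. With that adjustment the argument goes through.
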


In the following, if $\col_\delta : \NP(\delta)_0 \longrightarrow \{{\rsquare},{\gsquare},{\osquare},{\psquare}\}$ is a coloration of $\NP(\delta)_0$, we denote by $\overline{\col_\delta}$ the obtained coloration by exchanging the ${\rsquare}$ and ${\gsquare}$ points, and call it its \new{conjugate coloration}.

\begin{remark} \label{rem:laterality}
A coloration of $\NP(\delta)_0$ induces a laterality of the surface. From now on, for any accordion $\varsigma \in \Accord$, we consider that its source $s(\varsigma)$ is always on the left of its target $t(\varsigma)$, and thus ${\rsquare}$ above and ${\gsquare}$ below.
\end{remark}

\begin{ex} We consider the \cref{ex:Nproj1}.
We give in \cref{fig:Colorex} a coloration of $\NP(\delta)_0$ . Note that the points $\rpoint$ are points in $\mathcal{M}_{\rpoint} \setminus \NP(\delta)_0$.
    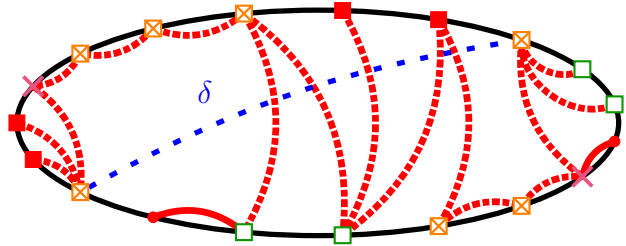
\begin{figure}[!ht]
\centering 
    \begin{tikzpicture}[mydot/.style={
					circle,
					thick,
					fill=white,
					draw,
					outer sep=0.5pt,
					inner sep=1pt
				}, scale = 1]
		\tikzset{
		osq/.style={
        rectangle,
        thick,
        fill=white,
        append after command={
            node [
                fit=(\tikzlastnode),
                orange,
                line width=0.3mm,
                inner sep=-\pgflinewidth,
                cross out,
                draw
            ] {}}}}
		\draw[line width=0.7mm,black] (0,0) ellipse (4cm and 1.5cm);
			\foreach \X in {0,1,...,37}
		{
		\tkzDefPoint(4*cos(pi/19*\X),1.5*sin(pi/19*\X)){\X};
		};
		
		\draw[line width=0.9mm ,bend left =60,red, densely dashdotted](1) edge (5);
		\draw[line width=0.9mm ,bend left =60,red, densely dashdotted](3) edge (5);
		\draw[line width=0.9mm ,bend right =30,red, densely dashdotted](5) edge (35);
		\draw[line width=0.9mm ,bend left =30,red,densely dashdotted](33) edge (35);
		\draw[line width=0.9mm ,bend left =30,red](35) edge (37);
		\draw[line width=0.9mm ,bend left =30,red, densely dashdotted](31) edge (33);
		\draw[line width=0.9mm ,bend left =30,red, densely dashdotted](7) edge (31);
		\draw[line width=0.9mm ,bend left =30,red, densely dashdotted](7) edge (29);
		\draw[line width=0.9mm ,bend left =30,red, densely dashdotted](9) edge (29);
		\draw[line width=0.9mm ,bend left =30,red, densely dashdotted](11) edge (29);
		\draw[line width=0.9mm ,bend left =30,red, densely dashdotted](11) edge (27);
		\draw[line width=0.9mm ,bend right =30,red, densely dashdotted](13) edge (11);
		\draw[line width=0.9mm ,bend left =30,red, densely dashdotted](13) edge (15);
		\draw[line width=0.9mm ,bend left =30,red, densely dashdotted](17) edge (23);
		\draw[line width=0.9mm ,bend left =30,red, densely dashdotted](15) edge (17);
		\draw[line width=0.9mm ,bend left =30,red, densely dashdotted](19) edge (23);
		\draw[line width=0.9mm ,bend left =30,red, densely dashdotted](21) edge (23);
		\draw[line width=0.9mm ,bend left =30,red](25) edge (27);

		\draw[line width=0.7mm ,bend right=10,blue, loosely dashed](5) edge (23);
		\foreach \X in {25,37}
		{
		\tkzDrawPoints[fill =red,size=4,color=red](\X);
		};
		\foreach \X in {7,9,19,21}
		{
		\tkzDrawPoints[rectangle,fill =red,size=6,color=red](\X);
		};
		
		\foreach \X in {1,3,27,29}
		{
		\tkzDrawPoints[rectangle,size=6,color=dark-green,thick,fill=white](\X);
		};
		\foreach \X in {5,11,13,15,23,31,33}
		{
		\tkzDrawPoints[size=6,orange,osq](\X);
		};
		\foreach \X in {17,35}
		{
		\tkzDrawPoints[size=6,darkpink,line width=0.5mm,cross out, draw](\X);
		};
		
		\tkzDefPoint(-1.5,0.7){gamma};
		\tkzLabelPoint[blue](gamma){\Large $\delta$}
    \end{tikzpicture}
\caption{\label{fig:Colorex} An example of a coloration of $\NP(\delta)_0$.}
\end{figure}
\end{ex}

Given $\delta \in \Accord'$, this coloration allows us to define a precise collection of accordions, which is the one corresponding to the indecomposable representations additively generating $\Res(\MM(\delta))$.

\begin{definition} \label{def:geometricres}
Let $(Q,R)$ be a gentle tree, and $\Surf(Q,R) = (\pmb{\Sigma}, \mathcal{M}, \Delta^{\gpoint})$. For any $\delta \in \Accord$, we define the \new{monogeneous geometric resolving set} of $\delta$ as the set $\opResAc(\delta) = \opResAc'(\delta) \cup \Prj(\Delta^{\gpoint})$ where: \[\opResAc'(\delta) = \{ \eta \in \Accord \mid s(\eta) \in \NP(\delta)_0^{{\rsquare}} \cup \NP(\delta)_0^{{\osquare}}, t(\eta) \in \NP(\delta)_0^{{\gsquare}} \cup \NP(\delta)_0^{{\osquare}} \}.\]
The \new{monogeneous geometric resolving subcategory} of $\delta$, denoted by \new{$\mathscr{U}_\delta$}, is the additive subcategory of $\rep(Q,R)$ generated by $\{\MM(\varsigma) \mid \varsigma \in \opResAc(\delta)\}$.
\end{definition}

\begin{ex} \label{ex:ResM}  Let $(\pmb{\Sigma}, \mathcal{M}, \Delta^{\gpoint})$ be the $\gpoint$-dissection of the marked surface as seen in \cref{ex:Nproj1} and in \cref{fig:Colorex}.  Consider $\delta \in \Accord$ as previously. In \cref{fig:exResM}, we represent the accordions in $\opResAc(\delta)$ with dotted and dashed lines. More precisely,
\begin{enumerate}[label=$\bullet$,itemsep=1mm]
    \item the blue loosely-dotted line is the accordion $\delta$; 
    \item the red densely dotted lines are the neighboring projective accordions of $\delta$ as seen in \cref{ex:Nproj1} and in \cref{fig:Colorex}; and,
    \item the purple dash-dotted lines are accordions in $\opResAc'(\delta)$. \qedhere
\end{enumerate}  
     \begin{figure}[!ht]
\centering 
    \begin{tikzpicture}[mydot/.style={
					circle,
					thick,
					fill=white,
					draw,
					outer sep=0.5pt,
					inner sep=1pt
				}, scale = 1.2]
		\tikzset{
		osq/.style={
        rectangle,
        thick,
        fill=white,
        append after command={
            node [
                fit=(\tikzlastnode),
                orange,
                line width=0.3mm,
                inner sep=-\pgflinewidth,
                cross out,
                draw
            ] {}}}}
		\draw[line width=0.7mm,black] (0,0) ellipse (4cm and 1.5cm);
			\foreach \X in {0,1,...,37}
		{
		\tkzDefPoint(4*cos(pi/19*\X),1.5*sin(pi/19*\X)){\X};
		};
		
		\draw[line width=0.9mm ,bend left =60,red, densely dashdotted](1) edge (5);
		\draw[line width=0.9mm ,bend left =60,red,densely dashdotted](3) edge (5);
		\draw[line width=0.9mm ,bend right =30,red, densely dashdotted](5) edge (35);
		\draw[line width=0.9mm ,bend left =30,red,densely dashdotted](33) edge (35);
		\draw[line width=0.9mm ,bend left =30,red](35) edge (37);
		\draw[line width=0.9mm ,bend left =30,red, densely dashdotted](31) edge (33);
		\draw[line width=0.9mm ,bend left =30,red, densely dashdotted](7) edge (31);
		\draw[line width=0.9mm ,bend left =30,red, densely dashdotted](7) edge (29);
		\draw[line width=0.9mm ,bend left =30,red, densely dashdotted](9) edge (29);
		\draw[line width=0.9mm ,bend left =30,red, densely dashdotted](11) edge (29);
		\draw[line width=0.9mm ,bend left =30,red, densely dashdotted](11) edge (27);
		\draw[line width=0.9mm ,bend right =30,red, densely dashdotted](13) edge (11);
		\draw[line width=0.9mm ,bend left =30,red, densely dashdotted](13) edge (15);
		\draw[line width=0.9mm ,bend left =30,red, densely dashdotted](17) edge (23);
		\draw[line width=0.9mm ,bend left =30,red, densely dashdotted](15) edge (17);
		\draw[line width=0.9mm ,bend left =30,red, densely dashdotted](19) edge (23);
		\draw[line width=0.9mm ,bend left =30,red,densely dashdotted](21) edge (23);
		\draw[line width=0.9mm ,bend left =30,red](25) edge (27);
		
		\draw[line width=0.7mm,blue, loosely dashed](5) edge (23);
		
		\draw [line width=0.7mm, mypurple,dash pattern={on 10pt off 2pt on 5pt off 2pt}, bend right=10] (5) edge (33);
		\draw [line width=0.7mm, mypurple,dash pattern={on 10pt off 2pt on 5pt off 2pt}, bend right=-10] (5) edge (31);
		\draw [line width=0.7mm, mypurple,dash pattern={on 10pt off 2pt on 5pt off 2pt}, bend right=10] (1) edge (33);
		\draw [line width=0.7mm, mypurple,dash pattern={on 10pt off 2pt on 5pt off 2pt}, bend right=10] (3) edge (33);
		\draw [line width=0.7mm, mypurple,dash pattern={on 10pt off 2pt on 5pt off 2pt}, bend left=10] (5) edge (7);
		\draw [line width=0.7mm, mypurple,dash pattern={on 10pt off 2pt on 5pt off 2pt}, bend left=10] (5) edge (9);
		\draw [line width=0.7mm, mypurple,dash pattern={on 10pt off 2pt on 5pt off 2pt}, bend left=10] (5) edge (11);
		\draw [line width=0.7mm, mypurple,dash pattern={on 10pt off 2pt on 5pt off 2pt}, bend right=10] (23) edge (11);
		\draw [line width=0.7mm, mypurple,dash pattern={on 10pt off 2pt on 5pt off 2pt}, bend right=10] (23) edge (13);
		\draw [line width=0.7mm, mypurple,dash pattern={on 10pt off 2pt on 5pt off 2pt}, bend right=10] (23) edge (15);
		\draw [line width=0.7mm, mypurple,dash pattern={on 10pt off 2pt on 5pt off 2pt}, bend right=20] (19) edge (15);
		\draw [line width=0.7mm, mypurple,dash pattern={on 10pt off 2pt on 5pt off 2pt}, bend right=20] (21) edge (15);
		\draw [line width=0.7mm, mypurple,dash pattern={on 10pt off 2pt on 5pt off 2pt}, bend left=20] (23) edge (27);
		\draw [line width=0.7mm, mypurple,dash pattern={on 10pt off 2pt on 5pt off 2pt}, bend left=15] (23) edge (29);
		\draw [line width=0.7mm, mypurple,dash pattern={on 10pt off 2pt on 5pt off 2pt}, bend left=15] (23) edge (31);
		\draw [line width=0.7mm, mypurple,dash pattern={on 10pt off 2pt on 5pt off 2pt}, bend left=10] (11) edge (31);
		\draw [line width=0.7mm, mypurple,dash pattern={on 10pt off 2pt on 5pt off 2pt}, bend left=10] (9) edge (31);

		\foreach \X in {25,37}
		{
		\tkzDrawPoints[fill =red,size=4,color=red](\X);
		};
		\foreach \X in {7,9,19,21}
		{
		\tkzDrawPoints[rectangle,fill =red,size=6,color=red](\X);
		};
		
		\foreach \X in {1,3,27,29}
		{
		\tkzDrawPoints[rectangle,size=6,color=dark-green,thick,fill=white](\X);
		};
		\foreach \X in {5,11,13,15,23,31,33}
		{
		\tkzDrawPoints[size=6,orange,osq](\X);
		};
		\foreach \X in {17,35}
		{
		\tkzDrawPoints[size=6,darkpink,line width=0.5mm,cross out, draw](\X);
		};
		
		\tkzDefPoint(-1.2,-0.2){gamma};
		\tkzLabelPoint[blue](gamma){\Large $\delta$}
    \end{tikzpicture}
\caption{\label{fig:exResM} An explicit calculation of  $\opResAc(\delta)$, done on \cref{ex:Nproj1}. In addition of $\Prj(\Delta^{\gpoint})$, $\opResAc(\delta)$ is given by all the dashed lines drawn above.}
\end{figure}
\end{ex}

Let $\delta \in \Accord'$. We recall a technical lemma characterizing $\rpoint$-arcs $\varsigma$ such that $s(\varsigma) \in \NP(\delta)_0^{{\rsquare}} \cup \NP(\delta)_0^{{\osquare}}$ and $t(\varsigma) \in \NP(\delta)_0^{{\osquare}} \cup \NP(\delta)_0^{{\gsquare}}$ that are accordions of $(\pmb{\Sigma}, \mathcal{M}, \Delta^{\gpoint})$.

For any $\delta \in \Accord'$ which is not contained in a cell of $\pmb{\Gamma}(\Prj(\Delta^{\gpoint}))$, define $\tc (\delta)$ as the pair $(\eta_R, C_R)$ where $\eta_R\in \Prj(\Delta^{\gpoint})$, and $C_R \in \pmb{\Gamma}(\Prj(\Delta^{\gpoint}))$ admitting $\eta_R$ as one of its edges, such that $\delta$ crosses $\eta_R$ and $t(\delta)$ is a vertex of $C_R$. If $\delta \subset C$ for some $\pmb{\Gamma}(\Prj(\Delta^{\gpoint}))$, then we set $\tc(\delta) = (\varnothing,C)$. We define $\sc(\delta)$ dually.

Given a $C \in \pmb{\Gamma}(\Prj(\Delta^{\gpoint}))$, we denote by $C(\delta)_0^{{\osquare}}$ the set of vertices of $C$ in $\NP(\delta)_0^{{\osquare}}$. We define similarly $C(\delta)_0^{{\gsquare}}$, $C(\delta)_0^{{\rpoint}}$ and $C(\delta)_0^{{\psquare}}$.

Define $\vartheta_1^R, \vartheta_2^R \in \Prj(\Delta^{\gpoint})$ such that:
\begin{enumerate}[label=$\bullet$,itemsep=1mm]
    \item $t(\vartheta_1^R) = t(\delta)$ and $\vartheta_1^R$ covers $\delta$ with respect to the counterclockwise order around $t(\delta)$ on $\Prj(\Delta^{\gpoint}) \cup \{\delta\}$; and,
    \item $t(\vartheta_2^R) = s(\vartheta_1^R)$ and $\vartheta_2^R$ covers $\vartheta_1^R$ with respect to the counterclockwise order around $s(\vartheta_1^R)$ on $\Prj(\Delta^{\gpoint}) \cup \{\delta\}$.
\end{enumerate}
We define $w_R(\delta) = s(\vartheta_2^R)$. We define $w_L(\delta)$ dually. See \cref{fig:exsctc} to visualize the labelings introduced.

\begin{figure}[!ht]
    \centering
    \begin{tikzpicture}[mydot/.style={
					circle,
					thick,
					fill=white,
					draw,
					outer sep=0.5pt,
					inner sep=1pt
				}, scale = 1]
		\tikzset{
		osq/.style={
        rectangle,
        thick,
        fill=white,
        append after command={
            node [
                fit=(\tikzlastnode),
                orange,
                line width=0.3mm,
                inner sep=-\pgflinewidth,
                cross out,
                draw
            ] {}}}}
        \draw [line width=0.7mm,domain=10:22] plot ({5*cos(\x)}, {2*sin(\x)});
		\draw [line width=0.7mm,domain=25:70] plot ({5*cos(\x)}, {2*sin(\x)});
		\draw [line width=0.7mm,domain=78:85] plot ({5*cos(\x)}, {2*sin(\x)});
		\draw [line width=0.7mm,domain=95:102] plot ({5*cos(\x)}, {2*sin(\x)});
		\draw [line width=0.7mm,domain=110:170] plot ({5*cos(\x)}, {2*sin(\x)});
		\draw [line width=0.7mm,domain=225:235] plot ({5*cos(\x)}, {2*sin(\x)});
		\draw [line width=0.7mm,domain=250:283] plot ({5*cos(\x)}, {2*sin(\x)});
		\draw [line width=0.7mm,domain=300:355] plot ({5*cos(\x)}, {2*sin(\x)});
		\foreach \X in {0,...,43}
		{
		\tkzDefPoint(5*cos(pi/22*\X),2*sin(pi/22*\X)){\X};
		};

		\draw[line width=0.7mm,blue, loosely dashed](4) to[bend left=30] (2,0) to[bend right=30] (-1,-1) to [bend right=30] (31);
		
		\draw[line width=0.9mm ,bend right=30,red, densely dashdotted](4) edge (43);
		\draw[line width=0.9mm ,bend right=30,red, densely dashdotted](4) edge (2);
		\draw[line width=0.9mm ,bend right=30,red, densely dashdotted](43) edge (41);
		\draw[line width=0.9mm ,bend right=30,red, densely dashdotted](41) edge (39);
		\draw[line width=0.9mm ,bend right=30,red, densely dashdotted](39) edge (37);
		\draw[line width=1.5mm ,bend left=30,red, densely dashdotted](37) edge (8);
		\draw[line width=0.9mm ,bend left=40,red, densely dashdotted](37) edge (10);
		\draw[line width=0.9mm ,bend left=40,red, densely dashdotted](34) edge (12);
		\draw[line width=1.5mm ,bend left=30,red, densely dashdotted](34) edge (14);
		\draw[line width=0.9mm ,bend right=30,red, densely dashdotted](16) edge (14);
		\draw[line width=0.9mm ,bend right=30,red, densely dashdotted](18) edge (16);
		\draw[line width=0.9mm ,bend right=30,red, densely dashdotted](20) edge (18);
		\draw[line width=0.9mm ,bend right=30,red, densely dashdotted](31) edge (20);
		\draw[line width=0.9mm ,bend right=30,red, densely dashdotted](31) edge (28);

		\filldraw [fill=red,opacity=0.1] (4) to [bend right=30] (43) to [bend right=30] (41) to [bend right=30] (39) to [bend right=30] (37) to [bend left=30] (8) to [bend left=10] cycle ;
		
		\filldraw [fill=red,opacity=0.1] (34) to [bend left=30] (14) to [bend left=30] (16) to [bend left=30] (18) to [bend left=30] (20) to [bend left=30] (31) to [bend right=10] cycle ;
		
		\foreach \X in {8,10,12,28}
		{
		\tkzDrawPoints[rectangle,fill =red,size=6,color=red](\X);
		};
		
		\foreach \X in {2,34}
		{
		\tkzDrawPoints[rectangle,size=6,color=dark-green,thick,fill=white](\X);
		};
		\foreach \X in {4,14,16,18,31,37,39,41}
		{
		\tkzDrawPoints[size=6,orange,osq](\X);
		};
		\foreach \X in {20,43}
		{
		\tkzDrawPoints[size=6,darkpink,line width=0.5mm,cross out, draw](\X);
		};

		\tkzDefPoint(-3.6,1){gammaM};
		\tkzLabelPoint[red](gammaM){\Large $C_L$}
		\tkzDefPoint(-1.4,1){gammaP};
		\tkzLabelPoint[red](gammaP){\Large $\pmb{\eta_L}$}
		\tkzDefPoint(1.4,1){v};
		\tkzLabelPoint[red](v){\Large $\pmb{\eta_R}$}
		\tkzDefPoint(3.7,-0.3){w};
		\tkzLabelPoint[red](w){\Large $C_R$}
		\tkzDefPoint(0,-0.2){deltav};
		\tkzLabelPoint[blue](deltav){\Large $\delta$}
		\tkzDefPoint(-4.7,1.8){deltaw};
		\tkzLabelPoint[orange](deltaw){$w_L(\delta)$}
		\tkzDefPoint(4.7,-0.9){w};
		\tkzLabelPoint[orange](w){$w_R(\delta)$}
    \end{tikzpicture}
    \caption{\label{fig:exsctc} Illustration of  $\sc(\delta) = (\eta_L, C_L)$ and $\tc(\delta) = (\eta_R, C_R)$ given $\delta \in \Accord$.}
\end{figure}

\begin{lemma}[\cite{DS251}] \label{lem:conditionsResAc}
Let $(\pmb{\Sigma}, \mathcal{M}, \Delta^{\gpoint})$ a $\gpoint$-dissected marked disc with $\mathcal{M} \subset \partial \pmb{\Sigma}$. Let $\delta \in \Accord'$. If $\delta \nsubseteq C$ for some $C \in \pmb{\Gamma}(\Prj(\Delta^{\gpoint}))$, then we set $\sc(\delta) = (\eta_L,C_L)$ and $\tc(\delta) = (\eta_R,C_R)$; otherwise, we set $C=C_L=C_R$ the cell containing $\delta$. The curve $\varsigma \in \opResAc'(\delta)$ must satisfy all of the following assertions: 
\begin{enumerate}[label=$(\roman*)$, itemsep=1mm]
        \item \label{1Accord} if $\varsigma$ crosses $\eta_L$ and $w_L(\delta) \neq t(\eta_L)$, then $s(\varsigma) = s(\delta)$, and, dually, if $\varsigma$ crosses $\eta_R$ and $w_R(\delta) \neq s(\eta_R)$, then $t(\varsigma) = t(\delta)$;
        \item \label{2Accord} if $\varsigma \subset C_L$, then $s(\varsigma)=s(\delta)$, and, dually, if $\varsigma \subset C_R$, then $t(\varsigma) = t(\delta)$;
        \item \label{3Accord} if $s(\varsigma)\in \NP(\delta)_0^{{\rsquare}}$ and $t(\varsigma) \in C_R(\delta)_0$, then $t(\varsigma)=w_L(\delta)$, and, dually, if $t(\varsigma)\in \NP(\delta)_0^{{\gsquare}}$ and $s(\varsigma) \in C_R(\delta)_0$, then $s(\varsigma)=w_R(\delta)$;
        \item \label{4Accord} $s(\varsigma) \notin C_L(\delta)_0^{{\osquare}}\setminus \{ s(\delta),t(\eta_L)\}$, and $t(\varsigma) \notin C_R(\delta)_0^{{\osquare}}\setminus \{ t(\delta),s(\eta_R)\}$.
    \end{enumerate}
\end{lemma}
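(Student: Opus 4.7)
The plan is to prove each of the four assertions by contraposition, reducing every failing case either to a violation of the accordion criterion of \cref{prop:arcsaccordions} (rules $(a)$ and $(b)$) or to a color mismatch at $s(\varsigma)$ or $t(\varsigma)$ with respect to the coloration of \cref{def:colourendpoints}. All four items admit symmetric dual statements, so I will only treat the ``$L$-side'' version; the ``$R$-side'' follows by reversing the orientation of $\varsigma$ and applying the same argument.

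I would begin with the simpler items (ii) and (iv), which do not require tracking $\varsigma$ across multiple cells. For (ii), an accordion $\varsigma$ contained in $C_L$ falls under rule $(b)$ of \cref{prop:arcsaccordions}: its endpoints are either those of a pair of adjacent projective accordions or one of them is $m_{C_L}$. Combined with the coloration rules on $\partial C_L$, a direct case analysis shows that the unique $\rpoint$-point on $\partial C_L$ meeting both the endpoint restriction of $(b)$ and the color constraint $s(\varsigma)\in\NP(\delta)_0^{\rsquare}\cup\NP(\delta)_0^{\osquare}$ is $s(\delta)$ itself. For (iv), the points of $C_L(\delta)_0^{\osquare}\setminus\{s(\delta),t(\eta_L)\}$ are precisely the ``intermediate'' orange vertices along the source-side of $\partial C_L$ introduced by clause (4) of \cref{def:colourendpoints}; if $s(\varsigma)$ were such a point, then the color constraint on $t(\varsigma)$ forces $\varsigma$ to exit $C_L$, but rule $(a2)$ of \cref{prop:arcsaccordions} forbids an intermediate vertex of a large cell from being the endpoint of an accordion that leaves the cell across a non-adjacent $\rpoint$-arc, a contradiction.

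Next, for (i), the hypothesis $w_L(\delta)\neq t(\eta_L)$ ensures that the source-side of $C_L$ has at least three distinct $\rpoint$-vertices between $s(\delta)$ and the $m_{C_L}$-side; by \cref{def:colourendpoints}, all vertices of $\partial C_L$ on that side, other than $s(\delta)$ itself, are colored ${\gsquare}$, ${\osquare}$, or ${\psquare}$. If $\varsigma$ crosses $\eta_L$, then $s(\varsigma)$ lies on $\partial C_L$ on that same side, and rule $(a2)$ of \cref{prop:arcsaccordions}, together with (iv), forces $s(\varsigma)=s(\delta)$. For (iii), the hypothesis $s(\varsigma)\in\NP(\delta)_0^{\rsquare}$ places $\varsigma$ strictly on the red side of $\delta$, so to reach $C_R(\delta)_0$ the arc $\varsigma$ must cross $\delta$. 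Iterating rule $(a1)$ of \cref{prop:arcsaccordions} cell by cell along the sequence of cells traversed by $\delta$, I would show that the only vertex of $C_R$ compatible with both the accordion criterion and the coloration hypothesis is the one named in the statement.

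The main obstacle will be the bookkeeping in (iii): the accordion $\varsigma$ must be tracked through every cell crossed by $\delta$, with each application of rule $(a1)$ shifting which projective $\rpoint$-arcs and which vertices are available as continuations, while one must simultaneously keep track of how the coloration propagates along the projective chain forming $\NP(\delta)$. Secondary care is needed in the degenerate case where $\delta \subset C$ for a single $C\in\pmb{\Gamma}(\Prj(\Delta^{\gpoint}))$, in which (i) is vacuous and (ii)--(iv) collapse onto a single analysis driven entirely by rule $(b)$ of \cref{prop:arcsaccordions} and the in-cell part of \cref{def:colourendpoints}.
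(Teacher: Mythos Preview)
This lemma is not proved in the present paper: it is stated in the background section and attributed to the companion paper \cite{DS251}, so there is no in-paper proof to compare your proposal against. Your plan, namely combining the accordion criterion of \cref{prop:arcsaccordions} with the coloration rules of \cref{def:colourendpoints} and the membership condition of \cref{def:geometricres}, and then running a case analysis on the source and target cells, is the natural argument and is sound in outline.

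Two points of care. First, in your treatment of (ii) you assert that $s(\delta)$ is the \emph{unique} vertex of $\partial C_L$ satisfying both rule~$(b)$ and the color constraint; but $t(\eta_L)$ may also be orange (this is precisely why (iv) singles out $\{s(\delta),t(\eta_L)\}$), so you will need to argue separately that when $\varsigma\subset C_L$ the accordion rule~$(b)$ together with the target-color constraint on $t(\varsigma)$ rules out $s(\varsigma)=t(\eta_L)$. Second, in (iii) your sketch (``$\varsigma$ must cross $\delta$'') is the right geometric intuition, but note that the statement as printed has $t(\varsigma)\in C_R(\delta)_0$ paired with $t(\varsigma)=w_L(\delta)$, whereas $w_L(\delta)$ is a vertex of $C_L$; comparing with the dual half of (iii), this is almost certainly a typo for $C_L(\delta)_0$, and you should proceed under that reading. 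With those adjustments your cell-by-cell propagation via rule~$(a1)$ is the expected argument.
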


Following more technical calculations, and the use of \cref{thm:equivres}, we get a geometric description of the monogeneous resolving subcategories of $\rep(Q,R)$ by using the monogeneous geometric resolving sets.

\begin{theorem}[\cite{DS251}]
\label{thm:res_clo_1}
Let $(Q,R)$ be a gentle tree, and $\Surf(Q,R) = (\pmb{\Sigma}, \mathcal{M}, \Delta^{\gpoint})$. For any $\delta \in \Accord$, we have that \[\Res(\MM(\delta)) = \add\left( \MM(\eta) \mid \eta \in \opResAc(\delta) \right) = \mathscr{U}_\delta.\]
\end{theorem}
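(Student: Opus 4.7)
The plan is to prove both inclusions $\Res(\MM(\delta)) \subseteq \mathscr{U}_\delta$ and $\mathscr{U}_\delta \subseteq \Res(\MM(\delta))$.

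For the first inclusion, I would show that $\mathscr{U}_\delta$ is a resolving subcategory containing $\MM(\delta)$, applying \cref{thm:equivres}. The containment $\MM(\delta) \in \mathscr{U}_\delta$ is immediate: by \cref{def:colourendpoints} the source and target of $\delta$ receive conjugate colors (red and green), so $\delta \in \opResAc'(\delta)$. Condition \ref{Res1} holds because $\Prj(\Delta^{\gpoint}) \subseteq \opResAc(\delta)$ by definition. For closure under extensions \ref{Res2}, I would exploit the fact that in a gentle tree each non-split extension between indecomposables is either an arrow or an overlap extension, then apply \cref{prop:geom_ext} together with \cref{lem:conditionsResAc}: given two input accordions in $\opResAc(\delta)$, a case analysis shows that $\gamma_{(E)}$ (or the pair $\gamma_{(E_1)}, \gamma_{(E_2)}$) keeps its endpoints within $\NP(\delta)_0^{\rsquare} \cup \NP(\delta)_0^{\osquare}$ on the left and $\NP(\delta)_0^{\osquare} \cup \NP(\delta)_0^{\gsquare}$ on the right. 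Closure under syzygies \ref{Res3} is handled similarly: for $\varsigma \in \opResAc'(\delta)$, \cref{prop:geo_mor} describes the summands $\kappa_0, \ldots, \kappa_p$ of the syzygy of $\MM(\varsigma)$ via the projective neighbors of $\varsigma$, and each $\kappa_i$ has endpoints at marked points of $\NP(\delta)$ which, by the construction of the coloration, still satisfy the prescribed color condition.

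For the converse inclusion, I would show that each $\MM(\eta)$ with $\eta \in \opResAc'(\delta)$ is generated from $\MM(\delta)$ by finitely many extensions and syzygies. Projective neighbors in $\NP(\delta)$ lie in $\Res(\MM(\delta))$ trivially by \ref{Res1}. I would then induct on a combinatorial distance between $\eta$ and $\delta$ -- for instance, the symmetric difference of their crossing sequences through $\Prj(\Delta^{\gpoint})$ -- producing $\MM(\eta)$ as the middle term of an arrow or overlap extension (via \cref{prop:geom_ext}) between two indecomposables already shown to belong to $\Res(\MM(\delta))$, or as a summand of a syzygy via \cref{prop:geo_mor}. The base case $\eta = \delta$ is trivial, and the inductive step reduces to exhibiting an appropriate operation whose inputs have already been reached.

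The main obstacle is the case analysis at the boundary cells $C_L$ and $C_R$ containing the endpoints of $\delta$: the exceptional vertices $w_L(\delta), w_R(\delta)$ and the pink-colored vertices of large cells give rise to the subtle restrictions \ref{1Accord}--\ref{4Accord} of \cref{lem:conditionsResAc}, and one must check that extensions and syzygies reach precisely those accordions permitted by these restrictions, neither more nor less. Designing the induction order for the reverse inclusion so that each $\eta$ is obtained from previously constructed indecomposables in a single resolving operation is the most delicate combinatorial step of the argument.
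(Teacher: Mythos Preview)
This theorem is not proved in the present paper: it is stated here as background, with the citation \cite{DS251} indicating that the proof appears in the first paper of the series. Consequently there is no proof in this manuscript against which to compare your proposal.

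That said, your outline is in line with what the surrounding material suggests the original argument looks like. The paper explicitly records \cref{conv:usethisthm}, which says that resolving-ness is always checked via \cref{thm:equivres}; and \cref{lem:conditionsResAc} is recalled precisely as the technical lemma constraining which $\rpoint$-arcs with the prescribed source/target colors are genuine accordions. So your plan for the inclusion $\Res(\MM(\delta)) \subseteq \mathscr{U}_\delta$ --- verify \ref{Res1}, \ref{Res2}, \ref{Res3} on $\mathscr{U}_\delta$ using the geometric descriptions of extensions and syzygies together with the color constraints --- matches the toolkit the authors have set up. Your identification of the boundary-cell analysis (the roles of $w_L(\delta)$, $w_R(\delta)$, and the ${\psquare}$ vertices governed by \ref{1Accord}--\ref{4Accord}) as the delicate part is also consistent with the paper's emphasis: these are exactly the exclusions that \cref{lem:conditionsResAc} encodes, and the paper cites \cite[Lemmas 6.12 and 6.13]{DS251} for the syzygy accordions in large cells, confirming that this is where the work lies.

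One small correction: you write that the source and target of $\delta$ ``receive conjugate colors (red and green)'', but by \cref{def:colourendpoints} the endpoints of $\delta$ are colored ${\rsquare}$ or ${\osquare}$ (source) and ${\gsquare}$ or ${\osquare}$ (target), depending on the neighboring projectives; either way $\delta \in \opResAc'(\delta)$, so your conclusion stands.
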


In the following, we will provide a procedure for obtaining all the resolving subcategories as unions of monogeneous ones. As it will be underlined in our proofs, we fix the following convention.

\begin{conv} \label{conv:usethisthm}
    Whenever we must show that some subcategory of $\rep(Q,R)$, or some collection of accordions in $\Accord(Q,R)$, is resolving, we always use \cref{thm:equivres}.
\end{conv}
	
	\section{Closure operators and poset of resolving subcategories}
	\label{sec:ResPoset}
	\pagestyle{plain}

\subsection{Closure operators}
\label{ss:closop}
We fix a set $\mathfrak{A}$ in the following. We denote by $\mathcal{P}(\mathfrak{A})$ the poset of the subsets of $\mathfrak{A}$.
\begin{definition}
\label{def:closop} A \new{closure operator} on $\mathfrak{A}$ is a map $\cl : \mathcal{P}(\mathfrak{A}) \rightarrow \mathcal{P}(\mathfrak{A})$ which satisfies the following properties:
\begin{enumerate}[label=$\bullet$, itemsep=1mm]
    \item for all $\mathfrak{B} \in  \mathcal{P}(\mathfrak{A})$, $\mathfrak{B} \subseteq \cl(\mathfrak{B})$,
    \item for all $\mathfrak{B} \in  \mathcal{P}(\mathfrak{A})$, $\cl(\mathfrak{B}) = \cl(\cl(\mathfrak{B}))$, and,
    \item for all $\mathfrak{B}, \mathfrak{C} \in \mathcal{P}(\mathfrak{A})$, if $\mathfrak{B} \subseteq \mathfrak{C}$, then $\cl(\mathfrak{B}) \subseteq \cl(\mathfrak{C})$.
\end{enumerate}
A subset $\mathfrak{B} \in \mathcal{P}(\mathfrak{A})$ is \new{$\cl$-closed} if $\mathfrak{B} = \cl(\mathfrak{B})$.
We denote by $\mathcal{P}_{\cl}(\mathfrak{A})$ the subset of $\mathcal{P}(\mathfrak{A})$ made of $\cl$-closed sets.
\end{definition}

\begin{ex} \label{ex:closop} $ $
\begin{enumerate}[label=$\bullet$, itemsep=1mm]
    \item For any $n \in \mathbb{N}^*$, the convex hull operator is a closure operator on the Euclidean space $\mathbb{R}^n$.
    \item Given a gentle quiver $(Q,R)$, the resolving closure operator is a closure operator on $\rep(Q,R)$. Then $(-)^{\Res}$ is a closure operation on $\pmb{\ind \setminus \proj}(Q,R)$. \qedhere
\end{enumerate}
\end{ex}

\begin{lemma}
\label{lem:elementaryonclosop} Consider a closure operator $\cl$ on $\mathfrak{A}$. The following assertions hold:
\begin{enumerate}[label=$\bullet$, itemsep=1mm]
    \item we have $\cl(\mathfrak{A}) = \mathfrak{A}$;
    \item for any $\mathfrak{B} \in \mathcal{P}(\mathfrak{A})$, we have \[\cl(\mathfrak{B}) = \bigcap_{\mathfrak{C} \in \mathcal{P}_{\cl}(\mathfrak{A}),\ \mathfrak{B} \subseteq \mathfrak{C}} \mathfrak{C}\ ;\]
    \item the poset $(\mathcal{P}_{\cl}(\mathfrak{A}), \subseteq)$ is a complete lattice where, for any collection $(\mathfrak{B}_i)_{i \in I}$ of subsets of $\mathcal{P}_{\cl}(\mathfrak{A})$, the join $\underline{\vee}$ and the meet $\overline{\wedge}$ are defined as follows: \[ \overline{\bigwedge_{i \in I}} \mathfrak{B}_i = \bigcap_{i \in I} \mathfrak{B}_i \text{ and } \underset{i \in I}{\underline{\bigvee}}\mathfrak{B}_i = \cl \left( \bigcup_{i \in I} \mathfrak{B}_i \right).  \]
\end{enumerate}
\end{lemma}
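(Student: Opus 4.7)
The three assertions are standard consequences of the three axioms of a closure operator, so the proof is a short exercise in set-theoretic manipulation; there is no real obstacle, but each point deserves a clean argument.

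For the first bullet, since $\cl$ takes values in $\mathcal{P}(\mathfrak{A})$, one has $\cl(\mathfrak{A}) \subseteq \mathfrak{A}$ automatically, and the reverse inclusion is the extensivity axiom. Hence $\cl(\mathfrak{A}) = \mathfrak{A}$.

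For the second bullet, I would first observe that $\cl(\mathfrak{B})$ itself is a $\cl$-closed set containing $\mathfrak{B}$ (by idempotency and extensivity), so it appears as one of the terms in the intersection, which yields
\[
\bigcap_{\mathfrak{C} \in \mathcal{P}_{\cl}(\mathfrak{A}),\ \mathfrak{B} \subseteq \mathfrak{C}} \mathfrak{C} \subseteq \cl(\mathfrak{B}).
\]
Conversely, for any $\cl$-closed $\mathfrak{C}$ containing $\mathfrak{B}$, monotonicity gives $\cl(\mathfrak{B}) \subseteq \cl(\mathfrak{C}) = \mathfrak{C}$, so $\cl(\mathfrak{B})$ is contained in every term of the intersection, hence in the intersection itself.

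For the third bullet, the key point is that arbitrary intersections of $\cl$-closed sets are $\cl$-closed: if $(\mathfrak{B}_i)_{i \in I} \subseteq \mathcal{P}_{\cl}(\mathfrak{A})$ and $\mathfrak{D} = \bigcap_i \mathfrak{B}_i$, then for each $i$ monotonicity yields $\cl(\mathfrak{D}) \subseteq \cl(\mathfrak{B}_i) = \mathfrak{B}_i$, so $\cl(\mathfrak{D}) \subseteq \mathfrak{D}$, and extensivity gives equality. This proves the meet formula and shows that $(\mathcal{P}_{\cl}(\mathfrak{A}), \subseteq)$ has all meets; together with the top element $\mathfrak{A}$ furnished by the first bullet, this already makes it a complete lattice. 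It then suffices to verify that the join formula agrees with the abstract one. The set $\cl\bigl(\bigcup_i \mathfrak{B}_i\bigr)$ is $\cl$-closed by idempotency and contains each $\mathfrak{B}_i$ by extensivity. Moreover, any $\cl$-closed $\mathfrak{E}$ containing every $\mathfrak{B}_i$ also contains $\bigcup_i \mathfrak{B}_i$, whence $\cl\bigl(\bigcup_i \mathfrak{B}_i\bigr) \subseteq \cl(\mathfrak{E}) = \mathfrak{E}$ by monotonicity and $\cl$-closedness. Thus $\cl\bigl(\bigcup_i \mathfrak{B}_i\bigr)$ is the least upper bound, concluding the proof.
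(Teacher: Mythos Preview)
Your proof is correct and complete. The paper itself omits the proof of this lemma entirely, treating it as a standard fact about closure operators; your argument supplies exactly the routine verification one would expect.
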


Now we assume that $\mathfrak{A}$ is equipped with a partial order $\leqslant$. We denote by $\mathscr{J}(\mathfrak{A}, \leqslant)$ the set of order ideals in $(\mathfrak{A}, \leqslant)$.

\begin{definition}
\label{def:closopresp} A closure operator $\cl$ on $\mathfrak{A}$ is said to be
\new{compatible with $\leqslant$} whenever, for any $\mathfrak{B} \in \mathcal{P}(\mathfrak{A})$, we have that $\cl(\mathfrak{B}) \in \mathscr{J}(\mathfrak{A}, \leqslant)$.
\end{definition}

\begin{ex} \label{ex:closopcompatible} $ $
\begin{enumerate}[label=$\bullet$, itemsep=1mm]
    \item For any $n \in \mathbb{N}^*$, we endow $\mathbb{R}^n$ with a partial order $\preccurlyeq$: for $(x,y) \in (\mathbb{R}^n)^2$, we say that $x \preccurlyeq y$ if, and only if, there exists $\lambda \in [0;1]$ such that $x = \lambda y$. Consider the closure operator $\cl$ on $\mathbb{R}^n$ defined such that, for any $\mathfrak{B} \in \mathcal{P}(\mathbb{R}^2)$, $\cl(\mathfrak{B})$ is the convex hull of $\mathfrak{B} \cup \{0\}$. Then $\cl$ is compatible with $\preccurlyeq$.
    \item Given a gentle tree $(Q,R)$, the closure operation $(-)^{\Res}$ on $\pmb{\ind \setminus \proj}(Q,R)$ is compatible with the Res-relation $\Resleq$. \qedhere
\end{enumerate}
\end{ex}

\begin{prop} \label{prop:closeopcompatible} Consider $\cl$ a closure operator on $\mathfrak{A}$ compatible with $\leqslant$. Then $(\mathcal{P}_{\cl}(\mathfrak{A}), \subseteq)$ is a subposet of $(\mathscr{J}(\mathfrak{A}, \leqslant), \subseteq)$.
\end{prop}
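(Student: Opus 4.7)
The plan is simply to unwind the definitions: to be a subposet of $(\mathscr{J}(\mathfrak{A}, \leqslant), \subseteq)$, it suffices that $\mathcal{P}_{\cl}(\mathfrak{A}) \subseteq \mathscr{J}(\mathfrak{A}, \leqslant)$ as sets, since both posets carry the same relation (inclusion) on their elements. So the content of the statement reduces to verifying that every $\cl$-closed set is an order ideal of $(\mathfrak{A}, \leqslant)$.

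Concretely, I would take an arbitrary $\mathfrak{B} \in \mathcal{P}_{\cl}(\mathfrak{A})$, and use the defining equality $\mathfrak{B} = \cl(\mathfrak{B})$ coming from \cref{def:closop}. By the compatibility hypothesis (\cref{def:closopresp}), $\cl(\mathfrak{B}) \in \mathscr{J}(\mathfrak{A}, \leqslant)$, and substituting yields $\mathfrak{B} \in \mathscr{J}(\mathfrak{A}, \leqslant)$. This proves the set-theoretic inclusion; restricting the relation $\subseteq$ of $\mathscr{J}(\mathfrak{A}, \leqslant)$ to $\mathcal{P}_{\cl}(\mathfrak{A})$ then recovers exactly the order of $(\mathcal{P}_{\cl}(\mathfrak{A}), \subseteq)$, which is the definition of a subposet.

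There is no real obstacle here: the statement is an immediate tautology once one notes that compatibility is formulated precisely so that the image of $\cl$ lands in $\mathscr{J}(\mathfrak{A}, \leqslant)$, while the fixed points of $\cl$ are in particular values of $\cl$. The only care required is to observe that $\mathcal{P}_{\cl}(\mathfrak{A})$ inherits its order from $(\mathcal{P}(\mathfrak{A}), \subseteq)$ and that this agrees with the one inherited from $(\mathscr{J}(\mathfrak{A}, \leqslant), \subseteq)$, so no additional verification on the relation is needed.
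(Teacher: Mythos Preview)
Your proposal is correct and matches the paper's treatment: the paper states this proposition without proof, precisely because it is the tautology you identify (every $\cl$-closed set equals its own closure, which by compatibility lies in $\mathscr{J}(\mathfrak{A}, \leqslant)$, and both posets carry the inclusion order). Nothing further is needed.
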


\begin{remark} \label{rem:notasublattice} In general, given a closure operator $\cl$ on $\mathfrak{A}$ compatible with $\leqslant$, the lattice of $\cl$-closed subsets of $\mathfrak{A}$ is not a sublattice of $(\mathscr{J}(\mathfrak{A}, \leqslant), \subseteq,\cap, \cup)$.
\end{remark}

\subsection{Lattices and join-irreducible elements}
\label{ss:latticeJI}

From now on, we assume that $(\mathfrak{A}, \leqslant, \wedge, \vee)$ is a finite lattice. This section highlights unique decomposition into join-irreducible elements, which we call the \emph{upper join-decomposition}.

Let us first recall some notions and relevant results from poset theory. An \new{antichain} $\mathfrak{B}$ of $(\mathfrak{A},\leqslant)$ is a subset of pairwise non-comparable elements of $\mathfrak{A}$. Denote by $\Anti(\mathfrak{A}, \leqslant)$ the set of all the antichains of $(\mathfrak{A},\leqslant)$.

\begin{prop} \label{prop:1to1antiideal} The map \[ \Theta_\mathfrak{A} : \left\{ \begin{matrix}
\Anti(\mathfrak{A}, \leqslant) & \longrightarrow & \mathscr{J}(\mathfrak{A},\leqslant) \\
\mathfrak{B} & \longmapsto & \langle \mathfrak{B} \rangle_\mathfrak{A}
\end{matrix} \right.\]
is bijective.
\end{prop}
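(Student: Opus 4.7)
The plan is to exhibit an explicit two-sided inverse to $\Theta_\mathfrak{A}$. Define
\[
\Psi : \left\{ \begin{matrix} \mathscr{J}(\mathfrak{A},\leqslant) & \longrightarrow & \Anti(\mathfrak{A}, \leqslant) \\ I & \longmapsto & \max(I) \end{matrix} \right.
\]
where $\max(I)$ denotes the set of maximal elements of $I$ for $\leqslant$. This map is well-defined: since $\mathfrak{A}$ is finite, $\max(I)$ exists, and any two distinct maximal elements of $I$ are incomparable by definition, so $\max(I) \in \Anti(\mathfrak{A},\leqslant)$.

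First I would check that $\Theta_\mathfrak{A} \circ \Psi = \mathrm{id}_{\mathscr{J}(\mathfrak{A},\leqslant)}$. The inclusion $\langle \max(I) \rangle_\mathfrak{A} \subseteq I$ is immediate from $\max(I) \subseteq I$ together with the fact that $I$ is downward-closed. For the reverse inclusion, I would argue that given $x \in I$, one may iteratively ascend from $x$ inside $I$: either $x$ is already maximal in $I$, or there exists $x_1 \in I$ with $x < x_1$, and one repeats; finiteness of $\mathfrak{A}$ forces the chain to terminate at some $y \in \max(I)$ with $x \leqslant y$, so $x \in \langle \max(I) \rangle_\mathfrak{A}$.

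Then I would check that $\Psi \circ \Theta_\mathfrak{A} = \mathrm{id}_{\Anti(\mathfrak{A},\leqslant)}$. Fix $\mathfrak{B} \in \Anti(\mathfrak{A},\leqslant)$. For any $b \in \mathfrak{B}$, if $b < y$ for some $y \in \langle \mathfrak{B} \rangle_\mathfrak{A}$, then $y \leqslant b'$ for some $b' \in \mathfrak{B}$, yielding $b < b'$ and contradicting the antichain hypothesis; hence $b \in \max(\langle \mathfrak{B} \rangle_\mathfrak{A})$, giving $\mathfrak{B} \subseteq \max(\langle \mathfrak{B} \rangle_\mathfrak{A})$. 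Conversely, any $y \in \max(\langle \mathfrak{B} \rangle_\mathfrak{A})$ satisfies $y \leqslant b$ for some $b \in \mathfrak{B}$ by definition of the order ideal generated by $\mathfrak{B}$, and maximality forces $y = b \in \mathfrak{B}$.

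The argument is purely order-theoretic and uses only the finiteness of $\mathfrak{A}$; the lattice structure plays no role here, and there is no genuine obstacle beyond carefully separating the four containments.
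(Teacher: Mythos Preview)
Your proof is correct and is the standard argument for this well-known order-theoretic fact. The paper does not actually supply a proof of this proposition; it is stated as a folklore result and used without justification, so there is nothing to compare against.
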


This one-to-one correspondence allows us to endow $\Anti(\mathfrak{A}, \leqslant)$ with a lattice structure. 
\label{order:antichain} Consider $\bleq$ the order relation on $\Anti(\mathfrak{A}, \leqslant)$ defined by:  \[ \forall (\mathfrak{B}, \mathfrak{C}) \in \Anti(\mathfrak{A}, \leqslant)^2,\ \mathfrak{B} \bleq \mathfrak{C} \Longleftrightarrow \langle \mathfrak{B} \rangle_\mathfrak{A} \subseteq \langle \mathfrak{C} \rangle_\mathfrak{A}. \]
We define the meet $\curlywedge$ and the join operations $\curlyvee$ on $\Anti(\mathfrak{A}, \leqslant)$ by:
\[ \mathfrak{B} \curlywedge \mathfrak{C} =\Theta_\mathfrak{A}^{-1} \left(\langle \mathfrak{B} \rangle_{\mathfrak{A}} \cap  \langle \mathfrak{C} \rangle_\mathfrak{A} \right) \text{ and } \mathfrak{B} \curlyvee \mathfrak{C} = \Theta_\mathfrak{A}^{-1} \left(\langle \mathfrak{B} \rangle_{\mathfrak{A}} \cup  \langle \mathfrak{C} \rangle_\mathfrak{A} \right). \] By noting that $\langle \mathfrak{B} \rangle_{\mathfrak{A}} \cup  \langle \mathfrak{C} \rangle_\mathfrak{A} = \langle \mathfrak{B} \cup  \mathfrak{C} \rangle_\mathfrak{A}$, we can calculate directly, from $\mathfrak{B}$ and $\mathfrak{C}$, the antichain $\mathfrak{D}$ by deleting, for each pair of comparable elements in $\mathfrak{B} \cup \mathfrak{C}$, the lowest one.

A \new{join-decomposition} of $a \in \mathfrak{A}$ is a subset $\mathfrak{B} \subseteq \JI(\mathfrak{A})$ such that  \[a = \bigvee_{b \in \mathfrak{B}} b.\] Note that $\widehat{0}_\mathfrak{A}$ admits a join-decomposition given by $\mathfrak{B} = \varnothing$.

\begin{prop} \label{prop:decompJI}
Let $(\mathfrak{A}, \leqslant, \wedge,\vee)$ be a finite lattice. For any $a \in \mathfrak{A}$, there exists a unique antichain $\mathfrak{B}$ of $(\JI(\mathfrak{A}), \leqslant)$ maximal with respect to $\bleq$ such that $\mathfrak{B}$ is a join-decomposition of $a$.
\end{prop}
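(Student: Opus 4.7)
The plan is to exhibit an explicit candidate for the maximum antichain, verify that it is a join-decomposition of $a$, and show that every other join-decomposition lies below it under $\bleq$. Since $\bleq$ is a genuine partial order (coming from the inclusion of order ideals via the bijection $\Theta$ of \cref{prop:1to1antiideal}), existence of a maximum automatically gives uniqueness.

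First I would set
\[
\mathfrak{B}^{\star} := \max{}_{\leqslant}\bigl\{\, j \in \JI(\mathfrak{A}) \mid j \leqslant a \,\bigr\},
\]
the set of maximal join-irreducible elements below $a$. As $\mathfrak{A}$ is finite, $\mathfrak{B}^{\star}$ is a well-defined finite antichain in $(\JI(\mathfrak{A}),\leqslant)$. Next I would check that $\mathfrak{B}^{\star}$ is a join-decomposition of $a$. The classical fact that in any finite lattice $a=\bigvee\{j\in\JI(\mathfrak{A}) \mid j\leqslant a\}$ (shown by strong induction on the length: if $a\in\JI(\mathfrak{A})$ we are done, otherwise $a=b\vee c$ with $b,c<a$ and we apply induction) reduces the question to noting that every join-irreducible below $a$ is $\leqslant$-dominated by some element of $\mathfrak{B}^{\star}$, so suppressing the non-maximal ones leaves the join unchanged. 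Hence $\bigvee \mathfrak{B}^{\star}=a$. The edge case $a=\widehat{0}_{\mathfrak{A}}$ is handled by the convention $\mathfrak{B}^{\star}=\varnothing$.

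For maximality, let $\mathfrak{B}$ be an arbitrary antichain of $\JI(\mathfrak{A})$ with $\bigvee \mathfrak{B}=a$. Then every $b \in \mathfrak{B}$ satisfies $b \leqslant a$, hence
\[
\langle \mathfrak{B} \rangle \;\subseteq\; \{\, j \in \JI(\mathfrak{A}) \mid j \leqslant a \,\} \;=\; \langle \mathfrak{B}^{\star} \rangle,
\]
where the order ideals are taken inside $(\JI(\mathfrak{A}),\leqslant)$, and the last equality uses exactly the same maximality argument as in the previous paragraph. By the definition of $\bleq$, this means $\mathfrak{B} \bleq \mathfrak{B}^{\star}$. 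Consequently $\mathfrak{B}^{\star}$ is the unique maximum among join-decompositions of $a$ with respect to $\bleq$, which proves both clauses of the proposition simultaneously.

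No step is truly delicate here: the candidate $\mathfrak{B}^{\star}$ is forced by the logic, and the whole argument rests on the elementary fact that an element of a finite lattice is the join of its maximal join-irreducible lower bounds. The only point worth being careful about is keeping the distinction between ideals in $(\mathfrak{A},\leqslant)$ and ideals in $(\JI(\mathfrak{A}),\leqslant)$, since $\bleq$ here is the order on antichains of $\JI(\mathfrak{A})$ rather than of $\mathfrak{A}$ itself.
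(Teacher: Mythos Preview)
Your proof is correct and takes a genuinely different, more constructive route than the paper. The paper argues uniqueness abstractly: given two maximal antichain join-decompositions $\mathfrak{B}$ and $\mathfrak{C}$ of $a$, it observes that $\mathfrak{B}\cup\mathfrak{C}$ still joins to $a$, hence so does the antichain $\mathfrak{D}=\mathfrak{B}\curlyvee\mathfrak{C}$, and maximality forces $\mathfrak{B}=\mathfrak{D}=\mathfrak{C}$; existence is dismissed in one line by induction on the finite lattice. You instead exhibit the maximum explicitly as $\mathfrak{B}^{\star}=\max_{\leqslant}\{j\in\JI(\mathfrak{A})\mid j\leqslant a\}$, verify it joins to $a$ via the standard fact that every element is the join of the join-irreducibles below it, and then show every other join-decomposition generates a smaller ideal. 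Your approach has the advantage of identifying the upper join-decomposition concretely (which is exactly what is used implicitly in the map $\Phi_{\mathfrak{A}}$ of \cref{thm:clandjoinirr}); the paper's approach is slightly slicker in that it avoids invoking the ``every element is a join of join-irreducibles'' lemma and works purely with the lattice structure on antichains.
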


\begin{proof}
The fact that any element $a \in \mathfrak{A}$ admits a join-decomposition by an antichain $\mathfrak{B}$ of $(\JI(\mathfrak{A}), \leqslant)$ is obtained by induction as $\mathfrak{A}$ is finite. We only have to check the unicity of a join-decomposition given by a maximal antichain of $(\JI(\mathfrak{A}), \leqslant)$ for $\bleq$.

Let $a \in \mathfrak{A}$. Consider $\mathfrak{B}$ and 
$\mathfrak{C}$ two antichains of $(\JI(\mathfrak{A}), \leqslant)$ which are two join-decompositions of $a$. Then we have\[a = \bigvee_{b \in \mathfrak{B}} b = \bigvee_{c \in \mathfrak{C}} c = \bigvee_{d \in \mathfrak{B} \cup \mathfrak{C}} d. \]
Set $\mathfrak{D} = \mathfrak{B} \curlyvee \mathfrak{C}$ in the lattice $(\Anti(\JI(\mathfrak{A}), \leqslant), \bleq, \curlywedge, \curlyvee)$. By construction, we have $\mathfrak{B} \bleq \mathfrak{D}$, $\mathfrak{C}  \bleq \mathfrak{D}$, and $a = \bigvee_{d \in \mathfrak{D}} d$. 

If $\mathfrak{B}$ and $\mathfrak{C}$ are maximal antichains both giving a join-decomposition of $a$, then $\mathfrak{B} = \mathfrak{D} = \mathfrak{C}$.
\end{proof}

\begin{definition}
\label{def:upperjoindec}
Let $(\mathfrak{A},\leqslant, \wedge, \vee)$ be a finite lattice. The join-decomposition of $a \in \mathfrak{A}$ in \cref{prop:decompJI} is called the \new{upper join-decomposition} of $a$ in $\mathfrak{A}$, and, from now on, is denoted by $\D_\mathfrak{A}(a)$.
\end{definition}

\begin{remark} \label{rem:canonicaldecompforideals} The upper join-decomposition of elements in $\mathfrak{A}$ we exhibit corresponds to the usual canonical join-decomposition of ideals in the distributive lattice $(\mathscr{J}(\JI(\mathfrak{A}), \leqslant), \subseteq, \cap, \cup)$.
\end{remark}

\begin{ex}
Let $(\mathfrak{A},\leqslant, \wedge, \vee)$ be the lattice whose Hasse diagram is given in \cref{fig:Posetanddecomp1}. We can check that $f$ admits many distinct join-decompositions; among them, $\{c,d,e\}$ is the greatest for $\bleq$. Therefore we have $\D_\mathfrak{A}(f) = \{c,d,e\}$. \qedhere
 \begin{figure}[!ht]
\centering 
    \begin{tikzpicture}
		
		\tkzDefPoint(0,0){f};
		\tkzDefPoint(0,-1.5){e};
		\tkzDefPoint(-1.5,-1.5){d};
		\tkzDefPoint(1.5,-1){c};
		\tkzDefPoint(1.5,-2){b};
		\tkzDefPoint(0,-3){a};
		
		\draw[line width=0.7mm,black] (a) -- (b);
		\draw[line width=0.7mm,black] (a) -- (d);
		\draw[line width=0.7mm,black] (a) -- (e);
		\draw[line width=0.7mm,black] (b) -- (c);
		\draw[line width=0.7mm,black] (c) -- (f);
		\draw[line width=0.7mm,black] (d) -- (f);
		\draw[line width=0.7mm,black] (e) -- (f);

		\foreach \X in {b,c,d,e}
		{
		\tkzDrawPoints[fill =red,size=6,color=red](\X);
		};
		\foreach \X in {a,f}
		{
		\tkzDrawPoints[size=6,color=black](\X);
		};
		
		\tkzLabelPoint[below left](a){\Large $a$}
		\tkzLabelPoint[red,below right](b){\Large $b$}
		\tkzLabelPoint[red,above right](c){\Large $c$}
		\tkzLabelPoint[red,below left](d){\Large $d$}
		\tkzLabelPoint[red,below left](e){\Large $e$}
		\tkzLabelPoint[above left](f){\Large $f$}
    \end{tikzpicture}
\caption{\label{fig:Posetanddecomp1} Hasse diagram of a lattice. The red elements $b,c,d,$ and $e$ are the join-irreducible ones.}
\end{figure}
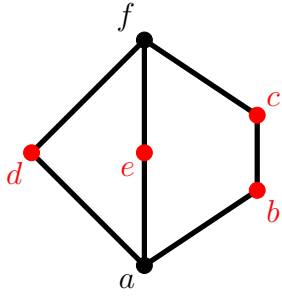
\end{ex}

We can state the following result thanks to the upper join-decomposition of any element in $\mathfrak{A}$. 

\begin{theorem} \label{thm:clandjoinirr}
Let $(\mathfrak{A}, \leqslant, \wedge, \vee)$ be a finite lattice, and $(\JI(\mathfrak{A}), \leqslant)$ be its subposet of join-irreducible elements. Consider $\cl_\vee : \mathcal{P}(\JI(\mathfrak{A})) \longrightarrow \mathcal{P}(\JI(\mathfrak{A}))$ defined by \[\cl_\vee (\mathfrak{U}) = \left\langle \bigvee_{u \in \mathfrak{U}} u \right\rangle_\mathfrak{A} \cap \JI(\mathfrak{A}).\]
Then $\cl_\vee$ is a closure operator on $\JI(\mathfrak{A})$ compatible with $\leqslant$. Moreover, the map
\[\Psi_\mathfrak{A} : \left\{ \begin{matrix} 
(\mathcal{P}_{\cl_\vee}(\JI(\mathfrak{A})), \subseteq) & \longrightarrow & (\mathfrak{A}, \leqslant) \\
\mathfrak{U} & \longmapsto & \bigvee_{u \in \mathfrak{U}} u.
\end{matrix} \right.\]
is an isomorphism whose inverse is the map
\[\Phi_\mathfrak{A} : \left\{ \begin{matrix} 
(\mathfrak{A}, \leqslant) & \longrightarrow & (\mathcal{P}_{\cl_\vee}(\JI(\mathfrak{A})), \subseteq) \\
a & \longmapsto & \langle \mathbb{D}_\mathfrak{A}(a) \rangle_\mathfrak{A} \cap \JI(\mathfrak{A})
\end{matrix} \right.\]
\end{theorem}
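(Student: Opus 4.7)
The plan is to verify the closure-operator axioms first, then show the inverse property of $\Psi_\mathfrak{A}$ and $\Phi_\mathfrak{A}$ by identifying $\Phi_\mathfrak{A}(a)$ with a simpler principal-ideal expression.

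First I would check the three clauses of \cref{def:closop} for $\cl_\vee$. Extensivity is immediate: for $u \in \mathfrak{U}$ the join-irreducible $u$ satisfies $u \leqslant \bigvee_{v \in \mathfrak{U}} v$ and so lies in $\langle \bigvee \mathfrak{U}\rangle_\mathfrak{A} \cap \JI(\mathfrak{A})$. Monotonicity follows since $\mathfrak{U} \subseteq \mathfrak{V}$ forces $\bigvee \mathfrak{U} \leqslant \bigvee \mathfrak{V}$ and hence $\langle \bigvee \mathfrak{U}\rangle \subseteq \langle \bigvee \mathfrak{V}\rangle$. For idempotence, every element of $\cl_\vee(\mathfrak{U})$ is $\leqslant \bigvee \mathfrak{U}$, so $\bigvee \cl_\vee(\mathfrak{U}) = \bigvee \mathfrak{U}$, giving $\cl_\vee(\cl_\vee(\mathfrak{U})) = \cl_\vee(\mathfrak{U})$. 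Compatibility with $\leqslant$ in the sense of \cref{def:closopresp} is direct: if $v \leqslant u$ in $\JI(\mathfrak{A})$ with $u \in \cl_\vee(\mathfrak{U})$, then $v \leqslant u \leqslant \bigvee \mathfrak{U}$ places $v$ in $\cl_\vee(\mathfrak{U})$.

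Next comes the main observation, which drives the rest of the proof: for every $a \in \mathfrak{A}$,
\[
\Phi_\mathfrak{A}(a) \;=\; \langle \mathbb{D}_\mathfrak{A}(a)\rangle_\mathfrak{A} \cap \JI(\mathfrak{A}) \;=\; \langle a\rangle_\mathfrak{A} \cap \JI(\mathfrak{A}).
\]
The inclusion $\subseteq$ is clear since every element of $\mathbb{D}_\mathfrak{A}(a)$ is $\leqslant a$. The reverse inclusion is where the maximality of the upper join-decomposition is used, and this is the crux of the argument. Given a join-irreducible $j \leqslant a$, I would form the set $\mathbb{D}_\mathfrak{A}(a) \cup \{j\}$ and extract its associated antichain $\mathfrak{B}$ in $\JI(\mathfrak{A})$ by deleting any element strictly dominated by another. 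Since $j \leqslant a$, the join of $\mathfrak{B}$ is still $a$, so $\mathfrak{B}$ is a join-decomposition of $a$. Moreover $\langle \mathbb{D}_\mathfrak{A}(a)\rangle \subseteq \langle \mathbb{D}_\mathfrak{A}(a) \cup \{j\}\rangle = \langle \mathfrak{B}\rangle$, which gives $\mathbb{D}_\mathfrak{A}(a) \bleq \mathfrak{B}$. By the uniqueness of the maximum in \cref{prop:decompJI}, $\mathfrak{B} = \mathbb{D}_\mathfrak{A}(a)$, forcing $j$ to be already dominated by some element of $\mathbb{D}_\mathfrak{A}(a)$, i.e. $j \in \langle \mathbb{D}_\mathfrak{A}(a)\rangle_\mathfrak{A}$.

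Once this identity is established, the rest is bookkeeping. The set $\Phi_\mathfrak{A}(a) = \langle a\rangle \cap \JI(\mathfrak{A})$ is $\cl_\vee$-closed because $\bigvee(\langle a\rangle \cap \JI(\mathfrak{A})) = a$ (using that $\mathbb{D}_\mathfrak{A}(a) \subseteq \langle a\rangle \cap \JI(\mathfrak{A})$ and every element of the latter is $\leqslant a$), so applying $\cl_\vee$ gives back $\langle a\rangle \cap \JI(\mathfrak{A})$. The composition $\Psi_\mathfrak{A} \circ \Phi_\mathfrak{A}$ sends $a$ to $\bigvee(\langle a\rangle \cap \JI(\mathfrak{A})) = a$. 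Conversely, for $\mathfrak{U} \in \mathcal{P}_{\cl_\vee}(\JI(\mathfrak{A}))$ we have $\Phi_\mathfrak{A}(\Psi_\mathfrak{A}(\mathfrak{U})) = \langle \bigvee \mathfrak{U}\rangle \cap \JI(\mathfrak{A}) = \cl_\vee(\mathfrak{U}) = \mathfrak{U}$. Finally both maps are manifestly order-preserving (joins preserve $\leqslant$, and larger elements have larger principal ideals), so $\Psi_\mathfrak{A}$ is the asserted isomorphism with inverse $\Phi_\mathfrak{A}$. The only genuinely delicate step is the maximality argument in the middle paragraph; every other verification is formal.
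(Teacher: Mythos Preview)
Your proof is correct and complete. The paper, however, does not actually prove this theorem: it simply states that this is ``a precise restatement of a well-known property in the study of closure lattices'' and refers the reader to \cite[Section 1.5]{BS81}. So you have supplied a full argument where the paper gives only a citation; your direct verification of the closure-operator axioms together with the key identity $\Phi_\mathfrak{A}(a) = \langle a\rangle_\mathfrak{A} \cap \JI(\mathfrak{A})$ (obtained via the maximality clause of \cref{prop:decompJI}) is exactly the elementary route one would take to unpack the cited reference.
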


This is a precise restatement of a well-known property in the study of closure lattices. We refer the reader to \cite[Section 1.5]{BS81} for more details.

\subsection{On resolving subcategories}
\label{ss:rescat}
In this section, assuming that $(Q,R)$ is a gentle tree, we apply the previous results obtained on $(\ResOrd(Q,R), \subseteq)$

\begin{prop} \label{prop:folkloreres}
Let $(Q,R)$ be a gentle tree. The following assertions hold:
\begin{enumerate}[label=$(\roman*)$,itemsep=1mm]
    \item The poset $(\mathcal{P}_{\Res}(\pmb{\ind \setminus \proj}(Q,R)), \subseteq)$ is a subposet of\\ $(\mathscr{J}(\pmb{\ind \setminus \proj}(Q,R), \Resleq), \subseteq)$;
    \item The lattice $(\mathcal{P}_{\Res}(\pmb{\ind \setminus \proj}(Q,R)), \subseteq, \cap, \veebar)$  is isomorphic to \\ $(\ResOrd(Q,R), \subseteq, \cap, \underline{\cup})$;
    \item The join-irreducible elements of $(\mathcal{P}_{\Res}(\pmb{\ind \setminus \proj}(Q,R)), \subseteq, \cap, \underline{\cup})$ are the principal ideals of $(\pmb{\ind \setminus \proj}(Q,R), \Resleq)$.
\end{enumerate}
\end{prop}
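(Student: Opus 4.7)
The plan is to transport results of Section 3.1--3.2 along the obvious bijection between a resolving subcategory and its non-projective indecomposables, using \cref{thm:Monoareallthejoinirred} as the input on join-irreducibles.

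For (i), I would verify that the closure operator $(-)^{\Res}$ is compatible with $\Resleq$ in the sense of \cref{def:closopresp}, and then invoke \cref{prop:closeopcompatible}. The compatibility reduces to the following observation, spelled out directly from the definition of the Res-relation: if $\mathcal{X}\subseteq \pmb{\ind\setminus\proj}(Q,R)$, $Y \in \mathcal{X}^{\Res}$ and $X \Resleq Y$, then $\Res(X) \subseteq \Res(Y) \subseteq \Res(\mathcal{X})$, so $X \in \pmb{\ind\setminus\proj}(\Res(\mathcal{X})) = \mathcal{X}^{\Res}$. Hence $\mathcal{X}^{\Res}$ is an order ideal of $(\pmb{\ind\setminus\proj}(Q,R),\Resleq)$.

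For (ii), I would exhibit the mutually inverse order isomorphisms
\[
\Phi : \mathscr{R}\longmapsto \pmb{\ind\setminus\proj}(\mathscr{R}), \qquad \Psi : \mathcal{X} \longmapsto \add\bigl(\proj(Q,R)\cup\{M\mid M\in\mathcal{X}\}\bigr).
\]
That $\Phi$ lands in $\mathcal{P}_{\Res}(\pmb{\ind\setminus\proj}(Q,R))$ is immediate: if $\mathscr{R}$ is resolving then $\pmb{\ind\setminus\proj}(\mathscr{R})^{\Res}$ additively generates $\Res(\mathscr{R})=\mathscr{R}$, so equals $\pmb{\ind\setminus\proj}(\mathscr{R})$. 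That $\Psi(\mathcal{X})$ is resolving for any $\Res$-closed $\mathcal{X}$ follows by using \cref{thm:equivres} (per \cref{conv:usethisthm}), the extension- and syzygy-closure being exactly what a $\Res$-closed set guarantees. Checking $\Phi\circ\Psi=\Id$ and $\Psi\circ\Phi=\Id$ uses \ref{R0}, \ref{R1}, and \cref{lem:othercharactresolv}\ref{ires}; both maps visibly preserve inclusions, so they transport meets (intersections on both sides) and joins (the closure-of-union on the left matches the smallest resolving subcategory containing the union on the right).

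For (iii), under the isomorphism $\Phi$ of (ii) the join-irreducibles correspond to each other. By \cref{thm:Monoareallthejoinirred}, the join-irreducibles of $(\ResOrd(Q,R),\subseteq)$ are the monogeneous resolving subcategories $\Res(\MM(\delta))$ with $\MM(\delta)\in\pmb{\ind\setminus\proj}(Q,R)$. It then suffices to show
\[
\Phi(\Res(X)) \;=\; \pmb{\ind\setminus\proj}(\Res(X)) \;=\; \langle X\rangle_{\pmb{\ind\setminus\proj}(Q,R)}
\]
for every non-projective indecomposable $X$, where the right-hand side denotes the principal ideal of $(\pmb{\ind\setminus\proj}(Q,R),\Resleq)$. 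The inclusion $\supseteq$ follows from the very definition of $\Resleq$: if $Y\Resleq X$ then $Y\in\Res(Y)\subseteq\Res(X)$. For the inclusion $\subseteq$, any $Y\in\pmb{\ind\setminus\proj}(\Res(X))$ satisfies $\Res(Y)\subseteq\Res(X)$ since $\Res(X)$ is itself resolving and contains $Y$, i.e.\ $Y\Resleq X$.

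The only substantive point is (ii), and within it the verification that $\Psi(\mathcal{X})$ is resolving when $\mathcal{X}$ is $\Res$-closed; however, this is by construction of the closure operator $(-)^{\Res}$, so no real difficulty arises and the whole proposition is essentially an application of \cref{thm:clandjoinirr} combined with \cref{thm:Monoareallthejoinirred}.
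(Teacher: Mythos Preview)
Your proof is correct and follows essentially the same approach as the paper: (i) via compatibility of $(-)^{\Res}$ with $\Resleq$ and \cref{prop:closeopcompatible}, (ii) via the obvious bijection $\mathscr{R}\mapsto\pmb{\ind\setminus\proj}(\mathscr{R})$, and (iii) via \cref{thm:Monoareallthejoinirred}. The paper's proof is much terser (each part dispatched in a single sentence), while you spell out the verifications in greater detail, but the underlying argument is the same.
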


\begin{proof}
The assertion $(i)$ is obvious from \cref{prop:closeopcompatible}. The assertion $(ii)$ is by definition. The assertion $(iii)$ is a direct consequence of \cref{thm:Monoareallthejoinirred}, as a principal ideal is the set of the non-projective indecomposable objects in a monogeneous resolving subcategory.
\end{proof}

\begin{ex}
In \cref{fig:Resrelex}, we gave an example of $(\pmb{\ind \setminus \proj}(Q,R), \Resleq)$ and the lattice $(\ResOrd(Q,R), \subseteq, \cap, \underline{\cup})$. We identified the resolving subcategories with the unique antichain of $(\pmb{\ind \setminus \proj}(Q,R), \Resleq)$ corresponding to their upper join-decomposition . 
 \begin{figure}[!ht]
\centering 
    \begin{tikzpicture}
    \begin{scope}[->,scale=1.4]
		\node (a) at (0,0) {$1$};
		\node (b) at (1,0) {$2$};
		\node (c) at (2,0) {$3$};
		\node (d) at (1,1) {$4$};
		
		\draw (a) -- (b);
		\draw (b) -- (c);
		\draw (b) -- (d);

		\draw[dashed,-] ([xshift=-.05cm,yshift=.35cm]b.north) arc[start angle = 100, end angle = 170, x radius=.6cm, y radius =.6cm];
		
		\node at (1,-.5) {$(Q,R)$};
    \end{scope}
    
    \begin{scope}[xshift=5cm,yshift=-1.25cm]
	
	    \node[red] (b) at (-0.5,1) {\scalebox{0.6}{$
	    \begin{tikzpicture}[baseline={(0,-.2)},scale=0.2]
	    \node at (0,0){$2$};
	    \node at (1,-1){$4$};
	    \end{tikzpicture}$}};
	    \node[red] (c) at (0.5,1)  {\scalebox{0.6}{$
	    \begin{tikzpicture}[baseline={(0,-.2)},scale=0.2]
	    \node at (0,0){$2$};
	    \node at (-1,-1){$3$};
	    \end{tikzpicture}$}};
	    \node[red] (e) at (0,3) {\scalebox{0.6}{$
	    \begin{tikzpicture}[baseline={(0,-.1)},scale=0.2]
	    \node at (0,0){$2$};
	    \end{tikzpicture}$}};
	    \node[red] (f) at (-1,3) {\scalebox{0.6}{$
	    \begin{tikzpicture}[baseline={(0,-.2)},scale=0.2]
	    \node at (0,0){$1$};
	    \node at (-1,-1){$2$};
	    \end{tikzpicture}$}};
	    \node[red] (h) at (1,3) {\scalebox{0.6}{$\begin{tikzpicture}[baseline={(0,-.1)},scale=0.2]
	    \node at (0,0){$1$};
	    \end{tikzpicture}$}};

		\draw[line width=0.7mm,black] (b) -- (e);
		\draw[line width=0.7mm,black] (b) -- (f);
		\draw[line width=0.7mm,black] (c) -- (e);
		\draw[line width=0.7mm,black] (c) -- (h);
		
		\node at (0,.5) {$(\pmb{\ind \setminus \proj}(Q,R), \Resleq)$};
	\end{scope}
    
	\begin{scope}[xshift=9cm,yshift=-2cm]
	
	    \node (a) at (0,0) {$\varnothing$};
	    \node[red] (b) at (-1,1) {\scalebox{0.6}{$\left\{
	    \begin{tikzpicture}[baseline={(0,-.2)},scale=0.2]
	    \node at (0,0){$2$};
	    \node at (1,-1){$4$};
	    \end{tikzpicture}\right\}$}};
	    \node[red] (c) at (1,1)  {\scalebox{0.6}{$\left\{
	    \begin{tikzpicture}[baseline={(0,-.2)},scale=0.2]
	    \node at (0,0){$2$};
	    \node at (-1,-1){$3$};
	    \end{tikzpicture}\right\}$}};
	    \node (d) at (0,2)  {\scalebox{0.6}{$\left\{
	    \begin{tikzpicture}[baseline={(0,-.2)},scale=0.2]
	    \node at (0,0){$2$};
	    \node at (1,-1){$4$};
	    \end{tikzpicture}, \begin{tikzpicture}[baseline={(0,-.2)},scale=0.2]
	    \node at (0,0){$2$};
	    \node at (-1,-1){$3$};
	    \end{tikzpicture}\right\}$}};
	    \node[red] (e) at (-.4,3) {\scalebox{0.6}{$\left\{
	    \begin{tikzpicture}[baseline={(0,-.1)},scale=0.2]
	    \node at (0,0){$2$};
	    \end{tikzpicture}\right\}$}};
	    \node[red] (f) at (-1.5,2.5) {\scalebox{0.6}{$\left\{
	    \begin{tikzpicture}[baseline={(0,-.2)},scale=0.2]
	    \node at (0,0){$1$};
	    \node at (-1,-1){$2$};
	    \end{tikzpicture}\right\}$}};
	    \node (g) at (-1,4) {\scalebox{0.6}{$\left\{
	    \begin{tikzpicture}[baseline={(0,-.2)},scale=0.2]
	    \node at (0,0){$1$};
	    \node at (-1,-1){$2$};
	    \end{tikzpicture}, \begin{tikzpicture}[baseline={(0,-.1)},scale=0.2]
	    \node at (0,0){$2$};
	    \end{tikzpicture}\right\}$}};
	    \node[red] (h) at (1.5,2.5) {\scalebox{0.6}{$\left\{\begin{tikzpicture}[baseline={(0,-.1)},scale=0.2]
	    \node at (0,0){$1$};
	    \end{tikzpicture}\right\}$}};
	    \node (h2) at (1,4)  {\scalebox{0.6}{$\left\{
	    \begin{tikzpicture}[baseline={(0,-.2)},scale=0.2]
	    \node at (0,0){$2$};
	    \node at (1,-1){$4$};
	    \end{tikzpicture}, \begin{tikzpicture}[baseline={(0,-.1)},scale=0.2]
	    \node at (0,0){$1$};
	    \end{tikzpicture}\right\}$}};
		\node (i) at (0,5) {\scalebox{0.6}{$\left\{
	    \begin{tikzpicture}[baseline={(0,-.2)},scale=0.2]
	    \node at (0,0){$1$};
	    \node at (-1,-1){$2$};
	    \end{tikzpicture}, \begin{tikzpicture}[baseline={(0,-.1)},scale=0.2]
	    \node at (0,0){$2$};
	    \end{tikzpicture},\begin{tikzpicture}[baseline={(0,-.1)},scale=0.2]
	    \node at (0,0){$1$};
	    \end{tikzpicture}\right\}$}};

		\draw[line width=0.7mm,black] (a) -- (b);
		\draw[line width=0.7mm,black] (a) -- (c);
		\draw[line width=0.7mm,black] (b) -- (d);
		\draw[line width=0.7mm,black] (b) -- (f);
		\draw[line width=0.7mm,black] (c) -- (d);
		\draw[line width=0.7mm,black] (c) -- (h);
		\draw[line width=0.7mm,black] (d) -- (e);
		\draw[line width=0.7mm,black] (d) -- (h2);
		\draw[line width=0.7mm,black] (e) -- (g);
		\draw[line width=0.7mm,black] (f) -- (g);
		\draw[line width=0.7mm,black] (g) -- (i);
		\draw[line width=0.7mm,black] (h) -- (h2);
		\draw[line width=0.7mm,black] (h2) -- (i);
		
		\node at (0,-.75) {$(\ResOrd(Q,R), \subseteq)$};
	\end{scope}
    \end{tikzpicture}
\caption{\label{fig:Resrelex} An example of the Res-relation poset and the lattice of resolving subcategories calculated for a gentle tree. We identify the resolving subcategories with their upper-join decomposition, and, more precisely, we keep the collection of non-projective indecomposable representations that generate the corresponding monogeneous resolving subcategories appearing in their upper join decomposition decomposition.}
\end{figure}
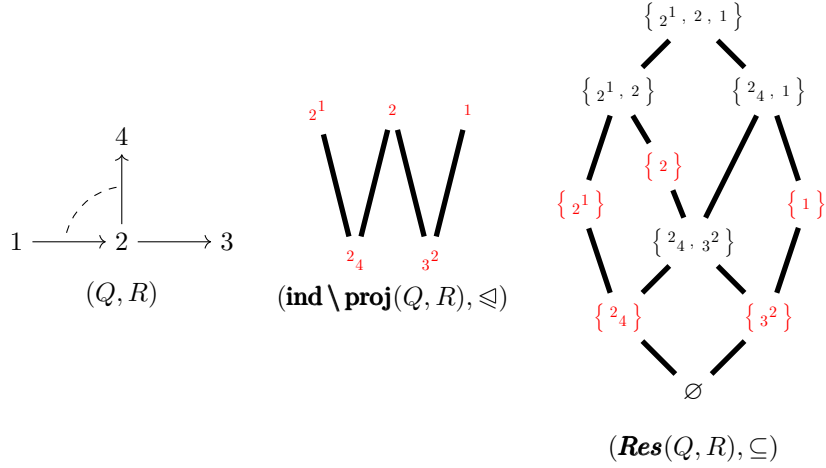
\end{ex}

\begin{prop} \label{prop:candecompres}
Let $(Q,R)$ be a gentle tree. For any resolving subcategory $\mathscr{C} \subseteq \rep(Q,R)$, there exists a unique $\mathfrak{E}_\mathscr{C} \in \Anti(\pmb{\ind \setminus \proj}(Q,R), \Resleq)$ such that \[\mathscr{C} = \add \left( \Res(M) \mid M \in \mathfrak{E}_\mathscr{C} \right). \]
\end{prop}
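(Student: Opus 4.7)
The plan is to apply the general lattice-theoretic machinery of \cref{ss:latticeJI} to the finite lattice $(\ResOrd(Q,R), \subseteq, \cap, \underline{\cup})$, and then transport the resulting antichain of join-irreducibles back to $\pmb{\ind \setminus \proj}(Q,R)$ via the Res-relation. By \cref{thm:Monoareallthejoinirred}, together with \cref{prop:folkloreres}(iii), the join-irreducibles of $\ResOrd(Q,R)$ are exactly the monogeneous resolving subcategories $\Res(M)$ for $M \in \pmb{\ind \setminus \proj}(Q,R)$. Moreover, by the very definition of the Res-relation, the map $M \mapsto \Res(M)$ is a poset isomorphism from $(\pmb{\ind \setminus \proj}(Q,R), \Resleq)$ onto $(\JI(\ResOrd(Q,R)), \subseteq)$; in particular it sends antichains to antichains.

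Given a resolving subcategory $\mathscr{C}$, I would first apply \cref{prop:decompJI} to the element $\mathscr{C}$ inside the finite lattice $(\ResOrd(Q,R), \subseteq)$. This produces a unique $\bleq$-maximal antichain $\mathfrak{F}_\mathscr{C} \subseteq \JI(\ResOrd(Q,R))$ with $\mathscr{C} = \underline{\bigvee}_{\mathscr{U} \in \mathfrak{F}_\mathscr{C}} \mathscr{U}$. Pulling this antichain back along the isomorphism above gives a unique antichain $\mathfrak{E}_\mathscr{C} \in \Anti(\pmb{\ind \setminus \proj}(Q,R), \Resleq)$ characterized by $\mathfrak{F}_\mathscr{C} = \{\Res(M) \mid M \in \mathfrak{E}_\mathscr{C}\}$.

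The next step is to identify the abstract join $\underline{\bigvee}_{M \in \mathfrak{E}_\mathscr{C}} \Res(M)$ with the additive closure $\add(\Res(M) \mid M \in \mathfrak{E}_\mathscr{C})$. The inclusion $\add(\Res(M) \mid M \in \mathfrak{E}_\mathscr{C}) \subseteq \mathscr{C}$ is immediate since each $\Res(M)$ is contained in the resolving subcategory $\mathscr{C}$. For the converse, I would pick an arbitrary $N \in \pmb{\ind \setminus \proj}(\mathscr{C})$ and argue that $\Res(N) \subseteq \mathscr{C}$, so that $\Res(N)$ belongs to the $\cl_\vee$-closed subset of $\JI(\ResOrd(Q,R))$ corresponding to $\mathscr{C}$ under $\Phi_{\ResOrd(Q,R)}$ of \cref{thm:clandjoinirr}. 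Since $\mathfrak{F}_\mathscr{C}$ is the $\bleq$-maximal antichain of that set, some $M \in \mathfrak{E}_\mathscr{C}$ satisfies $N \Resleq M$, whence $N \in \Res(M)$; together with $\proj(Q,R) \subseteq \Res(M)$, this shows every indecomposable of $\mathscr{C}$ lies in the union $\bigcup_{M \in \mathfrak{E}_\mathscr{C}} \Res(M)$, proving the reverse inclusion.

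Uniqueness of $\mathfrak{E}_\mathscr{C}$ follows by the usual antichain argument: if $\mathfrak{E}$ and $\mathfrak{E}'$ both satisfy the equality, then for any $M \in \mathfrak{E}$ the fact that $M$ is an indecomposable summand of an object of $\add(\Res(M') \mid M' \in \mathfrak{E}')$ forces $M \in \Res(M')$ for some $M' \in \mathfrak{E}'$, i.e.\ $M \Resleq M'$; iterating gives $M' \Resleq M''$ with $M'' \in \mathfrak{E}$, and the antichain property of $\mathfrak{E}$ collapses the chain, so that $M = M' \in \mathfrak{E}'$. The only mildly delicate point of the whole argument is this identification of the lattice-theoretic join with the honest additive closure of the union of monogeneous subcategories; everything else is a direct translation of the closure-operator formalism through the poset isomorphism $M \leftrightarrow \Res(M)$.
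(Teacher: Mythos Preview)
Your proof is correct and follows essentially the same approach as the paper, which dispatches the result in one line as a consequence of \cref{thm:clandjoinirr} and \cref{prop:folkloreres}. You have simply unpacked what that citation means: identifying $\JI(\ResOrd(Q,R))$ with $(\pmb{\ind\setminus\proj}(Q,R),\Resleq)$ via \cref{prop:folkloreres}, invoking the upper join-decomposition, and then checking that the lattice join $\underline{\bigvee}_{M}\Res(M)$ coincides with the honest additive closure $\add(\Res(M)\mid M\in\mathfrak{E}_\mathscr{C})$. Your separate uniqueness argument is fine but slightly heavier than needed: once you observe that $\mathscr{C}=\add(\Res(M)\mid M\in\mathfrak{E})$ forces $\pmb{\ind\setminus\proj}(\mathscr{C})=\langle\mathfrak{E}\rangle_{\Accord'}$ as order ideals, uniqueness is immediate from the antichain--ideal bijection of \cref{prop:1to1antiideal}.
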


\begin{proof}
The result is a consequence of \cref{thm:clandjoinirr} and \cref{prop:folkloreres}.
\end{proof}

Following the previous result, we can construct an algorithm that allows one to construct $\mathfrak{E}_\mathscr{C}$ in the case $\mathscr{C} = \Res(\mathfrak{B})$ where $\mathfrak{B} \in \Anti(\pmb{\ind \setminus \proj}(Q,R), \Resleq)$. A first step is , given any pair $(X,Y) \in \mathfrak{B}^2$, to determine the antichain $\mathfrak{E}_{\Res(X,Y)}$

\begin{algo} \label{algo:Rescatmotivation} Let $(Q,R)$ be a gentle tree. Set $\mathfrak{A} = \ind \setminus \proj(Q,R)$. We input $\mathfrak{X} \subset \mathfrak{A}$.
\begin{enumerate}[label=$(\arabic*)$,itemsep=1mm]
\item Set $\mathfrak{B}^0 = \Theta_{\mathfrak{A}}^{-1} \left(\langle \mathfrak{X} \rangle_\mathfrak{A} \right)$.
\item \label{2algores1motiv} At the $i$th iteration of the algorithm, we define $\mathfrak{B}^{i+1}$ from $\mathfrak{B}^i$ as follows:
\begin{enumerate}[label=$(2 \alph*)$,itemsep=1mm]
    \item if there exists a pair $(X,Y) \in (\mathfrak{B}^i)^2$ such that $\mathfrak{E}_{\Res(X,Y)} \nsubseteq  \langle \mathfrak{B}^i \rangle_\mathfrak{A}$, then we set $\mathfrak{B}^{i+1} = \Theta_\mathfrak{A}^{-1}\left(\langle \mathfrak{B}^i \rangle_\mathfrak{A} \cup \langle \mathfrak{E}_{\Res(X,Y)} \rangle_\mathfrak{A} \right)$; 
    \item Otherwise, we set $\mathfrak{B}^{i+1} = \mathfrak{B}^i$.
\end{enumerate}
\item If $\mathfrak{B}^{i+1} \neq \mathfrak{B}^i$, then go back to Step $(2)$;

\item Otherwise, return $\mathfrak{B}^{i+1}$.
\end{enumerate}
\end{algo}

\begin{theorem} \label{thm:EndAlgo} Let $(Q,R)$ be a gentle tree, and $\mathfrak{X} \subseteq \pmb{\ind \setminus \proj}(Q,R)$. Then \cref{algo:Rescatmotivation} ends, and it returns the antichain $\mathfrak{E}_{\Res(\mathfrak{X})}$.
\end{theorem}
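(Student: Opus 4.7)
The plan is to prove the theorem in two parts: first, that Algorithm \ref{algo:Rescatmotivation} terminates, and second, that when it does, the returned antichain coincides with $\mathfrak{E}_{\Res(\mathfrak{X})}$. Throughout, I would use Proposition \ref{prop:1to1antiideal} to pass freely between antichains of $(\pmb{\ind \setminus \proj}(Q,R), \Resleq)$ and the order ideals they generate, so that the chain $\mathfrak{B}^0, \mathfrak{B}^1, \ldots$ produced by the algorithm can be replaced by the chain of associated ideals $\langle \mathfrak{B}^i \rangle_\mathfrak{A}$.

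For termination, I would observe that the sequence $\langle \mathfrak{B}^0 \rangle_\mathfrak{A} \subseteq \langle \mathfrak{B}^1 \rangle_\mathfrak{A} \subseteq \cdots$ is weakly increasing by the very definition of step $(2a)$, and is strictly increasing whenever step $(2a)$ fires, since then some $\mathfrak{E}_{\Res(X,Y)}$ is by hypothesis not contained in $\langle \mathfrak{B}^i \rangle_\mathfrak{A}$. As $\pmb{\ind \setminus \proj}(Q,R)$ is finite, the chain stabilises after finitely many iterations, at which point step $(2b)$ applies and the algorithm halts.

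For correctness, I would establish both inclusions $\langle \mathfrak{B}^{\mathrm{final}} \rangle_\mathfrak{A} = \langle \mathfrak{E}_{\Res(\mathfrak{X})} \rangle_\mathfrak{A}$. The inclusion $\subseteq$ is an easy induction: $\langle \mathfrak{B}^0 \rangle_\mathfrak{A} = \langle \mathfrak{X} \rangle_\mathfrak{A} \subseteq \langle \mathfrak{E}_{\Res(\mathfrak{X})} \rangle_\mathfrak{A}$ because $\mathfrak{X} \subseteq \Res(\mathfrak{X})$, and any update adjoins some $\langle \mathfrak{E}_{\Res(X,Y)} \rangle_\mathfrak{A}$ with $X, Y$ already in $\Res(\mathfrak{X})$, so $\Res(X,Y) \subseteq \Res(\mathfrak{X})$. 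For the reverse inclusion, let $\mathscr{D}$ be the additive subcategory of $\rep(Q,R)$ generated by $\{ \MM(X) \mid X \in \langle \mathfrak{B}^{\mathrm{final}} \rangle_\mathfrak{A} \} \cup \proj(Q,R)$; it is enough to show $\mathscr{D}$ is resolving, since it plainly contains $\mathfrak{X}$ and would therefore contain $\Res(\mathfrak{X})$.

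To check that $\mathscr{D}$ is resolving I would invoke \cref{thm:equivres}, following \cref{conv:usethisthm}. Axiom $(\mathsf{Res}1)$ holds by construction. For any non-projective indecomposable $Y \in \mathscr{D}$ there is $Y' \in \mathfrak{B}^{\mathrm{final}}$ with $Y \Resleq Y'$, so $\Res(Y) \subseteq \Res(Y') \subseteq \mathscr{D}$; this immediately yields $(\mathsf{Res}3)$, and also handles $(\mathsf{Res}2)$ whenever one of the two indecomposables in a short exact sequence is projective (extensions by projectives lie in $\Res(Y)$). The remaining case of $(\mathsf{Res}2)$, where both $X, Y$ are non-projective indecomposables in $\mathscr{D}$, is the main obstacle: picking dominating $X', Y' \in \mathfrak{B}^{\mathrm{final}}$, an extension $E$ of $X$ by $Y$ sits in $\Res(X, Y) \subseteq \Res(X', Y')$, and the termination condition guarantees $\mathfrak{E}_{\Res(X', Y')} \subseteq \langle \mathfrak{B}^{\mathrm{final}} \rangle_\mathfrak{A}$, so the upper join-decomposition (\cref{def:upperjoindec}, \cref{prop:candecompres}) of $\Res(X', Y')$ forces every indecomposable of it, and in particular $E$, to belong to $\mathscr{D}$. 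The subtle point I would emphasise is exactly that the termination check of the algorithm inspects only pairs \emph{within} the antichain $\mathfrak{B}^{\mathrm{final}}$, and one must argue that this suffices to tame extensions between arbitrary indecomposables of the whole ideal $\langle \mathfrak{B}^{\mathrm{final}} \rangle_\mathfrak{A}$, using the monotonicity $\Res(X,Y) \subseteq \Res(X',Y')$.
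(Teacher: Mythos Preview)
Your proposal is correct and follows essentially the same approach as the paper's proof: both establish termination via the increasing chain of ideals in a finite poset, prove $\langle \mathfrak{B}^{\mathrm{final}} \rangle_\mathfrak{A} \subseteq \Res(\mathfrak{X})$ by induction along the algorithm, and prove the reverse inclusion by showing the additive subcategory generated by $\langle \mathfrak{B}^{\mathrm{final}} \rangle_\mathfrak{A}$ together with projectives is resolving via \cref{thm:equivres}, with the crucial step being the monotonicity $\Res(X,Y) \subseteq \Res(X',Y')$ for dominating $X',Y' \in \mathfrak{B}^{\mathrm{final}}$ combined with the termination condition. The only cosmetic difference is that the paper cites \cref{lem:IdealsclosedSyzygies} directly for closure under syzygies, whereas you deduce it from $\Res(Y) \subseteq \Res(Y')$; both arguments are equivalent.
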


Let us first state the following lemma, which is a direct consequence of the definition of the Res-relation.

\begin{lemma} \label{lem:IdealsclosedSyzygies}
Let $(Q,R)$ be a gentle tree. All the ideals of $(\pmb{\ind \setminus \proj}(Q,R), \Resleq)$ are closed under non-projective syzygies, and by taking extensions with projective representations.
\end{lemma}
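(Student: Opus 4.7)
The plan is to reduce both closure properties to the defining property of the Res-relation, namely $X \Resleq Y \iff \Res(X) \subseteq \Res(Y)$, combined with the fact that $\Res(X)$ is itself a resolving subcategory. Fix an ideal $\mathfrak{I}$ of $(\pmb{\ind \setminus \proj}(Q,R), \Resleq)$ and an element $X \in \mathfrak{I}$. For any indecomposable non-projective $Y$ obtained from $X$ by one of the two operations, it suffices to check that $Y \in \Res(X)$, because then $\Res(Y) \subseteq \Res(X)$ (since $\Res(Y)$ is the smallest resolving subcategory containing $Y$ and $\Res(X)$ is one such), hence $Y \Resleq X$, and finally $Y \in \mathfrak{I}$ by downward closedness of the ideal.

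First I would treat the syzygy case. Let $Y$ be a non-projective indecomposable summand of $\Syg(X)$. By \cref{lem:othercharactresolv}\ref{ires}, a resolving subcategory is in particular closed under taking syzygies; since $X \in \Res(X)$, we get $\Syg(X) \in \Res(X)$ and therefore $Y \in \Res(X)$, which gives the conclusion as above.

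Next, for the extension case, consider a short exact sequence $0 \to A \to E \to B \to 0$ in $\rep(Q,R)$ where $\{A,B\} = \{X, P\}$ with $P$ projective, and let $Y$ be an indecomposable non-projective direct summand of $E$. The resolving subcategory $\Res(X)$ contains $X$ by construction and all projective objects by \ref{R1}, so both $A$ and $B$ lie in $\Res(X)$; being closed under extensions by \ref{R2}, $\Res(X)$ contains $E$, and by additive closure it contains $Y$. The defining property of $\Resleq$ then gives $Y \Resleq X$, and hence $Y \in \mathfrak{I}$.

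There is no real obstacle: the only thing to verify is that the two operations (taking a non-projective syzygy, or forming an extension with a projective) produce objects in $\Res(X)$, and this is immediate from the axioms of a resolving subcategory. No combinatorial input from the geometric model is used here; the lemma is purely a consequence of the definition of $\Resleq$ together with \ref{R0}--\ref{R3}.
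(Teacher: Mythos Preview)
Your argument is correct and is precisely the ``direct consequence of the definition of the Res-relation'' that the paper alludes to in lieu of a written proof: if $Y$ arises from $X$ by a syzygy or by an extension with a projective, then $Y\in\Res(X)$ by \ref{R1}--\ref{R3} and \ref{R0}, whence $Y\Resleq X$ and so $Y$ lies in any ideal containing $X$. One small fix: the reference you cite for syzygy closure, \ref{ires}, is item $(i)$ of \cref{lem:othercharactresolv}, which concerns \ref{R1}; you want either item $(ii)$ of that lemma or, more directly, \ref{R3} applied to the epimorphism from the projective cover.
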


\begin{proof}[Proof of \cref{thm:EndAlgo}]
The sequences $\left( \mathfrak{B}^i \right)_{i \in \mathbb{N}}$ is increasing with respect to $\bleq$. Moreover, the set $\Anti(\pmb{\ind \setminus \proj}(Q,R))$ is finite. So \cref{algo:Rescatmotivation} ends.

Let $j \in \mathbb{N}$ be such that $\mathfrak{B}^j = \mathfrak{B}^{j+1}$. We prove that $\mathfrak{B}^j$ is such that \[\Res(\mathfrak{X}) = \add(\Res(M) \mid M \in \mathfrak{B}^j).\] In the following, we set $\mathscr{S} =\add(\Res(M) \mid M \in \mathfrak{B}^j)$

First, we show that $\mathscr{S}$ is a resolving subcategory. Note that \[\mathscr{S} = \add(M \mid M \in \langle \mathfrak{B}^j \rangle \cup \proj(Q,R)).\] By \cref{lem:IdealsclosedSyzygies}, we know that $\add(\Res(M) \mid M \in \mathfrak{B}^j)$ is closed under syzygies. Let $X,Y \in \mathscr{S}$. Let $M,N \in \mathfrak{B}^j$ be such that $X \Resleq M$ and $Y \Resleq N$. We have that $\Res(X,Y) \subseteq \Res(M,N)$. By assumption, we have that $\mathfrak{E}_{\Res(M,N)} \subseteq \langle \mathfrak{B}^j \rangle$, which implies that $\Res(M,N) \subseteq \mathscr{S}$. Therefore, any extension between $X$ and $Y$ is contained in $\mathscr{S}$. So $\mathscr{S}$ is closed under sums, summands, extensions, and syzygies and contains $\proj(Q,R)$. By \cref{thm:equivres}, $\mathscr{S}$ is a resolving subcategory.

By construction, $\mathfrak{X} \subseteq \langle \mathfrak{B}^j \rangle$, so $\Res(\mathfrak{X}) \subseteq \mathscr{S}$. Now we will show that $\mathscr{S} \subseteq \Res(\mathfrak{X})$. To do so, we show by induction that for all $i \in \mathbb{N}$, $\add(\Res(M) \mid M \in \mathfrak{B}^i) \subseteq \Res(\mathfrak{X})$. It is obvious that $\add(\Res(M) \mid M \in \mathfrak{B}^0) \subseteq \Res(\mathfrak{X})$. 

Assume that for some $i \in \mathbb{N}$, we have that $\add(\Res(M) \mid M \in \mathfrak{B}^i) \subseteq \Res(\mathfrak{X})$. If $\mathfrak{B}^i = \mathfrak{B}^{i+1}$, then we are done. Assume that there exists $(X,Y) \in (\mathfrak{B}^i)^2$ such that $\mathfrak{E}_{\Res(X,Y)} \nsubseteq \mathfrak{B}^i$. As $\Res(\mathfrak{X})$ is closed under extension, and by induction, we have that $\Res(X,Y) \subset \Res(\mathfrak{X})$, which means that $\mathfrak{B}^{i+1} \subseteq \Res(\mathfrak{X})$. 

As a result, we have that $\mathscr{S} = \Res(\mathfrak{X})$, and by \cref{prop:candecompres}, we have that $\mathfrak{B}^j = \mathfrak{E}_{\mathscr{C}}$.
\end{proof}

Thanks to \cref{thm:EndAlgo}, we show that to get the resolving subcategories generated by a collection of non-projective indecomposable representations of $(Q,R)$, we only need to express explicitly the resolving subcategories generated by a pair of non-comparable non-projective indecomposable representations. This will be our goal in the following section.

\subsection{On geometric resolving sets}
\label{ss:geometric}

Fix a gentle tree $(Q,R)$. In the following, we set $\Surf(Q,R) = (\pmb{\Sigma}, \mathcal{M}, \Delta^{\gpoint})$. Set $\Accord = \mathscr{A}(\mathcal{S}(Q,R))$, and $\Accord' = \Accord \setminus \Prj(\Delta^{\gpoint})$. Let us first introduce the following definition.

\begin{definition} \label{def:resclosaccord}
  Given a collection of accordions $\mathfrak{U}$, we consider its \new{resolving closure} $\opResAc(\mathfrak{U})$ to be the smallest resolving set of $\Accord$ containing $\mathfrak{U}$. We also set $\opResAc'(\mathfrak{U}) = \opResAc(\mathfrak{U}) \setminus \Prj(\Delta^{\gpoint})$.
\end{definition}

For simplicity, for $\delta, \eta \in \Accord'$, we write $\opResAc(\delta)$ for $\opResAc(\{\delta\})$, and $\opResAc(\delta,\eta)$ for $\opResAc(\{\delta,\eta\})$. Similar conventions apply to $\opResAc'$.

\begin{remark}
    Note that, for any $\delta \in \Accord'$, the monogeneous geometric resolving set $\opResAc(\delta)$ is the resolving closure of $\{\delta\}$ by \cref{thm:res_clo_1}. We chose to use the same notation.
\end{remark}

Now we translate the $\Res$-relation on $\pmb{\ind \setminus \proj}(Q,R)$

\begin{definition} \label{def:GeomResrelation}
The \new{geometric Res-relation}, denoted by $\Accordleq$, is a binary relation on $\Accord'$ defined as follows:
\[\forall (\delta, \eta) \in (\Accord')^2,\  \delta \Accordleq \eta \Longleftrightarrow \opResAc(\delta) \subseteq \opResAc(\eta). \]
\end{definition}

\begin{lemma} \label{lem:ResleqandAccordleq}
The relation $\Accordleq$ is an order relation on $\Accord'$. Moreover, the posets $(\Accord', \Accordleq)$ and $(\pmb{\ind \setminus \proj}(Q,R), \Resleq)$ are isomorphic.
\end{lemma}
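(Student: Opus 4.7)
The plan is to transport the known order $\Resleq$ on $\pmb{\ind \setminus \proj}(Q,R)$ along the bijection $\delta \mapsto \MM(\delta)$ furnished by \cref{thm:GeomandRep}. First, reflexivity and transitivity of $\Accordleq$ are immediate, since it is defined via set inclusion. So the only nontrivial property to check is antisymmetry, and this I will obtain as a by-product of the poset isomorphism.

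Recall from \cref{thm:GeomandRep} that $\MM$ is a bijection between $\Accord$ and $\ind(Q,R)$ (up to homotopy and isomorphism respectively), and by construction $\MM$ sends $\Prj(\Delta^{\gpoint})$ bijectively onto the indecomposable projectives. Consequently $\MM$ restricts to a bijection $\Phi : \Accord' \longrightarrow \pmb{\ind \setminus \proj}(Q,R)$. The key step is then to verify that, for any $(\delta,\eta) \in (\Accord')^2$, one has
\[ \delta \Accordleq \eta \ \Longleftrightarrow \ \MM(\delta) \Resleq \MM(\eta). \]
By \cref{thm:res_clo_1}, $\Res(\MM(\delta)) = \add(\MM(\varsigma) \mid \varsigma \in \opResAc(\delta))$ and similarly for $\eta$. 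Since $\Res(-)$ subcategories are additively closed and each indecomposable object is representable by a unique accordion, the inclusion $\Res(\MM(\delta)) \subseteq \Res(\MM(\eta))$ is equivalent to the containment of the corresponding sets of accordions (after adding the projective accordions on both sides, which are automatically present). This yields the desired equivalence.

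From this equivalence it follows that if $\delta \Accordleq \eta$ and $\eta \Accordleq \delta$, then $\MM(\delta) \Resleq \MM(\eta)$ and $\MM(\eta) \Resleq \MM(\delta)$; the antisymmetry of $\Resleq$ (which is a genuine order on $\pmb{\ind \setminus \proj}(Q,R)$ by the recalled material of \cref{ss:recalltrees}) gives $\MM(\delta) \cong \MM(\eta)$, hence $\delta = \eta$ via the bijection $\Phi$. Thus $\Accordleq$ is antisymmetric, and $\Phi$ becomes an order isomorphism $(\Accord', \Accordleq) \xrightarrow{\sim} (\pmb{\ind \setminus \proj}(Q,R), \Resleq)$.

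The only subtle point is the equivalence of inclusions at the level of accordions versus at the level of additive subcategories; this relies essentially on the fact that $\MM$ identifies indecomposable summands on both sides, together with \cref{thm:res_clo_1} which is what makes the comparison purely combinatorial. Everything else is a direct transport of structure.
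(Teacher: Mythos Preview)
Your proof is correct and follows exactly the approach the paper takes: the paper's proof is a one-line reference to \cref{thm:GeomandRep,thm:res_clo_1}, and you have simply unpacked that reference by spelling out how the bijection $\MM$ transports $\Resleq$ to $\Accordleq$ via the identification $\Res(\MM(\delta)) = \add(\MM(\varsigma)\mid \varsigma\in\opResAc(\delta))$. Nothing is missing or different in substance.
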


\begin{proof}
This is a direct consequence of \cref{thm:GeomandRep,thm:res_clo_1}.
\end{proof}

We can then translate the different results in \cref{ss:rescat} and get the following results.

\begin{prop} \label{prop:folkloreresaccord}
The following assertions hold:
\begin{enumerate}[label=$(\roman*)$,itemsep=1mm]
    \item The operator $\opResAc'$ is a closure operator on $\Accord'$;
    \item The poset $(\mathcal{P}_{\opResAc'}(\Accord'), \subseteq)$ is a subposet of $(\mathscr{J}(\Accord', \Accordleq), \subseteq)$;
    \item The lattice $(\mathcal{P}_{\opResAc'}(\Accord'), \subseteq, \cap, \veebar)$, where \[\forall (\mathfrak{B}, \mathfrak{C}) \in \mathcal{P}_{\opResAc'}(\Accord')^2,\ \mathfrak{B} \veebar \mathfrak{C} = \opResAc'(\mathfrak{B} \cup \mathfrak{C}),\] is isomorphic to $(\ResOrd(Q,R), \subseteq, \cap, \underline{\cup})$;
    \item The join-irreducible elements of $(\mathcal{P}_{\opResAc'}(\Accord'), \subseteq, \cap, \underline{\cup})$ are the principal ideals of $(\Accord', \Accordleq)$;
    \item All the ideals of $(\Accord', \Accordleq)$ are closed under non-projective syzygies, and both arrow and overlap extensions with projective accordions.
\end{enumerate}
\end{prop}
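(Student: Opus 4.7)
My plan is to obtain all five assertions as direct transports of the corresponding statements for $(\pmb{\ind \setminus \proj}(Q,R), \Resleq)$ already established in \cref{prop:folkloreres} and \cref{lem:IdealsclosedSyzygies}, using the poset isomorphism $\Phi: (\Accord', \Accordleq) \to (\pmb{\ind \setminus \proj}(Q,R), \Resleq)$, $\delta \mapsto \MM(\delta)$ provided by \cref{lem:ResleqandAccordleq}. The key observation that drives everything is that, by \cref{thm:res_clo_1}, for any $\mathfrak{U} \subseteq \Accord'$ one has
\[
\opResAc'(\mathfrak{U}) \;=\; \Phi^{-1}\!\left( \pmb{\ind \setminus \proj}\bigl(\Res(\{\MM(\delta)\mid \delta\in\mathfrak{U}\})\bigr)\right),
\]
so $\opResAc'$ corresponds under $\Phi$ to the closure operator $(-)^{\Res}$ on $\pmb{\ind \setminus \proj}(Q,R)$.

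For (i), I would simply verify the three axioms of \cref{def:closop} ($\mathfrak{U} \subseteq \opResAc'(\mathfrak{U})$, idempotence, and monotonicity) by transporting them from the fact, noted in \cref{ex:closop}, that $(-)^{\Res}$ is a closure operator. For (ii), the compatibility of $\opResAc'$ with $\Accordleq$ (in the sense of \cref{def:closopresp}) follows from the analogous compatibility of $(-)^{\Res}$ with $\Resleq$ given in \cref{ex:closopcompatible}, and then \cref{prop:closeopcompatible} delivers the subposet statement. For (iii), the binary join described in the statement is by construction the closure of the union, so $\Phi$ becomes a lattice isomorphism between $(\mathcal{P}_{\opResAc'}(\Accord'),\subseteq,\cap,\veebar)$ and $(\mathcal{P}_{\Res}(\pmb{\ind \setminus \proj}(Q,R)),\subseteq,\cap,\underline{\cup})$, and composing with the isomorphism of \cref{prop:folkloreres}(ii) yields an isomorphism with $(\ResOrd(Q,R), \subseteq, \cap, \underline{\cup})$.

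For (iv), I would combine \cref{prop:folkloreres}(iii), which identifies join-irreducibles with principal ideals in $(\pmb{\ind \setminus \proj}(Q,R), \Resleq)$, with \cref{thm:Monoareallthejoinirred} to see that each join-irreducible resolving subcategory is monogeneous, and then pull back through $\Phi$: the principal ideal $\langle M \rangle$ corresponds, via \cref{thm:res_clo_1}, to the principal ideal $\langle \gamma_{(M)}\rangle = \opResAc'(\gamma_{(M)})$ in $(\Accord', \Accordleq)$. Finally, for (v), the closure of ideals of $(\pmb{\ind \setminus \proj}(Q,R), \Resleq)$ under non-projective syzygies (which is \cref{lem:IdealsclosedSyzygies}) transports to the analogous statement for $(\Accord', \Accordleq)$ via $\Phi$; the closure under arrow and overlap extensions with projective accordions is the geometric formulation, via \cref{prop:geom_ext}, of the closure under extensions with projective representations (arrow/overlap being the only two types of extensions in a gentle tree).

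I do not anticipate any real obstacle, since the entire statement is a translation. The only bit that requires a little care is making sure that the definition of $\veebar$ in (iii) really does correspond to $\underline{\cup}$ under $\Phi$ — this amounts to observing that $\Phi$ intertwines $\opResAc'$ and $(-)^{\Res}$ on unions, which is immediate from the displayed identity above; and, for (v), that the $\Phi$-image of an extension of non-projective indecomposables by projective ones is realized geometrically precisely through arrow and overlap extensions with projective accordions, which is the content of \cref{prop:geom_ext} in the tree case.
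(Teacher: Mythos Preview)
Your proposal is correct and matches the paper's approach exactly: the paper does not even write out a proof for this proposition, introducing it only with the sentence ``We can then translate the different results in \cref{ss:rescat} and get the following results,'' which is precisely the transport along the isomorphism of \cref{lem:ResleqandAccordleq} that you describe. Your elaboration of how each item of \cref{prop:folkloreres} and \cref{lem:IdealsclosedSyzygies} carries over is a faithful unpacking of that one-line justification.
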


\begin{cor} \label{cor:maingeommotivation}
For any resolving set $\mathfrak{U} \subseteq \mathcal{P}_{\opResAc'}(\Accord')$, there exists a unique antichain $\mathfrak{E}_\mathfrak{U} \in \Anti(\Accord', \Accordleq)$ such that \[\mathfrak{U} = \bigcup_{\delta \in \mathfrak{E}_{\mathfrak{U}}} \opResAc'(\delta). \]
Moreover $\mathfrak{E}_\mathfrak{U}$ can be obtained from any $\mathfrak{D} \subseteq \Accord'$ such that $\opResAc(\mathfrak{D}) = \mathfrak{U} $ by applying \cref{algo:Rescatmotivation} adapted to the geometric model language.
\end{cor}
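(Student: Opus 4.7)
The plan is to obtain this corollary by transporting the representation-theoretic results of Section 3.3 along the isomorphisms already established. By \cref{lem:ResleqandAccordleq}, the map $\delta \mapsto \MM(\delta)$ is an isomorphism of posets $(\Accord', \Accordleq) \cong (\pmb{\ind \setminus \proj}(Q,R), \Resleq)$, which induces a bijection between $\Anti(\Accord', \Accordleq)$ and $\Anti(\pmb{\ind \setminus \proj}(Q,R), \Resleq)$. By \cref{prop:folkloreresaccord} (iii), the lattice $(\mathcal{P}_{\opResAc'}(\Accord'), \subseteq, \cap, \veebar)$ is isomorphic to $(\ResOrd(Q,R), \subseteq, \cap, \underline{\cup})$; call this isomorphism $\Omega$. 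Moreover, \cref{thm:res_clo_1} ensures that, under $\Omega$, principal ideals of $(\Accord', \Accordleq)$ correspond to monogeneous resolving subcategories via $\opResAc'(\delta) \longleftrightarrow \Res(\MM(\delta))$.

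For existence and uniqueness of $\mathfrak{E}_\mathfrak{U}$, set $\mathscr{C} = \Omega(\mathfrak{U}) \in \ResOrd(Q,R)$. Applying \cref{prop:candecompres} yields a unique antichain $\mathfrak{E}_\mathscr{C} \in \Anti(\pmb{\ind \setminus \proj}(Q,R), \Resleq)$ with $\mathscr{C} = \add(\Res(M) \mid M \in \mathfrak{E}_\mathscr{C})$. Define $\mathfrak{E}_\mathfrak{U}$ as the image of $\mathfrak{E}_\mathscr{C}$ under the bijection of antichains. Applying $\Omega^{-1}$ term-by-term to the decomposition of $\mathscr{C}$, and using that $\Omega^{-1}(\Res(\MM(\delta))) = \opResAc'(\delta)$, yields $\mathfrak{U} = \bigcup_{\delta \in \mathfrak{E}_\mathfrak{U}} \opResAc'(\delta)$. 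Uniqueness of $\mathfrak{E}_\mathfrak{U}$ follows from the uniqueness of $\mathfrak{E}_\mathscr{C}$ together with the injectivity of the transport.

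For the algorithmic statement, given $\mathfrak{D} \subseteq \Accord'$ with $\opResAc(\mathfrak{D}) = \mathfrak{U}$, set $\mathfrak{X} = \{\MM(\delta) \mid \delta \in \mathfrak{D}\} \subseteq \pmb{\ind \setminus \proj}(Q,R)$, so $\Res(\mathfrak{X}) = \mathscr{C}$. The steps of \cref{algo:Rescatmotivation} are purely order-theoretic: they produce an increasing sequence of antichains by enlarging an ideal using local pairwise witnesses $\mathfrak{E}_{\Res(X,Y)}$. Under the isomorphism above, each such step translates verbatim to the corresponding step in $\Accord'$, with $\mathfrak{E}_{\opResAc(\delta,\eta)}$ playing the role of $\mathfrak{E}_{\Res(X,Y)}$. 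By \cref{thm:EndAlgo}, the original algorithm terminates and returns $\mathfrak{E}_{\Res(\mathfrak{X})} = \mathfrak{E}_\mathscr{C}$, whose image under the poset isomorphism is precisely $\mathfrak{E}_\mathfrak{U}$.

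The main obstacle is not in the corollary itself, which is a formal transport, but in ensuring that the geometric version of the algorithm is genuinely executable: one must compute the antichains $\mathfrak{E}_{\opResAc(\delta,\eta)}$ associated with an arbitrary pair of non-comparable accordions. This computation is the substance of the subsequent sections, and once it is available, the procedure described here assembles it into an explicit geometric description of every resolving subcategory.
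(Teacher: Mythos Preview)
Your proof is correct and follows exactly the approach implicit in the paper: the corollary is stated without proof immediately after \cref{prop:folkloreresaccord}, and is meant to be read as the direct translation of \cref{prop:candecompres} and \cref{thm:EndAlgo} along the isomorphisms of \cref{lem:ResleqandAccordleq} and \cref{prop:folkloreresaccord}. Your final paragraph correctly identifies that the substantive work lies ahead in computing $\mathfrak{E}_{\opResAc'(\delta,\eta)}$, which is precisely what the remaining sections of the paper carry out.
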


Therefore, this motivates us to describe $\mathfrak{E}_{\opResAc'(\delta,\eta)}$  for any pair $(\delta, \eta) \in \Accord'$ in the next sections.
	
	\section{Colorations}
	\label{sec:Matchable}
        The main goal is to describe all the resolving subcategories and their upper join decomposition in $\rep(Q,R)$ for any given gentle tree $(Q,R)$. Our idea is to use \cref{thm:EndAlgo}, and, via the geometric model, to compute $\mathfrak{E}_{\opResAc'(\delta,\eta)}$ explicitly for any pair $(\delta,\eta) \in \Accord'^2$. 

In this section, given such a pair $(\delta,\eta)$, we highlight combinatorial behaviors about the neighboring projective accordions and their respective colorations, called \emph{matchability}. It will capture whether there are direct or indirect interactions between $\delta$ and $\eta$. More precisely, we enumerate three main ones:
\begin{enumerate}[label=$\bullet$, itemsep=1mm]
    \item the case where $\NP(\delta) \cap \NP(\eta) = \varnothing$: it implies for any $\xi \Accordleq \delta$ and for any $\varsigma \Accordleq \eta$, there is neither arrow nor overlap extension between $\xi$ and $\varsigma$;
    \item the case where $\NP(\delta) \cap \NP(\eta) \neq \varnothing$, and $\col_\delta$ and $\col_\eta$ are not matchable: it implies that there is neither epimorphism nor extension ($\Ext^1$) between $\MM(\delta)$ and $\MM(\eta)$; and,
    \item the case where $\NP(\delta) \cap \NP(\eta) \neq \varnothing$, and $\col_\delta$ and $\col_\eta$ are  matchable.
\end{enumerate}
This section aims at showing that to computing $\opResAc(\delta,\eta)$ can be reduced to computing $\opResAc(\xi,\varsigma)$ where $(\xi,\varsigma) \in \Accord'^2$ is such that:
\begin{enumerate}[label=$\bullet$, itemsep=1mm]
    \item $\xi \Accordleq \delta$, and $\varsigma \Accordleq \eta$; and,
    \item $\col_\xi$ and $\col_\eta$ are matchable.
\end{enumerate}
In \cref{ss:weakmatchabilityconfig}, we focus on a specific case of matchability, called \emph{weak matchability}, which will be crucial to reach our goal.

\subsection{Matchable colorations}
\label{ss:Matchable}
Let us begin with the following definition.
\begin{definition} \label{def:matchablematch}
    Let $(\delta, \eta) \in \Accord'^2$. Consider a coloration $\col_\delta$ of $\NP(\delta)_0$, and  a coloration $\col_\eta$ of $\NP(\eta)_0$. Let $\rho \in \Prj(\Delta^{\gpoint})$ and denote by $v_1, v_2 \in \mathcal{M}_{\rpoint}$ its endpoints. We say that $\rho$ \new{makes the colorations matchable} whenever we are in one of the following configurations:
    \begin{enumerate}[label=$(\mathsf{M}\arabic*)$, itemsep=1mm]
        \item \label{M1} all the assertions hold:
    \begin{enumerate}[label=$\bullet$, itemsep=1mm]
        \item we have $\rho \in \NP(\delta) \cup \NP(\eta)$;
        \item there exists $v \in \{v_1,v_2\}$ such that $v \in \NP(\delta)_0$ and $\col_\delta(v) \in \{{\rsquare},{\gsquare}\}$; and,
        \item there exists $w \in \{v_1,v_2\}$ such that $w \in \NP(\eta)_0$ and $\col_\eta(w) \in \{{\rsquare},{\gsquare}\}$;
    \end{enumerate}
        \item \label{M2} up to exchanging the role of $\delta$ and $\eta$, all assertions hold:
        \begin{enumerate}[label=$\bullet$, itemsep=1mm] 
         \item the accordion $\eta$ is contained in the source or the target cell $C$ of $\delta$; 
         \item the projective accordion $\rho$ is in $\partial C$; and,
         \item $\rho$ and $\eta$ share a common endpoint $v \in \mathcal{M}_{\rpoint}$, which is not an endpoint of $\delta$, and $\eta \prec_v \rho$.
         \end{enumerate}
    \end{enumerate}
    We say that $\col_\delta$ and $\col_\eta$ are \new{matchable} whenever $\NP(\delta) \cap \NP(\eta) \neq \varnothing$ and there exists $\rho \in \NP(\delta)\cup\NP(\eta)$  that makes the colorations matchable. In such a case we say that $\col_\delta$ and $\col_\eta$ are:
    \begin{enumerate}[label=$\bullet$, itemsep=1mm]
        \item \new{strongly} matchable whenever there exists $\rho \in \NP(\delta)\cap\NP(\eta)$ that makes the colorations matchable; and,
        \item \new{weakly} matchable otherwise.
    \end{enumerate}
\end{definition}

\begin{remark}\label{rem:M2}
    If $\rho \in \NP(\delta) \cup \NP(\eta)$ satisfies \ref{M2}, then:
    \begin{enumerate}[label=$\bullet$, itemsep=1mm]
        \item $\NP(\delta) \subset \NP(\eta)$, or $\NP(\eta) \subset \NP(\delta)$; and,
        \item $\rho \in \NP(\delta) \cap \NP(\eta)$.
    \end{enumerate}
\end{remark}

\begin{definition} \label{def:match}
We say that $\col_\delta$ and $\col_\eta$ \new{match} whenever, $\NP(\delta) \cap \NP(\eta) \neq \varnothing$, and for any $v\in\NP(\delta)_0\cap\NP(\eta)_0$, one of the following assertions holds:
\begin{enumerate}[label = $\bullet$,itemsep=1mm]
 \item $\col_\delta(v) = \col_\eta(v)$; or,
 \item ${\osquare} \in \{\col_\delta(v)\} \cup \{\col_\eta(v)\}$; or,
 \item ${\psquare} \in \{\col_\delta(v)\} \cup \{\col_\eta(v)\}$.
\end{enumerate}
\end{definition}

\begin{lemma} \label{lem:coincidecoloration}
Let $(\delta,\eta) \in \Accord'^2$. Fix the coloration $\col_\delta$ of $\NP(\delta)_0$ and $\col_\eta$ of $\NP(\eta)_0$. If $\col_\delta$ and $\col_\eta$ are matchable, then either $\col_\delta$ and $\col_\eta$ match, or $\col_\delta$ and $\overline{\col_\eta}$ match.
\end{lemma}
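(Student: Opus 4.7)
The plan is to directly produce the match. Suppose $\col_\delta$ and $\col_\eta$ are matchable via some projective accordion $\rho$, and introduce
\[
V = \{v \in \NP(\delta)_0 \cap \NP(\eta)_0 : \col_\delta(v),\col_\eta(v) \in \{\rsquare,\gsquare\}\},
\]
together with the comparison function $\epsilon \colon V \to \{+,-\}$ defined by $\epsilon(v) = +$ if $\col_\delta(v)=\col_\eta(v)$, and $\epsilon(v)=-$ otherwise. By \cref{def:match}, matching $\col_\eta$ (resp.\ $\overline{\col_\eta}$) is equivalent to $\epsilon \equiv +$ (resp.\ $\epsilon \equiv -$) on $V$, since on the remaining vertices $v \in (\NP(\delta)_0 \cap \NP(\eta)_0)\setminus V$ one of the two colors is already orange or pink and the matching clause holds automatically. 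Thus the proof reduces to showing that $\epsilon$ is constant on $V$.

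The key geometric input is that, by \cref{def:colourendpoints}(1), a simple color on $v \in \NP(\delta)_0$ records which connected component of $\pmb{\Sigma}\setminus\delta$ contains $v$; similarly for $\eta$. Hence $\epsilon(v)$ compares two side-labelings on $V$, and constancy amounts to saying that these two partitions of $V$ coincide up to a global flip. Since $(Q,R)$ is a gentle tree, $\pmb{\Sigma}$ is a disc and $\delta,\eta$ cross at most once, which dramatically limits how the two partitions can differ.

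I then case-split on the matchability witness condition. In the \ref{M1} case, \cref{prop:CombidescripNproj} constrains $\rho$ to cross, to sit on the boundary of a crossed cell of, or to share an endpoint with whichever of $\delta,\eta$ has $\rho$ in its $\NP$; the simply-colored endpoints $v$ and $w$ of $\rho$ supplied by $\col_\delta$ and $\col_\eta$ then act as anchor points from which the laterality of $\col_\delta$ propagates to that of $\col_\eta$ across all of $V$. A typical subcase is when $\rho \in \NP(\delta) \cap \NP(\eta)$ crosses both curves: then $v$ and $w$ are the two endpoints of $\rho$, lying on opposite sides of both $\delta$ and $\eta$, so the two side-partitions flip together between $v$ and $w$ and $\epsilon(v)=\epsilon(w)$. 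In the \ref{M2} case, \cref{rem:M2} gives $\NP(\delta) \subseteq \NP(\eta)$ (up to swapping), so $V \subseteq \NP(\delta)_0$; the counter-clockwise constraint $\eta \prec_v \rho$ inside the host cell of $\delta$ then pins down the laterality of $\col_\eta$ relative to $\col_\delta$ uniquely up to conjugation.

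The main obstacle I anticipate is the bookkeeping around large cells: rules (4)--(5) of \cref{def:colourendpoints} replace red/green by orange or pink for intermediate vertices of large cells, so such vertices leave $V$. Correctly identifying which shared vertices get reclassified in $\col_\delta$ versus $\col_\eta$, especially in the \ref{M2} case where $\eta$ lies inside a source- or target-cell of $\delta$, will be the most delicate verification; but this reclassification is precisely what absorbs the would-be obstructions to the constancy of $\epsilon$ that could arise when the curves twist through large cells.
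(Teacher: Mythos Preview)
Your reduction to the constancy of $\epsilon$ on $V$ is clean and correct, and the observation that simple colors record sides of $\delta$ (resp.\ $\eta$) is the right geometric picture. However, the propagation step is not actually carried out. In your \ref{M1} subcase you only establish $\epsilon(v)=\epsilon(w)$ for the two endpoints of the witnessing $\rho$; you then assert that laterality ``propagates to all of $V$'' without justification. When $\delta$ and $\eta$ cross once, the complement $\pmb{\Sigma}\setminus(\delta\cup\eta)$ has four regions, and $\epsilon$ takes value $+$ on two of them and $-$ on the other two, so constancy on $V$ genuinely requires an argument that $V$ avoids regions of the wrong sign. Your case split on the positions of $\rho$ relative to $\delta,\eta$ does not supply this, and the large-cell bookkeeping you flag as the ``main obstacle'' is not the real difficulty --- the real difficulty is global, not local to one cell.

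The paper takes a different and shorter route for this final step. After using $\rho$ to choose between $\col_\eta$ and $\overline{\col_\eta}$ (exactly as you do), it argues by contradiction: suppose some $v\in\NP(\delta)_0\cap\NP(\eta)_0$ has $\col_\delta(v)={\gsquare}$ and $\col_\eta(v)={\rsquare}$. Since $\NP(\delta)$ and $\NP(\eta)$ are each connected subtrees of the projective dissection, there is a path of projectives in $\NP(\delta)$ from $v$ to $\NP(\delta)\cap\NP(\eta)$, and another path in $\NP(\eta)$ from $v$ to $\NP(\delta)\cap\NP(\eta)$; together these form a loop of projective arcs, which is impossible in a dissected disc without inner cells. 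This connectivity-and-loop argument replaces your entire case analysis and handles all of $V$ at once, without needing to track crossings or large-cell recoloring.
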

\begin{proof}
By \cref{lem:uniqcolandlaterality}, there are only two colorations of $\NP(\delta)_0$, and only two colorations of $\NP(\eta)_0$. Let $\col_\delta$ be a coloration of $\NP(\delta)_0$, and $\col_\eta$ be one of $\NP(\eta)_0$. Assume that $\NP(\delta) \nsubseteq \NP(\eta)$ and $\NP(\delta) \nsupseteq \NP(\eta)$. There exists $\rho \in \NP(\delta) \cup \NP(\eta)$ that makes the colorations matchable. At least one of the endpoints of $\rho$ is red or green for $\col_\delta$, and one is red or green for $\col_\eta$. One can distinguish between a top and a bottom endpoint. If the laterality of this arc induced by $\col_\delta$ and $\col_\eta$ are the same, then we consider $\col_\delta$ and $\col_\eta$  otherwise, $\col_\delta$ and $\overline{\col_\eta}$. Once this choice is made, we suppose that $\col_\eta$ was the good one. Consider a point $v\in \NP(\delta)_0\cap\NP(\eta)_0$ such that $\col_\delta(v)=\gsquare$ and $\col_\eta(v)=\rsquare$. Note that as the neighbouring projectives are connected, there exists two paths of projectives starting at $v$ and reaching $\NP(\delta)\cap\NP(\eta)$ through $\NP(\delta)$ and $\NP(\eta)$ respectively. This is impossible as the surface is a disc without an inner cell and these two paths form a loop. The case where $\NP(\delta) \subset \NP(\eta)$ can be treated similarly 
\end{proof}
\begin{conv} \label{conv:matchablematching}
In the following, each time we consider a pair $(\delta,\eta)$ of non-projective accordions whose colorations $\col_\delta$ and $\col_\eta$ are matchable, we assume that $\col_\delta$ and $\col_\eta$ match. In this case, an induced compatible laterality on $(\pmb{\Sigma}, \mathcal{M}, \Prj{\Delta^{\circ}})$ allows us to distinguish a source $s(\mu)$ and a target $t(\mu)$, for any $\rpoint$-arc $\mu$ whose endpoints are in $\NP(\delta)_0 \cup \NP(\eta)_0$ by choosing that the source is on the left and the target on the right provided that the red points are above and the green below.
\end{conv}

Now we show that if $\delta$ and $\eta$ interact, meaning there are either epimorphisms, overlap, or arrow extensions between them, then $\col_\delta$ and $\col_\eta$ are matchable. This will be helpful in \cref{ss:nomatchablemoves}.

Let us first state the following lemma.

\begin{lemma} \label{lem:crossprojmatchable}
    Let $(\delta,\eta) \in \Accord'^2$ be such that there exists $\rho \in \Prj(\Delta^{\gpoint})$ which crosses both $\delta$ and $\eta$. Then $\rho$ makes the colorations matchable.
\end{lemma}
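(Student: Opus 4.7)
The aim is to check that $\rho$ satisfies condition \ref{M1} of \cref{def:matchablematch}, from which strong matchability (and hence the lemma) will follow. First, I would establish that $\rho \in \NP(\delta) \cap \NP(\eta)$: since $\rho$ crosses $\delta$, we have $\OvExt(\rho,\delta) \neq \varnothing$, hence by \cref{prop:CombidescripNproj}~(i) the arc $\rho$ belongs to $\NP(\delta)$; the identical reasoning applied to $\eta$ gives $\rho \in \NP(\eta)$. In particular the first bullet of \ref{M1} is satisfied.

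Next, let $v_1,v_2 \in \mathcal{M}_{\rpoint}$ be the endpoints of $\rho$. Because $\rho$ crosses $\delta$ in their relative interiors and two accordions on a gentle tree cross at most once (since $\Hom$-spaces are at most one-dimensional under the tree assumption together with \cref{prop:HomCrossing}), the points $v_1$ and $v_2$ lie on opposite sides of $\delta$; moreover, neither equals $s(\delta)$ nor $t(\delta)$. By Rule~(1) of \cref{def:colourendpoints}, the initial assignment therefore gives $\{\col_\delta(v_1),\col_\delta(v_2)\} = \{\rsquare,\gsquare\}$. I then need to check this survives the subsequent recolorations. Rules~(2)--(3) act only on $s(\delta),t(\delta) \notin \{v_1,v_2\}$, and Rules~(4)--(5) affect only intermediate points of source/target cells larger than a triangle; by the geometric description of such cells (as in \cref{fig:exsctc}), all these intermediate points lie on a single side of $\delta$, namely the side opposite to $\delta$'s trajectory through the cell. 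Since $v_1$ and $v_2$ lie on opposite sides of $\delta$, they cannot both be recolored to $\osquare$ or $\psquare$, so at least one of $\col_\delta(v_1),\col_\delta(v_2)$ remains in $\{\rsquare,\gsquare\}$. Applying the symmetric argument to $\eta$ completes the verification of the remaining two bullets of \ref{M1}.

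The main obstacle is the second half of the previous paragraph: one has to carefully invoke the geometry of source and target cells to argue that all intermediate orange and pink points are confined to a single side of $\delta$, so that two endpoints of a transverse projective arc cannot both be recolored away from $\{\rsquare,\gsquare\}$. Once this is in place, $\rho$ satisfies \ref{M1}, and combined with $\rho \in \NP(\delta) \cap \NP(\eta)$ we obtain strong matchability as claimed.
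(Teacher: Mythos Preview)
Your approach is correct and coincides with the paper's, whose entire proof reads ``This is an obvious outcome of \cref{def:colourendpoints}''; you are supplying the verification of \ref{M1} that the paper leaves implicit. The only imprecision worth noting is that the recolored (orange/pink) vertices of $C_L$ and of $C_R$ actually lie on \emph{opposite} sides of $\delta$ (compare \cref{fig:exsctc}), so the per-cell ``single side'' observation does not by itself exclude $v_1$ being recolored via $C_L$ and $v_2$ via $C_R$; what closes this is that for each of $\eta_L,\eta_R$ exactly one endpoint is recolored while the other stays in $\{\rsquare,\gsquare\}$, and any other arc crossed by $\delta$ shares only that non-recolored endpoint with $\eta_L$ or $\eta_R$---a refinement you rightly flag as the main obstacle and which the paper simply absorbs into ``obvious''.
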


\begin{proof}
    This is an obvious outcome of \cref{def:colourendpoints}.
\end{proof}

\begin{prop} \label{prop:matchableextorepi}
Let $(\delta,\eta) \in \Accord'^2$ be such that $\NP(\delta) \cap \NP(\eta) \neq \varnothing$. Assume that one of the following assertions holds:
\begin{enumerate}[label=$(\roman*)$, itemsep=1mm]
    \item there exists an epimorphism $\begin{tikzcd}
	{f: \MM(\eta) } & \MM(\delta);
	\arrow[two heads, from=1-1, to=1-2]
\end{tikzcd}$ or,
    \item either $\OvExt(\delta,\eta) \neq \varnothing$ or $\ArExt(\delta,\eta) \neq \varnothing$.
    \end{enumerate}
    Then $\col_\delta$ and $\col_\eta$ are matchable.
\end{prop}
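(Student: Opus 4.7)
The plan is to treat the two hypotheses (i) and (ii) separately, and within (ii) to further distinguish the overlap case from the arrow case. In each case, the strategy is to exhibit a projective $\rpoint$-arc $\rho$ that witnesses either \ref{M1} or \ref{M2} of \cref{def:matchablematch}.

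For hypothesis (i), I would first invoke \cref{prop:HomCrossing}: the epimorphism $\MM(\eta) \twoheadrightarrow \MM(\delta)$ is represented by a hom-crossing as in \cref{fig:HomCrossing}, in which the shaded region is a portion where $\delta$ and $\eta$ are homotopic after cutting $\pmb{\Sigma}$ by $\pmb{\Gamma}(\Delta^{\gpoint})$; in particular, they cross the same projective arcs throughout that region. Surjectivity of the map (rather than just having a nonzero hom) ensures that this region is nonempty and actually traverses at least one projective, and one can then pick $\rho \in \Prj(\Delta^{\gpoint})$ crossed by both. By \cref{lem:crossprojmatchable}, this $\rho$ makes $\col_\delta$ and $\col_\eta$ matchable. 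Similarly, for \cref{ii} with $\OvExt(\delta,\eta) \neq \varnothing$: by \cref{prop:geom_ext} and \cref{fig:overlapextaccord}, $\delta$ and $\eta$ genuinely cross, and they share an overlap in which both cross some common projective arc $\rho$; \cref{lem:crossprojmatchable} finishes this case as well.

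The arrow-extension case is the one I expect to be the main obstacle. There, by \cref{prop:geom_ext} and \cref{fig:arrowextaccord}, $\delta$ and $\eta$ do not cross but sit in a common cell $C$ and are bridged by the arrow-extension accordion $\gamma_{(E)}$. The projective arcs of $\partial C$ between the endpoints of $\delta$ and $\eta$ lie in $\NP(\delta) \cap \NP(\eta)$ by \cref{prop:CombidescripNproj}\,(ii)--(iii), giving a candidate $\rho$. The delicate point is that $\col_\delta$ and $\col_\eta$ may assign $\osquare$ or $\psquare$ to intermediate vertices of large cells, and one must rule out the a~priori possibility that our candidate $\rho$ has both endpoints colored $\osquare$ or $\psquare$ for one of the two colorations, which would defeat \ref{M1}. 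I would argue by local inspection: the placement of $\psquare$ and $\osquare$ in \cref{def:colourendpoints} only occurs at strictly intermediate positions of a large cell bordered by $\NP(\delta)$ (resp.~$\NP(\eta)$), whereas the configuration forcing the arrow extension places $\rho$ adjacent to a genuine endpoint of $\delta$ or $\eta$, which is colored $\rsquare$, $\gsquare$, or at worst $\osquare$ with the symmetric side in $\{\rsquare,\gsquare\}$. The genuinely degenerate sub-case, where $\eta$ is entirely contained in the source or target cell of $\delta$ and sits above $\rho$ with respect to $\preccurlyeq_v$ for a shared endpoint $v$, is exactly what \ref{M2} is designed to capture, and matchability then follows from \ref{M2} rather than \ref{M1}. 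This case analysis, together with the uniform argument via \cref{lem:crossprojmatchable} in the other two cases, yields the proposition.
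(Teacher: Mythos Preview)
Your overall strategy coincides with the paper's: exhibit a projective $\rho$ witnessing \ref{M1} or \ref{M2}, and dispatch the crossing configurations (overlap extension, and the epimorphism when $\delta$ and $\eta$ genuinely cross) via \cref{lem:crossprojmatchable}. That part is fine.

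The gap is in the arrow-extension case. You describe $\delta$ and $\eta$ as ``sitting in a common cell $C$'' and propose to find $\rho$ on $\partial C$ ``between the endpoints of $\delta$ and $\eta$''. But that is not the geometry of an arrow extension: by \cref{prop:geom_ext} (and \cref{fig:arrowextaccord}), $\delta$ and $\eta$ \emph{share a common endpoint} $v$, with $\eta \prec_v \delta$, and the extension $\kappa$ bridges their \emph{other} endpoints; neither accordion need lie inside a single cell, and there is no canonical ``cell between them'' to inspect. The paper, after fixing $v = s(\delta) = s(\eta)$, runs a genuine case analysis on whether the source cells $C_\delta$ and $C_\eta$ (from $\sc(\delta)$, $\sc(\eta)$) coincide, and within that on whether $\delta$ or $\eta$ is contained in its source cell. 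Each sub-case produces a different witness: sometimes the arc $\mu$ or $\nu$ that $\delta$ or $\eta$ crosses when leaving the common cell, sometimes a projective $\rho_{\min}$ lying strictly between $\eta$ and $\delta$ around $v$ when $C_\delta \neq C_\eta$, and in two sub-cases one instead gets $\NP(\eta) \subseteq \NP(\delta)$, which is what feeds into \ref{M2}. Your ``local inspection'' and your single fallback to \ref{M2} capture only a fragment of this; the remaining sub-cases (in particular $C_\delta \neq C_\eta$, where the witness is found among the projectives with endpoint $v$ rather than on the boundary of any one cell) need to be written out explicitly.
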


\begin{proof}
By \cref{prop:geo_mor,prop:geom_ext}, one of the following holds:
\begin{enumerate}[label=$\bullet$,itemsep=1mm]
    \item $\delta$ and $\eta$ cross; or,
    \item $\delta$ and $\eta$ share a common endpoint $v$, and $\eta \prec_v \delta$.
\end{enumerate}

If $\delta$ and $\eta$ cross, then there exists $\rho \in \Prj(\Delta^{\gpoint})$ which crosses both $\delta$ and $\eta$, and by \cref{lem:crossprojmatchable}, we get that $\col_\delta$ and $\col_\eta$ are matchable.

Assume that $\delta$ and $\eta$ do not cross. Consequently, $\OvExt(\delta,\eta) = \varnothing$.  If we still have an epimorphism $\begin{tikzcd}
	{f: \MM(\eta) } & \MM(\delta),
	\arrow[two heads, from=1-1, to=1-2]
\end{tikzcd}$ then there exists $\rho \in \Prj(\Delta^{\gpoint})$ which crosses both $\delta$ and $\eta$, and we get the desired result. So now assume that $\ArExt(\delta,\eta) \neq \varnothing$, and denote by $\kappa$ the unique accordion in $\ArExt(\delta,\eta)$. Then $\delta$ and $\eta$ share a common endpoint $v$, and $\eta \prec_v \delta$

Fix $\col_\delta$ and $\col_\eta$ such that the induced lateralities imply that $v = s(\delta) = s(\eta)$. By denoting $u$ the other endpoint of $\delta$ and $w$ the other endpoint of $\eta$, recall that $\kappa$ is the accordion whose endpoints are $u$ and $w$. Whenever $\delta$ is contained in a cell of $\pmb{\Gamma}(\Prj(\Delta^{\gpoint}))$, we denote this cell by $C_\delta$; otherwise, we set $(\mu,C_\delta) = \sc(\delta)$.  We define $C_\eta$ and $\nu$ relatively to $\eta$ in a similar way.

Let us first treat the case where $C_\delta = C_\eta = C$. See \cref{fig:Samecellcasematch} to visualize the general configuration. Then:
\begin{enumerate}[label=$\bullet$, itemsep=1mm]
    \item if $\delta,\eta \subset C$, as $\eta \prec_v \delta$, we have that $\NP(\eta) \subseteq \NP(\delta)$;
    \item if $\delta \not\subset C$ and $\eta \subset C$, then, as $\kappa \in \Accord$,  $w$ must be an endpoint of $\mu$; in such a case, we have $\NP(\eta) \subseteq \NP(\delta)$;
    \item if $\delta \subset C$ and $\eta \not\subset C$, then $u$ must be an endpoint of $\nu$; in such a case, $\nu$ makes $\col_\delta$ and $\col_\eta$ matchable;
    \item otherwise, $\delta \not\subset C$ and $\eta \not\subset C$; as $\kappa \in \Accord$, this means that $\mu$ and $\nu$ must have a common endpoint and, by the definition of $\col_\delta$ and $\col_\eta$, and as $\eta \prec_v \delta$, this implies that $\mu$  makes $\col_\delta$ and $\col_\eta$ matchable.
\end{enumerate}
     \begin{figure}[!ht]
\centering 
    \begin{tikzpicture}[mydot/.style={
					circle,
					thick,
					fill=white,
					draw,
					outer sep=0.5pt,
					inner sep=1pt
				}, scale = 1.2]
		\tikzset{
		osq/.style={
        rectangle,
        thick,
        fill=white,
        append after command={
            node [
                fit=(\tikzlastnode),
                orange,
                line width=0.3mm,
                inner sep=-\pgflinewidth,
                cross out,
                draw
            ] {}}}}
			\foreach \X in {0,1,...,37}
		{
		\tkzDefPoint(4*cos(pi/19*\X),1.5*sin(pi/19*\X)){\X};
		};
        \draw [line width=0.7mm,domain=30:45] plot ({4*cos(\x)}, {1.5*sin(\x)});
        \draw [line width=0.7mm,domain=80:90] plot ({4*cos(\x)}, {1.5*sin(\x)});
        \draw [line width=0.7mm,domain=110:120] plot ({4*cos(\x)}, {1.5*sin(\x)});
        \draw [line width=0.7mm,domain=145:155] plot ({4*cos(\x)}, {1.5*sin(\x)});
        \draw [line width=0.7mm,domain=180:195] plot ({4*cos(\x)}, {1.5*sin(\x)});
        \draw [line width=0.7mm,domain=210:260] plot ({4*cos(\x)}, {1.5*sin(\x)});
        \draw [line width=0.7mm,domain=280:290] plot ({4*cos(\x)}, {1.5*sin(\x)});
        \draw [line width=0.7mm,domain=335:350] plot ({4*cos(\x)}, {1.5*sin(\x)});

		\draw[line width=0.9mm ,bend right=40,red](23) edge (20);
		\draw[line width=0.9mm ,bend right =30,red](20) edge (16);
            \draw[line width=0.9mm ,bend right =30,red](16) edge (12);
            \draw[line width=0.9mm ,bend right =20,red](12) edge (4);
            \draw[line width=0.9mm ,bend right =20,red](4) edge (30);
            \draw[line width=0.9mm ,bend right =30,red](30) edge (27);
		
		\draw[line width=0.7mm,blue,bend left=20, loosely dashed](9) edge (23);
            \draw[line width=0.7mm,blue,bend right=10, loosely dashed](36) edge (23);
		
		\draw [line width=0.7mm, mypurple,dash pattern={on 10pt off 2pt on 5pt off 2pt}, bend right=10] (9) edge (36);

            \filldraw [fill=red,opacity=0.1] (23) to [bend right=40] (20) to [bend right=30] (16) to [bend right=30] (12) to [bend right=20] (4)  to [bend right=20] (30) to [bend right=30] (27) to [bend left=10] cycle ;

		\foreach \X in {4,9,12,16,20,23,27,30,36}
		{
		\tkzDrawPoints[fill =red,size=4,color=red](\X);
		};

		\tkzDefPoint(-1.2,-0.6){g};
		\tkzLabelPoint[blue](g){\Large $\delta$}
            \tkzDefPoint(-1.5,.3){h};
		\tkzLabelPoint[blue](h){\Large $\eta$}
            \tkzDefPoint(-1,1){i};
		\tkzLabelPoint[red](i){\Large $\nu$}
            \tkzDefPoint(1,-0.7){j};
		\tkzLabelPoint[red](j){\Large $\mu$}
            \tkzDefPoint(1.3,1.3){j};
		\tkzLabelPoint[mypurple](j){\Large $\kappa$}
            \tkzDefPoint(-3,0.5){k};
		\tkzLabelPoint[red](k){\huge $C$}
            \tkzDefPoint(-3.3,-.9){l};
		\tkzLabelPoint[red](l){\Large $v$}
            \tkzDefPoint(4,-.4){l};
		\tkzLabelPoint[red](l){\Large $u$}
            \tkzDefPoint(.35,1.9){l};
		\tkzLabelPoint[red](l){\Large $w$}
    \end{tikzpicture}
\caption{\label{fig:Samecellcasematch} The case where $C = C_\delta = C_\eta$ and $\delta, \eta \nsubseteq C$. As $\kappa \in \Accord$, the projective accordions $\mu$ and $\nu$ must share a common endpoint.}
\end{figure}
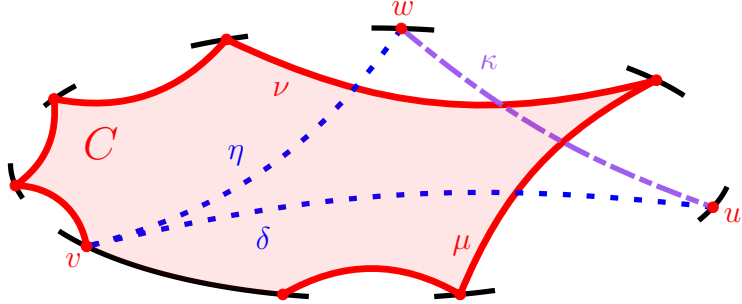
In either case, if $C_\delta = C_\eta$, then $\col_\delta$ and $\col_\eta$ are matchable.

Now, assume that $C_\delta \neq C_\eta$. There exists $\rho \in \Prj(\Delta^{\gpoint})$ such that $\eta \prec_v \rho \prec_v \delta$. Let $\rho_{\min}$ and $\rho_{\max}$ be respectively the minimal and the maximal projective accordion such that $\eta \prec_v \rho_{\min} \preccurlyeq_v \rho_{\max} \prec_v \delta$. Note that $\rho_{\min} \subset \partial C_\eta$, and $\rho_{\max} \subset \partial C_\delta$. Denote by $v_{\min}$ the other endpoint of $\rho_{\min}$, and $v_{\max}$ the other endpoint of $\rho_{\max}$ (see \cref{fig:Distinctcellcasematch})
\begin{figure}[!ht]
\centering 
    \begin{tikzpicture}[mydot/.style={
					circle,
					thick,
					fill=white,
					draw,
					outer sep=0.5pt,
					inner sep=1pt
				}, scale = 1.2]
		\tikzset{
		osq/.style={
        rectangle,
        thick,
        fill=white,
        append after command={
            node [
                fit=(\tikzlastnode),
                orange,
                line width=0.3mm,
                inner sep=-\pgflinewidth,
                cross out,
                draw
            ] {}}}}
			\foreach \X in {0,1,...,37}
		{
		\tkzDefPoint(4*cos(pi/19*\X),1.5*sin(pi/19*\X)){\X};
		};
        \draw [line width=0.7mm,domain=30:45] plot ({4*cos(\x)}, {1.5*sin(\x)});
        \draw [line width=0.7mm,domain=80:90] plot ({4*cos(\x)}, {1.5*sin(\x)});
        \draw [line width=0.7mm,domain=110:120] plot ({4*cos(\x)}, {1.5*sin(\x)});
        \draw [line width=0.7mm,domain=145:155] plot ({4*cos(\x)}, {1.5*sin(\x)});
        \draw [line width=0.7mm,domain=215:225] plot ({4*cos(\x)}, {1.5*sin(\x)});
        \draw [line width=0.7mm,domain=250:260] plot ({4*cos(\x)}, {1.5*sin(\x)});
        \draw [line width=0.7mm,domain=280:290] plot ({4*cos(\x)}, {1.5*sin(\x)});
        \draw [line width=0.7mm,domain=335:350] plot ({4*cos(\x)}, {1.5*sin(\x)});

		\draw[line width=0.9mm ,bend right=5,densely dashdotted,red](23) edge (9);
		\draw[line width=0.9mm ,bend right =0,red](23) edge (4);
            \draw[line width=0.9mm ,bend left =5, densely dashdotted, red](23) edge (36);
            \draw[line width=0.9mm ,bend right =5,dashdotted,red](36) edge (27);
            \draw[line width=0.9mm ,bend left =5, dashdotted,red](9) edge (16);
		
		\draw[line width=0.7mm,blue,bend left=20,     
            loosely dashed](12) edge (23);
            \draw[line width=0.7mm,blue,bend right=10, loosely dashed](30) edge (23);
		
		\draw [line width=0.7mm, mypurple,dash pattern={on 10pt off 2pt on 5pt off 2pt}, bend right=10] (12) edge (30);

            \filldraw [fill=red,opacity=0.1] (23) to [bend right=5] (9) to [bend left=5] (16) to [bend right=40] cycle ;

            \filldraw [pattern=dots,pattern color=red,opacity=0.3] (23) to [bend left=5] (36) to [bend right=5] (27) to [bend left=40] cycle ;

		\foreach \X in {4,9,12,16,23,27,30,36}
		{
		\tkzDrawPoints[fill =red,size=4,color=red](\X);
		};

		\tkzDefPoint(-1,-1){g};
		\tkzLabelPoint[blue](g){\Large $\delta$}
            \tkzDefPoint(-2.5,.3){h};
		\tkzLabelPoint[blue](h){\Large $\eta$}
            \tkzDefPoint(2,1){i};
		\tkzLabelPoint[red](i){\Large $\rho$}
            \tkzDefPoint(0.4,1.2){j};
		\tkzLabelPoint[red](j){\Large $\rho_{\min}$}
            \tkzDefPoint(2,0){j};
		\tkzLabelPoint[red](j){\Large $\rho_{\max}$}
            \tkzDefPoint(-0.5,0.5){j};
		\tkzLabelPoint[mypurple](j){\Large $\kappa$}
            \tkzDefPoint(-3.2,0.5){k};
		\tkzLabelPoint[red](k){\huge $C_\eta$}
            \tkzDefPoint(-1.8,-1){l};
		\tkzLabelPoint[red](l){\huge $C_\delta$}
            \tkzDefPoint(-3.3,-.9){l};
		\tkzLabelPoint[red](l){\Large $v$}
            \tkzDefPoint(1,-1.5){l};
            \tkzLabelPoint[red](l){\Large $u$}
            \tkzDefPoint(-1.7,1.8){l};
            \tkzLabelPoint[red](l){\Large $w$}
            \tkzDefPoint(0.4,2){l};
		\tkzLabelPoint[red](l){\Large $v_{\min}$}
            \tkzDefPoint(4.3,-.4){l};
		\tkzLabelPoint[red](l){\Large $v_{\max}$}
            \tkzDefPoint(-2.6,1.2){l};
		\tkzLabelPoint[red](l){\Large $\nu$}
            \tkzDefPoint(2.3,-.7){l};
		\tkzLabelPoint[red](l){\Large $\mu$}
    \end{tikzpicture}
\caption{\label{fig:Distinctcellcasematch} The case where $C_\delta \neq C_\eta$, $\delta \not\subset C_\delta$ and $\eta \not\subset C_\eta$. We note that $\rho_{\min} \subseteq \partial C_\eta$ and $\rho_{\max} \subseteq \partial C_\delta$}
\end{figure}
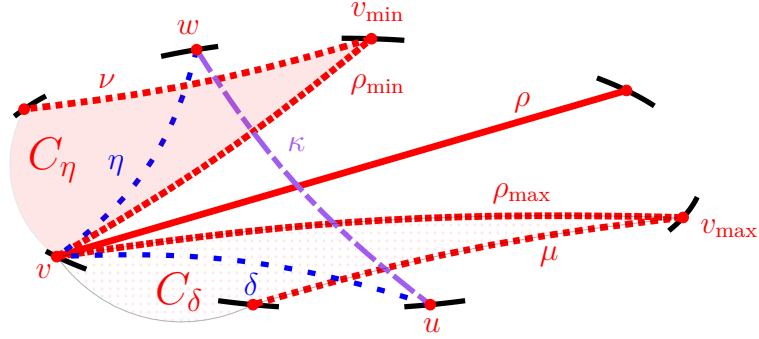

Assume that $\delta \not\subset C_\delta$ and $\eta \not\subset C_\eta$. We have $\rho_{\min} \in \NP(\delta) \cap \NP(\eta)$, and by definition of $\col_\delta$ and $\col_\eta$, the projective accordion $\rho_{\min}$ makes the coloration matchable. These arguments hold whenever $\delta \subset C_\delta$ or $\eta \subset C_\eta$. 
\end{proof}

Given $(\delta, \eta) \in \Accord'^2$ such that $\NP(\delta) \cap \NP(\eta) = \varnothing$, the accordions are too ``far away" from each other to interact when computing resolving closure. The following result accurately states this fact.

\begin{cor} \label{prop:noNPs}
Let $(\delta, \eta) \in  \Accord'^2$. Assume that $\NP(\delta) \cap \NP(\eta) = \varnothing$. Then we have $\mathfrak{E}_{\opResAc'(\delta,\eta)} = \{\delta, \eta\}$.
\end{cor}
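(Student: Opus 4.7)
The plan is to invoke \cref{algo:Rescatmotivation} in its geometric form (see \cref{cor:maingeommotivation}) with input $\{\delta,\eta\}$ and show the algorithm terminates after step (1), so the returned antichain is $\{\delta,\eta\}$. Concretely, I would prove two facts: first, that $\delta$ and $\eta$ are incomparable for $\Accordleq$; second, that the set $\mathfrak{U} := \opResAc(\delta) \cup \opResAc(\eta)$ is itself resolving, which forces $\opResAc(\delta,\eta) = \mathfrak{U}$ and hence $\mathfrak{E}_{\opResAc'(\delta,\eta)} = \{\delta,\eta\}$.

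Incomparability is quick: if $\delta \Accordleq \eta$, then $\delta \in \opResAc'(\eta)$, so each endpoint of $\delta$ belongs to $\NP(\eta)_0$; but by \cref{prop:CombidescripNproj}(iii) the smallest projective accordion at $s(\delta)$ with respect to $\preccurlyeq_{s(\delta)}$ already lies in $\NP(\delta)$, and since $s(\delta)$ is a vertex of $\NP(\eta)$ this minimal projective is also in $\NP(\eta)$, contradicting the hypothesis; the reverse comparison is excluded symmetrically. For the resolvingness of $\mathfrak{U}$, I apply the geometric version of \cref{thm:equivres} via \cref{prop:folkloreresaccord}: conditions \ref{Res1} and \ref{Res3} are inherited from each of $\opResAc(\delta)$ and $\opResAc(\eta)$ separately, so the only content left in \ref{Res2} is to rule out any arrow or overlap extension between some $\xi \in \opResAc'(\delta)$ and $\varsigma \in \opResAc'(\eta)$. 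By \cref{prop:matchableextorepi}, combined with the observation that matchability of $\col_\xi$ and $\col_\varsigma$ requires $\NP(\xi) \cap \NP(\varsigma) \neq \varnothing$ (see \cref{def:matchablematch}), the existence of such an extension would force this intersection to be nonempty.

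The main obstacle is therefore the combinatorial claim
\[
\xi \in \opResAc'(\delta),\ \varsigma \in \opResAc'(\eta) \Longrightarrow \NP(\xi) \cap \NP(\varsigma) = \varnothing.
\]
I would prove the stronger containment $\NP(\xi) \subseteq \NP(\delta)$ (symmetrically $\NP(\varsigma) \subseteq \NP(\eta)$) by inspecting the three cases of \cref{prop:CombidescripNproj}. A projective $\rho$ crossing $\xi$ must lie on the path of $\NP(\delta)$-arcs separating $\delta$ from the boundary, because by \cref{prop:arcsaccordions} the sequence of projectives crossed by $\xi$ is determined by the adjacency path between its endpoints, and both endpoints of $\xi$ lie in $\NP(\delta)_0$. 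A projective appearing as a summand in the minimal projective resolution of $\MM(\xi)$ lies in one of the source or target cells of $\xi$, which are cells of $\Prj(\Delta^{\gpoint})$ already contained in the corresponding $\sc(\delta)$ or $\tc(\delta)$ by \cref{lem:conditionsResAc}\ref{2Accord}. Finally, a projective $\rho$ sharing an endpoint $v$ of $\xi$ with $\rho \prec_v \xi$ must satisfy $\rho \prec_v \delta$, because $v \in \NP(\delta)_0$ and the counter-clockwise position of $\xi$ at $v$ cannot exceed that of $\delta$ at $v$: this last point is the one requiring the most careful verification, using the positional bounds from \cref{lem:conditionsResAc}\ref{1Accord}--\ref{3Accord}. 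Once the containment is established, $\NP(\xi) \cap \NP(\varsigma) \subseteq \NP(\delta) \cap \NP(\eta) = \varnothing$, which closes the argument.
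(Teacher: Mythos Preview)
Your approach is essentially the same as the paper's: both reduce to showing that $\opResAc(\delta)\cup\opResAc(\eta)$ is resolving, both note that syzygy-closure is inherited from ideals, and both deduce extension-closure by contraposing \cref{prop:matchableextorepi} once one knows $\NP(\xi)\cap\NP(\varsigma)=\varnothing$ for $\xi\Accordleq\delta$, $\varsigma\Accordleq\eta$. The paper simply asserts the implication $\xi\Accordleq\delta\Rightarrow\NP(\xi)\subseteq\NP(\delta)$ (this is implicit in the description of $\opResAc'(\delta)$ from \cite{DS251}), whereas you expand it via a case analysis on \cref{prop:CombidescripNproj}; your incomparability argument is a special case of that same containment and could be shortened accordingly.
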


\begin{proof}
We show that $\opResAc(\delta, \eta) = \opResAc(\delta) \cup \opResAc(\eta)$, which is equivalent to the desired result. By \cref{lem:IdealsclosedSyzygies}, we only have to check that $\opResAc(\delta) \cup \opResAc(\eta)$ is closed under extensions. Both $\opResAc(\delta)$ and $\opResAc(\eta)$ are closed under extensions by \cref{thm:res_clo_1}. 

Let $\xi \Accordleq \delta$ and $\varsigma \Accordleq \eta$. Denote by $\col_\xi$ and $\col_\varsigma$ colorations of respectively $\NP(\xi)_0$ and $\NP(\varsigma)_0$. This implies that $\NP(\xi) \subseteq \NP(\delta)$ and $\NP(\varsigma) \subseteq \NP(\eta)$. Therefore, $\NP(\xi) \cap \NP(\varsigma) = \varnothing$, and $\col_\xi$ and $\col_\varsigma$ are not matchable. By \cref{prop:matchableextorepi}, there is neither overlap nor arrow extensions between $\xi$ and $\varsigma$. By additivity of $\Ext^1$ there are no non trivial extensions between $M\in\Res (\MM(\delta))$ and $N\in\Res(\MM(\eta))$. So $\opResAc(\delta) \cup \opResAc(\eta)$ is closed under extensions.
\end{proof}

\subsection{Weakly matchable configurations}
\label{ss:weakmatchabilityconfig}

In this section, we focus on the matchability configurations. Most of the time, given $\delta, \eta \in \Accord'$ such that $\NP(\delta) \cap \NP(\eta) \neq \varnothing$, the matchability of $\col_\delta$ and $\col_\eta$ is in the scope of strong matchability. We highlight under which specific conditions the matchability, called weak matchability, is only guaranteed by some $\rho \notin \NP(\delta) \cap \NP(\eta)$. 
By \cref{lem:coincidecoloration} and \cref{conv:matchablematching}, we consider two colorations $\col_\delta$ and $\col_\eta$ that are weakly matching.

Let us first prove the following lemmas on the structure of the neighboring projective accordions of some $\delta \in \Accord'$, which are obvious consequences of both the definition of $\NP(\delta)$ and the fact that we are working in a marked disc $(\pmb{\Sigma}, \mathcal{M})$ without punctures.

\begin{definition} \label{def:connectedaccordions}
    Let $\mathfrak{D}, \mathfrak{P} \subset \Accord$ such that $\mathfrak{D} \subseteq \mathfrak{P}$. We say that $\mathfrak{D}$ is $\mathfrak{P}$-\new{fan-connected} if:
    \begin{enumerate}[label=$\bullet$, itemsep=1mm]
        \item $\mathfrak{D}$ is connected; and, 
        \item for any $(\delta_1, \delta_2) \in \mathfrak{D}$ sharing a common endpoint $v \in \mathcal{M}_{\rpoint}$, if $\delta_1 \prec_v \delta_2$, then all the accordions $\mu \in \mathfrak{P}$ with $v$ as an endpoint, and such that $\delta_1 \preccurlyeq_v \mu \preccurlyeq_v \delta_2$, are in $\mathfrak{D}$.
    \end{enumerate}
\end{definition}

\begin{lemma} \label{lem:connectedNP}
    Let $\delta \in \Accord'$. Then $\NP(\delta)$ is $\Prj(\Delta^{\gpoint})$-fan-connected. 
\end{lemma}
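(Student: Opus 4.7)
The plan is to leverage the three-fold combinatorial description of $\NP(\delta)$ given by \cref{prop:CombidescripNproj}: crossing arcs (i), cell-boundary arcs (ii) lying on the chain from $\eta_L$ (respectively $\eta_R$) to $s(\delta)$ (respectively $t(\delta)$) in the source/target cell, and endpoint arcs (iii) at a vertex where $\delta$ is $\preccurlyeq_v$-maximal among projectives. I would then verify the two defining conditions of $\Prj(\Delta^{\gpoint})$-fan-connectedness in turn.

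For connectedness, I would follow $\delta$ through the sequence of cells $C_L=C_0,C_1,\ldots,C_n=C_R$ it traverses. Two consecutive cells $C_i, C_{i+1}$ share the projective arc $\delta$ crosses between them, which lies in $\NP(\delta)$ by (i); within each $C_i$ the arcs of (i) and (ii) that are edges of $C_i$ form a connected polygonal sub-arc of $\partial C_i$, thanks to the accordion rules and the simple-connectivity of $C_i$. At each endpoint of $\delta$ the (possibly empty) fan described by (iii) attaches to the arcs from (ii) in $\partial C_L$ (respectively $\partial C_R$) through a common vertex, yielding a globally connected collection.

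For the fan condition, fix $\delta_1 \prec_v \delta_2$ in $\NP(\delta)$ sharing the endpoint $v$ and pick $\mu \in \Prj(\Delta^{\gpoint})$ with $\delta_1 \preccurlyeq_v \mu \preccurlyeq_v \delta_2$. I split according to whether $v$ is an endpoint of $\delta$. If $v$ is an endpoint and clause (iii) holds at $v$, every projective at $v$ belongs to $\NP(\delta)$, so $\mu$ does; otherwise (iii) fails at $v$, and the (ii)-arcs at $v$ together with $\sc(\delta)$ or $\tc(\delta)$ form a $\preccurlyeq_v$-consecutive block (namely the edges of the path in $\partial C_L$ or $\partial C_R$ ending at $v$), which contains $\mu$. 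If $v$ is instead an interior vertex of some cell crossed by $\delta$, then $\delta_1$ and $\delta_2$ each come from (i) or (ii) and border cells incident to $v$ that are traversed by $\delta$; since $\pmb{\Sigma}$ is a disc without punctures, each projective at $v$ strictly between $\delta_1$ and $\delta_2$ borders a cell incident to $v$ lying inside the chain of crossed cells, and therefore also lies in (i) or (ii).

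The main obstacle is certifying this last sub-case: one must rule out the existence of a projective arc at $v$, $\preccurlyeq_v$-strictly between $\delta_1$ and $\delta_2$, that borders a cell \emph{not} crossed by $\delta$. This argument relies on the simple-connectivity of $\pmb{\Sigma}$ to exclude a ``hidden'' pocket of non-crossed cells sandwiched between $\delta_1$ and $\delta_2$, together with accordion rule $(b)$ of \cref{def:accordions}, which forces $\delta$ to pass consistently on one side of each $\gpoint$-arc it crosses so that the cells crossed by $\delta$ form an interval in the cyclic order at $v$.
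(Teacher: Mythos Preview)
Your argument is sound; the paper itself offers no proof for this lemma, treating it (together with \cref{lem:inbetweenNP}) as an ``obvious consequence of both the definition of $\NP(\delta)$ and the fact that we are working in a marked disc $(\pmb{\Sigma}, \mathcal{M})$ without punctures.'' Your case analysis via \cref{prop:CombidescripNproj} makes explicit precisely what the paper leaves implicit, and correctly identifies the one non-trivial point: ruling out, at a vertex $v$ which is not an endpoint of $\delta$, a projective arc sandwiched between two members of $\NP(\delta)$ yet lying outside $\NP(\delta)$. Your appeal to the disc topology and the accordion rule $(b)$ is the right mechanism here---the sequence of cells traversed by $\delta$ is a simple chain (no cell is visited twice, since $\pmb{\Sigma}$ is a disc and $\delta$ is a simple arc), and around any boundary vertex $v$ of one of these cells the arcs of $\NP(\delta)$ incident to $v$ are exactly the edges of consecutive crossed cells, hence form a $\preccurlyeq_v$-interval.

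Two small presentational remarks. First, the phrase ``interior vertex of some cell crossed by $\delta$'' is slightly ambiguous; you mean a vertex lying on the boundary of a crossed cell other than $s(\delta)$ or $t(\delta)$. Second, when $v$ is an endpoint of $\delta$ and clause (iii) of \cref{prop:CombidescripNproj} fails there, you should observe that at most one type-(ii) arc is incident to $v$ (the last edge of the boundary path in $C_L$ or $C_R$), so the fan condition at $v$ is vacuous; your text suggests a ``consecutive block'' of several arcs, which slightly overstates the situation. Neither point affects correctness.
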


\begin{lemma} \label{lem:inbetweenNP}
   Let $\mathfrak{P} \subseteq \Accord$ and $\mathfrak{D}_1,\mathfrak{D}_2 \subseteq \mathfrak{P}$. Assume that:
   \begin{enumerate}[label=$\bullet$, itemsep=1mm]
       \item $\mathfrak{P}$ is a $\rpoint$-dissection without inner cell of $(\pmb{\Sigma}, \mathcal{M})$; and,
       \item $\mathfrak{D}_1$ and $\mathfrak{D}_2$ are $\mathfrak{P}$-fan-connected.
   \end{enumerate}
   Then the following assertions hold:
   \begin{enumerate}[label=$(\roman*)$, itemsep=1mm]
       \item $\mathfrak{D}_1 \cap \mathfrak{D}_2$ is $\mathfrak{P}$-fan-connected; 
       \item  if $\mathfrak{D}_1 \cap \mathfrak{D}_2 \neq \varnothing$, then $\mathfrak{D}_1 \cup \mathfrak{D}_2$ is $\mathfrak{P}$-fan-connected; and,
       \item if $\mathfrak{D}_1 \cap \mathfrak{D}_2 \neq \varnothing$, for any $\rho \in \mathfrak{D}_1$ and $\rho' \in \mathfrak{D}_2$ sharing a common endpoint $v \in \mathcal{M}_{\rpoint}$ such that $\rho \prec_v \rho'$, there exists $\mu \in \mathfrak{D}_1 \cap \mathfrak{D}_2$ having $v$ as an endpoint such that $\rho \prec_v \mu \prec_v \rho'$.
   \end{enumerate}
\end{lemma}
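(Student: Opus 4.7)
The plan is to exploit the tree-like structure of $\mathfrak{P}$. Since $\mathfrak{P}$ is a $\rpoint$-dissection of $(\pmb{\Sigma}, \mathcal{M})$ without inner cell, the dual graph $T_\mathfrak{P}$---whose vertices are the cells of $\mathfrak{P}$ and whose edges are the arcs of $\mathfrak{P}$---is a tree. In this language, a fan-connected subset $\mathfrak{D} \subseteq \mathfrak{P}$ is a set of edges spanning a subtree of $T_\mathfrak{P}$ whose arcs, at every $v \in \mathcal{M}_{\rpoint}$, form a contiguous interval for $\preccurlyeq_v$ among all $\mathfrak{P}$-arcs at $v$.

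For \emph{(i)}, the first step is to show that $\mathfrak{D}_1 \cap \mathfrak{D}_2$ is connected; this is the classical fact that the intersection of two subtrees of a tree is again a subtree (or empty). The fan condition is then immediate: if $\delta_1, \delta_2 \in \mathfrak{D}_1 \cap \mathfrak{D}_2$ share an endpoint $v$ with $\delta_1 \prec_v \delta_2$ and $\mu \in \mathfrak{P}$ with endpoint $v$ satisfies $\delta_1 \preccurlyeq_v \mu \preccurlyeq_v \delta_2$, then the fan condition applied separately to $\mathfrak{D}_1$ and to $\mathfrak{D}_2$ places $\mu$ in both, hence in the intersection.

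The heart of the argument is \emph{(iii)}. Pick some $\xi \in \mathfrak{D}_1 \cap \mathfrak{D}_2$. Using the connectedness of $\mathfrak{D}_1$, I build a sequence of arcs $\rho = \rho_0, \rho_1, \ldots, \rho_k = \xi$ in $\mathfrak{D}_1$ with consecutive arcs sharing a cell of $\mathfrak{P}$; similarly I build a sequence $\xi = \sigma_0, \sigma_1, \ldots, \sigma_\ell = \rho'$ in $\mathfrak{D}_2$. Concatenating produces a chain of cells in $\mathfrak{D}_1 \cup \mathfrak{D}_2$ linking $\rho$ and $\rho'$. Because $\rho$ and $\rho'$ share the endpoint $v$, this chain together with the portion of the fan of $\mathfrak{P}$ at $v$ between $\rho$ and $\rho'$ bounds a topological disc $R$ in $\pmb{\Sigma}$---here the hypothesis that $\mathfrak{P}$ has no inner cell is crucial to ensure that $R$ is simply connected. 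Every arc of $\mathfrak{P}$ incident to $v$ and strictly between $\rho$ and $\rho'$ must lie in $R$. Starting from $\rho$ and sweeping counter-clockwise around $v$, I then peel off the cells of $R$ adjacent to the $\mathfrak{D}_1$-portion of the boundary path, each time applying the fan condition of $\mathfrak{D}_1$ to add one more $v$-arc to $\mathfrak{D}_1$; symmetrically, sweeping clockwise from $\rho'$ and using the fan condition on $\mathfrak{D}_2$ extends $\mathfrak{D}_2$ inward. Because $R$ contains only finitely many cells, these two inward processes must meet, producing an arc $\mu$ incident to $v$ that lies in both $\mathfrak{D}_1$ and $\mathfrak{D}_2$, with $\rho \prec_v \mu \prec_v \rho'$.

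Finally, \emph{(ii)} follows by combining (i) and (iii). Connectedness of $\mathfrak{D}_1 \cup \mathfrak{D}_2$ is immediate from the nonempty intersection. For the fan condition at $v$, suppose $\delta_1, \delta_2 \in \mathfrak{D}_1 \cup \mathfrak{D}_2$ share $v$ with $\delta_1 \prec_v \delta_2$. If both lie in some common $\mathfrak{D}_i$, apply its fan condition directly. Otherwise, say $\delta_1 \in \mathfrak{D}_1$ and $\delta_2 \in \mathfrak{D}_2$; by (iii) there exists $\mu \in \mathfrak{D}_1 \cap \mathfrak{D}_2$ at $v$ with $\delta_1 \prec_v \mu \prec_v \delta_2$. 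Any intermediate $\kappa \in \mathfrak{P}$ at $v$ with $\delta_1 \preccurlyeq_v \kappa \preccurlyeq_v \delta_2$ then satisfies either $\kappa \preccurlyeq_v \mu$ (forcing $\kappa \in \mathfrak{D}_1$ by the fan condition on $\mathfrak{D}_1$ applied to the pair $\delta_1, \mu$) or $\kappa \succeq_v \mu$ (forcing $\kappa \in \mathfrak{D}_2$ by the fan condition on $\mathfrak{D}_2$ applied to $\mu, \delta_2$). The main obstacle will be making the topological step in (iii) fully rigorous: identifying $R$ as an embedded disc requires using that $\pmb{\Sigma}$ is a disc and that $\mathfrak{P}$ has no inner cell, and then checking carefully that the counter-clockwise and clockwise inward sweeps via the fan condition really do meet rather than leaving a gap.
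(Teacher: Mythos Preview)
Your argument for \emph{(i)} is fine and matches the paper. Your reduction of \emph{(ii)} to \emph{(iii)} is also correct, though the paper proves \emph{(ii)} directly.

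The real issue is your proof of \emph{(iii)}, which you yourself flag as incomplete. The ``sweeping'' step does not work as stated: to invoke the fan condition for $\mathfrak{D}_1$ at $v$ and absorb the next $v$-arc, you need a \emph{second} $\mathfrak{D}_1$-arc at $v$ lying beyond it. But a priori $\rho$ may be the only $\mathfrak{D}_1$-arc at $v$. Peeling a cell adjacent to the $\mathfrak{D}_1$-portion of your boundary path does not, by itself, place a new $\mathfrak{D}_1$-arc at $v$; the arcs of that cell that are not already on your path need not touch $v$ at all. Making this rigorous would essentially require proving a separation property of the disc, which is exactly what the paper uses directly and far more cheaply.

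The paper's approach avoids all of this. Both \emph{(ii)} and \emph{(iii)} are proved by the same one-line contradiction: a single arc of $\mathfrak{P}$ (or, in the covering case for \emph{(iii)}, a curve from $v$ to the unique $\gpoint$-point of the shared cell) separates the disc $\pmb{\Sigma}$ into two connected components. If that separating curve lies outside $\mathfrak{D}_i$, then $\mathfrak{P}$-fan-connectedness forces all of $\mathfrak{D}_i$ to sit on one side. For \emph{(ii)}, if some $\mu \in \mathfrak{P}\setminus(\mathfrak{D}_1\cup\mathfrak{D}_2)$ lies strictly between $\delta_1$ and $\delta_2$ at $v$, then $\mu$ separates $\mathfrak{D}_1$ from $\mathfrak{D}_2$, contradicting $\mathfrak{D}_1\cap\mathfrak{D}_2\neq\varnothing$. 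For \emph{(iii)}, one first reduces to the case $\delta_1\in\mathfrak{D}_1\setminus\mathfrak{D}_2$, $\delta_2\in\mathfrak{D}_2\setminus\mathfrak{D}_1$ with $\delta_1\precdot_v\delta_2$ in $\mathfrak{P}$; then $\delta_1,\delta_2$ border a common cell $C$, and the curve from $v$ to the unique point of $\mathcal{M}_{\gpoint}\cap\partial C$ gives the same separation contradiction. No paths, no regions, no sweeping.
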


\begin{proof}
    The assertion $(i)$ is obvious. Assume by contradiction that there exist a marked point $v \in \mathcal{M}_{\rpoint}$ and three accordions $\delta_1, \delta_2 \in \mathfrak{D}_1 \cup \mathfrak{D}_2$ and $\mu \in \mathfrak{P} \setminus (\mathfrak{D}_1 \cup \mathfrak{D}_2)$ sharing $v$ as a common endpoint such that $\delta_1 \prec_v \mu \prec_v \delta_2$. Assume that $\delta_1 \in \mathfrak{D}_1$ and $\delta_2 \in \mathfrak{D}_2$. The accordion $\mu$ cuts the ambient disc $(\pmb{\Sigma}, \mathcal{M})$ into two distinct connected components. As $\mathfrak{D}_1$ and $\mathfrak{D}_2$ are $\mathfrak{P}$-fan-connected, all the accordions of $\mathfrak{D}_1$ belong to one of the connected components, and all the ones of $\mathfrak{D}_2$ belong to the other one. This is in contradiction with the fact that $\mathfrak{D}_1 \cap \mathfrak{D}_2 \neq \varnothing$. We get the assertion $(ii)$.

    Assume by contradiction that the assertion $(iii)$ does not hold. We can reduce the situation to the case where $\delta_1 \in \mathfrak{D}_1 \setminus \mathfrak{D}_2$ and $ \delta_2 \in \mathfrak{D}_2 \setminus \mathfrak{D}_1 $ share a common endpoint $v \in \mathcal{M}_{\rpoint}$, and $\delta_1 \precdot_v \delta_2$ in $\mathfrak{P}$. In such a case, $\delta_1$ and $\delta_2$ are in the boundary of a common cell $C \in \pmb{\Gamma}(\mathfrak{P})$. This cell contains a unique marked point $w \in \mathcal{M}_{\gpoint}$ which is also in $\partial \pmb{\Sigma}$. Consider a curve $\vartheta$ whose endpoints are $v$ and $w$. As in the proof of $(ii)$, $\vartheta$ cuts the marked disc $(\pmb{\Sigma}, \mathcal{M})$ into two connected components, and all the accordions of $\mathfrak{D}_1$ belong to one of the connected components, and all the ones of $\mathfrak{D}_2$ belong to the other one, by $\mathfrak{P}$-fan-connectivity. This is in  contradiction with the fact that $\mathfrak{D}_1 \cap \mathfrak{D}_2 \neq \varnothing$. We get the assertion $(iii)$.
\end{proof}

We now prove a first technical lemma that illustrates the colorations of the endpoints of $\rho \in \NP(\delta) \setminus \NP(\eta)$, matching the colorations.

\begin{lemma} \label{lem:angleconfigNPscolor}
    Let $\delta,\eta \in \Accord'$. Consider $\col_\delta$ and $\col_\eta$ colorations of respectively $\NP(\delta)_0$ and $\NP(\eta)_0$. Assume that $\col_\delta$ and $\col_\eta$  weakly match, and there exists $\rho \in \NP(\delta) \setminus \NP(\eta)$ that makes the colorations matchable. Then:
    \begin{enumerate}[label=$\bullet$, itemsep=1mm]
        \item  exactly one endpoint $v$ of $\rho$ belongs to in $\NP(\eta)_0$; and,
        \item  $\col_\eta(v) \in \{{\rsquare},{\gsquare}\}$, and $\col_\delta(v) = {\osquare}$.
    \end{enumerate}
\end{lemma}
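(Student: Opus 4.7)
The plan is to combine two tools: the dichotomy between~(M1) and~(M2) in the definition of matchability together with Remark~\ref{rem:M2}, and the fan-connectedness of $\NP(\delta)$ and $\NP(\eta)$ captured by Lemma~\ref{lem:connectedNP} and the transfer Lemma~\ref{lem:inbetweenNP}(iii). Since $\rho \notin \NP(\eta)$, Remark~\ref{rem:M2} excludes that $\rho$ makes the colorations matchable through~(M2), so $\rho$ must satisfy~(M1); this directly exhibits an endpoint $w$ of $\rho$ lying in $\NP(\eta)_0$ with $\col_\eta(w) \in \{\rsquare, \gsquare\}$, establishing the existence of one endpoint of $\rho$ in $\NP(\eta)_0$.

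For the uniqueness claim I would argue by contradiction. Suppose both endpoints $v_1, v_2$ of $\rho$ lie in $\NP(\eta)_0$. At each $v_i$, I would apply Lemma~\ref{lem:inbetweenNP}(iii) to the intersecting, $\Prj(\Delta^{\gpoint})$-fan-connected sets $\NP(\delta)$ and $\NP(\eta)$ to produce an accordion $\mu_i \in \NP(\delta) \cap \NP(\eta)$ incident to $v_i$. Weak matchability forces each $\mu_i$ to fail~(M1), which translates into the alternative: either all endpoints of $\mu_i$ satisfy $\col_\delta \notin \{\rsquare, \gsquare\}$, or all satisfy $\col_\eta \notin \{\rsquare, \gsquare\}$. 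Crossed with the~(M1) condition for $\rho$ itself, this pins down the following rigid color pattern: one endpoint of $\rho$ has $\col_\delta \in \{\rsquare, \gsquare\}$ and $\col_\eta \in \{\osquare, \psquare\}$, while the other has $\col_\delta \in \{\osquare, \psquare\}$ and $\col_\eta \in \{\rsquare, \gsquare\}$. I would then rule this out geometrically using the meaning of the orange and pink colorations: the endpoint at which $\col_\eta$ takes the value $\osquare$ or $\psquare$ lies on the boundary of a large source or target cell of $\eta$, whose bounding arcs all belong to $\NP(\eta)$, and a fresh Lemma~\ref{lem:inbetweenNP}(iii) argument then produces an element of $\NP(\delta) \cap \NP(\eta)$ that does satisfy~(M1), contradicting weak matchability.

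For the color identification, let $v$ denote the unique endpoint of $\rho$ lying in $\NP(\eta)_0$; from~(M1) one directly gets $\col_\eta(v) \in \{\rsquare, \gsquare\}$. It remains to show $\col_\delta(v) = \osquare$, which I would do by ruling out the three other possibilities. The values $\col_\delta(v) \in \{\rsquare, \gsquare\}$ and $\col_\delta(v) = \psquare$ are both excluded by another application of Lemma~\ref{lem:inbetweenNP}(iii) at $v$: it supplies a $\mu \in \NP(\delta) \cap \NP(\eta)$ incident to $v$ whose $v$-endpoint carries $\col_\eta(v) \in \{\rsquare, \gsquare\}$, and in either ruled-out case the $\col_\delta$-value at $v$ (respectively at a vertex of the large cell tied to the pink coloration) lets $\mu$ realize~(M1), contradicting weak matchability. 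Only $\col_\delta(v) = \osquare$ survives.

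The hard part will be the bookkeeping of the orange and pink cases in Definition~\ref{def:colourendpoints}: carefully locating, at an orange or pink endpoint, the boundary arc of the corresponding large cell that serves as the required $\NP(\delta) \cap \NP(\eta)$-witness, and checking that its endpoints realize the red/green conditions of~(M1) in both $\col_\delta$ and $\col_\eta$. This is what turns the color constraints derived above into a genuine contradiction with weak matchability.
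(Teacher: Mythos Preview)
Your overall structure is sound, but two steps are handled more cleanly in the paper and one of your steps has an actual gap.

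\textbf{Uniqueness of the endpoint in $\NP(\eta)_0$.} Your coloration-based contradiction is far more elaborate than necessary, and the closing move (``a fresh Lemma~\ref{lem:inbetweenNP}(iii) argument then produces an element of $\NP(\delta)\cap\NP(\eta)$ that does satisfy~(M1)'') is not obviously justified: once you have pinned down $\col_\eta(v_1)\in\{\osquare,\psquare\}$, the bounding arcs of the corresponding large cell of $\eta$ have their \emph{other} endpoints colored $\osquare$ or $\psquare$ for $\col_\eta$ as well, so they do not automatically realise~(M1). The paper bypasses all of this. Since $(\pmb{\Sigma},\mathcal{M})$ is a disc, $\Prj(\Delta^{\gpoint})$ is a tree; a connected subgraph (here $\NP(\eta)$, by Lemma~\ref{lem:connectedNP}) containing both endpoints of an edge must contain that edge. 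So if both endpoints of $\rho$ lay in $\NP(\eta)_0$ we would get $\rho\in\NP(\eta)$, contradicting the hypothesis. One line, no colorations needed.

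\textbf{Excluding $\col_\delta(v)=\psquare$.} Here your argument breaks. You propose to use the same $\mu\in\NP(\delta)\cap\NP(\eta)$ incident to $v$ and claim it then realises~(M1). But if $\col_\delta(v)=\psquare$, the only arcs of $\NP(\delta)$ through $v$ are the two boundary arcs of the relevant large cell, and their \emph{other} endpoints are orange for $\col_\delta$; so neither endpoint of $\mu$ gives $\col_\delta\in\{\rsquare,\gsquare\}$, and $\mu$ still fails~(M1). The paper's fix is to look not at $\mu$ but at $\rho$ itself: since $\rho$ satisfies~(M1) and we have just shown $\col_\delta(v)\in\{\osquare,\psquare\}$, the \emph{other} endpoint $u$ of $\rho$ must satisfy $\col_\delta(u)\in\{\rsquare,\gsquare\}$. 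Now invoke the structural fact built into Definition~\ref{def:colourendpoints}: no projective accordion in $\NP(\delta)$ has one endpoint colored $\psquare$ and the other colored $\rsquare$ or $\gsquare$ (pink vertices only neighbour orange ones along the large-cell boundary). Hence $\col_\delta(v)\neq\psquare$, leaving $\col_\delta(v)=\osquare$.

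Your derivation of $\col_\eta(v)\in\{\rsquare,\gsquare\}$ via (M1), and your exclusion of $\col_\delta(v)\in\{\rsquare,\gsquare\}$ via $\mu$ failing (M1), are both correct and match the paper.
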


\begin{proof}
    As $\rho$ makes the colorations matchable, one of the endpoints $v$ of $\rho$ must be in $\NP(\eta)_0$, with $\col_\eta(v) \in \{{\rsquare},{\gsquare}\}$. As $\rho \notin \NP(\eta)$, by \cref{lem:connectedNP}, exactly one of its endpoints is in $\NP(\eta)_0$. 

    Consider $\rho' \in \NP(\eta)$ such that $v$ is one of its endpoints. Assume, without loss of generality, that $\rho \prec_v \rho'$. By \cref{lem:inbetweenNP}, there exists $\mu \in \NP(\delta) \cap \NP(\eta)$ such that:
    \begin{enumerate}[label=$\bullet$, itemsep=1mm]
        \item $v$ is an endpoint of $\mu$; and,
        \item $\rho \prec_v \mu \prec_v \rho'$.
    \end{enumerate}
    By assumption, we have that $\mu$ does not make the colorations matchable and, therefore, $\col_\delta(v) \in \{{\osquare},{\psquare}\}$. Moreover, by letting $u$ be the other endpoint of $\rho$, we have $\col_\delta(u) \in \{{\gsquare},{\rsquare}\}$. By definition of $\col_\delta$, there is no projective accordions whose endpoints are both colored ${\psquare}$ and ${\gsquare}$, or ${\rsquare}$. Thus $\col_\delta(v) = {\osquare}$.
\end{proof}

\begin{lemma} \label{lem:angleconfigNPsplace}
    Let $\delta,\eta \in \Accord'$. Consider two colorations  $\col_\delta$ and $\col_\eta$  of  $\NP(\delta)_0$ and $\NP(\eta)_0$ respectively. Assume that $\col_\delta$ and $\col_\eta$  weakly match, and that there exists $\rho \in \NP(\delta) \setminus \NP(\eta)$ making the colorations matchable. Let $v$ be the endpoint of $\rho$ in $\NP(\eta)_0$. Then either:
    \begin{enumerate}[label=$\bullet$, itemsep=1mm]
        \item any $\rho' \in \NP(\eta)$ having $v$ as an endpoint is such that $\rho \prec_v \rho'$; or,
        \item  any $\rho' \in \NP(\eta)$ having $v$ as an endpoint is such that $\rho' \prec_v \rho$.
    \end{enumerate}
\end{lemma}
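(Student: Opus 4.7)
The plan is to argue by contradiction, extracting the conclusion directly from the fan-connectivity of $\NP(\eta)$ inside $\Prj(\Delta^{\gpoint})$ that was established in Lemma~\ref{lem:connectedNP}. Observe first that $v$ lies on the boundary of the disc, so $\prec_v$ is a total order on the set of $\rpoint$-arcs of $(\pmb{\Sigma},\mathcal{M},\Prj(\Delta^{\gpoint}))$ admitting $v$ as an endpoint. Hence the failure of the conclusion is equivalent to the existence of two elements of $\NP(\eta)$ lying on opposite sides of $\rho$ in this order.

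More precisely, suppose the statement fails. Then there exist $\rho'_1, \rho'_2 \in \NP(\eta)$, both having $v$ as an endpoint, such that
\[ \rho'_1 \prec_v \rho \prec_v \rho'_2. \]
Since $\rho \in \NP(\delta) \subseteq \Prj(\Delta^{\gpoint})$, the accordion $\rho$ is a projective arc having $v$ as an endpoint, and it sits strictly between $\rho'_1$ and $\rho'_2$ in the order $\prec_v$. By the $\Prj(\Delta^{\gpoint})$-fan-connectivity of $\NP(\eta)$ applied to the pair $(\rho'_1,\rho'_2)$, every projective accordion $\mu$ with $v$ as an endpoint and $\rho'_1 \preccurlyeq_v \mu \preccurlyeq_v \rho'_2$ must belong to $\NP(\eta)$. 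Taking $\mu = \rho$ yields $\rho \in \NP(\eta)$, contradicting the hypothesis $\rho \in \NP(\delta) \setminus \NP(\eta)$.

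The argument is therefore essentially a one-line invocation of Lemma~\ref{lem:connectedNP}, and no real obstacle is anticipated. The only subtlety worth making explicit is that $\rho$ itself is a legitimate ``candidate in between'' for fan-connectivity, precisely because it belongs to $\Prj(\Delta^{\gpoint})$ and shares the endpoint $v$ with $\rho'_1$ and $\rho'_2$. The substantive geometric content of weak matchability—namely that the intruder $\rho$ sits at a precise angular position around $v$ with $\col_\delta(v) = {\osquare}$—was already exploited in the preceding Lemma~\ref{lem:angleconfigNPscolor}, so this lemma merely records the angular consequence that the ``cone'' of $\NP(\eta)$-arcs at $v$ lies entirely on one side of $\rho$.
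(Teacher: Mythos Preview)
Your proof is correct and follows exactly the paper's approach: the paper's proof is a single sentence stating that the result is a direct consequence of the $\Prj(\Delta^{\gpoint})$-fan-connectivity of $\NP(\eta)$ together with $\rho \notin \NP(\eta)$, and you have simply spelled out this argument in detail.
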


\begin{proof}
    This is direct consequence of the fact that $\NP(\eta)$ is $\Prj(\Delta^{\gpoint})$-fan-connected, and the fact that $\rho \notin \NP(\eta)$.
\end{proof}

\begin{lemma} \label{lem:angleconfigNPsnonmatchable}
    Let $\delta,\eta \in \Accord'$. Consider two colorations  $\col_\delta$ and $\col_\eta$  of $\NP(\delta)_0$ and $\NP(\eta)_0$ respectively. Assume that $\col_\delta$ and $\col_\eta$ weakly match, and that there exists $\rho \in \NP(\delta) \setminus \NP(\eta)$ that makes the colorations matchable. Let $v$ be the endpoint of $\rho$ in $\NP(\eta)_0$. Assume that, for any projective accordion $\rho' \in \NP(\eta)$ having $v$ as an endpoint, we have $\rho \prec_v \rho'$. Then any $\mu \in \NP(\delta) \setminus \NP(\eta)$ such that $\rho \prec_v \mu \prec_v \rho'$ makes the colorations matchable.
\end{lemma}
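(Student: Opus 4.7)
The plan is to verify condition \ref{M1} of \cref{def:matchablematch} for $\mu$, with the main work concentrated on controlling the $\col_\delta$-value at the endpoint of $\mu$ other than $v$.

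First, since $\mu \in \NP(\delta)$ by hypothesis, the membership clause $\mu \in \NP(\delta) \cup \NP(\eta)$ of \ref{M1} is immediate. Applying \cref{lem:angleconfigNPscolor} to $\rho$ yields $\col_\delta(v) = \osquare$ and $\col_\eta(v) \in \{\rsquare,\gsquare\}$; since $v$ is an endpoint of $\mu$ (by $\rho \prec_v \mu \prec_v \rho'$) with $v \in \NP(\eta)_0$, the clause of \ref{M1} asking for an endpoint of $\mu$ in $\NP(\eta)_0$ with a good $\col_\eta$-value is witnessed by $v$. Since $\mu \in \NP(\delta)$, its other endpoint $u$ lies automatically in $\NP(\delta)_0$, so it remains only to show $\col_\delta(u) \in \{\rsquare,\gsquare\}$.

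To handle this last point, I would analyse the consecutive sub-fan structure at $v$. By \cref{lem:connectedNP}, the elements of $\NP(\delta)$ meeting $v$ form a consecutive sub-fan, and likewise for $\NP(\eta)$. The hypothesis that $\rho \prec_v \rho'$ for every $\rho' \in \NP(\eta)$ at $v$, combined with \cref{lem:angleconfigNPsplace}, forces the entire $\NP(\eta)$-sub-fan at $v$ to lie strictly above $\rho$ in the order $\preccurlyeq_v$. Hence the projectives $\sigma \in \NP(\delta)$ at $v$ in the range $\rho \preccurlyeq_v \sigma \preccurlyeq_v \mu$ form a consecutive sub-segment lying entirely in $\NP(\delta) \setminus \NP(\eta)$, and each pair of consecutive elements borders a cell $D \in \pmb{\Gamma}(\Prj(\Delta^{\gpoint}))$ having $v$ as a vertex. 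By construction no such $D$ coincides with the cell $C$ containing $\delta$ locally at $v$, because $C$ is bordered at $v$ by the projective immediately above $\delta$ in the fan, which lies beyond the sub-segment. For each intermediate cell $D$, rule~(4) of \cref{def:colourendpoints} does not apply, so rule~(1) assigns to the two vertices of $\partial D$ adjacent to $v$ the common color determined by the single side of $\delta$ on which $D$ lies, and this color belongs to $\{\rsquare,\gsquare\}$. Propagating this equality along the sub-segment from the other endpoint of $\rho$ to $u$, and using the fact that $\col_\delta$ at the other endpoint of $\rho$ lies in $\{\rsquare,\gsquare\}$ (since $\rho$ itself fulfils \ref{M1}), I would conclude $\col_\delta(u) \in \{\rsquare,\gsquare\}$, thereby verifying \ref{M1} for $\mu$.

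The most delicate point will be checking that the sub-segment genuinely avoids the source and target cells of $\delta$, particularly in the borderline case where $\mu$ is adjacent in the fan at $v$ to the projective bounding one of these cells. There one must invoke the weak matchability hypothesis --- namely that no projective in $\NP(\delta) \cap \NP(\eta)$ makes the colorations matchable --- together with the characterization in \cref{lem:angleconfigNPscolor} to pin down the coloration at the relevant endpoints and exclude the potential orange or pink override coming from rules~(4) and (5).
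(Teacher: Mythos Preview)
Your reduction is correct: verifying \ref{M1} for $\mu$ does come down to showing $\col_\delta(u)\in\{\rsquare,\gsquare\}$ for the endpoint $u$ of $\mu$ other than $v$, and your use of \cref{lem:angleconfigNPscolor} to get $\col_\delta(v)=\osquare$ and $\col_\eta(v)\in\{\rsquare,\gsquare\}$ is fine. The route, however, diverges from the paper's and has a gap at precisely the point you flag as delicate.

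The paper argues by contradiction. Assuming $\col_\delta(x)\in\{\osquare,\psquare\}$ for the other endpoint $x$ of $\mu$, it brings in $\rho_+$, the smallest projective of $\NP(\eta)$ at $v$, which lies in $\NP(\delta)\cap\NP(\eta)$ by \cref{lem:inbetweenNP}. Weak matchability then forces $\col_\delta(y)\in\{\osquare,\psquare\}$ for the other endpoint $y$ of $\rho_+$ as well. Having \emph{both} $\mu$ and $\rho_+$ with orange/pink far endpoints pins them as edges of the same source or target cell of $\delta$, so $\mu\precdot_v\rho_+$ in $\Prj(\Delta^{\gpoint})$; combined with $\rho\prec_v\mu$ and $\col_\delta$ green at the far end of $\rho$, one gets $v=t(\delta)$ and $\mu\prec_v\delta\prec_v\rho_+$, which contradicts the construction of $\NP(\delta)$.

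Your propagation argument instead tries to show directly that every intermediate cell $D$ at $v$ between consecutive projectives in the sub-fan $[\rho,\mu]$ is \emph{not} a source/target cell of $\delta$, so that rule~(1) rather than rules~(4)--(5) of \cref{def:colourendpoints} governs the far endpoints. But the sentence ``$C$ is bordered at $v$ by the projective immediately above $\delta$ in the fan, which lies beyond the sub-segment'' is not well-posed when $v$ is not an endpoint of $\delta$ (there is then no position of $\delta$ in the fan at $v$), and when $v=t(\delta)$ it is exactly the content of the paper's contradiction that the target cell can intrude into the sub-segment --- this is what the argument via $\rho_+$ rules out. Without invoking $\rho_+$ and the weak matchability hypothesis to control its far endpoint, you have no leverage to exclude the target cell from appearing between $\mu$ and the top of the sub-fan; your final paragraph names the right ingredients but does not carry out the argument. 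Either complete the case analysis on whether $v$ is an endpoint of $\delta$ and where the target cell sits in the fan at $v$, or adopt the paper's shorter contradiction via $\rho_+$.
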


\begin{proof}
    Let $\col_\delta$ and $\col_\eta$ be colorations of  $\NP(\delta)_0$ and $\NP(\eta)_0$ respectively that match. Call $u$ the other endpoint of $\rho$. By \cref{lem:angleconfigNPscolor}, we can assume that $\col_\eta(v) = {\rsquare}$. We also have $\col_\delta(v) = {\osquare}$ and $\col_\delta(u) = {\gsquare}$. 
    
    Assume by contradiction that such a $\mu \in \NP(\delta) \setminus \NP(\eta)$ exists. Let $\rho_+$ be the smallest projective accordion for $\prec_v$ among those in $\NP(\eta)$ having $v$ as an endpoint. By \cref{lem:angleconfigNPsplace}, we have that:
    \begin{enumerate}[label=$\bullet$, itemsep=1mm]
        \item $\mu \prec_v \rho_+$; and,
        \item $\rho_+ \in \NP(\delta) \cap \NP(\eta)$.
    \end{enumerate}
    Set $x \in \mathcal{M}_{\rpoint}$ to be the other endpoint of $\mu$ and $y \in \mathcal{M}_{\rpoint}$ the one of $\rho_+$. See \cref{fig:specialmatchreduction} for an illustration.
    \begin{figure}[!ht]
        \centering
        \begin{tikzpicture}[mydot/.style={
					circle,
					thick,
					fill=white,
					draw,
					outer sep=0.5pt,
					inner sep=1pt
				}, scale = 1]
		\tikzset{
		osq/.style={
        rectangle,
        thick,
        fill=white,
        append after command={
            node [
                fit=(\tikzlastnode),
                orange,
                line width=0.3mm,
                inner sep=-\pgflinewidth,
                cross out,
                draw
            ] {}}}}
		\draw [line width=0.7mm,domain=60:70] plot ({5*cos(\x)}, {2*sin(\x)});
		\draw [line width=0.7mm,domain=95:102] plot ({5*cos(\x)}, {2*sin(\x)});
		\draw [line width=0.7mm,domain=140:155] plot ({5*cos(\x)}, {2*sin(\x)});
		\draw [line width=0.7mm,domain=250:260] plot ({5*cos(\x)}, {2*sin(\x)});
		\draw [line width=0.7mm,domain=297:308] plot ({5*cos(\x)}, {2*sin(\x)});
		\foreach \X in {0,...,43}
		{
		\tkzDefPoint(5*cos(pi/22*\X),2*sin(pi/22*\X)){\X};
		};

		\draw[line width=0.7mm,blue, loosely dashed] (1.4,2) to [bend left=30] (31);
		\draw[line width=0.9mm ,bend right=30,red](31) edge (18);
		\draw[line width=0.9mm ,bend right=30,red](31) edge (12);
		\draw[line width=0.9mm ,bend right=30,red, densely dashdotted](31) edge (8);
		\draw[line width=0.9mm ,bend left=10,red, densely dashdotted](31) edge (37);
		
		\filldraw [fill=red,opacity=0.1] (31) to [bend right=30] (8) to [bend right=15] (12) to [bend left=30] cycle ;
		
		\foreach \X in {37}
		{
		\tkzDrawPoints[fill =red,size=4,color=red](\X);
		};
		\foreach \X in {18}
		{
		\tkzDrawPoints[rectangle,size=6,color=dark-green,thick,fill=white](\X);
		};
		\foreach \X in {8,12,31}
		{
		\tkzDrawPoints[size=6,orange,osq](\X);
		};

		\tkzDefPoint(-3.1,0.5){gammaM};
		\tkzLabelPoint[red](gammaM){\Large $\rho$}
		\tkzDefPoint(-1,1){gammaP};
		\tkzLabelPoint[red](gammaP){\Large $\mu$}
		\tkzDefPoint(2.3,1){v};
		\tkzLabelPoint[red](v){\Large $\rho_+$}
		\tkzDefPoint(1.7,-.9){w};
		\tkzLabelPoint[red](w){\Large $\rho'$}
		\tkzDefPoint(0,-0.2){deltav};
		\tkzLabelPoint[blue](deltav){\Large $\delta$}
		\tkzDefPoint(-4.5,1.6){deltaw};
		\tkzLabelPoint[dark-green](deltaw){$u$}
		\tkzDefPoint(-1.4,-2){w};
		\tkzLabelPoint[orange](w){$v$}
            \tkzDefPoint(-.7,2.5){x};
		\tkzLabelPoint[orange](x){$x$}
            \tkzDefPoint(2.1,2.4){y};
		\tkzLabelPoint[orange](y){$y$}
    \end{tikzpicture}
        \caption{Illustration of the configuration. We assigned the colorations $\col_\delta$ to the relevant vertices. Note that $\col_\eta(v) = {\rsquare}$ and $\col_\delta(x),\col_\delta(y) \in \{{\osquare},{\psquare}\}$.}
        \label{fig:specialmatchreduction}
    \end{figure}
    By assumption on $\mu$, we have $\col_\delta(x) \in \{{\osquare}, {\psquare}\}$. As there is no projective accordion in $\NP(\delta) \cap \NP(\eta)$ that makes the colorations matchable, we have that  $\col_\delta(y) \in \{{\osquare},{\psquare}\}$. So $\mu$ and $\rho_+$ are boundaries of a source or a target cell of $\delta$. This implies that $\mu \precdot_v \rho_+$ in $\Prj(\Delta^{\gpoint})$. Moreover, as $\rho \prec_v \mu$ and $\col_\delta(u) = {\gsquare}$, then $v = t(\delta)$, and $\mu \prec_v \delta \prec_v \rho_+$ in $\Accord$. By constructing $\col_\delta$ via \cref{def:colourendpoints}, we cannot have both $\mu$ and $\rho_+$ in $\NP(\delta)$. This is a contradiction.
\end{proof}

\begin{lemma} \label{lem:impossibleorder}
    Let $\delta,\eta \in \Accord'$. Consider two colorations $\col_\delta$ and $\col_\eta$ of  $\NP(\delta)_0$ and $\NP(\eta)_0$ respectively. Assume that $\col_\delta$ and $\col_\eta$ weakly match, and that there exists $\rho \in \NP(\delta) \setminus \NP(\eta)$ that makes the colorations matchable. Let $v$ be the endpoint of $\rho$ in $\NP(\eta)_0$. Then there are no pairs $(\rho_1, \rho_2)$ of projective accordions such that:
    \begin{enumerate}[label=$\bullet$, itemsep=1mm]
        \item $\rho_2 \in \NP(\delta) \setminus \NP(\eta)$ and $\rho_1 \in \NP(\delta) \cap \NP(\eta)$;
        \item  $\rho_1$ and $\rho_2$ share $v$ as a common endpoint, and $\rho_1 \precdot_v \rho_2$; and,
        \item  $\rho_1$ makes the colorations matchable.
    \end{enumerate}
\end{lemma}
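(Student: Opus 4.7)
My plan is to argue by immediate contradiction, invoking only the definition of weak matchability itself.

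I would suppose for contradiction that a pair $(\rho_1, \rho_2)$ satisfying the three bulleted conditions exists. The first condition places $\rho_1$ in $\NP(\delta) \cap \NP(\eta)$, while the third condition asserts that $\rho_1$ makes the colorations matchable in the sense of the definition of matchability (via $(\mathsf{M}1)$ or $(\mathsf{M}2)$). However, the existence of any element of $\NP(\delta) \cap \NP(\eta)$ that makes the colorations matchable is precisely the criterion for $\col_\delta$ and $\col_\eta$ to be \emph{strongly} matchable. This directly contradicts the hypothesis that they only weakly match. Hence no such pair $(\rho_1, \rho_2)$ can exist.

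I would then observe that the ancillary data in the statement---the auxiliary accordion $\rho$, the distinguished vertex $v$, the covering relation $\rho_1 \precdot_v \rho_2$, and the side condition $\rho_2 \in \NP(\delta) \setminus \NP(\eta)$---play no role in this argument. Their function is rather to pin down the geometric configuration around $v$ in which this non-existence assertion will be invoked in the subsequent sections, namely in the structural reductions aimed at reducing the computation of $\opResAc(\delta,\eta)$ to strongly matchable configurations (in the spirit of the preceding two lemmas, which handle the partner situations where the offending projective accordion lies between $\rho$ and the elements of $\NP(\eta)$ at $v$).

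The main obstacle is essentially non-existent: the statement is a direct unfolding of the definition of weak matchability, so that neither a case analysis on the two configurations $(\mathsf{M}1)$ and $(\mathsf{M}2)$ nor any inspection of the colors taken at the endpoints of $\rho_1$ and $\rho_2$ will be required. The lemma should therefore be viewed as a clean bookkeeping statement, extracting this consequence of weak matchability in a form tailored to the geometric manipulations of the following section.
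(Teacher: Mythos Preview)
Your argument is correct and is in fact more direct than the paper's own proof. As you observed, bullet one places $\rho_1$ in $\NP(\delta)\cap\NP(\eta)$ and bullet three says $\rho_1$ makes the colorations matchable; by the very definition of strong matchability this already contradicts the hypothesis that $\col_\delta$ and $\col_\eta$ are only weakly matchable, and none of the remaining data (the accordion $\rho$, the vertex $v$, the covering relation $\rho_1\precdot_v\rho_2$, the condition on $\rho_2$) is needed.

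The paper's proof takes a longer route: it first invokes Lemma~\ref{lem:angleconfigNPscolor} to pin down $\col_\delta(v)=\osquare$ and $\col_\eta(v)\in\{\gsquare,\rsquare\}$, then argues geometrically about the position of $\delta$ relative to the pair $(\rho_1,\rho_2)$ (the printed proof contains an apparent typo, writing $\rho_2\in\NP(\delta)\cap\NP(\eta)$ where the hypothesis says otherwise), and only at the end derives the same contradiction with weak matchability. Your shortcut bypasses this entirely. The paper's extra reasoning may reflect an intended statement in which the roles of $\rho_1$ and $\rho_2$ are permuted (so that the accordion making the colorations matchable lies in $\NP(\delta)\setminus\NP(\eta)$, in which case the immediate definitional contradiction would no longer be available); but for the lemma exactly as stated, your one-line argument suffices.
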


\begin{proof}
    Assume by contradiction that this configuration is possible. By \cref{lem:angleconfigNPscolor}, we have $\col_\delta(v) = {\osquare}$ and $\col_\eta(v) \in \{{\gsquare}, {\rsquare}\}$. We know that $\rho_2 \in \NP(\delta) \cap \NP(\eta)$. This can only be the case if $v$ is an endpoint of $\delta$ and, denoting by $u$ the other endpoint of $\rho_2$, we must have $\col_\delta(u) \in \{{\rsquare},{\gsquare}\}$. This is a contradiction as it means that $\col_\delta$ and $\col_\eta$ should be strongly matchable.
\end{proof}

\begin{prop} \label{prop:specialcaseofmatching} 
    Let $(\delta,\eta) \in \Accord'^2$. Consider two colorations $\col_\delta$ and $\col_\eta$ of  $\NP(\delta)_0$ and $\NP(\eta)_0$ respectively. Assume that $\col_\delta$ and $\col_\eta$ are weakly matching, and there exists $\rho \in \NP(\delta) \setminus \NP(\eta)$ that makes the colorations matchable. Then, there exist $\rho_- \in \NP(\delta) \setminus \NP(\eta)$ and $\rho_+ \in \NP(\delta) \cap \NP(\eta)$ such that all of the following assertions hold:
    \begin{enumerate}[label=$(\roman*)$, itemsep=1mm]
        \item $\rho_-$ makes the colorations matchable;
        \item $\rho_-$ and $\rho_+$ share a common endpoint $v \in \mathcal{M}_{\rpoint}$, and $\rho_- \prec_v \rho_+$;
        \item  $\rho_-$ and $\rho_+$ belong to the boundary of the same cell in $\Gamma(\Prj(\Delta^{\gpoint}))$;
        \item by denoting $u$ the other endpoint of $\rho_-$,  either $u$ or $v$ is an endpoint of $\delta$, or $\delta$ crosses $\rho_-$; and,
        \item by denoting $w$ the other endpoint of $\rho_+$, either $w$ or $v$ is an endpoint of $\eta$, or $\eta$ crosses $\rho_+$.
    \end{enumerate}
\end{prop}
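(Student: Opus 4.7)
The plan is to locate $\rho_+$ as the smallest element of $\NP(\delta)\cap\NP(\eta)$ above $\rho$ at a specific shared vertex $v$, and then $\rho_-$ as its covering predecessor in $\Prj(\Delta^{\gpoint})$ at $v$. First, by \cref{lem:angleconfigNPscolor}, letting $v$ be the endpoint of $\rho$ in $\NP(\eta)_0$ and $u$ its other endpoint, one has $\col_\delta(v) = {\osquare}$, $\col_\eta(v) \in \{{\rsquare},{\gsquare}\}$, and $\col_\delta(u) \in \{{\rsquare},{\gsquare}\}$. Using \cref{lem:angleconfigNPsplace}, I may assume, up to swapping lateralities, that every $\rho' \in \NP(\eta)$ incident to $v$ satisfies $\rho \prec_v \rho'$. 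Since $\NP(\delta)$ and $\NP(\eta)$ are both $\Prj(\Delta^{\gpoint})$-fan-connected by \cref{lem:connectedNP} and $\NP(\delta) \cap \NP(\eta) \neq \varnothing$, \cref{lem:inbetweenNP}(iii) guarantees the existence of some $\mu \in \NP(\delta) \cap \NP(\eta)$ incident to $v$ with $\rho \prec_v \mu$.

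Next, I would set $\rho_+$ to be the minimum of such $\mu$ under $\prec_v$, and $\rho_-$ to be the projective accordion immediately preceding $\rho_+$ at $v$ in $\Prj(\Delta^{\gpoint})$, so that $\rho_- \precdot_v \rho_+$ in the fan at $v$. Being covering neighbours at $v$, they lie on the boundary of a common cell of $\pmb{\Gamma}(\Prj(\Delta^{\gpoint}))$, which gives (iii). From $\rho \preccurlyeq_v \rho_- \prec_v \rho_+$ combined with the fan-connectedness of $\NP(\delta)$, I obtain $\rho_- \in \NP(\delta)$; the minimality of $\rho_+$ in $\NP(\delta) \cap \NP(\eta)$ then forces $\rho_- \notin \NP(\eta)$, so $\rho_- \in \NP(\delta) \setminus \NP(\eta)$, yielding (ii). For (i), either $\rho_- = \rho$, in which case the hypothesis on $\rho$ directly provides what we need, or $\rho \prec_v \rho_-$, in which case $\rho \prec_v \rho_- \prec_v \rho'$ for the smallest $\rho' \in \NP(\eta)$ at $v$, and \cref{lem:angleconfigNPsnonmatchable} ensures that $\rho_-$ itself makes the colorations matchable.

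Assertions (iv) and (v) should then follow by unpacking \cref{prop:CombidescripNproj}: membership $\rho_- \in \NP(\delta)$ forces one of its three listed alternatives, which is precisely the disjunction ``$u$ or $v$ is an endpoint of $\delta$, or $\delta$ crosses $\rho_-$'', and the symmetric argument for $\rho_+ \in \NP(\eta)$ yields (v). The main obstacle I anticipate is the fan bookkeeping itself: one must verify that the minimum $\rho_+$ is well-defined, lies genuinely between $\rho$ and the first $\rho' \in \NP(\eta)$ at $v$, and that its covering predecessor $\rho_-$ falls inside the interval where \cref{lem:angleconfigNPsnonmatchable} is applicable; this is ultimately controlled by \cref{lem:inbetweenNP}(iii) and the fact that \cref{lem:impossibleorder} rules out the only pathological configuration that could obstruct minimality of $\rho_+$.
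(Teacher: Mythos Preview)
Your strategy matches the paper's: fix $v$ via \cref{lem:angleconfigNPscolor}, orient so that $\NP(\eta)$ at $v$ lies above $\rho$, take $\rho_+$ as the least element of $\NP(\delta)\cap\NP(\eta)$ at $v$, and $\rho_-$ as its covering predecessor, then invoke \cref{lem:angleconfigNPsnonmatchable} for (i). The paper defines $\rho_+$ as the least element of $\NP(\eta)$ at $v$ and then checks it lies in the intersection, but the resulting objects agree, and (ii)--(iii) follow as you say.

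There is one genuine gap, in your derivation of (iv) and (v). The three alternatives of \cref{prop:CombidescripNproj} do \emph{not} translate directly into the disjunction of (iv): alternative (ii) there asserts only that $\rho_-$ lies on the path of boundary projectives in the source or target cell of $\delta$, and such a projective need not share an endpoint with $\delta$. The paper closes this with the coloration constraint you have already recorded. Since $\rho_-$ makes the colorations matchable and $\col_\delta(v)={\osquare}$, one has $\col_\delta(u)\in\{{\rsquare},{\gsquare}\}$. But by rules (4)--(5) of \cref{def:colourendpoints}, every vertex arising as an endpoint of a path projective in the source or target cell of $\delta$, other than $s(\delta)$ or $t(\delta)$ itself, is colored ${\osquare}$ or ${\psquare}$; hence if $\rho_-$ falls under alternative (ii) then $u$ must equal an endpoint of $\delta$, while alternatives (i) and (iii) give (iv) directly. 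The same argument, using $\col_\eta(v)\in\{{\rsquare},{\gsquare}\}$, handles (v). You have the ingredients; only the final appeal to \cref{prop:CombidescripNproj} needs this coloration supplement.
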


\begin{figure}[!ht]
\centering 
    \begin{tikzpicture}[mydot/.style={
					circle,
					thick,
					fill=white,
					draw,
					outer sep=0.5pt,
					inner sep=1pt
				}, scale = .8]
		\tikzset{
		osq/.style={
        rectangle,
        thick,
        fill=white,
        append after command={
            node [
                fit=(\tikzlastnode),
                orange,
                line width=0.3mm,
                inner sep=-\pgflinewidth,
                cross out,
                draw
            ] {}}}}
			\foreach \X in {0,1,...,37}
		{
		\tkzDefPoint(4*cos(pi/19*\X),1.5*sin(pi/19*\X)){\X};
		};
        \draw [line width=0.7mm,domain=30:90] plot ({4*cos(\x)}, {1.5*sin(\x)});
        \draw [line width=0.7mm,domain=110:120] plot ({4*cos(\x)}, {1.5*sin(\x)});
        \draw [line width=0.7mm,domain=145:155] plot ({4*cos(\x)}, {1.5*sin(\x)});
        \draw [line width=0.7mm,domain=213:223] plot ({4*cos(\x)}, {1.5*sin(\x)});
        \draw [line width=0.7mm,domain=250:260] plot ({4*cos(\x)}, {1.5*sin(\x)});
        \draw [line width=0.7mm,domain=280:290] plot ({4*cos(\x)}, {1.5*sin(\x)});
        \draw [line width=0.7mm,domain=308:317] plot ({4*cos(\x)}, {1.5*sin(\x)});

		\draw[line width=0.9mm ,bend left=40,red](9) edge (12);
		\draw[line width=0.9mm ,bend left =20,red](12) edge (23);
            \draw[line width=0.9mm ,bend left =20,red](23) edge (30);
            \draw[line width=0.9mm ,bend left =30,red](30) edge (33);
            \draw[line width=0.9mm ,bend left =20,red](33) edge (4);
		
		\draw[line width=0.7mm,blue,bend right=20, loosely dashed](16) edge (4);
            \draw[line width=0.7mm,blue,bend left=0, loosely dashed](27) edge (4);

            \filldraw [pattern=dots, pattern color=red,opacity=0.3] (9) to [bend left=40] (12) to [bend left=20] (23) to [bend left=20] (30) to [bend left=30] (33)  to [bend left=20] (4) to [bend right=10] cycle ;

		\foreach \X in {4,9,12,16,23,27,30,33}
		{
		\tkzDrawPoints[fill =red,size=4,color=red](\X);
		};

		\tkzDefPoint(-1.2,0.8){g};
		\tkzLabelPoint[blue](g){\Large $\delta$}
            \tkzDefPoint(.6,0){h};
		\tkzLabelPoint[blue](h){\Large $\eta$}
            \tkzDefPoint(-3,0){i};
		\tkzLabelPoint[red](i){\Large $\rho_-$}
            \tkzDefPoint(-2.4,-0.8){j};
		\tkzLabelPoint[red](j){\Large $\rho_+$}
            \tkzDefPoint(-1.6,2){j};
		\tkzLabelPoint[red](j){\Large $u$}
            \tkzDefPoint(-3.7,2){k};
		\tkzLabelPoint[black](k){\Large $(a)$}
            \tkzDefPoint(-3.3,-.9){l};
		\tkzLabelPoint[red](l){\Large $v$}
            \tkzDefPoint(1,-1.5){l};
		\tkzLabelPoint[red](l){\Large $w$}

    \begin{scope}[xshift = 8cm]
        	\foreach \X in {0,1,...,37}
		{
		\tkzDefPoint(4*cos(pi/19*\X),1.5*sin(pi/19*\X)){\X};
		};
        \draw [line width=0.7mm,domain=30:90] plot ({4*cos(\x)}, {1.5*sin(\x)});
        \draw [line width=0.7mm,domain=110:120] plot ({4*cos(\x)}, {1.5*sin(\x)});
        \draw [line width=0.7mm,domain=145:155] plot ({4*cos(\x)}, {1.5*sin(\x)});
        \draw [line width=0.7mm,domain=213:223] plot ({4*cos(\x)}, {1.5*sin(\x)});
        \draw [line width=0.7mm,domain=250:260] plot ({4*cos(\x)}, {1.5*sin(\x)});
        \draw [line width=0.7mm,domain=280:290] plot ({4*cos(\x)}, {1.5*sin(\x)});
        \draw [line width=0.7mm,domain=297:307] plot ({4*cos(\x)}, {1.5*sin(\x)});
        \draw [line width=0.7mm,domain=315:328] plot ({4*cos(\x)}, {1.5*sin(\x)});

		\draw[line width=0.9mm ,bend left=40,red](9) edge (12);
		\draw[line width=0.9mm ,bend left =20,red](12) edge (23);
            \draw[line width=0.9mm ,bend left =20,red](23) edge (30);
            \draw[line width=0.9mm ,bend left =30,red](30) edge (34);
            \draw[line width=0.9mm ,bend left =20,red](34) edge (4);
		
		\draw[line width=0.7mm,blue,bend right=20, loosely dashed](16) edge (4);
            \draw[line width=0.7mm,blue,bend left=40, loosely dashed](27) edge (32);

            \filldraw [pattern=dots, pattern color=red,opacity=0.3] (9) to [bend left=40] (12) to [bend left=20] (23) to [bend left=20] (30) to [bend left=30] (34)  to [bend left=20] (4) to [bend right=10] cycle ;

		\foreach \X in {4,9,12,16,23,27,30,32,34}
		{
		\tkzDrawPoints[fill =red,size=4,color=red](\X);
		};

		\tkzDefPoint(-1.2,0.9){g};
		\tkzLabelPoint[blue](g){\Large $\delta$}
            \tkzDefPoint(.8,-.2){h};
		\tkzLabelPoint[blue](h){\Large $\eta$}
            \tkzDefPoint(-3,0){i};
		\tkzLabelPoint[red](i){\Large $\rho_-$}
            \tkzDefPoint(-2.4,-0.8){j};
		\tkzLabelPoint[red](j){\Large $\rho_+$}
            \tkzDefPoint(-1.6,2){j};
		\tkzLabelPoint[red](j){\Large $u$}
            \tkzDefPoint(-3.7,2){k};
		\tkzLabelPoint[black](k){\Large $(b)$}
            \tkzDefPoint(-3.3,-.9){l};
		\tkzLabelPoint[red](l){\Large $v$}
            \tkzDefPoint(1,-1.5){l};
		\tkzLabelPoint[red](l){\Large $w$}
    \end{scope}
    \begin{scope}[xshift = 4cm, yshift=-4.5cm]
        	\foreach \X in {0,1,...,37}
		{
		\tkzDefPoint(4*cos(pi/19*\X),1.5*sin(pi/19*\X)){\X};
		};
        \draw [line width=0.7mm,domain=30:45] plot ({4*cos(\x)}, {1.5*sin(\x)});
        \draw [line width=0.7mm,domain=80:120] plot ({4*cos(\x)}, {1.5*sin(\x)});
        \draw [line width=0.7mm,domain=145:155] plot ({4*cos(\x)}, {1.5*sin(\x)});
        \draw [line width=0.7mm,domain=195:205] plot ({4*cos(\x)}, {1.5*sin(\x)});
        \draw [line width=0.7mm,domain=220:230] plot ({4*cos(\x)}, {1.5*sin(\x)});
        \draw [line width=0.7mm,domain=250:260] plot ({4*cos(\x)}, {1.5*sin(\x)});
        \draw [line width=0.7mm,domain=280:290] plot ({4*cos(\x)}, {1.5*sin(\x)});
        \draw [line width=0.7mm,domain=305:320] plot ({4*cos(\x)}, {1.5*sin(\x)});
        \draw [line width=0.7mm,domain=335:350] plot ({4*cos(\x)}, {1.5*sin(\x)});

		\draw[line width=0.9mm ,bend left=30,red](12) edge (21);
		\draw[line width=0.9mm ,bend left =30,red](21) edge (24);
            \draw[line width=0.9mm ,bend left =20,red](24) edge (30);
            \draw[line width=0.9mm ,bend left =30,red](30) edge (36);
            \draw[line width=0.9mm ,bend left =20,red](30) edge (9);
            \draw[line width=0.9mm ,bend right =30,red](9) edge (4);
		
		\draw[line width=0.7mm,blue,bend right=20, loosely dashed](16) edge (9);
            \draw[line width=0.7mm,blue,bend left=40, loosely dashed](27) edge (33);

            \filldraw [pattern=dots, pattern color=red,opacity=0.3] (9) to [bend right=30] (4) to [bend left=50] (36) to [bend right=30] (30) to [bend left=20] cycle ;

		\foreach \X in {4,9,12,16,21,24,27,30,33,36}
		{
		\tkzDrawPoints[fill =red,size=4,color=red](\X);
		};

		\tkzDefPoint(-1.2,0.8){g};
		\tkzLabelPoint[blue](g){\Large $\delta$}
            \tkzDefPoint(0,-.2){h};
		\tkzLabelPoint[blue](h){\Large $\eta$}
            \tkzDefPoint(-.1,.7){i};
		\tkzLabelPoint[red](i){\Large $\rho_+$}
            \tkzDefPoint(1.3,1.7){j};
		\tkzLabelPoint[red](j){\Large $\rho_-$}
            \tkzDefPoint(3.5,1.5){j};
		\tkzLabelPoint[red](j){\Large $u$}
            \tkzDefPoint(-3.7,2){k};
		\tkzLabelPoint[black](k){\Large $(c)$}
            \tkzDefPoint(0.3,2.1){l};
		\tkzLabelPoint[red](l){\Large $v$}
            \tkzDefPoint(1,-1.5){l};
		\tkzLabelPoint[red](l){\Large $w$}
    \end{scope}
    \end{tikzpicture}
\caption{\label{fig:ReducedWeakConfig} Some of the weakly matchable configurations. All the configurations are obtained from them by identifying endpoints of $\delta$ and $\eta$ as stated in \cref{prop:specialcaseofmatching}. The dotted area is the cell whose boundary contains $\rho_+$ and $\rho_-$.}
\end{figure}
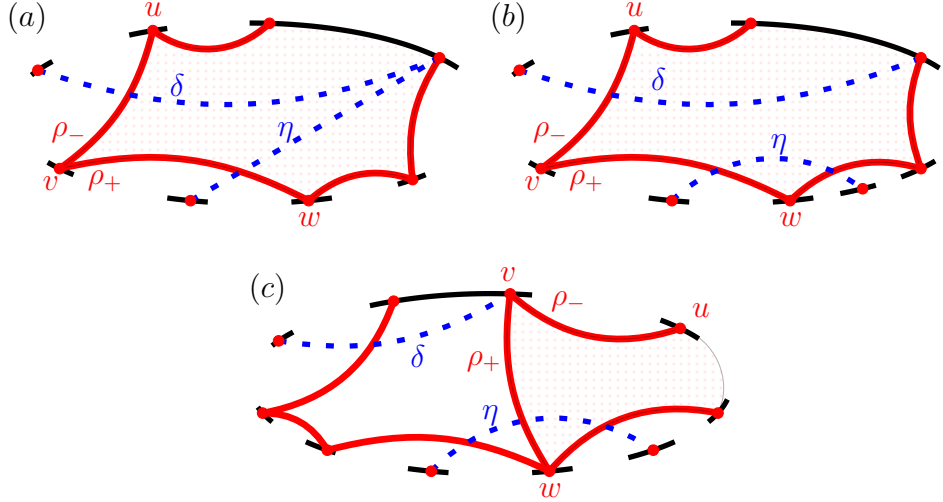

\begin{proof}
    Consider a projective accordion $\rho \in \NP(\delta) \setminus \NP(\eta)$ making the colorations matchable. By \cref{lem:angleconfigNPscolor}, exactly one of the endpoint $v$ is in $\NP(\eta)_0$, and we can assume that $\col_\eta(v) = {\rsquare}$ and $\col_{\delta}(v) = \osquare$. By \cref{lem:angleconfigNPsplace}, and by \cref{lem:impossibleorder}, we must have that all the projective accordions $\rho' \in \NP(\eta)$ having $v$ as an endpoint are greater than $\rho$ for $\prec_v$. Denote by:
    \begin{enumerate}[label=$\bullet$, itemsep=1mm]
        \item $\rho_-$ the smallest projective accordion for $\prec_v$ among those in $\NP(\delta)$ having $v$ as an endpoint that makes the colorations matchable 
        \item $\rho_+$ the greatest projective accordion for $\prec_v$ among those in $\NP(\eta)$ having $v$ as an endpoint.
    \end{enumerate}
    By \cref{lem:inbetweenNP,lem:angleconfigNPsnonmatchable}, we have that $\rho_+ \in \NP(\delta) \cap \NP(\eta)$ and $\rho_- \precdot_v \rho_+$ in $\Prj(\Delta^{\gpoint})$. By definition of $\col_\delta$, we have that $\rho_+$ and $\rho_-$ are in the boundary of some cell in $\pmb{\Gamma}(\Prj(\Delta^{\gpoint}))$.

    Denote by $u$ the other endpoint of $\rho_-$ and $w$ the other endpoint of $\rho_+$. If both $v \neq t(\delta) \neq u$, the accordion $\delta$ must cross $\rho_-$ to satisfy $\col_\delta(u) = {\gsquare}$. If both $v \neq s(\eta) \neq w$, the accordion $\eta$ must cross $\rho_+$ to satisfy $\col_\eta(v) = {\rsquare}$. Hence the desired result.
\end{proof}

Thanks to this result, we have the following consequences.

\begin{cor} \label{cor:numberofpairsrho+rho-}
Let $(\delta,\eta) \in \Accord'^2$ be such that $\col_\delta$ and $\col_\eta$ are weakly matchable. Assume that there exists $\rho \in \NP(\delta) \setminus \NP(\eta)$ that makes the colorations matchable. The following assertions hold:
    \begin{enumerate}[label=$(\roman*)$, itemsep=1mm]
        \item there are no $\mu \in \NP(\eta) \setminus \NP(\delta)$ making the colorations matchable; and,
        \item there exist at most two pairs $(\rho_+, \rho_-)$ of projective accordions satisfying the configurations in \cref{prop:specialcaseofmatching}; and,
        \item in the case where we have exactly two pairs $(\rho_+^{(1)}, \rho_-^{(2)})$ and $(\rho_+^{(2)}, \rho_-^{(2)})$ satisfying \cref{prop:specialcaseofmatching}, then, up to exchanging the pairs, $\rho_+^{(1)}$ and $\rho_+^{(2)}$ share a common endpoint $w \in \mathcal{M}_{\rpoint}$, $\rho_+^{(1)} \precdot_w \rho_+^{(2)}$ in $\Prj(\Delta^{\gpoint})$, and $\col_\delta(w) = {\psquare}$.
    \end{enumerate}
\end{cor}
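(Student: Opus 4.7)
The plan is to prove the three assertions in order, leaning on Proposition~\ref{prop:specialcaseofmatching} and the fan-connectivity results (Lemmas~\ref{lem:connectedNP} and~\ref{lem:inbetweenNP}).

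For (i), I argue by contradiction: assume a $\mu \in \NP(\eta)\setminus\NP(\delta)$ also makes the colorations matchable. Applying Proposition~\ref{prop:specialcaseofmatching} to $(\delta,\eta)$ with the given $\rho$ produces $\rho_-\in\NP(\delta)\setminus\NP(\eta)$ and $\rho_+\in\NP(\delta)\cap\NP(\eta)$ sharing a common endpoint $v$, with $\col_\delta(v)=\osquare$ and $\col_\eta(v)\in\{\rsquare,\gsquare\}$, on the boundary of a cell $C_1\in\pmb{\Gamma}(\Prj(\Delta^{\gpoint}))$. Applying the same proposition symmetrically to $(\eta,\delta)$ with $\mu$ yields $\sigma_-\in\NP(\eta)\setminus\NP(\delta)$ and $\sigma_+\in\NP(\delta)\cap\NP(\eta)$ sharing an endpoint $v'$, with $\col_\eta(v')=\osquare$ and $\col_\delta(v')\in\{\rsquare,\gsquare\}$, on the boundary of a cell $C_2$. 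By Lemma~\ref{lem:inbetweenNP}(i) together with Lemma~\ref{lem:connectedNP}, the intersection $\NP(\delta)\cap\NP(\eta)$ is $\Prj(\Delta^{\gpoint})$-fan-connected, so a path of projective accordions in the intersection links $\rho_+$ to $\sigma_+$. Combining the matching laterality (Convention~\ref{conv:matchablematching}) with the fact that $(\pmb{\Sigma},\mathcal{M})$ is a disc without punctures, the arcs $\rho_-$ and $\sigma_-$ would have to cut the disc into incompatible regions that cannot simultaneously accommodate the fan-connected intersection and the two separated symmetric differences, giving the contradiction.

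For (ii), Lemma~\ref{lem:angleconfigNPscolor} forces every pair to be anchored at a vertex $v$ with $\col_\delta(v)=\osquare$. By Definition~\ref{def:colourendpoints}, the $\osquare$-vertices of $\col_\delta$ lie on the boundary of the source cell $C_L$ and the target cell $C_R$ of $\delta$: namely $s(\delta)$, $t(\delta)$, and, when the cell is large, the intermediate vertices other than the second-to-last (which carries the $\psquare$ coloration). The requirement that $\rho_+\in\NP(\eta)$ combined with the cell-boundary and adjacency conditions of Proposition~\ref{prop:specialcaseofmatching}, together with the fan-connectivity of $\NP(\eta)$, restricts admissible pairs to at most one per side ($C_L$ or $C_R$), unless a single large cell provides two adjacent orange vertices accommodating two pairs. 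In every case the total count is bounded by two.

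For (iii), assume two distinct pairs $(\rho_+^{(1)},\rho_-^{(1)})$ and $(\rho_+^{(2)},\rho_-^{(2)})$. A refinement of the analysis in (ii) shows they must share the same ambient cell — a large source or target cell of $\delta$ — since two pairs drawn from distinct cells are incompatible with a common $\rho\in\NP(\delta)\setminus\NP(\eta)$ under the laterality convention. Within such a large cell, the intermediate orange vertices sit consecutively along the boundary, with the unique $\psquare$-vertex $w$ second-to-last. The fan-connectivity of $\NP(\eta)$ together with $\rho_+^{(i)}\in\NP(\eta)$ forces, up to relabeling, that $\rho_+^{(1)}$ and $\rho_+^{(2)}$ meet at $w$ and are consecutive in $\Prj(\Delta^{\gpoint})$, i.e.\ $\rho_+^{(1)}\precdot_w\rho_+^{(2)}$; the coloration rules then impose $\col_\delta(w)=\psquare$, this being the only color allowing matching pairs to occur on both sides of $w$. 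The hardest part is (i): the exclusion of a matchable projective accordion on the $\eta$-side of the intersection is the delicate disc-topology argument combining coloration conventions, matching laterality, and fan-connectivity; parts (ii) and (iii) reduce to combinatorial case analysis once (i) is established.
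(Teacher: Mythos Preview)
Your argument for (i) has a genuine gap. You set up the contradiction correctly by producing pairs $(\rho_+,\rho_-)$ and $(\sigma_+,\sigma_-)$ via Proposition~\ref{prop:specialcaseofmatching} applied to $(\delta,\eta)$ and to $(\eta,\delta)$, and you note that $\NP(\delta)\cap\NP(\eta)$ is fan-connected. But the concluding sentence---that $\rho_-$ and $\sigma_-$ ``would have to cut the disc into incompatible regions that cannot simultaneously accommodate the fan-connected intersection and the two separated symmetric differences''---is not a proof. You never say what these regions are, nor identify which hypothesis they violate; fan-connectivity and disc topology by themselves do not obviously exclude the coexistence of such $\rho$ and $\mu$, and no specific contradiction is ever exhibited. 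Parts (ii) and (iii) suffer similarly: in (ii) the clause ``at most one per side, unless a single large cell provides two adjacent orange vertices'' is not a coherent counting argument, and in (iii) the claim that both pairs must lie in the same cell is asserted rather than proved (and in fact sits uneasily with the ``one per side'' picture just offered).

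The paper's proof takes a completely different and much shorter route. Proposition~\ref{prop:specialcaseofmatching} already reduces the weakly matchable situation to the finite list of explicit local configurations drawn in Figure~\ref{fig:ReducedWeakConfig}, and the corollary is obtained by direct inspection of those pictures: in each of (a), (b), (c) one reads off (i) and (ii) immediately, and the two-pair case of (iii) arises precisely as the overlay of configurations (b) and (c), shown in Figure~\ref{fig:ReducedWeakConfigtworho+}. Given how few configurations survive Proposition~\ref{prop:specialcaseofmatching}, this inspection is both faster and more rigorous than the abstract argument you attempt; if you wish to keep your approach, you must at minimum replace the vague ``incompatible regions'' step in (i) with an explicit combinatorial contradiction.
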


\begin{figure}[!ht]
\centering 
    \begin{tikzpicture}[scale=1.4,mydot/.style={
					circle,
					thick,
					fill=white,
					draw,
					outer sep=0.5pt,
					inner sep=1pt
				}, scale = .8]
		\tikzset{
		osq/.style={
        rectangle,
        thick,
        fill=white,
        append after command={
            node [
                fit=(\tikzlastnode),
                orange,
                line width=0.3mm,
                inner sep=-\pgflinewidth,
                cross out,
                draw
            ] {}}}}
			
        	\foreach \X in {0,1,...,37}
		{
		\tkzDefPoint(4*cos(pi/19*\X),1.5*sin(pi/19*\X)){\X};
		};
        \draw [line width=0.7mm,domain=30:45] plot ({4*cos(\x)}, {1.5*sin(\x)});
        \draw [line width=0.7mm,domain=80:120] plot ({4*cos(\x)}, {1.5*sin(\x)});
        \draw [line width=0.7mm,domain=145:155] plot ({4*cos(\x)}, {1.5*sin(\x)});
        \draw [line width=0.7mm,domain=220:230] plot ({4*cos(\x)}, {1.5*sin(\x)});
        \draw [line width=0.7mm,domain=250:260] plot ({4*cos(\x)}, {1.5*sin(\x)});
        \draw [line width=0.7mm,domain=280:290] plot ({4*cos(\x)}, {1.5*sin(\x)});
        \draw [line width=0.7mm,domain=305:320] plot ({4*cos(\x)}, {1.5*sin(\x)});
        \draw [line width=0.7mm,domain=335:350] plot ({4*cos(\x)}, {1.5*sin(\x)});

		\draw[line width=0.9mm ,bend left=30,red](12) edge (24);
            \draw[line width=0.9mm ,bend left =20,red](24) edge (30);
            \draw[line width=0.9mm ,bend left =30,red](30) edge (36);
            \draw[line width=0.9mm ,bend left =20,red](30) edge (9);
            \draw[line width=0.9mm ,bend right =30,red](9) edge (4);
		
		\draw[line width=0.7mm,blue,bend right=20, loosely dashed](16) edge (9);
            \draw[line width=0.7mm,blue,bend left=40, loosely dashed](27) edge (33);

		\foreach \X in {4,9,12,16,24,27,30,33,36}
		{
		\tkzDrawPoints[fill =red,size=4,color=red](\X);
		};

		\tkzDefPoint(-1.2,0.8){g};
		\tkzLabelPoint[blue](g){\Large $\delta$}
            \tkzDefPoint(0,-.2){h};
		\tkzLabelPoint[blue](h){\Large $\eta$}
            \tkzDefPoint(-2.2,.3){i};
		\tkzLabelPoint[red](i){\Large $\rho_-^{(1)}$}
            \tkzDefPoint(.7,.7){i};
		\tkzLabelPoint[red](i){\Large $\rho_+^{(2)}$}
            \tkzDefPoint(1.4,1.7){j};
		\tkzLabelPoint[red](j){\Large $\rho_-^{(2)}$}
            \tkzDefPoint(1.1,-1.5){l};
		\tkzLabelPoint[red](l){\Large $w$}
            \tkzDefPoint(-1.2,-.2){l};
		\tkzLabelPoint[red](l){\Large $\rho_+^{(1)}$}
    \end{tikzpicture}
\caption{\label{fig:ReducedWeakConfigtworho+} Illustration of the case where we have two pairs $(\rho_+^{(1)},\rho_-^{(1)})$ and $(\rho_+^{(2)}, \rho_-^{(2)})$ that satisfy the conditions given by \cref{prop:specialcaseofmatching}.}
\end{figure}
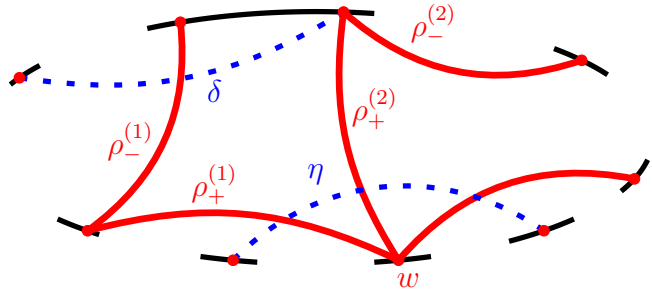

\begin{proof}
    This is a fusion of the cases $(b)$ and $(c)$ depicted in \cref{fig:specialmatchreduction}.
\end{proof}

Moreover, we can precisely identify the projective accordions that make the colorations matchable.

\begin{cor} \label{cor:ProjAccordweakmatch}
   Let $(\delta, \eta) \in \Accord'^2$. Consider two colorations $\col_\delta$ and $\col_\eta$  of $\NP(\delta)_0$ and $\NP(\eta)_0$  respectively. Assume that $\col_\delta$ and $\col_\eta$ are weakly matching, and there exists $\rho \in \NP(\delta) \setminus \NP(\eta)$ that makes the colorations matchable. For any $\rho \in \NP(\delta) \setminus \NP(\eta)$ which makes $\col_\delta$ and $\col_\eta$ matchable, there exist $\rho_- \in  \NP(\delta) \setminus \NP(\eta)$ and $\rho_+ \in \NP(\delta) \cap \NP(\eta)$ as given by \cref{prop:specialcaseofmatching} such that:
   \begin{enumerate}[label=$\bullet$,itemsep=1mm]
       \item the common endpoint $v \in \mathcal{M}_{\rpoint}$ of $\rho_-$ and $\rho_+$ is one of the endpoints of $\rho$, and if $\rho_- \prec_v \rho_+$, then $\rho \preccurlyeq_v \rho_-$, otherwise $\rho_- \preccurlyeq_v \rho$; and,
       \item either $\delta$ crosses $\rho$ or they share a common endpoint.
   \end{enumerate}
\end{cor}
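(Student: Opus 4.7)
The plan is to mimic closely the construction in the proof of Proposition~\ref{prop:specialcaseofmatching} but now applied to the given $\rho$, rather than to some abstract matchable projective. First, Lemma~\ref{lem:angleconfigNPscolor} fixes the geometry at $\rho$: its unique endpoint $v \in \NP(\eta)_0$ satisfies $\col_\eta(v) \in \{\rsquare,\gsquare\}$ and $\col_\delta(v) = \osquare$, while the other endpoint $u$ satisfies $\col_\delta(u) \in \{\rsquare,\gsquare\}$. Lemma~\ref{lem:angleconfigNPsplace} combined with Lemma~\ref{lem:impossibleorder} then delivers a dichotomy: either every $\rho' \in \NP(\eta)$ having $v$ as an endpoint satisfies $\rho \prec_v \rho'$, or every such $\rho'$ satisfies $\rho' \prec_v \rho$. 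These two alternatives are precisely what feed the two cases of the conclusion.

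In the first alternative, I would set $\rho_+$ to be the greatest element of $\NP(\eta)$ at $v$ (for $\prec_v$), and $\rho_-$ to be the smallest matchable projective of $\NP(\delta)$ at $v$, exactly as in the proof of Proposition~\ref{prop:specialcaseofmatching}. Lemmas~\ref{lem:inbetweenNP} and~\ref{lem:angleconfigNPsnonmatchable} yield $\rho_+ \in \NP(\delta) \cap \NP(\eta)$ and $\rho_- \precdot_v \rho_+$ in $\Prj(\Delta^{\gpoint})$, so in particular $\rho_- \prec_v \rho_+$. Since $\rho$ itself is matchable and lies in $\NP(\delta)$ with endpoint $v$, we must have $\rho_- \preccurlyeq_v \rho \prec_v \rho_+$; but $\rho_- \precdot_v \rho_+$ leaves no room strictly between, forcing $\rho = \rho_-$, hence $\rho \preccurlyeq_v \rho_-$. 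The remaining clauses (iii)--(v) of Proposition~\ref{prop:specialcaseofmatching} transfer verbatim. The second alternative is handled by the mirror construction (take $\rho_+$ smallest in $\NP(\eta)$ at $v$ and $\rho_-$ greatest matchable in $\NP(\delta)$ at $v$), producing $\rho_+ \prec_v \rho_-$ and again $\rho = \rho_-$, so $\rho_- \preccurlyeq_v \rho$.

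For the second bullet, I would invoke Proposition~\ref{prop:CombidescripNproj}: the membership $\rho \in \NP(\delta)$ falls into one of three cases. Case (i) is exactly that $\delta$ crosses $\rho$, and case (iii) is exactly that $\rho$ and $\delta$ share an endpoint. It remains to rule out case (ii) --- that $\rho$ is part of the projective path on the boundary of the source or target cell of $\delta$ without crossing or sharing. In case (ii), every $\rpoint$-endpoint of $\rho$ lies in the interior of that boundary path and is therefore colored $\osquare$ or $\psquare$ by rules (4)--(5) of Definition~\ref{def:colourendpoints}. This contradicts $\col_\delta(u) \in \{\rsquare,\gsquare\}$, so case (ii) is impossible.

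The main obstacle I expect is the coloration argument in the second bullet, since the cell rules of Definition~\ref{def:colourendpoints} interact with rules (2)--(3) at $s(\delta)$ and $t(\delta)$ and with the ``side'' rule~(1); one must check carefully that the only way $u$ can receive a $\rsquare$ or $\gsquare$ label is via rule~(1) (i.e.\ by being on one side of $\delta$), which in combination with $v$ being $\osquare$ forces the arc $\rho$ to cross $\delta$ unless it degenerates to sharing an endpoint.
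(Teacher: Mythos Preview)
The paper states this corollary without proof, treating it as a direct consequence of the construction in the proof of Proposition~\ref{prop:specialcaseofmatching}. Your approach --- rerunning that construction starting from the given $\rho$ and then invoking Proposition~\ref{prop:CombidescripNproj} for the second bullet --- is exactly the intended one.

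One caution on the first bullet. Your chain $\rho_- \preccurlyeq_v \rho \prec_v \rho_+$ together with $\rho_- \precdot_v \rho_+$ forces $\rho = \rho_-$, which is \emph{stronger} than what the corollary claims. But Lemma~\ref{lem:angleconfigNPsnonmatchable} says that \emph{every} $\mu \in \NP(\delta)\setminus\NP(\eta)$ strictly between the initial matchable projective and $\rho_+$ is again matchable, so in general there are several matchable $\rho$'s at $v$, and they cannot all equal $\rho_-$. The tension comes from the paper's own wording in the proof of Proposition~\ref{prop:specialcaseofmatching}, where ``smallest'' should read ``the one covering $\rho_+$ from below'' (equivalently, the greatest element of $\NP(\delta)\setminus\NP(\eta)$ at $v$ below $\rho_+$). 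With that reading, $\rho_- \precdot_v \rho_+$ is immediate and $\rho \preccurlyeq_v \rho_-$ follows for every matchable $\rho$, without forcing equality. Your conclusion survives; just don't lean on $\rho = \rho_-$.

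Your second-bullet argument is fine and your anticipated obstacle does not materialise. In case~(ii) of Proposition~\ref{prop:CombidescripNproj}, the projective $\rho$ lies on the boundary path of the source or target cell linking $\eta_L$ (resp.\ $\eta_R$) to $s(\delta)$ (resp.\ $t(\delta)$); the endpoint of $\eta_L$ that is shared with this path is itself an intermediate point and is recoloured $\osquare$ by rule~(4), while the \emph{other} endpoint of $\eta_L$ --- the only vertex of the cell that keeps a $\rsquare/\gsquare$ colour --- is an endpoint only of $\eta_L$, which is in case~(i), not case~(ii). Hence every case-(ii) projective that does not share an endpoint with $\delta$ has both endpoints in $\{\osquare,\psquare\}$, contradicting $\col_\delta(u)\in\{\rsquare,\gsquare\}$ as you say.
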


\subsection{Non-matchable colorations}
\label{ss:nomatchablemoves}
In this section, we focus on pairs $(\delta,\eta) \in \Accord'^2$ such that:
\begin{enumerate}[label=$\bullet$, itemsep=1mm]
    \item $\NP(\delta) \cap \NP(\eta) \neq \varnothing$; and,
    \item $\col_\delta$ and $\col_\eta$ are not matchable.
\end{enumerate} In general, we have $\opResAc(\delta,\eta) \neq \opResAc(\delta) \cup \opResAc(\eta)$. However, by \cref{prop:matchableextorepi}, there is neither epimorphism nor extensions ($\Ext^1$) between $\MM(\delta)$ and $\MM(\eta)$. The main point of this section is to show that to calculate $\opResAc(\delta,\eta)$, we can bring ourselves back to calculate $\opResAc(\xi,\varsigma)$, with $(\xi, \varsigma) \in \Accord'^2$ such that:
\begin{enumerate}[label=$\bullet$, itemsep=1mm]
    \item $\xi \in \opResAc'(\delta)$, and $\varsigma \in \opResAc'(\eta)$; and,
    \item the colorations $\col_\xi$ and $\col_\varsigma$ are matchable.
\end{enumerate}

\begin{prop} \label{prop:nomatchablesmaller}
Let $(\delta,\eta) \in \Accord'^2$ such that $\NP(\delta) \cap \NP(\eta) \neq \varnothing$. If $\col_\delta$ and $\col_\eta$ are not matchable, then we have:
\[\opResAc(\delta,\eta) = \opResAc(\delta) \cup \opResAc(\eta) \cup \left( \bigcup_{\mu \Accordlneq \delta} \opResAc(\mu,\eta) \right) \cup \left( \bigcup_{\nu \Accordlneq \eta} \opResAc(\delta,\nu) \right). \]
\end{prop}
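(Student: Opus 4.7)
The inclusion $\supseteq$ is immediate from the monotonicity of $\opResAc$ in its generating set: each summand of the right-hand side is contained in $\opResAc(\delta,\eta)$. Write $S$ for the right-hand side; since $\delta\in\opResAc(\delta)\subseteq S$ and $\eta\in\opResAc(\eta)\subseteq S$, proving that $S$ is a resolving set will force $\opResAc(\delta,\eta)\subseteq S$ and yield the reverse inclusion. The set $S$ clearly contains $\Prj(\Delta^{\gpoint})$, and each of its constituent summands is itself resolving by \cref{thm:res_clo_1}, so $S$ is closed under non-projective syzygies. The non-trivial point is closure under arrow and overlap extensions.

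Fix $\xi_1,\xi_2\in S$ and an extension $\kappa\in\OvExt(\xi_1,\xi_2)\cup\ArExt(\xi_1,\xi_2)$. The ``clean'' case is $\xi_1\in\opResAc(\delta)$ and $\xi_2\in\opResAc(\eta)$, handled as follows. If $\xi_1=\delta$ and $\xi_2=\eta$, then the non-matchability of $\col_\delta,\col_\eta$ combined with the contrapositive of \cref{prop:matchableextorepi} gives $\OvExt(\delta,\eta)=\ArExt(\delta,\eta)=\varnothing$, so there is nothing to check. Otherwise, swapping the roles of $\delta$ and $\eta$ if necessary, we may assume $\xi_1\Accordlneq\delta$ strictly; then both $\xi_1$ and $\xi_2$ belong to $\opResAc(\xi_1,\eta)$, and the extension $\kappa$ lies in $\opResAc(\xi_1,\eta)\subseteq\bigcup_{\mu\Accordlneq\delta}\opResAc(\mu,\eta)\subseteq S$.

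The main obstacle is the ``mixed'' case, where $\xi_1$ and $\xi_2$ come from different summands of $S$, for instance $\xi_1\in\opResAc(\mu,\eta)$ with $\mu\Accordlneq\delta$ strict and $\xi_2\in\opResAc(\delta,\nu)$ with $\nu\Accordlneq\eta$ strict, so that the naive ``combined witness'' already collapses to $(\delta,\eta)$ and $\kappa$ could a priori escape every individual summand of $S$ while still lying in $\opResAc(\delta,\eta)$. My plan here is a Noetherian induction on $(\delta,\eta)$ in the product order $(\Accord',\Accordleq)^2$: for strictly smaller pairs the proposition is available by the inductive hypothesis, and one reduces a mixed extension $\kappa$ to an extension inside some resolving set generated by accordions strictly below $(\delta,\eta)$, using the matchability analysis of \cref{ss:Matchable,ss:nomatchablemoves} (in particular \cref{prop:specialcaseofmatching} and \cref{cor:ProjAccordweakmatch}) to pin down the possible ``interaction points'' between the two towers of subaccordions. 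The delicate verification, which I expect to be the most intricate step, is to check that this reduction can always be carried out when the top-level colorations $\col_\delta,\col_\eta$ are non-matchable, since non-matchability at $(\delta,\eta)$ forces that any direct interaction between a descendant of $\delta$ and a descendant of $\eta$ must factor through a common accordion strictly below one of them, opening the inductive hypothesis.
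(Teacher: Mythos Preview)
Your clean case is exactly what the paper does. The paper's own proof of this proposition is strikingly short: it observes that the right-hand side $\mathscr{Z}(\delta,\eta)$ contains $\delta,\eta$, that $\mathscr{Z}'(\delta,\eta)$ is an ideal of $(\Accord',\Accordleq)$ hence closed under syzygies, that $\mathscr{Z}$ absorbs any overlap or arrow extension between accordions $\mu\Accordleq\delta$ and $\nu\Accordleq\eta$ with $(\mu,\nu)\neq(\delta,\eta)$ (since then $\opResAc(\mu,\nu)$ sits inside one of the summands), and that by \cref{prop:matchableextorepi} there is no extension between $\delta$ and $\eta$ themselves. That is the whole argument.

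In other words, the paper only checks extension closure for pairs of the form $\mu\Accordleq\delta$, $\nu\Accordleq\eta$. It does \emph{not} address the mixed case you isolate, where $\xi_1\in\opResAc(\mu_0,\eta)$ and $\xi_2\in\opResAc(\delta,\nu_0)$ may fail to lie below either $\delta$ or $\eta$ in $\Accordleq$. So you have put your finger on a point the paper glosses over rather than a point it handles and you missed. Your Noetherian-induction sketch is a plausible route to close this gap (and indeed the paper's next result, \cref{prop:Sapproxisenough}, does run an explicit induction on $\#\Ups(\delta,\eta)$ that feeds this proposition back into itself for strictly smaller pairs), but as you acknowledge, you have not carried out the reduction step. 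As it stands, your proposal and the paper's proof are at the same level of completeness on the mixed case: both assert it works without fully verifying it.
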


\begin{proof}
    We only need to show that the set : \[\mathscr{Z}(\delta,\eta) = \opResAc(\delta) \cup \opResAc(\eta) \cup \left( \bigcup_{\mu \Accordlneq \delta} \opResAc(\mu,\eta) \right) \cup \left( \bigcup_{\nu \Accordlneq \eta} \opResAc(\delta,\nu) \right)\] is a resolving set of $\Accord$ containing $\delta$ and $\eta$. Set $\mathscr{Z}'(\delta,\eta) = \mathscr{Z}(\delta,\eta) \setminus \Prj(\Delta^{\gpoint})$ So:
    \begin{enumerate}[label=$\bullet$, itemsep=1mm]
        \item $\delta, \eta \in \mathscr{Z}(\delta,\eta)$;
        \item $\mathscr{Z}'(\delta,\eta)$ is an ideal of $(\Accord',\Accordleq)$, and so, \cref{lem:IdealsclosedSyzygies} guarantees us that $\mathscr{Z}(\delta,\eta)$ is closed under syzygies;
        \item $\mathscr{Z}(\delta,\eta)$ is closed under overlap and arrow extensions between accordions $\mu$ and $\nu$ such that $\mu \Accordleq \delta$, $\nu \Accordleq \eta$ and $(\mu,\nu) \neq (\delta,\eta)$; and
        \item As $\col_\delta$ and $\col_\eta$ are not matchable, by the contraposition of \cref{prop:matchableextorepi}, there are neither overlap nor arrow extensions. 
    \end{enumerate}
    So we get the desired result.
\end{proof}
Let $\rho \in \NP(\delta)$. Consider the laterality induced by $\col_\delta$. Assume that \[\col_\delta(\{s(\rho), t(\rho)\}) \subseteq \{{\osquare},{\psquare}\}.\] Set $\sc(\delta) = (\eta_L, C_L)$ and $\tc(\delta) = (\eta_R, C_R)$. So $\rho \subset \partial C_R$, or $\rho \subset \partial C_L$, and one of the two cells contains at least three projective accordions in its boundary by \cref{def:colourendpoints}. 

In such a configuration, $\MM(\delta)$ admits indecomposable non-projective representations as summands of the $i$th syzygy for some $i \geqslant 1$. Those indecomposable non-projective summands correspond to accordions $\varsigma \Accordleq \delta$ such that:
\begin{enumerate}[label=$\bullet$,itemsep=1mm]
    \item $\varsigma \subset C_L$ and $s(\varsigma) = s(\delta)$; or,
    \item $\varsigma \subset C_R$ and $t(\varsigma) = t(\delta)$.
\end{enumerate}
We refer to \cite[Lemmas 6.12 and 6.13]{DS251} for more details.

\begin{definition} \label{def:sygaccord}
Let $\delta \in \Accord'$ and $\rho \in \NP(\delta)$ such that \[\col_\delta(\{s(\rho),t(\rho)\}) \subseteq \{{\osquare},{\psquare}\}.\]  We define $\Syg_{(\delta,\rho)}$ the \new{syzygy accordion of} $\delta$ \new{relative to} $\rho$ to be the accordion such that:
\begin{enumerate}[label=$\bullet$,itemsep=1mm]
    \item  $s(\Syg_{(\delta,\rho)}) = s(\delta)$, and:
    \begin{enumerate}[label=$\bullet$,itemsep=1mm]
    \item $t(\Syg_{(\delta,\rho)}) = t(\rho)$ if $\col_\delta (t(\rho)) \neq {\psquare}$; and,
    \item $t(\Syg_{(\delta,\rho)}) = w_L(\delta)$ otherwise,
    \end{enumerate}
   if $\rho \subset C_L$; or,
    \item $t(\Syg_{(\delta,\rho)}) = t(\delta)$ and:
    \begin{enumerate}[label=$\bullet$,itemsep=1mm]
    \item $s(\Syg_{(\delta,\rho)}) = s(\rho)$ if $\col_\delta (s(\rho)) \neq {\psquare}$; and,
    \item $s(\Syg_{(\delta,\rho)}) = w_R(\delta)$ otherwise,
    \end{enumerate} 
    if $\rho \subset C_R$.
\end{enumerate}
\end{definition}
In the case where $\delta \subset C$ for some $C \in \pmb{\Gamma}(\Delta^{\gpoint})$, the accordion $\Syg_{(\delta,\rho)}$ is still well-defined as precisely one of the constructions gives an accordion.

\begin{ex} \label{ex:sygaccordion}
In \cref{fig:nonorientable}, we show the construction of $\Syg_{(\delta,\rho)}$ and $\Syg_{(\eta, \rho)}$ in the case where:
\begin{enumerate}[label=$\bullet$, itemsep=1mm]
    \item the colorations $\col_\delta$ and $\col_\eta$ are not matchable;
    \item $\delta$ and $\eta$ are contained in a cell of $\pmb{\Gamma}(\Prj(\Delta^{\gpoint}))$; and,
    \item the image of the endpoints of $\rho \in \NP(\delta) \cap \NP(\eta)$ by $\col_\delta$ and by $\col_\eta$ are in $\{{\osquare},{\psquare}\}$. \qedhere
\end{enumerate}
\end{ex}

\begin{figure}[!ht]
        \centering
       \begin{tikzpicture}[mydot/.style={
					circle,
					thick,
					fill=white,
					draw,
					outer sep=0.5pt,
					inner sep=1pt
				}, scale = 1]
		\tikzset{
		osq/.style={
        rectangle,
        thick,
        fill=white,
        append after command={
            node [
                fit=(\tikzlastnode),
                orange,
                line width=0.3mm,
                inner sep=-\pgflinewidth,
                cross out,
                draw
            ] {}}}}
        \draw [line width=0.7mm,domain=0:15] plot ({4*cos(\x)}, {1.5*sin(\x)});
		\draw [line width=0.7mm,domain=40:55] plot ({4*cos(\x)}, {1.5*sin(\x)});
		\draw [line width=0.7mm,domain=65:75] plot ({4*cos(\x)}, {1.5*sin(\x)});
		\draw [line width=0.7mm,domain=98:105] plot ({4*cos(\x)}, {1.5*sin(\x)});
		\draw [line width=0.7mm,domain=127:140] plot ({4*cos(\x)}, {1.5*sin(\x)});
		\draw [line width=0.7mm,domain=155:170] plot ({4*cos(\x)}, {1.5*sin(\x)});
		\draw [line width=0.7mm,domain=205:217] plot ({4*cos(\x)}, {1.5*sin(\x)});
        \draw [line width=0.7mm,domain=230:240] plot ({4*cos(\x)}, {1.5*sin(\x)});
        \draw [line width=0.7mm,domain=255:262] plot ({4*cos(\x)}, {1.5*sin(\x)});
        \draw [line width=0.7mm,domain=278:285] plot ({4*cos(\x)}, {1.5*sin(\x)});
        \draw [line width=0.7mm,domain=302:309] plot ({4*cos(\x)}, {1.5*sin(\x)});
        \draw [line width=0.7mm,domain=330:345] plot ({4*cos(\x)}, {1.5*sin(\x)});
		\foreach \X in {0,...,45}
		{
		\tkzDefPoint(4*cos(pi/23*\X),1.5*sin(pi/23*\X)){\X};
		};
		
	    \draw[line width=0.9mm ,bend right =30,red] (1) edge (43);
	    \draw[line width=0.9mm ,bend right =30,red] (43) edge (39);
	    \draw[line width=0.9mm ,bend left =10,mypurple,densely dashdotted] (39) edge (13);
	    \draw[line width=0.9mm ,bend right =20,red] (13) edge (9);
	    \draw[line width=0.9mm ,bend right =20,red] (9) edge (6);

	    \draw[line width=0.9mm ,bend right=30,red] (21) edge (17);
	    \draw[line width=0.9mm ,bend right =30,red] (17) edge (13);
	    \draw[line width=0.9mm ,bend left =30,red] (36) edge (39);
	    \draw[line width=0.9mm ,bend left =40,red] (33) edge (36);
	    \draw[line width=0.9mm ,bend left =30,red] (30) edge (33);
	    \draw[line width=0.9mm ,bend left =20,red] (27) edge (30);

		\draw [line width=0.9mm,blue, dashdotted, bend right = 30, opacity=0.8]  (6) edge (1);
		\draw [line width=0.9mm,orange, loosely dotted, bend left = 30, opacity=0.8]  (21) edge (27);
        \draw [line width=0.9mm, orange,dash pattern={on 10pt off 2pt on 5pt off 2pt}, bend left=10,opacity=0.3] (21) edge (39);
         \draw [line width=0.9mm, blue,dash pattern={on 5pt off 2pt on 1pt off 2pt}, bend right=15,opacity=0.3] (13) edge (1);
         
		\foreach \X in {1,6,9,13,17,21,27,30,33,36,39,43}
		{
		\tkzDrawPoints[fill =red,size=4,color=red](\X);
		};

		\tkzDefPoint(2.5,1){gammaM};
		\tkzLabelPoint[blue](gammaM){\Large $\delta$}
		\tkzDefPoint(1.9,0.4){gammaM};
		\tkzLabelPoint[blue,opacity=0.3](gammaM){\large $\Syg_{(\delta,\rho)}$}
		\tkzDefPoint(-.8,1.2){gammaP};
		\tkzLabelPoint[mypurple](gammaP){\Large $\rho$}
		\tkzDefPoint(-3.1,0){rho};
		\tkzLabelPoint[orange](rho){\Large $\eta$}
		\tkzDefPoint(-.8,-.1){gammaM};
		\tkzLabelPoint[orange,opacity=0.3](gammaM){\large $\Syg_{(\eta,\rho)}$}
    \end{tikzpicture}
        \caption{\label{fig:nonorientable} Illustration of two accordions $\delta, \eta \in \Accord'$ whose colorations are not matchable, and $\{\rho\} = \NP(\delta) \cap \NP(\eta) \neq \varnothing$. We constructed $\Syg_{(\delta,\rho)}$ and $\Syg_{(\eta,\rho)}$.}
    \end{figure}

\begin{lemma} \label{lem:tomatchablecap}
Let $(\delta, \eta) \in \Accord'^2$. Assume that  $\NP(\delta)\cap\NP(\eta)\neq\emptyset$, and $\col_\delta$ and $\col_\eta$ are not matchable. For any $\rho \in \NP(\delta)\cap\NP(\eta)$, by denoting $v_1$ and $v_2$ the endpoints of $\rho$:
\begin{enumerate}[label=$(\roman*)$,itemsep=1mm]
    \item if $\col_\delta(\{v_1,v_2\}) \nsubseteq \{{\osquare}, {\psquare}\}$, then $\xi = \delta$ and $\varsigma = \Syg_{(\eta,\rho)}$ admit a pair $(\col_\delta, \col_\varsigma)$ of strongly matchable colorations;
    \item if $\col_\eta(\{v_1,v_2\}) \nsubseteq \{{\osquare}, {\psquare}\}$, then $\xi = \Syg_{(\delta,\rho)}$ and $\varsigma = \eta$ admit a pair $(\col_\xi, \col_\eta)$ of strongly matchable colorations;
    \item otherwise, $\xi = \Syg_{(\delta,\rho)}$ and $\varsigma = \Syg_{(\eta,\rho)}$ admit a pair $(\col_\xi,\col_\varsigma)$ of strongly matchable colorations.
\end{enumerate}  
\end{lemma}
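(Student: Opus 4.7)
The plan is to show that in each case, the projective accordion $\rho$ itself witnesses strong matchability of the new pair via condition \ref{M1}. The starting observation is that, since $\col_\delta$ and $\col_\eta$ are not matchable while $\NP(\delta)\cap\NP(\eta)\neq\varnothing$, no projective accordion in $\NP(\delta)\cup\NP(\eta)$, and in particular no $\rho\in\NP(\delta)\cap\NP(\eta)$, satisfies \ref{M1}. (One should first check that \ref{M2} cannot hold here either: a $\rho$ fitting \ref{M2} would lie in $\NP(\delta)\cap\NP(\eta)$ by \cref{rem:M2}, and reading off the coloration of its endpoint $v$ in the cell $C$ would already force \ref{M1} to hold for some projective accordion adjacent to $v$, contradicting non-matchability.) Since both endpoints $v_1,v_2$ of $\rho$ automatically lie in $\NP(\delta)_0\cap\NP(\eta)_0$, the failure of \ref{M1} at $\rho$ forces that $\col_\delta(\{v_1,v_2\})\subseteq\{\osquare,\psquare\}$ or $\col_\eta(\{v_1,v_2\})\subseteq\{\osquare,\psquare\}$, which exactly splits the argument into the three cases \emph{(i)}--\emph{(iii)}.

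I then turn to the three cases, treating \emph{(ii)} explicitly (case \emph{(i)} being symmetric and \emph{(iii)} a combination of both). In case \emph{(ii)} the hypothesis $\col_\delta(\{v_1,v_2\})\subseteq\{\osquare,\psquare\}$ is precisely the condition of \cref{def:sygaccord}, so $\xi=\Syg_{(\delta,\rho)}$ is well-defined and, by the recalled description of syzygies in gentle trees (cf.\ the discussion preceding \cref{def:sygaccord}), we have $\xi\in\Accord'$ with $\xi\Accordleq\delta$. The central geometric step is to verify the two points:
\begin{enumerate}[label=$(\ast\arabic*)$,itemsep=1mm]
  \item $\rho\in\NP(\xi)\cap\NP(\eta)$, so that $\rho$ is a candidate witness of matchability for the pair $(\xi,\eta)$;
  \item at least one endpoint $v\in\{v_1,v_2\}$ of $\rho$ satisfies $\col_\xi(v)\in\{\rsquare,\gsquare\}$.
\end{enumerate}
For $(\ast 1)$: by construction, the endpoint of $\xi$ inside the cell $C_L$ (or $C_R$) lies on $\rho$ or, in the pink sub-case, coincides with $w_L(\delta)$ (resp. $w_R(\delta)$), which is adjacent to $\rho$; in both sub-cases $\rho$ still belongs to $\NP(\xi)$ by \cref{prop:CombidescripNproj}. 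For $(\ast 2)$: the new endpoint of $\xi$ is, by definition, a source or a target of $\xi$, so its $\col_\xi$-image lies in $\{\rsquare,\osquare\}$ (or its dual $\{\gsquare,\osquare\}$). Here the hypothesis of case \emph{(ii)}, namely $\col_\eta(\{v_1,v_2\})\nsubseteq\{\osquare,\psquare\}$, pins down some $w\in\{v_1,v_2\}$ with $\col_\eta(w)\in\{\rsquare,\gsquare\}$; the definition of $\Syg_{(\delta,\rho)}$ guarantees that the corresponding endpoint of $\xi$ is genuinely a source/target (not merged with a projective that would force an orange coloration), so the value belongs to $\{\rsquare,\gsquare\}$ and matches $\col_\eta(w)$ after the laterality convention of \cref{conv:matchablematching}. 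Combining $(\ast 1)$ and $(\ast 2)$ yields \ref{M1} for $\rho\in\NP(\xi)\cap\NP(\eta)$, i.e.\ strong matchability.

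Case \emph{(iii)} is handled in the same way, applied symmetrically: one forms both syzygy accordions $\xi=\Syg_{(\delta,\rho)}$ and $\varsigma=\Syg_{(\eta,\rho)}$, each of which now has one endpoint of $\rho$ (or $w_{L/R}$) as a source/target colored $\rsquare$ or $\gsquare$ on its own side; assigning the two compatible lateralities via \cref{conv:matchablematching} and verifying that $\rho\in\NP(\xi)\cap\NP(\varsigma)$ proves strong matchability at $\rho$. The main obstacle in this proof is the careful verification of $(\ast 2)$, and especially the pink sub-case of \cref{def:sygaccord}, where the ``new'' endpoint jumps from the pink vertex $t(\rho)$ to $w_L(\delta)$: one must check, using \cref{def:colourendpoints} and the local structure of the large cell containing $\rho$, that $w_L(\delta)$ is not itself orange in the new coloration $\col_\xi$, but becomes a proper source/target colored $\rsquare$ or $\gsquare$. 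Everything else reduces to bookkeeping along the lines of \cref{lem:uniqcolandlaterality} and the geometric descriptions recalled in \cref{ss:RecallMono}.
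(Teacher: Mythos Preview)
Your approach is essentially the same as the paper's: in each case, verify that $\rho$ itself satisfies \ref{M1} for the new pair, since passing to the syzygy accordion forces an endpoint of $\rho$ to receive a $\rsquare$/$\gsquare$ coloration. The paper's proof is extremely terse (essentially just asserting that ``it is clear'' that $\col_\varsigma$ sends an endpoint of $\rho$ to $\rsquare$ or $\gsquare$), so your more detailed breakdown into $(\ast 1)$ and $(\ast 2)$ is a reasonable expansion.

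One small issue in your $(\ast 2)$: you try to match the \emph{specific} endpoint $w$ where $\col_\eta(w)\in\{\rsquare,\gsquare\}$ with the new source/target of $\xi$, and then speak of ``the corresponding endpoint of $\xi$''. This is both unnecessary and slightly confused. Condition \ref{M1} only requires \emph{some} endpoint of $\rho$ to be $\rsquare$/$\gsquare$ for $\col_\xi$ and \emph{some} (possibly different) endpoint to be $\rsquare$/$\gsquare$ for $\col_\eta$; it does not require these to coincide. Moreover, in the pink sub-case $t(\xi)=w_L(\delta)$ is not an endpoint of $\rho$ at all, so ``the corresponding endpoint of $\xi$'' does not refer to a point of $\{v_1,v_2\}$. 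The cleaner argument is simply: once $\xi=\Syg_{(\delta,\rho)}$ is formed, $\rho$ is no longer buried inside a large source/target cell of $\xi$ with both endpoints $\osquare/\psquare$; by the coloration rules of \cref{def:colourendpoints} applied to $\xi$, at least one endpoint of $\rho$ now sits on the $\rsquare$ or $\gsquare$ side. That is all you need for \ref{M1}, and it is what the paper is (implicitly) invoking.
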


\begin{proof}
Under our assumptions, if $\col_\delta(\{v_1,v_2\}) \nsubseteq \{{\osquare}, {\psquare}\}$, then it is clear that the coloration of $\NP(\varsigma)_0$, up to conjugation, sends one of the endpoints of $\rho$ to ${\rsquare}$ or ${\gsquare}$. This proves $(i)$.

The assertion $(ii)$ is the dual to $(i)$ up to exchanging the role of $\delta$ and $\eta$.

In the last case, we can consider $\xi$ and $\varsigma$ as stated, and the colorations of $\NP(\xi)_0$ and $\NP(\varsigma)_0$ are such that at least one of the endpoints of $\rho$ is ${\rsquare}$ or ${\gsquare}$ according to both colorations. It follows that the colorations are matchable. 
\end{proof}

In the following,  we consider pairs $(\xi, \varsigma) \in \Accord'^2$ such that:
    \begin{enumerate}[label=$\bullet$, itemsep=1mm]
        \item $\xi \Accordleq \delta$ and $\varsigma \Accordleq \eta$; and,
        \item  $(\col_\xi,\col_\varsigma)$ are matchable and there exists $\rho \in \NP(\xi) \setminus \NP(\varsigma)$ makes them matchable.
    \end{enumerate}
Let us first prove the following lemma.

\begin{lemma} \label{lem:positiontrymatchcupnotcap}
    Let $\eta \in \Accord'^2$. Consider a coloration $\col_\eta$ of  $\NP(\eta)_0$. Fix a projective accordion $\rho \in \Prj(\Delta^{\gpoint}) \setminus \NP(\eta)$, and denote by $v_1$ and $v_2$ the endpoints of $\rho$. Assume that there exists $w \in \{v_1,v_2\}$ such that $w \in \NP(\eta)_0$. If $w$ is an endpoint of $\eta$, then $\col_\eta(w) \in \{{\rsquare},{\gsquare}\}$.
\end{lemma}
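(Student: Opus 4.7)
My plan is to argue by contradiction, using the combinatorial description of $\NP(\eta)$ from \cref{prop:CombidescripNproj} to pin down exactly when an endpoint of $\eta$ can receive the color $\osquare$.

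First, I would observe that by rules 2--3 of \cref{def:colourendpoints}, the endpoints of $\eta$ receive colors in $\{\rsquare, \gsquare, \osquare\}$: the color $\psquare$ is assigned by rule 5 only to the ``last intermediate point on the boundary of a large cell'', never to an endpoint of $\eta$ itself. Hence the only case to rule out is $\col_\eta(w) = \osquare$, which I assume for contradiction. By rule 2 (the case $w = t(\eta)$ being dual via rule 3), this assumption translates into the following: the projectives in $\NP(\eta)$ sharing the endpoint $w$ have their other endpoints lying on both sides of $\eta$.

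I would then examine which projectives at $w$ belong to $\NP(\eta)$ through the three conditions in \cref{prop:CombidescripNproj}. Condition $(i)$ cannot apply, since on a disc any projective sharing the endpoint $w$ with $\eta$ does not cross $\eta$ in its relative interior. Condition $(ii)$ applied to the source (or target) cell $C$ of $\eta$ contributes \emph{at most one} projective at $w$, namely the edge of $C$ at $w$ on the side of $\eta$ traversed by the path of projective curves from the last projective crossed by $\eta$ to $w$; crucially, this projective has its other endpoint on one single side of $\eta$. Condition $(iii)$ contributes projectives at $w$ precisely when every projective at $w$ is $\prec_w \eta$, in which case it puts \emph{all} projectives at $w$ into $\NP(\eta)$. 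Consequently, if $(iii)$ did not apply at $w$, then $\NP(\eta)$ at $w$ would consist of the single projective contributed by $(ii)$, whose other endpoint lies on one side of $\eta$, forcing $\col_\eta(w) \in \{\rsquare, \gsquare\}$. Hence the assumption $\col_\eta(w) = \osquare$ forces $(iii)$ to hold at $w$.

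But then $\rho$, being a projective at $w$, automatically lies in $\NP(\eta)$ via $(iii)$, contradicting the hypothesis $\rho \notin \NP(\eta)$. The main subtlety is verifying that $(ii)$ contributes exactly one projective at $w$ and on one single side of $\eta$: this comes from the fact that the polygonal cell $C$ has exactly two edges at the vertex $w$, one on each side of $\eta$, and the ``path'' described in \cref{prop:CombidescripNproj}$(ii)$ traverses only one of them; the case $\eta \subset C$ is handled similarly, with the path being an arc on $\partial C$ between the two endpoints of $\eta$.
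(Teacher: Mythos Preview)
Your proof is correct and follows the same idea as the paper: argue by contradiction that $\col_\eta(w) = {\osquare}$ at an endpoint $w$ of $\eta$ forces every projective accordion at $w$ into $\NP(\eta)$, contradicting $\rho \notin \NP(\eta)$. The paper condenses this implication into a single line invoking the construction of $\NP(\eta)$, whereas you unpack it through the trichotomy of \cref{prop:CombidescripNproj}; the approaches are essentially identical.
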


\begin{proof} 
Let $\eta \in \Accord'^2$. Consider a coloration $\col_\eta$ of  $\NP(\eta)_0$. Fix a projective accordion $\rho \in \Prj(\Delta^{\gpoint}) \setminus \NP(\eta)$. Suppose that one of the endpoints of $\rho$ is an endpoint of $\eta$. Denote this endpoint by $w$ and note that if $\col_\eta(w)={\osquare}$ by construction of $\NP(\eta)$, all the projective accordions with endpoint $w$ are in $\NP(\eta)$.
\end{proof}

Now, we can prove the following result.

\begin{lemma} \label{lem:tomatchablecupnotcap}
Let $(\delta, \eta) \in \Accord'^2$. Assume that  $\NP(\delta)\cap\NP(\eta)\neq \varnothing$, and $\col_\delta$ and $\col_\eta$ are not matchable. For any $\rho \in \NP(\delta) \setminus \NP(\eta)$, with endpoints $v_1$ and $v_2$ :
\begin{enumerate}[label=$(\roman*)$,itemsep=1mm]
    \item if there exists $w \in \{v_1,v_2\}$ such that $w \in \NP(\eta)_0$ and $\col_\eta(w) \in \{{\rsquare}, {\gsquare}\}$, then $\xi = \Syg_{(\delta,\rho)}$ and $\varsigma = \eta$ admit a pair $(\col_\xi, \col_\eta)$ of matchable colorations;
    \item if there exists $w \in \{v_1,v_2\}$ such that $w \in \NP(\eta)_0$ and $\col_\eta(w) \in \{{\osquare},{\psquare}\}$, then, for any $(\xi, \varsigma) \in \Accord'^2$ such that:
    \begin{enumerate}[label=$\bullet$, itemsep=1mm]
        \item $\xi \Accordleq \delta$ and $\varsigma \Accordleq \eta$; and,
        \item  $\col_\xi$ and $\col_\varsigma$ are matchable and $\rho$ makes them matchable,
    \end{enumerate}
    then $\col_\xi$ and $\col_\varsigma$ are strongly matchable; 
    \item otherwise, there are no pair $(\xi, \varsigma) \in \Accord'^2$, where $\xi \Accordleq \delta$ and $\varsigma \Accordleq \eta$, such that $\rho$ makes $\col_\xi$ and $\col_\varsigma$ matchable.
\end{enumerate}  
\end{lemma}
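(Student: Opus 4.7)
The plan is to handle the three cases in turn, relying on the general fact that $\varsigma \Accordleq \eta$ forces $\NP(\varsigma) \subseteq \NP(\eta)$ (and hence $\NP(\varsigma)_0 \subseteq \NP(\eta)_0$): indeed $\varsigma \in \opResAc(\eta)$, so $\varsigma$ is confined by the coloration of $\NP(\eta)$, and the projectives appearing in its minimal resolution or detected by higher Ext are already in $\NP(\eta)$ by the geometric description from \cite{DS251}. Part (iii) then follows almost formally: if neither $v_1$ nor $v_2$ lies in $\NP(\eta)_0$, the containment above gives $v_1, v_2 \notin \NP(\varsigma)_0$ for every $\varsigma \Accordleq \eta$, so configuration \ref{M1} cannot be realised through $\rho$; and since \ref{M2} requires $\rho$ to meet $\eta$ or $\varsigma$ at a vertex of $\NP(\varsigma)_0$, it is likewise excluded.

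For part (i), I set $\xi := \Syg_{(\delta,\rho)}$ and analyse the two branches of \cref{def:sygaccord}. In either branch ($\rho \subset C_L$ or $\rho \subset C_R$), one of the endpoints of $\xi$ coincides with an endpoint of $\rho$—namely $t(\rho)$ (resp.\ $s(\rho)$) if that endpoint is not pink, and otherwise $w_L(\delta)$ (resp.\ $w_R(\delta)$), which shares a cell edge with $\rho$. By the source/target rules of \cref{def:colourendpoints}, this shared vertex is coloured in $\{{\rsquare},{\gsquare}\}$ under $\col_\xi$. Combined with the hypothesis that the other endpoint $w$ of $\rho$ is red or green under $\col_\eta$, the projective $\rho$ itself witnesses configuration \ref{M1}, so $(\col_\xi,\col_\eta)$ is matchable.

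Part (ii) is the main obstacle and I would argue by contradiction. Suppose $(\col_\xi,\col_\varsigma)$ is only weakly matchable through $\rho$, so $\rho \in \NP(\xi) \setminus \NP(\varsigma)$. Apply \cref{cor:ProjAccordweakmatch} together with \cref{prop:specialcaseofmatching} to produce $\rho_- \in \NP(\xi)\setminus\NP(\varsigma)$ and $\rho_+ \in \NP(\xi) \cap \NP(\varsigma)$ meeting at a common vertex on the boundary of a common cell. The hypothesis $\col_\eta(w) \in \{{\osquare},{\psquare}\}$ places $w$ strictly in the interior of the boundary of $C_L(\eta)$ or $C_R(\eta)$; I then observe that if $w$ is to remain in $\NP(\varsigma)_0$ with red or green colour under $\col_\varsigma$ (which is forced by the endpoint of $\rho$ being used for weak matchability), the accordion $\varsigma$ must have $w$ as a source or target and hence must inherit the cell-boundary projectives at $w$ into $\NP(\varsigma)$. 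At least one such projective $\mu$, already belonging to $\NP(\xi)$ since $\mu \in \NP(\eta) \subseteq \NP(\delta) \cup \NP(\eta)$ and $\xi \Accordleq \delta$, lies in $\NP(\xi) \cap \NP(\varsigma)$ and makes the colorations matchable via \ref{M1}. This directly contradicts weak matchability, and therefore $(\col_\xi,\col_\varsigma)$ is strongly matchable.

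The hard part is the cell-geometry argument in (ii): verifying that an orange or pink colour at $w$ in $\col_\eta$ is ``rigid'' enough that every descent $\varsigma \Accordleq \eta$ preserving a red/green colour at $w$ drags along a boundary projective $\mu$ that is simultaneously in $\NP(\xi)$ and makes $(\col_\xi,\col_\varsigma)$ matchable. This requires a careful case split on whether $w \in \NP(\eta)_0^{{\osquare}}$ or $w \in \NP(\eta)_0^{{\psquare}}$, and on whether $\rho$ meets the source or the target cell of $\eta$, using the explicit characterization of $\NP$ from \cref{prop:CombidescripNproj}.
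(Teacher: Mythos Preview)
Your treatment of parts (i) and (iii) is essentially correct and matches the paper's approach: part (iii) follows immediately from $\NP(\varsigma)\subseteq\NP(\eta)$, and part (i) works because non-matchability of $(\col_\delta,\col_\eta)$ forces $\col_\delta(\{v_1,v_2\})\subseteq\{{\osquare},{\psquare}\}$ (otherwise $\rho$ would already witness \ref{M1}), so $\Syg_{(\delta,\rho)}$ is defined and its new endpoint acquires a red/green colour.

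The gap is in part (ii). Your sentence ``At least one such projective $\mu$, already belonging to $\NP(\xi)$ since $\mu\in\NP(\eta)\subseteq\NP(\delta)\cup\NP(\eta)$ and $\xi\Accordleq\delta$'' does not establish $\mu\in\NP(\xi)$: the inclusion $\xi\Accordleq\delta$ gives $\NP(\xi)\subseteq\NP(\delta)$, which is the wrong direction, and $\mu\in\NP(\eta)$ says nothing about $\NP(\xi)$ at all. Likewise, the step ``$\varsigma$ must have $w$ as a source or target'' is not justified: a red/green colour at $w$ under $\col_\varsigma$ only says $w$ lies on one side of $\varsigma$, not that it is an endpoint. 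So the mechanism you propose for producing a projective in $\NP(\xi)\cap\NP(\varsigma)$ making the colorations matchable does not work as stated.

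The paper handles (ii) by the case split you anticipate, but the key idea is different from yours. When $\col_\eta(w)={\psquare}$ there are exactly two projectives $\rho_1\precdot_w\rho_2$ of $\NP(\eta)$ at $w$; fan-connectivity (\cref{lem:inbetweenNP}) forces $\rho_1\in\NP(\delta)\cap\NP(\eta)$, and since $\rho\prec_w\rho_1$ one argues directly that $\rho_1$ itself makes $(\col_\xi,\col_\varsigma)$ matchable --- a projective in the \emph{intersection}, hence strong matchability. When $\col_\eta(w)={\osquare}$, the paper runs the weak-matchability structure of \cref{prop:specialcaseofmatching} against \cref{lem:positiontrymatchcupnotcap} (which rules out $w$ being an endpoint of $\eta$) and the position constraints on $\rho_+$ to force $\rho_-\in\NP(\varsigma)$, contradicting $\rho_-\in\NP(\xi)\setminus\NP(\varsigma)$. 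In both cases the argument stays at the level of $\NP(\delta)\cap\NP(\eta)$ and the ordering $\prec_w$, rather than chasing cell-boundary projectives of $\varsigma$ into $\NP(\xi)$.
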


\begin{proof}
    The proof of assertion $(i)$ is similar to that of  \cref{lem:tomatchablecap} $(ii)$.

    Assume that there exists $w \in \{v_1,v_2\}$ such that $w \in \NP(\eta)_0$ and $\col_\eta(w) = {\osquare}$.  Let us consider $(\xi, \varsigma) \in \Accord'^2$ such that:
    \begin{enumerate}[label=$\bullet$, itemsep=1mm]
        \item $\xi \Accordleq \delta$ and $\varsigma \Accordleq \eta$; and,
        \item $\col_\xi$ and $\col_\varsigma$ are matchable and $\rho$ makes them matchable.
    \end{enumerate}
    Assume by contradiction that $\col_\delta$ and $\col_\eta$ are weakly matchable. By \cref{prop:specialcaseofmatching}, there exist $\rho_- \in \NP(\xi) \setminus \NP(\varsigma)$ and $\rho_+ \in \NP(\xi) \cap \NP(\varsigma)$ such that:
    \begin{enumerate}[label=$\bullet$, itemsep=1mm]
        \item $\rho_-$ makes the colorations matchable;
        \item $\rho_-$ and $\rho_+$ share $w$ as a common endpoint;
        \item  $\rho_-$ and $\rho_+$ are together in the boundary of a cell in $\Gamma(\Prj(\Delta^{\gpoint}))$;
        \item by denoting $v$ the other endpoint of $\rho_-$,  either $v$ or $w$ is an endpoint of $\xi$, or $\xi$ crosses $\rho_-$; and,
        \item by denoting $x$ the other endpoint of $\rho_+$, either $x$ or $w$ is an endpoint of $\varsigma$, or $\varsigma$ crosses $\rho_+$.
    \end{enumerate} Without loss of generality, we assume that $\rho \preccurlyeq_w \rho_- \prec_w \rho_+$, as $\rho \in \NP(\xi) \setminus \NP(\varsigma)$ makes the colorations matchable. Thus $\col_\varsigma(w) \in \{{\gsquare},{\rsquare}\}$. This is possible only if $\varsigma \Accordleq \Syg_{(\eta, \rho_1)}$. In our configuration, we have that $\col_\xi(w) = {\osquare}$. As $\col_\eta(w) = {\osquare}$ and as $\rho \in \NP(\delta) \setminus \NP(\eta)$, by \cref{lem:positiontrymatchcupnotcap}, the marked point $w$ is not an endpoint of $\eta$. Moreover, we have that $\rho_+ \in \NP(\eta)$ as $\varsigma \Accordleq \eta$. So $x$ cannot be an endpoint of $\varsigma$. Therefore, the only possibility is that $\rho_- \in \NP(\varsigma)$ by being either:
    \begin{enumerate}[label=$\bullet$, itemsep=1mm]
        \item contained in the boundary of the source or the target cell of $\delta$; or,
        \item part of the projective accordions with extensions of higher degree with $\eta$.
    \end{enumerate}

    Assume that there exists $w \in \{v_1,v_2\}$ such that $w \in \NP(\eta)_0$ and $\col_\eta(w) = {\psquare}$. In this case, there are precisely two projective accordions $\rho_1, \rho_2 \in \NP(\delta)$ such that $w$ is one of their endpoints. Moreover $\rho_1 \precdot_v \rho_2$ in $\Prj(\Delta^{\gpoint})$. By \cref{lem:angleconfigNPsplace}, we can assume that $\rho \preccurlyeq_w \rho_1$. The case where $\rho_2 \preccurlyeq_w \rho$ can be treated similarly. By \cref{lem:inbetweenNP}, we must have $\rho_1 \in \NP(\delta) \cap \NP(\eta)$.

    Let us consider $(\xi, \varsigma) \in \Accord'^2$ such that:
    \begin{enumerate}[label=$\bullet$, itemsep=1mm]
        \item $\xi \Accordleq \delta$ and $\varsigma \Accordleq \eta$; and,
        \item  $\col_\xi$ and $\col_\varsigma$ are matchable and $\rho$ makes them matchable.
    \end{enumerate}
    We must have $\col_\varsigma(w) \in \{{\gsquare},{\rsquare}\}$. This is possible only if $\varsigma \Accordleq \Syg_{(\eta, \rho_1)}$. If $\col_\xi(w) \in \{{\gsquare},{\rsquare}\}$, then $\rho_1$ makes the colorations matchable. Otherwise we have that $\col_\xi(w) = {\osquare}$. Therfore, by \cref{lem:positiontrymatchcupnotcap}, $w$ is not an endpoint of $\xi$. As $\rho$ makes $\col_\xi$ and $\col_\varsigma$ matchable, and $\rho \prec_w \rho_1$, we have that $\rho_1$ makes the colorations matchable too. We proved assertion $(ii)$.

    We still have to treat the case where $\{v_1,v_2\} \cap \NP(\eta)_0 = \varnothing$. So for any $\varsigma \Accordleq \eta$, we also have $\NP(\varsigma) \subseteq \NP(\eta)$, and so $\{v_1,v_2\} \cap \NP(\varsigma)_0 = \varnothing$. Therefore, we have proved assertion $(iii)$.
\end{proof}

Therefore, we introduce the following notion.

\begin{definition} \label{def:Sapprox}
Let $(\delta,\eta) \in \Accord'^2$ and $\rho \in \Prj(\Delta^{\gpoint})$. Denote by $v_1, v_2 \in \mathcal{M}_{\rpoint}$ the endpoints of $\rho$. We say that $(\delta,\eta)$ \new{admits an $\Smov(\rho)$-approximation} if the following assertions hold:
\begin{enumerate}[label=$\bullet$, itemsep=1mm]
    \item $\{\delta, \eta\} \in \Anti(\Accord', \Accordleq)$; and
    \item $\rho \in \NP(\delta)\cup \NP(\eta)$ is such that one of the following assertions holds:
    \begin{enumerate}[label=$\bullet$,itemsep=1mm]
        \item $\rho \in \NP(\delta) \cap \NP(\eta)$; 
        \item there exists $w \in \{v_1, v_2\} \cap \NP(\delta)_0$ such that $\col_{\delta}(w) \in \{{\rsquare},{\gsquare}\}$; or,
        \item there exists $w \in \{v_1, v_2\} \cap \NP(\eta)_0$ such that $\col_{\eta}(w) \in \{{\rsquare},{\gsquare}\}$.
    \end{enumerate}
\end{enumerate}
In such a case, we define the \new{$\Smov(\rho)$-approximation} of $(\delta,\eta)$ to be the pair of nonprojective accordions $(\xi_\rho^{(\delta)},\varsigma_\rho^{(\eta)})$ defined as follows:
\begin{enumerate}[label=$(\roman*)$,itemsep=1mm]
    \item if there exists $w \in \{v_1, v_2\} \cap \NP(\delta)_0$ such that $\col_{\delta}(w) \in \{{\rsquare},{\gsquare}\}$, then set $\xi_\rho^{(\delta)} = \delta$; otherwise set $\xi_\rho^{(\delta)} = \Syg_{(\delta,\rho)}$; and, 
    
    \item if there exists $w \in \{v_1, v_2\} \cap \NP(\eta)_0$ such that $\col_{\eta}(w) \in \{{\rsquare},{\gsquare}\}$; $\zeta_\rho^{(\eta)}=\eta$ otherwise, set $\varsigma_\rho^{(\eta)} = \Syg_{(\eta,\rho)}$.
\end{enumerate}
\end{definition}

\begin{ex} \label{ex:Sapprox}
    In \cref{fig:nonorientable}, the pair $(\Syg_{(\delta,\rho)}, \Syg_{(\eta,\rho)})$ is the $\Smov(\rho)$-approximation of $(\delta,\eta)$ relative to $\rho \in \NP(\delta) \cap \NP(\eta)$.
\end{ex}

The following lemma is an obvious consequence of the previous definition.

\begin{lemma}\label{lem:Sapproxmatchablecase} Let $(\delta,\eta) \in \Accord'^2$ be such that $\NP(\delta) \cap \NP(\eta) \neq \varnothing$. The following assertions hold:
\begin{enumerate}[label=$(\roman*)$,itemsep=1mm]
    \item for all $\rho \in \NP(\delta) \cup \NP(\eta)$ such that $(\delta,\eta)$ admits an $\Smov(\rho)$-approximation  $(\xi_\rho^{(\delta)},\varsigma_\rho^{(\eta)})$, the colorations $\col_{\xi_\rho^{(\delta)}}$ and $\col_{\varsigma_\rho^{(\eta)}}$ are matchable;
    \item if $\col_\delta$ and $\col_\eta$ are matchable, then there exists $\rho \in \NP(\delta) \cup \NP(\delta)$ such that the $\Smov(\rho)$-approximation of the pair $(\delta,\eta)$ is $(\delta, \eta)$ itself. 
\end{enumerate}
\end{lemma}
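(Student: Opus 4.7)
My plan for part (i) is to unwind the definition of the $\Smov(\rho)$-approximation and observe that the construction is engineered precisely so that $\rho$ makes the new colorations matchable through condition \ref{M1}. If $\xi_\rho^{(\delta)}=\delta$, then by hypothesis an endpoint of $\rho$ in $\NP(\delta)_0$ is already colored $\rsquare$ or $\gsquare$. Otherwise, $\xi_\rho^{(\delta)}=\Syg_{(\delta,\rho)}$, and \cref{def:sygaccord} places the source or target of the syzygy accordion at an endpoint of $\rho$; rules 2) and 3) of \cref{def:colourendpoints} then force that endpoint to be colored $\rsquare$ or $\gsquare$ in $\col_{\xi_\rho^{(\delta)}}$. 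The symmetric reasoning applies to $\varsigma_\rho^{(\eta)}$. A short verification that $\rho$ still belongs to $\NP(\xi_\rho^{(\delta)})\cup\NP(\varsigma_\rho^{(\eta)})$ (or can be replaced by an adjacent projective on the boundary of the relevant cell) then delivers condition \ref{M1} for the pair $(\col_{\xi_\rho^{(\delta)}},\col_{\varsigma_\rho^{(\eta)}})$.

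For part (ii), I distinguish according to which of \ref{M1} or \ref{M2} witnesses the matchability of $\col_\delta$ and $\col_\eta$. In the \ref{M1} case, the very wording of \ref{M1} yields a projective $\rho^{\ast}\in\NP(\delta)\cup\NP(\eta)$ with endpoints in $\NP(\delta)_0$ and $\NP(\eta)_0$ both colored in $\{\rsquare,\gsquare\}$; this guarantees simultaneously that $(\delta,\eta)$ admits an $\Smov(\rho^{\ast})$-approximation and that this approximation returns $(\delta,\eta)$ itself, so I set $\rho=\rho^{\ast}$. In the \ref{M2} case, \cref{rem:M2} gives (up to swap) $\NP(\eta)\subseteq\NP(\delta)$ with $\eta$ contained in a source or target cell $C$ of $\delta$; the plan is to produce a projective $\rho\subset\partial C$ one of whose endpoints is an endpoint of $\delta$ (and hence colored $\rsquare$ or $\gsquare$ by $\col_\delta$) and the other an endpoint of $\eta$ colored $\rsquare$ or $\gsquare$ by $\col_\eta$. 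Combining the condition $\eta\prec_v\rho$ from \ref{M2} with rules 2) and 3) of \cref{def:colourendpoints} applied at the endpoints of $\eta$ should rule out the $\osquare$ coloration at the appropriate endpoint of $\eta$, giving the desired \ref{M1} witness.

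The main obstacle will be the \ref{M2} subcase of (ii): it is not a priori clear that \ref{M2}-matchability always forces \ref{M1}-matchability via some other projective. The delicate point is to show that, among the projective accordions forming $\partial C$, at least one has the two required colored endpoints. This will rely on a careful case analysis depending on whether the endpoints of $\eta$ lie among those of $\delta$, on the size of $C$ (triangle versus larger cell), and on the positioning of $\eta$ relative to the adjacent projectives of $\partial C$, invoking rules 4) and 5) of \cref{def:colourendpoints} to pinpoint the location of the $\osquare$ and $\psquare$ vertices.
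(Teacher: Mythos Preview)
Your approach is essentially what the paper intends: it states the lemma as ``an obvious consequence of the previous definition'' and gives no further proof. Your unwinding of \cref{def:Sapprox} for part (i), and your use of the \ref{M1} witness for part (ii), are exactly the content of that ``obvious'' claim.

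Where you diverge is in your treatment of the \ref{M2} subcase of (ii). The paper does not isolate this as a separate difficulty at all; it regards the whole lemma as immediate. Your concern is not unfounded, but you are likely overcomplicating it. In the \ref{M2} configuration, \cref{rem:M2} already gives $\rho\in\NP(\delta)\cap\NP(\eta)$, so the first bullet of \cref{def:Sapprox} is satisfied and the $\Smov(\rho)$-approximation exists. The remaining question is whether this particular $\rho$ (or an adjacent projective on $\partial C$) returns $(\delta,\eta)$ as its approximation. Since $v$ is an endpoint of $\eta$ with $\eta\prec_v\rho$ and $v$ is not an endpoint of $\delta$, the coloration rules at endpoints (rules 2 and 3 of \cref{def:colourendpoints}) applied to $\eta$ at $v$ are what you should check directly, rather than embarking on the full case split by cell size and positioning you outline. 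Your proposed case analysis would certainly succeed, but it is heavier machinery than the paper deems necessary for a statement it declares obvious.
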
 

In the remainder of this section, we prove that if $(\delta,\eta) \in \Accord'^2$ is such that both:
\begin{enumerate}[label=$\bullet$,itemsep=1mm]
    \item $\NP(\delta) \cap \NP(\eta) \neq \varnothing$; and,
    \item $\col_\delta$ and $\col_\eta$ are not matchable,
\end{enumerate}
then, in order to compute $\opResAc'(\delta,\eta)$, it is enough to compute $\opResAc'(\delta)$, $\opResAc'(\eta)$, and $\opResAc'(\xi_\rho^{(\delta)}, \varsigma_\rho^{(\eta)})$ for each $\rho \in \NP(\delta) \cup \NP(\eta)$ such that $(\delta,\eta)$ admits an $\Smov(\rho)$-approximation. Let us first prove the following lemma.

\begin{lemma} \label{lem:upsourceuptargetexist} Let $\delta \in \Accord'$, $v \in \NP(\delta)_0^{{\gsquare}}$, and $w \in \NP(\delta)_0^{{\rsquare}}$. The following assertions hold:
\begin{enumerate}[label=$(\roman*)$, itemsep=1mm]
    \item The set $\{\varsigma \mid \varsigma \in \opResAc'(\delta),\ t(\varsigma) = v\}$ is a nonempty chain in $(\Accord', \Accordleq)$; and, 
    \item The set $\{\nu \mid \nu \in \opResAc'(\delta),\ s(\nu) = w\}$ is a nonempty chain in $(\Accord', \Accordleq)$.
\end{enumerate}
\end{lemma}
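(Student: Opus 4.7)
We only argue (i); assertion (ii) follows by the obvious left--right symmetry obtained by conjugating the coloration via $\overline{\col_\delta}$ (\cref{lem:uniqcolandlaterality}).

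For nonemptiness, the plan is to exhibit a concrete element of the set. Since $v \in \NP(\delta)_0^{{\gsquare}}$, there exists at least one projective accordion $\rho \in \NP(\delta)$ having $v$ as an endpoint; let $u$ be the other endpoint of $\rho$. By construction of the coloration (\cref{def:colourendpoints}) and the $\Prj(\Delta^{\gpoint})$-fan-connectedness of $\NP(\delta)$ (\cref{lem:connectedNP}), we can choose $\rho$ so that $u \in \NP(\delta)_0^{{\rsquare}} \cup \NP(\delta)_0^{{\osquare}}$. Applying the production rules of \cref{prop:arcsaccordions} and \cref{lem:conditionsResAc} at the endpoint $v$, we then build a non-projective $\rpoint$-arc $\varsigma_0$ which either stays inside the cell of $\Prj(\Delta^{\gpoint})$ containing $\rho$, or crosses exactly $\rho$ into the adjacent cell, and whose target is $v$ while its source is either $u$ or the appropriate neighbor dictated by \cref{lem:conditionsResAc}~\ref{2Accord}--\ref{4Accord}. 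Such a $\varsigma_0$ lies in $\opResAc'(\delta)$ by \cref{def:geometricres}.

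For the chain property, the plan is to show that any two elements $\varsigma_1, \varsigma_2 \in \opResAc'(\delta)$ with $t(\varsigma_1) = t(\varsigma_2) = v$ satisfy $\NP(\varsigma_1) \subseteq \NP(\varsigma_2)$ or the converse, and then conclude $\Accordleq$-comparability by a direct appeal to the description of monogeneous resolving closures in \cref{thm:res_clo_1}. The key geometric observation is that if we fix the target $v$, then the source of any candidate $\varsigma \in \opResAc'(\delta)$ lies in $\NP(\delta)_0^{{\rsquare}} \cup \NP(\delta)_0^{{\osquare}}$, which, on the side of $\delta$ where the ${\rsquare}$ vertices lie, forms a $\Prj(\Delta^{\gpoint})$-fan-connected path by \cref{lem:connectedNP}; the cells traversed by $\varsigma$ are exactly those lying between $s(\varsigma)$ and $v$ along this path, as forced by \cref{prop:arcsaccordions} combined with \cref{lem:conditionsResAc}~\ref{1Accord}--\ref{3Accord}. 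Hence the sets of cells crossed, and consequently the sets $\NP(\varsigma_i)$, are linearly ordered by inclusion according to the position of $s(\varsigma_i)$ along this path.

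Given this nesting, if $\NP(\varsigma_1) \subseteq \NP(\varsigma_2)$, we plan to verify that $\varsigma_1$ satisfies the source/target conditions of \cref{lem:conditionsResAc} with respect to $\varsigma_2$ rather than $\delta$: the coloration of $\NP(\varsigma_2)_0$ restricted to $\NP(\varsigma_1)_0$ agrees with $\col_\delta$ up to the exceptional ${\osquare}$/${\psquare}$ relabeling at $\sc(\varsigma_2), \tc(\varsigma_2)$, and the conditions on $w_L, w_R$ for $\varsigma_1$ with respect to $\varsigma_2$ reduce to the corresponding ones with respect to $\delta$. This yields $\varsigma_1 \in \opResAc'(\varsigma_2)$, hence $\varsigma_1 \Accordleq \varsigma_2$. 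The hardest part will be the boundary analysis at $\sc(\delta)$ and $\tc(\delta)$: when one of the two sources lies inside the source cell $C_L$ of $\delta$, or is equal to $w_L(\delta)$, the pink vertices and the exceptional cases of \cref{lem:conditionsResAc}~\ref{3Accord}--\ref{4Accord} must be handled separately to ensure that the candidate accordion $\varsigma_1$ really does satisfy the accordion rules relative to $\varsigma_2$; this requires a short case distinction depending on whether $v$ itself lies in $C_L$, in $C_R$, or strictly between them.
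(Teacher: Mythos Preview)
Your approach is essentially the same as the paper's: reduce (ii) to (i) by duality, observe that two accordions sharing the target $v$ have nested neighboring-projective sets, and then transfer the source coloration from $\col_\delta$ to $\col_{\varsigma_2}$ so that \cref{thm:res_clo_1} yields $\varsigma_1 \Accordleq \varsigma_2$. The paper's proof is considerably terser than yours: it does not argue nonemptiness at all, and it compresses your entire coloration-transfer and boundary analysis into the single sentence ``$s(\varsigma_1)\in\NP(\delta)_0^{{\rsquare}}\cup \NP(\delta)_0^{{\osquare}}$ and thus $s(\varsigma_1)\in\NP(\varsigma_2)_0^{{\rsquare}}\cup \NP(\varsigma_2)_0^{{\osquare}}$''. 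So your plan is sound, and the extra care you take with \cref{lem:conditionsResAc} at $\sc(\delta)$ and $\tc(\delta)$ is filling in detail the paper leaves implicit rather than introducing a genuinely different idea.
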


\begin{proof}
The assertions are dual to each other. Given $\varsigma_1, \varsigma_2 \in \{\varsigma \mid \varsigma \in \Accord', t(\varsigma) = v\}$, we have that either $\NP(\varsigma_1)\subset\NP(\varsigma_2)$ or $\NP(\varsigma_2)\subset\NP(\varsigma_1)$. Without loss of generality, consider that $\NP(\varsigma_1)\subset\NP(\varsigma_2)$. As $\varsigma_1$ and $\varsigma_2$ are in $\opResAc'(\delta)$, $s(\varsigma_1)\in\NP(\delta)_0^{{\rsquare}}\cup \NP(\delta)_0^{{\osquare}}$ and thus $s(\varsigma_1)\in\NP(\varsigma_2)_0^{{\rsquare}}\cup \NP(\varsigma_2)_0^{{\osquare}}$. Thus, by \cref{thm:res_clo_1} we have $\varsigma_1\Accordleq\varsigma_2$.
\end{proof}

\begin{remark} \label{rem:syzy.giestotalorder}
    In particular, the syzygy accordions of $\delta$ sharing a common endpoint forms a chain of $(\Accord', \Accordleq)$
\end{remark}

\begin{prop} \label{prop:bestchoiceformatchability}
Let $(\delta,\eta) \in \Accord'^2$ be such that $\NP(\delta) \cap \NP(\eta) \neq \varnothing$ and such that the colorations $\col_\delta$ and $\col_\eta$ are not matchable. Consider $\rho \in \NP(\delta) $, and denote by $v_1$ and $v_2$ its endpoints. Assume that $(\delta,\eta)$ admits an $\Smov(\rho)$-approximation $(\xi_\rho^{(\delta)}, \varsigma_\rho^{(\eta)})$. Then, for any pair $(\xi',\varsigma') \in \Accord'^2$ such that:
\begin{enumerate}[label=$(\alph*)$,itemsep=1mm]
    \item $\xi' \in \opResAc(\delta)$, and $\varsigma' \in \opResAc(\eta)$;
    \item $\rho \in \NP(\xi') $; and,
    \item $\rho$ makes the colorations $\col_{\xi'}$ and $\col_{\varsigma'}$ matchable,
\end{enumerate}
we have $\xi' \Accordleq \xi_\rho^{(\delta)}$ and $\varsigma' \Accordleq \varsigma_\rho^{(\eta)}$.
\end{prop}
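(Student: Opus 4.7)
The plan is to prove $\xi' \Accordleq \xi_\rho^{(\delta)}$ and appeal to symmetry for $\varsigma' \Accordleq \varsigma_\rho^{(\eta)}$, the two arguments being formally identical up to exchanging the roles of the pairs $(\delta,\xi')$ and $(\eta,\varsigma')$. I would begin with a case distinction based on \cref{def:Sapprox}. If one of the endpoints $v_1, v_2$ of $\rho$ is already in $\NP(\delta)_0^{\rsquare} \cup \NP(\delta)_0^{\gsquare}$, then $\xi_\rho^{(\delta)} = \delta$ and the inequality is immediate from hypothesis (a). In the remaining case we have $\col_\delta(\{v_1,v_2\}) \subseteq \{\osquare,\psquare\}$, so $\xi_\rho^{(\delta)} = \Syg_{(\delta,\rho)}$, and this is the heart of the proof.

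In this main case, the first observation is that by \cref{def:colourendpoints} only intermediate vertices of the source or target cells of $\delta$ are colored $\osquare$ or $\psquare$, so $\rho$ must lie on the boundary of $C_L$ or $C_R$. I would assume without loss of generality $\rho \subset \partial C_R$, so that $\Syg_{(\delta,\rho)} \subset C_R$ with $t(\Syg_{(\delta,\rho)}) = t(\delta)$ and $s(\Syg_{(\delta,\rho)}) \in \{s(\rho), w_R(\delta)\}$ according to whether $\col_\delta(s(\rho)) = \psquare$, via \cref{def:sygaccord}. The overall strategy from here is to verify that $\xi'$ satisfies the four conditions of \cref{lem:conditionsResAc} relative to $\Syg_{(\delta,\rho)}$, which by \cref{thm:res_clo_1} places $\xi' \in \opResAc(\Syg_{(\delta,\rho)})$ and gives the desired inequality.

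The verification splits into locating $t(\xi')$ and $s(\xi')$ inside the combinatorics of $\Syg_{(\delta,\rho)}$. Let $v \in \{v_1,v_2\}$ be the endpoint of $\rho$ with $\col_{\xi'}(v) \in \{\rsquare,\gsquare\}$. Applying \cref{lem:conditionsResAc} to $\xi' \in \opResAc(\delta)$, conditions (i) and (iii) of that lemma force $t(\xi') \in C_R(\delta)_0^{\gsquare} \cup \{t(\delta), s(\eta_R)\}$, which is exactly what is required of a valid target for a curve in $\opResAc'(\Syg_{(\delta,\rho)})$. For $s(\xi')$, the fact that $v$ is $\rsquare$ or $\gsquare$ under $\col_{\xi'}$ while $\osquare$ or $\psquare$ under $\col_\delta$ forces, via rules (2)-(4) of \cref{def:colourendpoints}, either that $v$ is an endpoint of $\xi'$ (with the fan at $v$ satisfying the one-sidedness condition) or that $v$ is an interior crossing of $\xi'$ lying on a triangular cell. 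A fan-theoretic argument at $v$, analogous in spirit to those of \cref{ss:weakmatchabilityconfig} but run inside $\NP(\delta)$, then shows $s(\xi')$ cannot lie strictly ``past $\rho$'' on $\partial C_R$ in the direction away from $\eta_R$, confining $s(\xi')$ to $\NP(\Syg_{(\delta,\rho)})_0^{\rsquare} \cup \NP(\Syg_{(\delta,\rho)})_0^{\osquare}$. Conditions (i), (ii), and (iv) of \cref{lem:conditionsResAc} for $\xi'$ relative to $\Syg_{(\delta,\rho)}$ then follow directly, since $\sc(\Syg_{(\delta,\rho)})$ and $\tc(\Syg_{(\delta,\rho)})$ are inherited from those of $\delta$ up to curtailment at $\rho$.

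I expect the confinement of $s(\xi')$ to be the main obstacle: the danger to rule out is that $s(\xi')$ might be an orange endpoint of a projective on $\partial C_R$ situated strictly beyond $\rho$, which would make $\xi'$ incomparable with, or strictly larger than, $\Syg_{(\delta,\rho)}$. Ruling this out uses the red or green coloration of $v$ under $\col_{\xi'}$ decisively, as this forces all $\NP(\xi')$-projectives at $v$ to cross to the opposite side of $\xi'$, combined with the connectedness of $\NP(\xi')$ (\cref{lem:connectedNP}) and the fan-structure arguments of \cref{ss:weakmatchabilityconfig}. Once this geometric confinement is established, the remaining verifications are routine bookkeeping.
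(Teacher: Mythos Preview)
Your opening case distinction and the appeal to symmetry match the paper exactly, but in the main case your route diverges substantially from the paper's and is far more elaborate than necessary. You aim to show $\xi' \in \opResAc'(\Syg_{(\delta,\rho)})$ by checking endpoint colorations against \cref{lem:conditionsResAc}; the paper instead proves the \emph{stronger} statement $\xi' = \Syg_{(\delta,\rho)}$ in three short steps. First, since $\rho$ lies on the boundary of the large source cell $C_L$ and $\xi' \in \opResAc'(\delta)$ has $\rho \in \NP(\xi')$, the shape constraints force one endpoint of $\xi'$ to be $s(\delta)$. Second, by \cref{lem:upsourceuptargetexist} the accordions in $\opResAc'(\delta)$ with source $s(\delta)$ form a chain, so $\xi'$ and $\Syg_{(\delta,\rho)}$ are comparable and in fact $\Syg_{(\delta,\rho)} \Accordleq \xi'$. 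Third, if the inequality were strict then both endpoints of $\rho$ would still be colored ${\osquare}$ or ${\psquare}$ under $\col_{\xi'}$, contradicting hypothesis~(c). The chain structure of \cref{lem:upsourceuptargetexist} thus replaces everything you were trying to do by hand with the fan arguments.

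There is also a slip in how you invoke \cref{lem:conditionsResAc}: that lemma records \emph{necessary} constraints satisfied by accordions in $\opResAc'(\delta)$, not a characterisation; membership is defined directly by the two coloration conditions in \cref{def:geometricres}. So verifying its four items would not by itself place $\xi'$ in $\opResAc'(\Syg_{(\delta,\rho)})$. More seriously, your confinement argument for $s(\xi')$ is left vague at exactly the point where the real content lies --- and that content is precisely the observation the paper isolates, namely that $\xi'$ must share the endpoint $s(\delta)$ (in your convention, $t(\delta)$) with $\delta$. Once you see that, the chain lemma and hypothesis~(c) finish immediately, and the detour through colorations of $\NP(\Syg_{(\delta,\rho)})_0$ becomes unnecessary.
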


\begin{proof}
One of the desired inequalities is obvious if $\xi_\rho^{(\delta)} = \delta$ or if $\varsigma_{\rho}^{(\eta)} = \eta$. We treat the case where $\xi_\rho^{(\delta)} = \Syg_{(\delta,\rho)}$, and we can treat similarly the case where $\varsigma_{\rho}^{(\eta)} = \Syg_{(\eta,\rho)}$. By setting $\sc(\delta) = (\eta_L,C_L)$, without loss of generality, we assume that $\rho \subset \partial C_L$.
 
By assumptions, $\xi'$ is such that $\rho \in \NP(\xi')$, as $\rho$ is part of the boundary of a large cell in $\pmb{\Gamma}(\Prj(\Delta^{\gpoint}))$ (containing at least three projective accordions in its boundary). It implies that one of the endpoints of $\xi'$ is $s(\delta)$, and $\NP(\xi_\rho^{(\delta)}) \subseteq \NP(\xi')$. Therefore $\xi_\rho^{(\delta)} \Accordleq \xi'$ by \cref{lem:upsourceuptargetexist}. If $\xi' \neq \xi_\rho^{(\delta)}$, then $\col_{\xi'}(\{v_1,v_2\}) \subseteq \{{\osquare, \psquare}\}$, which contradicts $(c)$. 
\end{proof}

The following proposition highlights the fact that to calculate $\opResAc(\delta,\eta)$ whenever $\col_\delta$ and $\col_\eta$ are not matchable, we are reduced to calculating $\opResAc(\mu,\nu)$ with a well-chosen collection of pairs $(\mu,\nu) \in \Accord'^2$ such that $\col_\mu$ and $\col_\nu$ are matchable.

\begin{prop} \label{prop:Sapproxisenough}
Let $(\delta,\eta) \in \Accord'^2$ be such that $\NP(\delta) \cap \NP(\eta) \neq \varnothing$, and $\col_\delta$ and $\col_\eta$ are not matchable. Then \[\opResAc(\delta,\eta) = \opResAc(\delta) \cup \opResAc(\eta) \cup \left( \bigcup_{\rho} \opResAc(\xi_\rho^{(\delta)},\varsigma_\rho^{(\eta)}) \right),\]
where the big union is over the projective accordions $\rho \in \NP(\delta) \cup \NP(\eta)$ such that $(\delta,\eta)$ admits an $\Smov(\rho)$-approximation $(\xi_\rho^{(\delta)},\varsigma_\rho^{(\eta)})$.
\end{prop}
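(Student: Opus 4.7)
The plan is to denote the right-hand side by $\mathscr{W}(\delta,\eta)$ and prove both inclusions with $\opResAc(\delta,\eta)$. The inclusion $\mathscr{W}(\delta,\eta) \subseteq \opResAc(\delta,\eta)$ is immediate from monotonicity of $\opResAc$: for every admissible $\rho$, the accordions $\xi_\rho^{(\delta)}$ and $\varsigma_\rho^{(\eta)}$ equal $\delta,\eta$ or are syzygy accordions of them (Definition~\ref{def:sygaccord}), hence belong to $\opResAc(\delta)$ and $\opResAc(\eta)$ respectively, so $\opResAc(\xi_\rho^{(\delta)},\varsigma_\rho^{(\eta)}) \subseteq \opResAc(\delta,\eta)$.

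For the reverse inclusion, following Convention~\ref{conv:usethisthm}, I would verify that $\mathscr{W}(\delta,\eta)$ is a resolving set containing $\delta$ and $\eta$. Containment of $\Prj(\Delta^{\gpoint})$, of $\delta$, and of $\eta$ is clear. Moreover, $\mathscr{W}(\delta,\eta) \setminus \Prj(\Delta^{\gpoint})$ is a finite union of $\opResAc'$-closed sets, each of which is an ideal of $(\Accord',\Accordleq)$ by Proposition~\ref{prop:folkloreresaccord}; the union of ideals is an ideal, and Lemma~\ref{lem:IdealsclosedSyzygies} then supplies closure under non-projective syzygies.

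The substantive step is closure under arrow and overlap extensions. Take $\xi,\varsigma \in \mathscr{W}(\delta,\eta)$ and suppose there is a non-split short exact sequence involving $\MM(\xi)$ and $\MM(\varsigma)$. If both $\xi$ and $\varsigma$ lie in a single term of the union, the conclusion holds by the resolving property of that term (Theorem~\ref{thm:res_clo_1} or the definition of $\opResAc(\cdot,\cdot)$). Otherwise, up to swapping roles, we may assume $\xi \Accordleq \delta$ and $\varsigma \Accordleq \eta$ with neither living entirely below just one of them. Taking the contrapositive of Corollary~\ref{prop:noNPs}, the existence of a non-split extension forces $\NP(\xi) \cap \NP(\varsigma) \neq \varnothing$; Proposition~\ref{prop:matchableextorepi} then makes $\col_\xi$ and $\col_\varsigma$ matchable, and I pick a witness $\rho \in \NP(\xi) \cup \NP(\varsigma)$.

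The remaining point, and the main obstacle, is to promote this $\rho$ to one that yields an $\Smov(\rho)$-approximation for the outer pair $(\delta,\eta)$ with $\rho \in \NP(\xi_\rho^{(\delta)})$; once this is done, Proposition~\ref{prop:bestchoiceformatchability} gives $\xi \Accordleq \xi_\rho^{(\delta)}$ and $\varsigma \Accordleq \varsigma_\rho^{(\eta)}$, so the extension accordions lie in $\opResAc(\xi_\rho^{(\delta)},\varsigma_\rho^{(\eta)}) \subseteq \mathscr{W}(\delta,\eta)$. To verify Definition~\ref{def:Sapprox} for $(\delta,\eta)$ one must track how the $\rsquare/\gsquare$ endpoint coloration witnessed by $\rho$ at the level of $(\xi,\varsigma)$ lifts to $(\delta,\eta)$. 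For an overlap extension the arc $\rho$ crosses both $\xi$ and $\varsigma$, hence lies in $\NP(\delta) \cap \NP(\eta)$ and the first bullet of Definition~\ref{def:Sapprox} is met; for an arrow extension, the analysis of Lemmas~\ref{lem:tomatchablecap} and~\ref{lem:tomatchablecupnotcap} (together with the geometric description of syzygy accordions) shows that at least one endpoint of $\rho$ retains a red or green color under $\col_\delta$ or $\col_\eta$, so the second or third bullet of Definition~\ref{def:Sapprox} holds. Once this combinatorial transfer is established, the proposition follows.
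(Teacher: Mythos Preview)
Your reduction to the case ``$\xi \Accordleq \delta$ and $\varsigma \Accordleq \eta$'' is where the argument breaks. The set $\mathscr{W}(\delta,\eta)$ is not just $\opResAc(\delta) \cup \opResAc(\eta)$; it also contains the terms $\opResAc(\xi_\rho^{(\delta)},\varsigma_\rho^{(\eta)})$, and an accordion in one of these terms need not lie below $\delta$ or below $\eta$ for $\Accordleq$ (it may arise, for instance, as an overlap extension mixing the two sides). So when you pick $\xi,\varsigma \in \mathscr{W}(\delta,\eta)$ in different terms, you cannot in general assume $\xi \in \opResAc'(\delta)$ and $\varsigma \in \opResAc'(\eta)$; you could have $\xi \in \opResAc'(\xi_{\rho_1}^{(\delta)},\varsigma_{\rho_1}^{(\eta)})$ and $\varsigma \in \opResAc'(\xi_{\rho_2}^{(\delta)},\varsigma_{\rho_2}^{(\eta)})$ for distinct $\rho_1,\rho_2$, and then Proposition~\ref{prop:bestchoiceformatchability} does not apply (its hypotheses require $\xi' \in \opResAc(\delta)$ and $\varsigma' \in \opResAc(\eta)$). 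Verifying directly that extensions between such ``mixed'' accordions stay in $\mathscr{W}(\delta,\eta)$ is essentially as hard as the original problem.

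The paper circumvents this by \emph{not} checking extension closure of $\mathscr{W}(\delta,\eta)$ at all. Instead it argues by induction on the size of $\Ups(\delta,\eta) = \opResAc'(\delta) \times \opResAc'(\eta)$. The key input is Proposition~\ref{prop:nomatchablesmaller}: since $\col_\delta$ and $\col_\eta$ are not matchable, $\opResAc(\delta,\eta)$ decomposes as $\opResAc(\delta) \cup \opResAc(\eta)$ together with the $\opResAc(\mu,\eta)$ for $\mu \Accordlneq \delta$ and the $\opResAc(\delta,\nu)$ for $\nu \Accordlneq \eta$. Each of these has strictly smaller $\Ups$, so by induction equals the corresponding $\mathscr{W}(\mu,\eta)$ or $\mathscr{W}(\delta,\nu)$; then Proposition~\ref{prop:bestchoiceformatchability} is used only to show $\mathscr{W}^\circ(\mu,\eta) \subseteq \mathscr{W}^\circ(\delta,\eta)$ (and dually), which is exactly the situation that proposition is designed for. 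This inductive route is what makes the cross-term interactions tractable.
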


\begin{proof}
Set \[\mathscr{W}(\delta,\eta) = \opResAc(\delta) \cup \opResAc(\eta) \cup \left( \bigcup_{\rho} \opResAc(\xi_\rho^{(\delta)},\varsigma_\rho^{(\eta)}) \right),\] and \[ \mathscr{W}^\circ(\delta,\eta) = \bigcup_{\rho} \opResAc(\xi_\rho^{(\delta)},\varsigma_\rho^{(\eta)}).\] It is clear that:
\begin{enumerate}[label=$\bullet$,itemsep=1mm]
    \item $\delta, \eta \in \mathscr{W}(\delta,\eta)$;
    \item $\mathscr{W}(\delta,\eta) \subseteq \opResAc(\delta,\eta)$; and,
    \item $\mathscr{W}(\delta,\eta) \cap \Accord'$ is an ideal of $(\Accord', \Accordleq)$, and so, $\mathscr{W}(\delta,\eta)$ is closed under syzygies by \cref{lem:IdealsclosedSyzygies}.
\end{enumerate}
Set $\Ups(\delta,\eta) = \{(\xi',\varsigma') \mid \xi' \in \opResAc'(\delta), \varsigma' \in \opResAc'(\eta)\}$. We prove by induction  on $\#\Ups(\delta,\eta)$ that $\opResAc(\delta,\eta) \subseteq \mathscr{W}(\delta,\eta)$. If $\#\Ups(\delta,\eta) = 1$, then $\opResAc'(\delta) = \{\delta\}$ and $\opResAc'(\eta) = \{\eta\}$. As $\col_\delta$ and $\col_\eta$ are non matchable, by \cref{prop:nomatchablesmaller}, it means that \[\opResAc(\delta,\eta) = \opResAc(\delta) \cup \opResAc(\eta).\] 

Fix an integer $n \geqslant 1$, and assume that, for any $k \leqslant n$, if $\#\Ups(\delta,\eta) \leqslant k$, then, for all $(\xi',\varsigma') \in \Ups(\delta,\eta)$, we have $\opResAc(\xi',\varsigma') \subseteq \mathscr{W}(\delta,\eta)$. Now suppose that $\#\Ups(\delta,\eta) = n+1$. By \cref{prop:nomatchablesmaller}, we get that \[
    \opResAc(\delta,\eta) = \opResAc(\delta) \cup \opResAc(\eta) \cup \left( \bigcup_{\mu \Accordlneq \delta} \opResAc(\mu,\eta) \right) \cup \left( \bigcup_{\nu \Accordlneq \eta} \opResAc(\delta,\nu) \right). \]
As $\#\Ups(\mu, \eta) < \#\Ups(\delta,\eta)$ for all $\mu \Accordlneq \delta$, and $\#\Ups(\delta, \nu) < \#\Ups(\delta,\eta)$ for all $\nu \Accordlneq \eta$, the induction hypothesis can be applied, and, after simplification, we get
\[\opResAc(\delta,\eta) = \opResAc(\delta) \cup \opResAc(\eta) \cup \left( \bigcup_{\mu \Accordlneq \delta}\mathscr{W}^\circ(\mu,\eta)\right) \cup \left( \bigcup_{\nu \Accordlneq \eta} \mathscr{W}^\circ(\delta,\nu) \right).\]
Let $\mu \Accordlneq \delta$. We show that $\mathscr{W}^\circ(\mu,\eta) \subseteq \mathscr{W}^\circ(\delta,\eta)$. If $\mathscr{W}^\circ(\mu,\eta) = \varnothing$, we are done. Otherwise, let us consider $\rho \in \NP(\mu) \cup \NP(\eta)$ such that $(\mu,\eta)$ admits an $\Smov(\rho)$-approximation $(\xi_\rho^{(\mu)},\varsigma_\rho^{(\eta)})$. Note that $(\delta,\eta)$ also admits an $\Smov(\rho)$-approximation $(\xi_\rho^{(\delta)}, \varsigma_\rho^{(\eta)})$. We thus have:
\begin{enumerate}[label=$\bullet$,itemsep=1mm]
    \item $\xi_\rho^{(\mu)} \Accordleq \mu \Accordleq \delta$ and $\varsigma_\rho^{(\eta)} \Accordleq \eta$;
    \item $\rho \in \NP(\mu) \cup \NP(\eta) \subseteq \NP(\delta) \cup \NP(\eta)$; and,
    \item $\rho$ makes the colorations $\col_{\xi_\rho^{(\mu)}}$ and $\col_{\varsigma_\rho^{(\eta)}}$ matchable.
\end{enumerate}
Using \cref{prop:bestchoiceformatchability}, we get that $\xi_\rho^{(\mu)} \Accordleq \xi_\rho^{(\delta)}$, and so $\opResAc(\xi_\rho^{(\mu)}, \varsigma_\rho^{(\eta)}) \subseteq  \opResAc(\xi_\rho^{(\delta)}, \varsigma_\rho^{(\eta)})$.

As this inclusion is true for any $\rho \in \NP(\mu) \cup \NP(\eta)$, we get that $\mathscr{W}^\circ(\mu,\eta) \subseteq \mathscr{W}^\circ(\delta,\eta)$. By duality, we also get that $\mathscr{W}^\circ(\delta,\nu) \subseteq \mathscr{W}^\circ(\delta,\eta)$ for all $\nu \Accordlneq \eta$, and thus it allows to get that $\opResAc(\delta,\eta) \subseteq \mathscr{W}(\delta,\eta)$. 
\end{proof}
	
	\section{Moves on pairs of non-projective accordions}
	\label{sec:Moves}
	\pagestyle{plain}

In this section, we will review some technical details that outline all possible interactions between two non-projective accordions. Our understanding of those interactions motivates us to introduce \emph{``moves''} on pairs $(\delta,\eta) \in \Accord'^2$. In \cref{tab:Allmovessummary}, we summarize all of the crucial moves we must consider to compute $\mathfrak{E}_{\opResAc'(\delta,\eta)}$, depending on the matchability of the colorations, and the notion of \emph{exclusivity} on endpoints of the two accordions we are considering. We invite the reader to go directly to \cref{sec:TreePart2} to see precisely how we use them in the computation of the resolving closure of any collection of non-projective accordions and, therefore, of the resolving subcategories of $\rep(Q,R)$.
\begin{table}[!ht]
\centering
\begin{tiny}
\begin{NiceTabular}{|c|c|c|c|c|c|}
\CodeBefore
\cellcolor{gray!20}{1-1}
\cellcolor{gray!20}{2-1}
\cellcolor{gray!20}{3-1}
\cellcolor{gray!20}{4-1}
\cellcolor{gray!20}{5-1}
\cellcolor{gray!20}{6-1}
\cellcolor{gray!20}{7-1}
\cellcolor{gray!20}{8-1}
\cellcolor{gray!20}{2-2}
\cellcolor{gray!20}{3-2}
\cellcolor{gray!20}{4-2}
\cellcolor{gray!20}{5-2}
\cellcolor{gray!20}{6-2}
\cellcolor{gray!20}{7-2}
\cellcolor{gray!20}{8-2}
\cellcolor{gray!20}{3-3}
\cellcolor{gray!20}{4-3}
\cellcolor{gray!20}{5-3}
\cellcolor{gray!20}{6-3}
\cellcolor{gray!20}{7-3}
\cellcolor{gray!20}{8-3}
\cellcolor{gray!20}{4-4}
\cellcolor{gray!20}{5-4}
\cellcolor{gray!20}{6-4}
\cellcolor{gray!20}{7-4}
\cellcolor{gray!20}{8-4}
\cellcolor{gray!20}{4-5}
\cellcolor{gray!20}{4-6}
\Body
\hline
$\NP(\delta) \cap \NP(\eta) = \varnothing$   & \multicolumn{5}{c|}{$\ast$}                                                                                                                      \\ \hline
\multirow{7}{*}{$\NP(\delta) \cap \NP(\eta) \neq \varnothing$} &    non-matchable              & \multicolumn{4}{c}{$\Smov$-move}       \\                                       \cline{2-6}                                                  
                                   &  \multirow{6}{*}{  matchable} &  weakly                & \multicolumn{3}{c}{$\W$ and $\KE$-move}                            \\ \cline{3-6} 
                                     &                          & \multirow{5}{*}{ strongly} &  $\delta$ and $\eta$ are &  non-crossing &  crossing  \\ \cline{4-6} 
                                   &                          &         & satisfying \ref{0E} & $\ast$ & $\ast$  \\ \cline{4-6} 
                                    &                     &                     &  satisfying \ref{2E} & $\KE$-move & $\ast$ \\ \cline{4-6} 
                                    &                     &                     & satisfying \ref{3E} & $\KE'$ and $\V$-move & $\V$-move \\ \cline{4-6} 
                                    &                     &                     & satisfying \ref{4E} & $\CoZ$-move & $\X$-move \\ \hline
\end{NiceTabular}
\end{tiny}
\caption{Summary of the relevant moves to apply on any pair $(\delta,\eta) \in \Accord'^2$ to determine $\mathfrak{E}_{\opResAc'(\delta,\eta)}$.}
\label{tab:Allmovessummary}
\end{table}

\subsection{Exclusive vertices}
\label{ss:exclusive}
We define the \emph{exclusive vertices}, which will reveal to be crucial to compute precisely $\mathfrak{E}_{\opResAc'(\delta,\eta)}$.

\begin{definition} \label{def:exclusive}
Let $(\delta,\eta) \in \Accord^2$ such that $\NP(\delta) \cap \NP(\eta) \neq \varnothing$, with matchable colorations, and $v\in\mathcal{M}_{\rpoint}$. Let us assume that $\col_\delta$ and $\col_\eta$ are matching. We say that $v$ is:
\begin{enumerate}[label=$\bullet$, itemsep=1mm]
\item \new{exclusive to} $\delta$  \new{in the pair} $(\delta, \eta)$ if $v \in \NP(\delta)_0$, and either $v \notin \NP(\eta)_0$ or $v \in \NP(\delta)_0^{{\osquare}} \setminus \NP(\eta)_0^{{\osquare}}$ or $v \in \NP(\delta)_0^{{\psquare}}$,  
\item \new{exclusive to} $\eta$ \new{in} $(\delta, \eta)$ if $v \in \NP(\eta)_0$, and either $v \notin \NP(\delta)_0$ or $v \in \NP(\eta)_0^{{\osquare}} \setminus \NP(\delta)_0^{{\osquare}}$ or $v \in \NP(\eta)_0^{{\psquare}}$
\item \new{exclusive in} $(\delta, \eta)$ if $v$ is exclusive to $\delta$ or to $\eta$ in $(\delta, \eta)$.
\end{enumerate}
\end{definition}
Note that a common endpoint of $\delta$ and $\eta$ cannot be exclusive to both $\delta$ and $\eta$ in $(\delta, \eta)$. 

\begin{ex} In \cref{fig:ExExclcusivity}, we give an example of exclusive points in $(\delta, \eta)$.
\begin{figure}[!ht]
\centering 
    \begin{tikzpicture}[mydot/.style={
					circle,
					thick,
					fill=white,
					draw,
					outer sep=0.5pt,
					inner sep=1pt
				}, scale = 1]
		\tikzset{
		osq/.style={
        rectangle,
        thick,
        fill=white,
        append after command={
            node [
                fit=(\tikzlastnode),
                orange,
                line width=0.3mm,
                inner sep=-\pgflinewidth,
                cross out,
                draw
            ] {}}}}
		\draw[line width=0.7mm,black] (0,0) ellipse (4cm and 1.5cm);
		\foreach \X in {0,1,...,23}
		{
		\tkzDefPoint(4*cos(pi/12*\X),1.5*sin(pi/12*\X)){\X};
		};
		
		\draw[line width=0.9mm ,bend left =60,red](1) edge (3);
		\draw[line width=0.9mm ,bend right =60,red](5) edge (3);
		\draw[line width=0.9mm ,bend right =30,red](5) edge (23);
		\draw[line width=0.9mm ,bend left =20,red](21) edge (23);
		\draw[line width=0.9mm ,bend left =20,red](7) edge (9);
		\draw[line width=0.9mm ,bend left =40,red](19) edge (21);
		\draw[line width=0.9mm ,bend left =30,mypurple,densely dashdotted](9) edge (19);
		\draw[line width=0.9mm ,bend left =40,mypurple,densely dashdotted](9) edge (17);
		\draw[line width=0.9mm ,bend left =40,red](9) edge (15);
		\draw[line width=0.9mm ,bend left =40,red](9) edge (11);
		\draw[line width=0.9mm ,bend left=30,red](13) edge (15);
		
		\draw[line width=0.7mm ,bend right=10,blue, loosely dashed](5) edge (17);
		\draw[line width=0.7mm ,bend left=10,blue, loosely dashed](7) edge (13);
		
		\filldraw [fill=mypurple,opacity=0.1] (9) to [bend left=30] (19) to [bend left=10] (17) to [bend right=40] cycle ;

		\foreach \X in {1,3,...,23}
		{
		\tkzDrawPoints[fill =red,size=4,color=red](\X);
		};
		
		\foreach \X in {3,5,7,13,15,19,21,23}
		{
		\tkzDrawPoints[size=6,color=mypurple](\X);
		};

		\tkzDefPoint(-2.1,0.5){gamma};
		\tkzLabelPoint[blue](gamma){\Large $\delta$}
		\tkzDefPoint(-0.6,0.1){gamma};
		\tkzLabelPoint[blue](gamma){\Large $\eta$}
    \end{tikzpicture}
\caption{\label{fig:ExExclcusivity} Given a $\rpoint$-dissected marked disc $(\pmb{\Sigma}, \mathcal{M}, \Delta^{\rpoint})$, where arcs of $\Delta^{\rpoint} = \Prj(\Delta^{\gpoint})$ for some  $\gpoint$-dissection $\Delta^{\gpoint}$, the purple marked points correspond to the exclusive ones in $(\delta, \eta) \in \Accord'^2$. The purple dashed lines are projective accordions in $\NP(\delta) \cap \NP(\eta)$.}
\end{figure}
\end{ex}

The following lemma enumerates the different hypotheses on the exclusivity which partition all the configurations that could be satisfied by a pair $(\delta, \eta) \in \Accord'^2$ such that both $\NP(\delta) \cap \NP(\eta) \neq \varnothing$, and $\col_\delta$ and $\col_\eta$ are matchable.

\begin{lemma} \label{lem:allexclusivityhyp} Let $(\delta, \eta) \in \Accord'^2$  be such that $\NP(\delta) \cap \NP(\eta) \neq \varnothing$, with matchable colorations. We assume that $\col_\delta$ and $\col_\eta$ match. Then, one of the following assertions occurs:
\begin{enumerate}[label=$(\arabic*E)$,itemsep=1mm]
\setcounter{enumi}{-1}
    \item \label{0E} There exists $\varsigma \in \{\delta,\eta\}$ such that $s(\varsigma)$ and $t(\varsigma)$ are not exclusive to $\varsigma$ in $(\delta, \eta)$;
\setcounter{enumi}{1}
    \item \label{2E} Each accordion has exactly one endpoint that is exclusive in $(\delta,\eta)$;
    \item \label{3E} Among all the vertices  $\{s(\delta), t(\delta), s(\eta),t(\eta)\}$ exactly one is not exclusive to the curve naturally associated. 
    \item \label{4E} All the vertices of $\{s(\delta), t(\delta), s(\eta),t(\eta)\}$ are exclusive in $(\delta,\eta)$;
\end{enumerate}
\end{lemma}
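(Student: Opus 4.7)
The proof will be a straightforward case analysis on the number of exclusive endpoints. For each $\varsigma \in \{\delta,\eta\}$, let $n_\varsigma$ denote the number of endpoints of $\varsigma$ that are exclusive to $\varsigma$ in $(\delta,\eta)$; since both $s(\varsigma)$ and $t(\varsigma)$ lie in $\NP(\varsigma)_0$ by item $(iii)$ of \cref{prop:CombidescripNproj}, the integer $n_\varsigma$ takes one of the three values $0,1,2$.

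If $n_\delta=0$ or $n_\eta=0$, then both endpoints of the corresponding accordion fail to be exclusive to it, and \ref{0E} holds directly with $\varsigma$ chosen to be that accordion. Otherwise $n_\delta, n_\eta \geqslant 1$, and I would split on the sum $N=n_\delta+n_\eta \in \{2,3,4\}$. When $N=4$, both endpoints of both accordions are exclusive to their associated curve, hence in particular exclusive in $(\delta,\eta)$, and \ref{4E} is satisfied. When $N=3$, exactly one of the four vertices in $\{s(\delta),t(\delta),s(\eta),t(\eta)\}$ is not exclusive to its naturally associated curve, which is precisely the content of \ref{3E}. When $N=2$, one has $n_\delta=n_\eta=1$: each accordion contributes exactly one endpoint exclusive to itself, and hence exclusive in $(\delta,\eta)$, which yields \ref{2E}.

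The only step requiring minor care is the last one, where one must also verify that in the configuration $n_\delta=n_\eta=1$ the single non-exclusive endpoint of each accordion does not accidentally become exclusive to the other curve, which would inflate the count in \ref{2E}. Using the matching hypothesis on $\col_\delta$ and $\col_\eta$ together with \cref{def:colourendpoints} (which guarantees that endpoints of an accordion $\varsigma$ never carry the colour $\psquare$ under $\col_\varsigma$), this reduces to a short tabulation of the admissible colour pairs at vertices in $\NP(\delta)_0 \cap \NP(\eta)_0$ that coincide with endpoints of $\delta$ or $\eta$. The main obstacle is therefore not conceptual but rather bookkeeping: handling cleanly the subtle interaction between ``exclusive to $\varsigma$'' and ``exclusive in $(\delta,\eta)$'' across the three distinct formulations used in \ref{0E}, \ref{2E}--\ref{4E}, and \ref{3E}.
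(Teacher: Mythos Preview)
The paper gives no proof of this lemma; it is stated as a case-splitting device and the text immediately continues with ``In the following, we treat each of those cases.'' Your counting argument via $n_\delta,n_\eta\in\{0,1,2\}$ is exactly the natural way to justify it, and for $n_\varsigma=0$, $N=3$, and $N=4$ your reasoning is complete as written.

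Your final paragraph correctly isolates the only genuine subtlety, in the $n_\delta=n_\eta=1$ case, where the formulations ``exclusive to $\varsigma$'' (used in \ref{0E}, \ref{3E}) and ``exclusive in $(\delta,\eta)$'' (used in \ref{2E}, \ref{4E}) diverge. One remark that would strengthen your sketch: the lemma only asserts that \emph{at least one} of the four alternatives holds, not that they are mutually exclusive. So in the $N=2$ case you do not actually need to force \ref{2E}; it suffices to observe that if the non-self-exclusive endpoint of $\delta$ happens to be exclusive to $\eta$ (and symmetrically for $\eta$), then all four endpoints are exclusive in $(\delta,\eta)$ and \ref{4E} holds instead. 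The tabulation you allude to therefore only has to rule out the asymmetric situation where exactly one of the two non-self-exclusive endpoints becomes exclusive to the other curve, since that would leave three endpoints exclusive in $(\delta,\eta)$ while two are non-exclusive to their own curve, matching none of \ref{0E}--\ref{4E}. Making that exclusion explicit (via the colour constraints you mention) would close the argument.
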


In the following, we treat each of those cases. Let us first treat the case \ref{0E} . For the other ones, we introduce, in \cref{sec:Moves}, moves on the pair $(\delta,\eta)$, which help us to describe $\opResAc'(\delta,\eta)$.

\begin{prop}
\label{prop:0E}
Let $(\delta, \eta) \in \Accord'^2$. Let $\col_\delta$ be a coloration of $\NP(\delta)_0$, and $\col_\eta$ be one of $\NP(\eta)_0$. Then the following assertions are equivalent:
\begin{enumerate} [label=$(\roman*)$, itemsep=1mm]
    \item We have $\NP(\delta) \cap \NP(\eta) \neq \varnothing$, $\col_\delta$ and $\col_\eta$ matchable, and $(\delta,\eta)$ satisfies \ref{0E}; and,
    \item We have $\eta \Accordleq \delta$ or $\delta \Accordleq \eta$.
\end{enumerate}
\end{prop}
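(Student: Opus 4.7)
The plan is to directly unpack both sides of the equivalence using the key characterization from \cref{thm:res_clo_1} and \cref{def:geometricres}: $\delta \Accordleq \eta$ is equivalent to $\delta \in \opResAc'(\eta)$, which reduces to the purely combinatorial conditions $s(\delta) \in \NP(\eta)_0^{{\rsquare}} \cup \NP(\eta)_0^{{\osquare}}$ and $t(\delta) \in \NP(\eta)_0^{{\gsquare}} \cup \NP(\eta)_0^{{\osquare}}$. The non-exclusivity conditions entering \ref{0E} (from \cref{def:exclusive}) amount to very nearly the same statement on endpoints and colorations, so the proof is essentially a dictionary translation combined with a careful handling of the ${\psquare}$ coloration.

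For $(ii) \Rightarrow (i)$, I would assume without loss of generality $\delta \Accordleq \eta$. The endpoint conditions above immediately place $s(\delta), t(\delta) \in \NP(\eta)_0$ with the prescribed colors. The nonempty intersection $\NP(\delta) \cap \NP(\eta) \neq \varnothing$ follows by contrapositive of \cref{prop:noNPs}, since disjoint $\NP$'s would force $\{\delta, \eta\}$ to be an antichain in $(\Accord', \Accordleq)$, contradicting $\delta \Accordleq \eta$. Matchability is witnessed by a projective accordion at $s(\delta)$ lying simultaneously in $\NP(\delta)$ (via \cref{prop:CombidescripNproj}(iii), which places all projectives below $\delta$ at $s(\delta)$ in $\NP(\delta)$) and in $\NP(\eta)$ (via fan-connectedness, \cref{lem:connectedNP}, which forces the $\NP(\eta)$-fan at $s(\delta)$ to meet this subfan). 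For \ref{0E}, a short case analysis on the colors of $s(\delta), t(\delta)$ in $\col_\delta$ versus $\col_\eta$ shows that either both endpoints of $\delta$ are non-exclusive to $\delta$ (giving \ref{0E} with $\varsigma = \delta$), or else the configuration forces $\delta$ and $\eta$ to share an endpoint in such a way that a symmetric argument applies to $\eta$ yielding \ref{0E} with $\varsigma = \eta$.

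For $(i) \Rightarrow (ii)$, I would assume $(i)$ with $\varsigma = \delta$ in \ref{0E} (the case $\varsigma = \eta$ being symmetric and producing $\eta \Accordleq \delta$). Non-exclusivity of $s(\delta)$ in $(\delta, \eta)$ gives $s(\delta) \in \NP(\eta)_0$ and the conditional ``$\col_\delta(s(\delta)) = {\osquare}$ implies $\col_\eta(s(\delta)) = {\osquare}$'', while matching of colorations forbids $\col_\eta(s(\delta)) = {\gsquare}$ when $\col_\delta(s(\delta)) = {\rsquare}$. The only remaining case to exclude is $\col_\eta(s(\delta)) = {\psquare}$: such a coloring would force $s(\delta)$ to coincide with $w_L(\eta)$ or $w_R(\eta)$, and a geometric argument using the precise position of these pink vertices on the source and target cells of $\eta$, together with the non-exclusivity of $t(\delta)$ and the matchability hypothesis, leads to a contradiction. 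A dual argument handles $t(\delta)$, so $\delta \in \opResAc'(\eta)$ and $\delta \Accordleq \eta$.

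The hard part will be the geometric case analysis ruling out the pink coloration in $(i) \Rightarrow (ii)$ and dualising the problematic case in $(ii) \Rightarrow (i)$: both rely on the cell structure of $(\pmb{\Sigma}, \mathcal{M}, \Prj(\Delta^{\gpoint}))$ and on how the vertices $w_L(\eta), w_R(\eta)$ interact with the endpoints of $\delta$ under the matching and non-exclusivity hypotheses, exploiting the fact that endpoints of an accordion can never be coloured ${\psquare}$ by their own coloration.
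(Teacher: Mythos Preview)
Your approach is essentially the same as the paper's: both directions amount to translating between the non-exclusivity conditions in \ref{0E} and the endpoint membership conditions defining $\opResAc'$ via \cref{thm:res_clo_1}. The paper's proof is considerably terser than your sketch: it declares $(ii)\Rightarrow(i)$ ``obvious'' with no further argument, and for $(i)\Rightarrow(ii)$ it takes $\varsigma=\eta$, reads off from non-exclusivity that $s(\eta)\in\NP(\delta)_0$ together with the relevant color constraints, and concludes directly that $s(\eta)\in\NP(\delta)_0^{\rsquare}\cup\{s(\delta)\}$ and dually for $t(\eta)$, hence $\eta\in\opResAc(\delta)$.

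Two remarks. First, your plan for $(ii)\Rightarrow(i)$ is more elaborate than necessary: once $\delta\Accordleq\eta$, the inclusion $\NP(\delta)\subseteq\NP(\eta)$ is immediate from $\opResAc(\delta)\subseteq\opResAc(\eta)$ (no need for the contrapositive of \cref{prop:noNPs}), and matchability then follows directly. Second, you are right that the ${\psquare}$ case in $(i)\Rightarrow(ii)$ deserves a sentence of justification --- the paper's argument glosses over it by writing the consequences of non-exclusivity with the roles of $\delta$ and $\eta$ apparently swapped. Your instinct to ground this in the cell structure (via the position of the pink vertex in the source/target cell of $\eta$ and \cref{lem:conditionsResAc}) is the natural way to close that gap; the point is that a pink vertex for $\eta$ cannot simultaneously serve as a legitimate source vertex for an accordion $\delta\in\opResAc'(\eta)$ whose other endpoint also satisfies the non-exclusivity constraint.
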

\begin{proof}
The fact that $(ii)$ implies $(i)$ is obvious.

Assume, without loss of generality, that:
\begin{enumerate}[label=$\bullet$, itemsep=1mm]
    \item $\col_\delta$ and $\col_\eta$ are matching; and,
    \item $s(\eta)$ and $t(\eta)$ are not exclusive to $\eta$ in $(\delta,\eta)$.
\end{enumerate}
First $s(\eta) \in \NP(\eta)_0$. As $s(\eta)$ is not exclusive to $\eta$ in $(\delta, \eta)$, the following statements hold:
\begin{enumerate}[label=$\bullet$,itemsep=1mm]
    \item $s(\eta) \in \NP(\delta)_0$;
    \item $s(\eta) \notin \NP(\delta)_0^{{\osquare}} \setminus \NP(\eta)_0^{{\osquare}}$; and, 
    \item $s(\eta) \notin \NP(\delta)_0^{{\psquare}}$.
\end{enumerate}
Therefore, by convention on the choice of the coloration for source and target of accordions, $s(\eta)\in \NP(\delta)_0^{{\rsquare}} \cup \{s(\delta)\}$. Similarly $t(\eta)\in \NP(\delta)_0^{{\gsquare}}\cup \{ t(\delta)\}$.  So we get $\eta \in \opResAc(\delta)$ by \cref{thm:res_clo_1}.
\end{proof}

\subsection{KE-move}
\label{ss:KE}
This first move is a translation of the computation of the kernel of an epimorphism and an arrow extension between indecomposable representations. It will be defined for any pair $(\delta,\eta) \in \Accord'^2$, but it will be relevant whenever $\col_\delta$ and $\col_\eta$ match.

\begin{definition}\label{def:completeaskernelorextension}
Let $(\delta,\eta,\kappa) \in \Accord'^3$. We say that $\kappa$ is a \new{kernel completion} of $(\delta,\eta)$ if either: 
\begin{enumerate}[label=$\bullet$, itemsep=1mm]
    \item there exists an epimorphism $\MM(\delta) \twoheadrightarrow \MM(\eta)$ in $\rep(Q,R)$ and $\MM(\kappa)$ is isomorphic to its kernel; or,
    \item there exists an epimorphism $\MM(\eta) \twoheadrightarrow \MM(\delta)$ in $\rep(Q,R)$ and $\MM(\kappa)$ is isomorphic to its kernel.
\end{enumerate}
We say that $\kappa$ is a \new{$1$-extension completion} of $(\delta,\eta)$ if one of the following holds:
\begin{enumerate}[label=$\bullet$,itemsep=1mm]
    \item there is a non-split short exact sequence \[\begin{tikzcd}
	\MM(\delta) & \MM(\kappa) & \MM(\eta)
	\arrow["k",tail, from=1-1, to=1-2]
	\arrow["f",two heads, from=1-2, to=1-3]
\end{tikzcd}\]  in $\rep(Q,R)$; or,
\item there is a non-split short exact sequence \[\begin{tikzcd}
	\MM(\eta) & \MM(\kappa) & \MM(\delta)
	\arrow["k",tail, from=1-1, to=1-2]
	\arrow["f",two heads, from=1-2, to=1-3]
\end{tikzcd}\]  in $\rep(Q,R)$.
\end{enumerate}
If $\kappa$ is either a kernel or a $1$-extension completion of $(\delta,\eta)$, then we call $\kappa$ the \new{$\KE$-completion} of $(\delta,\eta)$.
\end{definition}

\begin{lemma} \label{lem:uniqKE}
For any pair $(\delta,\eta) \in \Accord'^2$, there exists at most one $\kappa \in \Accord'$ such that $\kappa$ is a $\KE$-completion of $(\delta, \eta)$. Moreover, if such a $\kappa$ exists, then $(\delta,\eta)$ has matchable colorations.
\end{lemma}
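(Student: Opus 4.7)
The plan is to handle uniqueness by distinguishing the two types of $\KE$-completion, and to deduce matchability from \cref{prop:matchableextorepi}. First I would observe that a kernel completion and a 1-extension completion cannot both yield a single accordion $\kappa$ for the same pair $(\delta,\eta)$: in the kernel case, $\dim \MM(\kappa) = |\dim \MM(\delta) - \dim \MM(\eta)|$, while in the 1-extension case, $\dim \MM(\kappa) = \dim \MM(\delta) + \dim \MM(\eta)$; since $\MM(\delta)$ and $\MM(\eta)$ are both non-zero, these dimensions differ, so the two types are mutually exclusive.

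Within the kernel case, I use that in a gentle tree $\Hom$-spaces between indecomposable representations are at most one-dimensional (recalled in \cref{ss:recalltrees}): any epimorphism $f:\MM(\delta)\twoheadrightarrow\MM(\eta)$ is therefore unique up to scalar, so $\ker f$ and hence $\kappa$ are determined up to isomorphism. Moreover, at most one of $\MM(\delta)\twoheadrightarrow\MM(\eta)$ or $\MM(\eta)\twoheadrightarrow\MM(\delta)$ can exist: composing two such epimorphisms produces an epimorphism $\MM(\delta)\twoheadrightarrow\MM(\delta)$, which is an isomorphism by finite-dimensionality, forcing $\MM(\delta)\cong\MM(\eta)$ and contradicting $\delta\neq\eta$. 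In the 1-extension case, the indecomposability of $\MM(\kappa)$ rules out overlap extensions (whose middle terms are decomposable by \cref{prop:geom_ext}), so the extension is an arrow extension. Since $\Ext^1$-spaces between indecomposables in a gentle tree are at most one-dimensional, the middle term is determined up to isomorphism in each direction. The explicit construction depicted in \cref{fig:arrowextaccord} shows that the arrow extension is produced by the local configuration at the shared endpoint of $\delta$ and $\eta$, so if both $\ArExt(\delta,\eta)$ and $\ArExt(\eta,\delta)$ are non-empty, they yield the same accordion $\kappa$.

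Finally, I address matchability. Suppose a $\KE$-completion $\kappa$ exists. If $\kappa$ is a kernel completion, then there is a non-zero epimorphism between $\MM(\delta)$ and $\MM(\eta)$; by \cref{prop:HomCrossing} this forces $\delta$ and $\eta$ to cross, and any projective accordion crossed at that point belongs to $\NP(\delta)\cap\NP(\eta)$ by part $(i)$ of \cref{prop:CombidescripNproj}. If $\kappa$ is a 1-extension completion, then the associated arrow extension forces $\delta$ and $\eta$ to share a common endpoint $v$, and part $(iii)$ of \cref{prop:CombidescripNproj} yields a projective accordion with endpoint $v$ lying in both $\NP(\delta)$ and $\NP(\eta)$. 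In either case $\NP(\delta)\cap\NP(\eta)\neq\varnothing$, so \cref{prop:matchableextorepi} gives the matchability of $\col_\delta$ and $\col_\eta$. I expect the main obstacle to be the subtle point in paragraph two, where a priori the two arrow-extension directions could yield different middle-term accordions; resolving this cleanly requires a direct inspection of the local geometry at the shared endpoint, where the construction of \cref{fig:arrowextaccord} produces a single accordion irrespective of the direction.
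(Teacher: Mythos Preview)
Your argument is essentially correct and reaches the same conclusion, but it proceeds by a more elaborate case split than the paper does. The paper's proof is a two-line observation: in \emph{every} $\KE$-completion situation (kernel or $1$-extension), the three accordions $\delta,\eta,\kappa$ sit in a single non-split short exact sequence whose middle term is indecomposable, hence an arrow extension between two of the three; since $\Ext^1$ between indecomposables in a gentle tree is at most one-dimensional, this arrow extension --- and therefore the triangle $\{\delta,\eta,\kappa\}$ (cf.\ \cref{rem:KETriangle}) --- is uniquely determined by $\delta$ and $\eta$. This unifies your kernel and extension cases in one stroke, and makes your dimension-counting separation unnecessary.

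Your ``main obstacle'' is not actually an obstacle: for distinct non-projective accordions $\delta,\eta$ in a gentle tree, at most one of $\ArExt(\delta,\eta)$ and $\ArExt(\eta,\delta)$ can be non-empty. Indeed, $\delta$ and $\eta$ share at most one endpoint $v$ (two shared endpoints would force $\delta=\eta$), and the counter-clockwise order $\prec_v$ at that endpoint fixes the direction of the arrow extension. So there is no need to argue that the two directions ``yield the same $\kappa$''; one of them is simply empty. On the other hand, you are more explicit than the paper about verifying the hypothesis $\NP(\delta)\cap\NP(\eta)\neq\varnothing$ before invoking \cref{prop:matchableextorepi}; the paper's proof leaves this implicit. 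Your route through \cref{prop:CombidescripNproj} works, though note that an epimorphism with indecomposable kernel already forces $\delta$ and $\eta$ to share an endpoint (via the triangle picture), so part~$(i)$ alone may not suffice when $\eta$ sits inside a single cell --- parts~$(ii)$ or~$(iii)$ cover those cases.
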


\begin{proof}
The result follows from the fact that the extension spaces between indecomposable representations of a gentle tree are at most one-dimensional, and to have such a $\KE$-completion, we need to have an arrow extension between two out of the three accordions $\delta, \eta, \kappa$. In addition, by \cref{prop:matchableextorepi}, if such a $\KE$-completion exists, we get that $(\delta,\eta)$ admits matchable colorations.
\end{proof}

\begin{definition} \label{def:KEmove}
For any $(\delta,\eta)\in \Accord'^2$, we define the \new{$\KE$-move} of $(\delta,\eta)$ as follows:
\[\KE(\delta,\eta) = \begin{cases}
\{\delta,\eta,\kappa\} & \text{if } \{\delta,\eta\} \text{ admits a } \KE\text{-completion } \kappa \in \Accord' \\
\{\delta,\eta\} & \text{otherwise.}
\end{cases}\]
\end{definition}

\begin{ex} \label{ex:KEmove} \cref{fig:Triangle} shows an example of the $\KE$-move of two accordions. This picture represents the general behavior of such a completion.
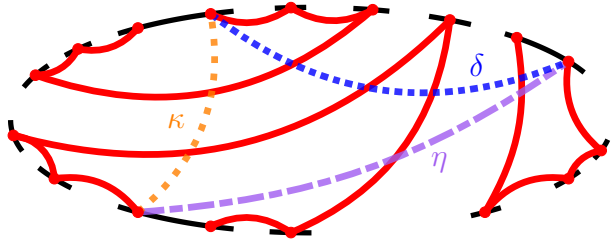
\begin{figure}[!ht]
        \centering
       \begin{tikzpicture}[mydot/.style={
					circle,
					thick,
					fill=white,
					draw,
					outer sep=0.5pt,
					inner sep=1pt
				}, scale = 1]
		\tikzset{
		osq/.style={
        rectangle,
        thick,
        fill=white,
        append after command={
            node [
                fit=(\tikzlastnode),
                orange,
                line width=0.3mm,
                inner sep=-\pgflinewidth,
                cross out,
                draw
            ] {}}}}
		\draw [line width=0.7mm,domain=25:50] plot ({4*cos(\x)}, {1.5*sin(\x)});
		\draw [line width=0.7mm,domain=58:67] plot ({4*cos(\x)}, {1.5*sin(\x)});
		\draw [line width=0.7mm,domain=75:82] plot ({4*cos(\x)}, {1.5*sin(\x)});
		\draw [line width=0.7mm,domain=91:97] plot ({4*cos(\x)}, {1.5*sin(\x)});
		\draw [line width=0.7mm,domain=105:130] plot ({4*cos(\x)}, {1.5*sin(\x)});
		\draw [line width=0.7mm,domain=137:145] plot ({4*cos(\x)}, {1.5*sin(\x)});
		\draw [line width=0.7mm,domain=151:163] plot ({4*cos(\x)}, {1.5*sin(\x)});
		\draw [line width=0.7mm,domain=180:196] plot ({4*cos(\x)}, {1.5*sin(\x)});
		\draw [line width=0.7mm,domain=205:217] plot ({4*cos(\x)}, {1.5*sin(\x)});
        \draw [line width=0.7mm,domain=230:255] plot ({4*cos(\x)}, {1.5*sin(\x)});
        \draw [line width=0.7mm,domain=262:270] plot ({4*cos(\x)}, {1.5*sin(\x)});
        \draw [line width=0.7mm,domain=302:309] plot ({4*cos(\x)}, {1.5*sin(\x)});
        \draw [line width=0.7mm,domain=324:333] plot ({4*cos(\x)}, {1.5*sin(\x)});
        \draw [line width=0.7mm,domain=340:350] plot ({4*cos(\x)}, {1.5*sin(\x)});
		\foreach \X in {0,...,45}
		{
		\tkzDefPoint(4*cos(pi/23*\X),1.5*sin(pi/23*\X)){\X};
		};
		
	    \draw[line width=0.9mm ,bend right =30,red] (4) edge (44);
	    \draw[line width=0.9mm ,bend left =30,red] (42) edge (44);
	    \draw[line width=0.9mm ,bend left =30,red] (39) edge (42);
	    \draw[line width=0.9mm ,bend right =20,red] (39) edge (6);

	    \draw[line width=0.9mm ,bend left=30,red] (32) edge (34);
	    \draw[line width=0.9mm ,bend right =30,red] (34) edge (8);
	    \draw[line width=0.9mm ,bend right =30,red] (24) edge (8);
	    \draw[line width=0.9mm ,bend left =40,red] (24) edge (27);
	    \draw[line width=0.9mm ,bend left =30,red] (27) edge (30);
	    
	    \draw[line width=0.9mm ,bend right =30,red] (14) edge (12);
	    \draw[line width=0.9mm ,bend right =30,red] (12) edge (10);
	    \draw[line width=0.9mm ,bend right =30,red] (20) edge (10);
	    \draw[line width=0.9mm ,bend right =30,red] (20) edge (18);
	    \draw[line width=0.9mm ,bend right =30,red] (18) edge (16);
	    
	    \draw [line width=0.9mm, mypurple,dash pattern={on 10pt off 2pt on 5pt off 2pt}, bend right=15,opacity=0.8] (30) edge (4);
		\draw [line width=0.9mm,blue, dashdotted, bend right = 30, opacity=0.8]  (14) edge (4);
		\draw [line width=0.9mm,orange, loosely dotted, bend right = 30, opacity=0.8]  (30) edge (14);
        
		\foreach \X in {4,6,8,10,12,14,16,18,20,24,27,30,32,34,39,42,44}
		{
		\tkzDrawPoints[fill =red,size=4,color=red](\X);
		};

		\tkzDefPoint(2.2,1){gammaM};
		\tkzLabelPoint[blue](gammaM){\Large $\delta$}
		\tkzDefPoint(1.7,-0.3){gammaP};
		\tkzLabelPoint[mypurple](gammaP){\Large $\eta$}
		\tkzDefPoint(-1.8,0.2){rho};
		\tkzLabelPoint[orange](rho){\Large $\kappa$}
    \end{tikzpicture}
        \caption{\label{fig:Triangle} An example of $\KE$-move of $\{\delta, \eta\}$. More precisely, $\kappa$ is a kernel completion of $\{\delta,\eta\}$.}
    \end{figure}
\end{ex}

\begin{remark} \label{rem:KETriangle}
Given $(\delta, \eta) \in \Accord'^2$, if $(\delta,\eta)$ admits a \textbf{KE}-completion $\kappa$, by the fact that $(\pmb{\Sigma}, \mathcal{M})$ is a marked disc with $\mathcal{M} \subset \partial \pmb{\Sigma}$, and by \cref{prop:geom_ext}, we get that $\delta$, $\eta$ and $\kappa$ form a triangle in $(\pmb{\Sigma}, \mathcal{M}, \Delta^{\gpoint})$ up to homeomorphism.
\end{remark}

\begin{prop}
\label{prop:SmallEV3}
Let $(\delta, \eta) \in \Accord'^2$ be such that:
\begin{enumerate}[label = $\bullet$, itemsep=1mm]
    \item $\col_\delta$ and $\col_\eta$ are matching;
    \item $\{\delta, \eta\} \in \Anti(\Accord', \Accordleq)$; and,
    \item $\delta$ and $\eta$ satisfy \ref{2E}.
\end{enumerate}
The following assertions hold:
\begin{enumerate}[label = $(\alph*)$, itemsep=1mm]
    \item \label{SmallEV3a} if $\delta$ and $\eta$ have a common endpoint, then $\mathfrak{E}_{\opResAc'(\delta,\eta)}= \KE(\delta,\eta)$; 
    \item \label{SmallEV3b} otherwise, we have $\mathfrak{E}_{\opResAc'(\delta, ,\eta)} = \{\delta,\eta\}$.
\end{enumerate}
\end{prop}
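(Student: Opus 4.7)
The plan is to first unpack what condition \ref{2E} combined with matchable colorations imposes on the relative position of $\delta$ and $\eta$. Under \ref{2E}, exactly one endpoint of each accordion is exclusive, so the two non-exclusive endpoints must lie in both $\NP(\delta)_0$ and $\NP(\eta)_0$ with coincident colorations in $\{\rsquare, \gsquare\}$; moreover they cannot be colored $\osquare$ or $\psquare$ in either coloration. I will argue that this forces $\delta$ and $\eta$ to be in ``parallel'' position: using \cref{lem:crossprojmatchable} and \cref{def:colourendpoints}, any crossing between $\delta$ and $\eta$ would produce at least two common non-exclusive endpoints on both sides of the crossing projective accordion, already filling up the quota allowed by \ref{2E}. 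In particular, I expect to show that $\delta$ and $\eta$ do not cross in case \ref{2E}, so by \cref{prop:geom_ext} only arrow extensions between $\MM(\delta)$ and $\MM(\eta)$ (or between their $\Res$-predecessors) are possible, and these require a common endpoint.

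For part \ref{SmallEV3a}, assuming $\delta$ and $\eta$ share a common endpoint $v$ (which must be the non-exclusive endpoint of each), I will verify that the two exclusive endpoints sit on the $\rsquare$/$\gsquare$ sides prescribed by the matching coloration, and that $\delta \prec_v \eta$ or $\eta \prec_v \delta$. Then by \cref{prop:geo_mor,prop:geom_ext} either an epimorphism exists between $\MM(\delta)$ and $\MM(\eta)$ (yielding the kernel completion $\kappa$) or a non-split arrow extension exists between them (yielding a $1$-extension completion $\kappa$), and in either case $\kappa$ is the unique $\KE$-completion of $(\delta,\eta)$ by \cref{lem:uniqKE}. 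Visually, $\delta,\eta,\kappa$ form a geometric triangle as in \cref{rem:KETriangle}. I will then check that $\{\delta,\eta,\kappa\}$ is an antichain by observing that each of the three has an endpoint exclusive to the pair formed by the other two (this is immediate from \cref{def:exclusive} and the triangular configuration). Finally, to identify $\mathfrak{E}_{\opResAc'(\delta,\eta)}$ I must show that $\opResAc(\delta,\eta) = \opResAc(\delta) \cup \opResAc(\eta) \cup \opResAc(\kappa)$; I will do so by checking the right-hand side is a resolving set using \cref{thm:equivres} (via \cref{conv:usethisthm})---the inclusion of projectives and closure under syzygies follow from \cref{prop:folkloreresaccord}(v), and for closure under extensions I will argue that any arrow or overlap extension between two accordions chosen in $\opResAc'(\delta) \cup \opResAc'(\eta) \cup \opResAc'(\kappa)$ either lies in one of the three monogeneous sets or is detected by the triangle configuration and lies in $\opResAc(\kappa)$.

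For part \ref{SmallEV3b}, I will show that when $\delta$ and $\eta$ share no common endpoint, then $\opResAc(\delta,\eta) = \opResAc(\delta) \cup \opResAc(\eta)$, so $\mathfrak{E}_{\opResAc'(\delta,\eta)} = \{\delta,\eta\}$ (antichain by assumption). By the non-crossing and no-common-endpoint observation, \cref{prop:matchableextorepi} and the first paragraph's analysis together imply that for any $\xi \Accordleq \delta$ and $\varsigma \Accordleq \eta$ there is no epimorphism, no overlap extension, and no arrow extension between $\MM(\xi)$ and $\MM(\varsigma)$: indeed, an arrow extension between $\xi$ and $\varsigma$ would require a common endpoint, which would also be a common endpoint of $\delta$ and $\eta$ by how syzygies and arrow-extension completions preserve endpoints in the geometric model. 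Hence $\opResAc(\delta) \cup \opResAc(\eta)$ is closed under extensions and, by \cref{prop:folkloreresaccord}(v), under syzygies, so it is resolving and coincides with $\opResAc(\delta,\eta)$.

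The main obstacle I foresee is the careful verification that \ref{2E} really forces the non-crossing condition, and in part \ref{SmallEV3a} the verification that no further accordions arise in $\opResAc'(\delta,\eta)$ beyond $\opResAc'(\delta) \cup \opResAc'(\eta) \cup \opResAc'(\kappa)$: the subtle point is that iterated extensions among the three monogeneous sets could in principle produce new accordions, and one must use the triangular geometric picture together with \cref{lem:conditionsResAc} to confirm that every such extension is already captured by $\opResAc(\kappa)$. Once this is established, the identification of the unique maximal antichain via \cref{prop:candecompres,cor:maingeommotivation} is formal.
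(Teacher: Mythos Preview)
Your plan has a genuine gap in part~\ref{SmallEV3b}. You propose to show that for any $\xi \Accordleq \delta$ and $\varsigma \Accordleq \eta$ there is \emph{no} overlap or arrow extension between $\MM(\xi)$ and $\MM(\varsigma)$. This is false: elements of $\opResAc'(\delta)$ and $\opResAc'(\eta)$ can and do cross (their endpoints range over all of $\NP(\delta)_0$ and $\NP(\eta)_0$, which overlap nontrivially), so overlap extensions exist. The paper's proof does not avoid these extensions; instead it shows that whenever $\vartheta \in \opResAc'(\delta)$ and $\varsigma \in \opResAc'(\eta)$ cross, one of them already lies in the other monogeneous set (because both of its endpoints land in $\NP(\delta)_0 \cap \NP(\eta)_0$ with the right colors), so the overlap extension is absorbed. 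Your supporting claim that ``an arrow extension between $\xi$ and $\varsigma$ would require a common endpoint, which would also be a common endpoint of $\delta$ and $\eta$'' is also false: accordions in $\opResAc'(\delta)$ may have neither endpoint equal to $s(\delta)$ or $t(\delta)$, so a shared endpoint of $\xi$ and $\varsigma$ need not be an endpoint of $\delta$ or $\eta$. The paper instead argues that an arrow extension forces a contradiction with the assumed exclusivity pattern under \ref{2E} (using that WLOG $s(\delta)$ and $t(\eta)$ are the exclusive endpoints).

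Relatedly, the preliminary claim that \ref{2E} forces $\delta$ and $\eta$ not to cross is not something the paper proves or needs, and your sketch for it is too vague to assess. Even if it were true, it would not help with part~\ref{SmallEV3b} for the reason above: non-crossing of $\delta,\eta$ does not prevent crossings among arbitrary members of $\opResAc'(\delta)$ and $\opResAc'(\eta)$. For part~\ref{SmallEV3a} your outline is closer to the paper's argument, but the same issue reappears in the closure-under-extensions step: you need to check not just that the triangular picture produces $\kappa$, but that \emph{every} extension between arbitrary $\vartheta \in \opResAc'(\delta)$ and $\varsigma \in \opResAc'(\eta)$ (including when $\varsigma$ is a syzygy summand of $\eta$) is already in $\opResAc(\delta) \cup \opResAc(\eta) \cup \opResAc(\kappa)$; this requires a case analysis on colors of the relevant endpoints, as the paper does.
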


\begin{proof}
Let us assume that $\delta$ and $\eta$ share a common endpoint. Without loss of generality, we can assume that:
\begin{enumerate}[label=$\bullet$,itemsep=1mm]
    \item $\delta$ is above $\eta$ following the orientation of $\NP(\delta)\cup\NP(\eta)$; and,
    \item their common endpoint is their target, meaning $t(\delta) = t(\eta)$.
\end{enumerate}
Moreover, in both cases, by \cref{prop:folkloreresaccord} $(v)$, we only have to check that $\bigcup_{\nu \in \KE(\delta,\eta)} \opResAc(\nu)$ is closed under extensions.

Assume that $(\delta,\eta)$ admits a \textbf{KE}-completion $\kappa$. Otherwise, we can set $\opResAc(\kappa) = \Prj(\Delta^{\gpoint})$.
To check the extension closure of $\opResAc(\delta) \cup \opResAc(\eta) \cup \opResAc(\kappa)$, it is enough to check the extensions of any non-comparable pair $(\vartheta, \varsigma) \in \Accord'^2$ such that each curve lies in exactly one of the three $\opResAc(\delta), \opResAc(\eta), \opResAc(\kappa)$ (as each term is extension closed). Consider as a first step that  $\vartheta \in  \opResAc'(\delta) \setminus \opResAc'(\eta)$, and $\varsigma \in \opResAc'(\eta) \setminus \opResAc'(\delta)$. By assumptions, $\vartheta$ and $\varsigma$ are crossing or having a common endpoint in $\NP(\delta)\cap\NP(\eta)$  which implies that the coloration of $t(\vartheta)$ and $t(\varsigma)$ are orange or green in both of the coloration of $\NP(\delta)_0$ and $\NP(\eta)_0$. The sources are colored red or orange. Two cases naturally appear in the computations : 
\begin{itemize}
\item $\varsigma$ is not an indecomposable summand of the syzygies of $\eta$. Then, the orientation of the crossing is the one following common colorations on teh common part, and the conclusion is straightforward
\item $\varsigma$ is an indecomposable summand of the syzygies of $\eta$. In this situation, the completion may be needed. Then, both endpoints of $\varsigma$ are orange. The orientation of the crossing follows the one of $\NP(\kappa)$. Thus $s(\varsigma)\in\NP(\kappa)_0^{{\osquare}}$ and $s(\vartheta)\in\NP(\kappa)_0^{{\osquare}}\cup \NP(\kappa)_0^{{\gsquare}}$. 
\end{itemize}
Similar arguments work by exchanging the roles of $\kappa$, $\delta$, and $\eta$. We proved \ref{SmallEV3a}.

Now let us assume that $\delta$ and $\eta$ do not share a common endpoint. If $s(\delta)$ and $s(\eta)$ are exclusive in $(\delta,\eta)$, then  $t(\delta) = t(\eta)$, which is absurd. A similar contradiction would arise if $t(\delta)$ and $t(\eta)$ were exclusive in $(\delta,\eta)$. So,  without loss of generality, we can assume that $s(\delta)$ and $t(\eta)$ are exclusive in $(\delta, \eta)$. We illustrate the configuration in \cref{fig:Resuniona2config}. 
    \begin{figure}[!ht]
        \centering
       \begin{tikzpicture}[mydot/.style={
					circle,
					thick,
					fill=white,
					draw,
					outer sep=0.5pt,
					inner sep=1pt
				}, scale = 1]
		\tikzset{
		osq/.style={
        rectangle,
        thick,
        fill=white,
        append after command={
            node [
                fit=(\tikzlastnode),
                orange,
                line width=0.3mm,
                inner sep=-\pgflinewidth,
                cross out,
                draw
            ] {}}}}
		\draw [line width=0.7mm,domain=50:130] plot ({4*cos(\x)}, {1.5*sin(\x)});
        \draw [line width=0.7mm,domain=230:310] plot ({4*cos(\x)}, {1.5*sin(\x)});
		\foreach \X in {0,1}
		{
		\tkzDefPoint(4*cos(pi/5*\X +pi/3),1.5*sin(pi/5*\X + pi/3)){\X};
		};
		\foreach \X in {2,3}
		{
		\tkzDefPoint(4*cos(pi/5*(\X-2) +4*pi/3),1.5*sin(pi/5*(\X-2) + 4*pi/3)){\X};
		};
		
		\draw[line width=0.7mm ,bend right=10,orange, dashed](-2,1.3) edge (3);
		
		\draw[line width=0.7mm ,bend right=30,blue, loosely dotted](1) edge (2,-1.3);
		
		\draw[line width=0.9mm ,bend right=40,red, densely dashdotted] (0.7,1.45) edge (-0.8,-1.45);
		
		\draw [line width=0.9mm, mypurple,dash pattern={on 10pt off 2pt on 5pt off 2pt}, bend right=20,opacity=0.5] (1) edge (2);
		\draw [line width=0.9mm,mypurple,domain=60:96, dash pattern={on 10pt off 2pt on 5pt off 2pt},opacity=0.5] plot ({4*cos(\x)}, {1.5*sin(\x)});
		\draw [line width=0.9mm, mypurple,dash pattern={on 10pt off 2pt on 5pt off 2pt}, bend right=20,opacity=0.5] (0) edge (3);
		\draw [line width=0.9mm,mypurple,domain=240:276, dash pattern={on 10pt off 2pt on 5pt off 2pt},opacity=0.5] plot ({4*cos(\x)}, {1.5*sin(\x)});
		
        \filldraw [fill=mypurple,opacity=0.1] (1) to [bend right=20] (2) to [bend right=10] (3) to [bend left=20] (0) to [bend right=10] cycle ;
        
		\foreach \X in {0,...,3}
		{
		\tkzDrawPoints[fill =red,size=4,color=red](\X);
		};
		
		\tkzDrawPoint[fill =red,size=4,color=red](0.7,1.45);
		\tkzDrawPoint[fill =red,size=4,color=red](-0.8,-1.45);
		\tkzDrawPoint[fill =red,size=4,color=red](-2,1.3);
		\tkzDrawPoint[fill =red,size=4,color=red](2,-1.3);

		\tkzDefPoint(-2.1,1){gammaM};
		\tkzLabelPoint[orange](gammaM){\Large $\delta$}
		\tkzDefPoint(1.3,-0.5){gammaP};
		\tkzLabelPoint[blue](gammaP){\Large $\eta$}
		\tkzDefPoint(-.8,0.7){rho};
		\tkzLabelPoint[red](rho){\Large $\rho$}
    \end{tikzpicture}
        \caption{\label{fig:Resuniona2config} Any projective accordion $\rho$ contained in the purple area is in $\NP(\delta) \cap \NP(\eta)$.}
    \end{figure}
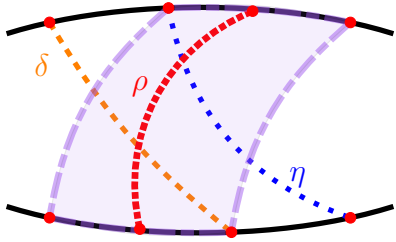 As previously, we only need to prove that $\opResAc(\delta) \cup \opResAc(\eta)$ is closed under extensions. Let $\vartheta \in \opResAc'(\delta)$ and $\varsigma \in \opResAc'(\eta)$. 
    
    Assume that $\OvExt(\vartheta, \varsigma) \neq \varnothing$. Then $\vartheta$ crosses $\varsigma$, implying that they both cross a common projective accordion  $\rho$ in $\NP(\delta) \cap \NP(\eta)$.Considering that $\vartheta$ and $\varsigma$ must cross, we deduce that both of the endpoints of $\varsigma$ or $\vartheta$ must land on vertices of $\NP(\delta)_0 \cap \NP(\eta)_0$. Suppose, without loss of generality, that it is $\varsigma$. It follows that $\varsigma\in\opResAc'(\delta)$ which offers us the conclusion
    
    Assume that $\ArExt(\vartheta, \varsigma) \neq \varnothing$. Then they admit a common endpoint which must be in $\NP(\delta)_0 \cap \NP(\eta)_0$. Up to symmetry, suppose the common endpoint is at the bottom of the common part. Thus $s(\varsigma)\in\NP(\eta)_0^{{\osquare}}$. If $s(\varsigma)\in\NP(\delta)_0^{{\osquare}}$, this point is either in a common large cell, which would imply that one of $\delta$ or $\eta$ is included in the large cell, which is in contradiction with the hypotheses. Otherwise, it is $t(\delta)$ but $t(\delta)\in \NP(\delta)_0^{{\osquare}}$ would imply that the common endpoint is in the top part, which is a contradiction. Thus $s(\varsigma)\in\NP(\eta)_0^{{\osquare}}\cap \NP(\delta)_0^{{\gsquare}}$. It follows that $s(\varsigma)=s(\eta)$. This contradicts $s(\eta)$ being non-exclusive. We proved \ref{SmallEV3b}.
\end{proof}

Let us summarize the result we have proved as follows.

\begin{cor} \label{cor:KEmove}
Let $(\delta, \eta) \in \Accord'^2$ be such that 
\begin{enumerate}[label=$\bullet$,itemsep=1mm]
    \item $\col_\delta$ and $\col_\eta$ are matching;
    \item $\{\delta, \eta\} \in \Anti(\Accord', \Accordleq)$; and,
    \item $\delta$ and $\eta$ satisfy \ref{2E}.
\end{enumerate}
Then $\mathfrak{E}_{\opResAc'(\delta,\eta)} = \KE(\delta,\eta)$.
\end{cor}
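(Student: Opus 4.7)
The plan is to deduce \cref{cor:KEmove} immediately from \cref{prop:SmallEV3}, by splitting along the two subcases of that proposition and checking that the set $\KE(\delta,\eta)$ produces the right answer in each of them.

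\textbf{Case 1: $\delta$ and $\eta$ share a common endpoint.} Here \cref{prop:SmallEV3}\ref{SmallEV3a} gives $\mathfrak{E}_{\opResAc'(\delta,\eta)} = \KE(\delta,\eta)$ verbatim, so nothing more needs to be said.

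\textbf{Case 2: $\delta$ and $\eta$ have no common endpoint.} Here \cref{prop:SmallEV3}\ref{SmallEV3b} yields $\mathfrak{E}_{\opResAc'(\delta,\eta)} = \{\delta,\eta\}$, so the corollary reduces to showing $\KE(\delta,\eta) = \{\delta,\eta\}$, i.e.\ that $(\delta,\eta)$ admits no $\KE$-completion $\kappa \in \Accord'$ (see \cref{def:KEmove}). I will rule out the two possible types from \cref{def:completeaskernelorextension} in turn. A $1$-extension completion comes from a non-split short exact sequence with $\MM(\kappa)$ indecomposable in the middle; \cref{prop:geom_ext} classifies such sequences as either arrow or overlap extensions, the first of which requires $\delta$ and $\eta$ to share an endpoint (excluded by hypothesis) and the second of which produces a middle term with two non-trivial indecomposable summands, not one. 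A kernel completion would require an epimorphism $\MM(\delta)\twoheadrightarrow \MM(\eta)$ (or the reverse) with indecomposable kernel; by \cref{prop:HomCrossing} such a morphism exists only through a crossing, and by \cref{prop:geo_mor} applied with $p=1$ the kernel of any such epimorphism is $\MM(\kappa_0)\oplus \MM(\kappa_1)$, where a summand $\kappa_i$ becomes trivial precisely when $\delta$ and $\eta$ share the corresponding endpoint. Since no common endpoint is available, both $\kappa_0$ and $\kappa_1$ are genuine non-projective accordions, and the kernel is not indecomposable.

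Thus in Case 2 no $\KE$-completion exists, so $\KE(\delta,\eta)=\{\delta,\eta\}=\mathfrak{E}_{\opResAc'(\delta,\eta)}$, and combining with Case 1 concludes the proof.

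The only part where some care is needed is the second bullet of Case 2, namely ruling out kernel completions in the absence of a common endpoint; but this is a purely formal application of the geometric descriptions of $\Hom$ (\cref{prop:HomCrossing}) and of kernels (\cref{prop:geo_mor}), and the argument involves no combinatorial obstacle beyond unpacking \cref{fig:kerepi} in the single-crossing case.
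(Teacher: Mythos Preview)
Your proof is correct and follows the paper's intended approach: the corollary is presented in the paper simply as a summary of \cref{prop:SmallEV3}, and the only thing to check is that in case~\ref{SmallEV3b} one has $\KE(\delta,\eta)=\{\delta,\eta\}$, which you do. Two minor remarks: first, \cref{rem:KETriangle} gives a one-line shortcut for your Case~2 (a $\KE$-completion forces $\delta,\eta,\kappa$ to form a triangle, hence $\delta$ and $\eta$ share an endpoint); second, in your kernel argument you only need that both $\kappa_0$ and $\kappa_1$ are \emph{nonzero}, not that they are non-projective---the latter is not guaranteed in general, but is also irrelevant to concluding that the kernel fails to be indecomposable.
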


\subsection{Co-Z-move}
\label{sss:CoZ}
This second move takes into account the behavior of two consecutive (higher) extensions involving a projective accordion $\rho \in \NP(\delta) \cap \NP(\eta)$ such that, by denoting $v$ and $w$ its endpoints, we have  \[\col_\delta(\{v,w\}) \nsubseteq \{{\osquare}, {\psquare}\} \text{ and } \col_\eta(\{v,w\}) \nsubseteq \{{\osquare}, {\psquare}\}.\]

To define this move, we need to introduce vertices associated to $(\delta, \eta) \in \Accord'^2$ and some given vertex $v \in \NP(\delta)_0 \cap \NP(\eta)_0$. \cref{lem:upsourceuptargetexist} allows us to give the following definition.

\begin{definition}
Let $\delta\in\Accord'$, $v\in\NP(\delta)_0^{{\gsquare}}$, $w\in\NP(\delta)_0^{{\rsquare}}$. The \new{upper source of} $\delta$ \new{associated to} $v$ denoted by $\s_{(\delta)}(v)$ is the source of the maximal accordion in  $\{\varsigma \mid \varsigma \in \opResAc'(\delta),\ t(\varsigma) = v\}$ with respect to $\Accordleq$.
Similarly, we define the \new{upper target of} $\delta$ \new{associated to} $w$, denoted by $\t_{(\delta)}(w)$, as the target of the maximal accordion in $\{\nu \mid \nu \in \opResAc'(\delta),\ s(\nu) = w\}$.
\end{definition}

We can make $\s_{(\delta)}(v)$ and $\t_{(\delta)}(v)$ explicit.

\begin{lemma}\label{rem:useful_up_s}
Let $\delta \in \Accord'$, $v \in \NP(\delta)_0^{{\gsquare}}$ and $w \in \NP(\delta)_0^{{\rsquare}}$. The following assertions hold:
\begin{enumerate}[label=$(\roman*)$, itemsep=1mm]
    \item we have $\displaystyle \s_{(\delta)}(v)= \begin{cases}
    s(\delta) &  \begin{matrix} 
    \text{if there exists } \rho \in \Prj(\Delta^{\gpoint}) \text{ such that both} \\
    v \text{ is an endpoint of }\rho \text{ and } \rho \text{ crosses } \delta; \hfill \end{matrix} \\
    w_R(\delta) & \text{otherwise;}
    \end{cases}$
    \item we have $\displaystyle \t_{(\delta)}(w)= \begin{cases}
    t(\delta) & \begin{matrix} 
    \text{if there exists } \rho \in \Prj(\Delta^{\gpoint}) \text{ such that both} \\
    w \text{ is an endpoint of }\rho \text{ and } \rho \text{ crosses } \delta; \hfill \end{matrix} \\
    w_L(\delta) & \text{otherwise.}
    \end{cases}$
\end{enumerate}
\end{lemma}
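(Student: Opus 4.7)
By symmetry, only $(i)$ needs proof; $(ii)$ follows by exchanging the roles of sources and targets, and of $C_L$ and $C_R$. Fix $v \in \NP(\delta)_0^{\gsquare}$. By \cref{lem:upsourceuptargetexist}, the set $\mathcal{T} := \{\varsigma \in \opResAc'(\delta) \mid t(\varsigma) = v\}$ is a nonempty chain in $(\Accord', \Accordleq)$, so it admits a unique maximum; the task reduces to identifying this maximum and reading off its source. I will split into two cases, matching the two branches of the piecewise definition.

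\emph{Existence of the candidate.} Suppose first that some $\rho \in \Prj(\Delta^{\gpoint})$ incident to $v$ crosses $\delta$. I construct $\varsigma_v$ by following $\delta$ through the very same sequence of projective crossings, but bending in the cell containing $\rho$ so as to terminate at $v$ rather than continuing to $t(\delta)$. The accordion conditions of \cref{prop:arcsaccordions} are inherited from $\delta$ on the common stretch, and the final bend is legal because $\rho$ is adjacent to the last projective crossed by $\delta$ in that cell. Membership $\varsigma_v \in \opResAc'(\delta)$ then follows from \cref{thm:res_clo_1}, as $s(\varsigma_v) = s(\delta)$ is colored $\rsquare$ or $\osquare$ and $t(\varsigma_v) = v$ is colored $\gsquare$. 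If instead no such $\rho$ exists, every projective of $\NP(\delta)$ incident to $v$ lies in $\partial C_R \cup \partial C_L$; the laterality convention (\cref{rem:laterality}) together with \cref{def:colourendpoints} forces $v$ to be a vertex of $C_R$, and the arc within $C_R$ joining $w_R(\delta)$ to $v$ is an accordion of $\opResAc'(\delta)$, since $w_R(\delta) \in \NP(\delta)_0^{\osquare}$.

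\emph{Maximality.} For the first case, any $\varsigma' \in \mathcal{T}$ has its source in $\NP(\delta)_0^{\rsquare} \cup \NP(\delta)_0^{\osquare}$. Because $\varsigma_v$ shadows $\delta$ outside the region near $t(\delta)$, the colorings $\col_\delta$ and $\col_{\varsigma_v}$ agree on $\NP(\delta)_0 \cap \NP(\varsigma_v)_0$, so $s(\varsigma')$ also lies in $\NP(\varsigma_v)_0^{\rsquare} \cup \NP(\varsigma_v)_0^{\osquare}$. Hence \cref{thm:res_clo_1} yields $\varsigma' \in \opResAc'(\varsigma_v)$, i.e., $\varsigma' \Accordleq \varsigma_v$, proving that $\varsigma_v$ is the maximum of $\mathcal{T}$ and $\s_{(\delta)}(v) = s(\delta)$. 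In the second case, the absence of a crossing projective at $v$ confines every $\varsigma' \in \mathcal{T}$ to $C_R$, so $s(\varsigma') \in C_R(\delta)_0$; \cref{lem:conditionsResAc}(iii) then pins the source down to $w_R(\delta)$, making $\mathcal{T}$ a singleton and giving $\s_{(\delta)}(v) = w_R(\delta)$.

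\emph{Main obstacle.} The only step with real content is the coloring comparison in the first case: one must verify that $\col_{\varsigma_v}$ agrees with $\col_\delta$ on the common part of the two neighbor-projective sets, especially at vertices of $C_R$ which may be pink or orange in $\col_\delta$ but could a priori carry a different color for $\varsigma_v$. This reduces to a case analysis on the local configuration at $\rho$ (size of the target cell, position of $\rho$ relative to $\eta_R$, etc.), settled by direct appeal to \cref{def:colourendpoints}; enumerating the cases is the most tedious part, but each individual case is routine.
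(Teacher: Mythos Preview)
Your argument follows the same route the paper indicates (it merely cites \cref{prop:arcsaccordions} and the exclusion conditions from the proof of \cref{thm:res_clo_1}), and your treatment of the first case is fine. The second case, however, has a real gap. You assert that if no projective through $v$ crosses $\delta$, then every projective of $\NP(\delta)$ incident to $v$ lies in $\partial C_L \cup \partial C_R$, and hence $v$ is a vertex of $C_R$. This ignores the type~(iii) neighbouring projectives of \cref{prop:CombidescripNproj}: when $\col_\delta(s(\delta)) = {\rsquare}$, every projective at $s(\delta)$ belongs to $\NP(\delta)$ and has a green second endpoint. These green endpoints need not be vertices of $C_R$ (nor even of $C_L$ once more than one projective meets $s(\delta)$), so your confinement of $\varsigma'$ to $C_R$ and your invocation of \cref{lem:conditionsResAc}\ref{3Accord} do not apply to them.

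The fix is not hard but does require a separate local analysis near $s(\delta)$ using the accordion rules of \cref{prop:arcsaccordions}: for such a vertex $v$, any $\varsigma \in \opResAc'(\delta)$ with $t(\varsigma)=v$ is trapped in the fan of cells at $s(\delta)$, and one checks directly (via the $m_C$ condition) which sources are admissible. So your overall plan is correct, but the blanket localisation to $C_R$ in the second branch is too coarse; you need to split off the higher-extension situation at $s(\delta)$ before the $C_R$ argument goes through.
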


\begin{proof}
This follows from \cref{prop:arcsaccordions}, and the exclusion conditions in the proof of \cref{thm:res_clo_1}.
\end{proof}
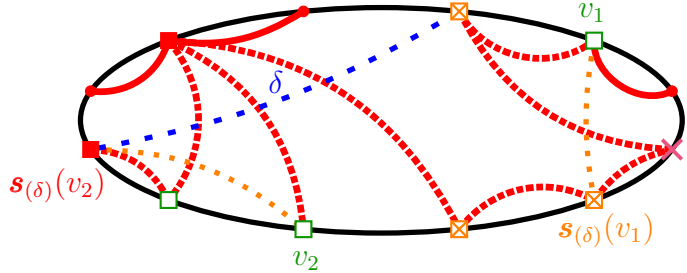
\begin{figure}[!ht]
\centering 
    \begin{tikzpicture}[mydot/.style={
					circle,
					thick,
					fill=white,
					draw,
					outer sep=0.5pt,
					inner sep=1pt
				}, scale = 1]
		\tikzset{
		osq/.style={
        rectangle,
        thick,
        fill=white,
        append after command={
            node [
                fit=(\tikzlastnode),
                orange,
                line width=0.3mm,
                inner sep=-\pgflinewidth,
                cross out,
                draw
            ] {}}}}
		\draw[line width=0.7mm,black] (0,0) ellipse (4cm and 1.5cm);
		\foreach \X in {0,1,...,23}
		{
		\tkzDefPoint(4*cos(pi/12*\X),1.5*sin(pi/12*\X)){\X};
		};

		\draw[line width=0.9mm ,bend left =60,red](1) edge (3);
		\draw[line width=0.9mm, densely dashdotted,bend right =50,red](5) edge (3);
		\draw[line width=0.9mm,  densely dashdotted,bend right =30,red](5) edge (23);
		\draw[line width=0.9mm,  densely dashdotted,bend left =20,red](21) edge (23);
		\draw[line width=0.9mm ,bend left =20,red](7) edge (9);
		\draw[line width=0.9mm, densely dashdotted, bend left =40,red](19) edge (21);
		\draw[line width=0.9mm ,bend left =30,red,densely dashdotted](9) edge (19);
		\draw[line width=0.9mm ,bend left =30,red,densely dashdotted](9) edge (17);
		\draw[line width=0.9mm,  densely dashdotted ,bend left =40,red](9) edge (15);
		\draw[line width=0.9mm ,bend left =40,red](9) edge (11);
		\draw[line width=0.9mm,  densely dashdotted,bend left=30,red](13) edge (15);
		
		\draw[line width=0.7mm ,bend right=-10,blue, loosely dashed](5) edge (13);
		\draw[line width=0.7mm ,bend left=-10,orange, loosely dotted](3) edge (21);
		\draw[line width=0.7mm ,bend left=20,orange, loosely dotted](13) edge (17);

        \foreach \X in {1,7,11}
		{
		\tkzDrawPoints[fill =red,size=4,color=red](\X);
		};
		\foreach \X in {9,13}
		{
		\tkzDrawPoints[rectangle,fill =red,size=6,color=red](\X);
		};
		
		\foreach \X in {3,15,17}
		{
		\tkzDrawPoints[rectangle,size=6,color=dark-green,thick,fill=white](\X);
		};
		\foreach \X in {5,19,21}
		{
		\tkzDrawPoints[size=6,orange,osq](\X);
		};
		\foreach \X in {23}
		{
		\tkzDrawPoints[size=6,darkpink,line width=0.5mm,cross out, draw](\X);
		};

		\tkzDefPoint(-1.4,0.8){gamma};
		\tkzLabelPoint[blue](gamma){\Large $\delta$}
		\tkzDefPoint(2.8,1.7){gamma};
		\tkzLabelPoint[dark-green](gamma){\Large $v_1$}
        \tkzDefPoint(-1,-1.6){gamma};
		\tkzLabelPoint[dark-green](gamma){\Large $v_2$}
		\tkzDefPoint(-4.3,-0.5){s};
		\tkzLabelPoint[red](s){\Large $\pmb{s}_{(\delta)}(v_2)$}
		\tkzDefPoint(3,-1.1){t};
		\tkzLabelPoint[orange](t){\Large $\pmb{s}_{(\delta)}(v_1)$}
    \end{tikzpicture}
\caption{\label{fig:exuppersources} Example of calculations of $s_{(\delta)}(v)$ for some $\delta \in \Accord'$. The two distinct cases appear in this example.}
\end{figure}

\begin{definition} \label{def:sourceproxy}
Let  $(\delta, \eta) \in \Accord'^2$ be a pair of noncrossing accordions, such that $\NP(\delta)\cap \NP(\eta)\neq \varnothing$, and $\col_\delta$ and $\col_\eta$ match. Without loss of generality, consider that $\delta$ is above $\eta$. Let $v \in \NP(\eta)_0 \cap \NP(\delta)_0^{{\gsquare}}$, and $w \in \NP(\delta)_0 \cap \NP(\eta)_0^{{\rsquare}}$. The \new{upper source of} $(\delta,\eta)$ \new{associated to} $v$, denoted by $\s_{(\delta,\eta)}(v)$, is defined as $\s_{(\delta)}(v)$. We define the \new{upper target of} $(\delta,\eta)$ \new{associated to} $w$, denoted by $\t_{(\delta,\eta)}(w)$, to be $\t_{(\eta)}(w)$.
\end{definition}

The following lemmas show that $\s_{(\delta,\eta)}(v)$ and $\t_{(\delta,\eta)}(w)$ only depend on the pair $(\delta,\eta)$.

\begin{lemma} \label{lem:placeofcommonprojectivevertices}
Let $(\delta, \eta) \in \Accord'^2$ be a pair of noncrossing accordions such that $\NP(\delta) \cap \NP(\eta) \neq \varnothing$ with strongly matching colorations. Assume that $\delta$ is above $\eta$ if there exists $\rho'$ that makes colorations matchable and either $\rho'$ crosses $\delta$ or $\eta$, or whose endpoints are endpoints of $\delta$ or $\eta$. And assume that $\delta$ is on the left of $\eta$ in any other configuration of $\rho'$. Then the following assertions hold:
\begin{enumerate}[label=$(\roman*)$, itemsep =1mm]
    \item exactly one of the following holds:
    \begin{enumerate}[label=$(\alph*)$, itemsep =1mm]
        \item For all $\rho \in \NP(\delta) \cap \NP(\eta)$ with an endpoint in  $\NP(\delta)_0^{{\gsquare}}$, we have that $\Ext^1(\MM(\delta), \MM(\rho)) \neq 0$ or the module $\MM(\rho)$ appears as summand of the minimal projective resolution of $\MM(\delta)$; or,
        \item For all $\rho \in \NP(\delta) \cap \NP(\eta)$ with an endpoint in $\NP(\delta)_0^{{\gsquare}}$, we have for some $i>1$ $\Ext^i(\MM(\delta), \MM(\rho)) \neq 0$; and,
    \end{enumerate}
    \item exactly one of the following holds:
    \begin{enumerate}[label=$(\alph*)$, itemsep =1mm]
        \item For all $\rho \in \NP(\delta) \cap \NP(\eta)$ with an endpoint $e$ such that $e \in \NP(\eta)_0^{{\rsquare}}$,  $\Ext^1(\MM(\delta), \MM(\rho)) \neq 0$ or the module $\MM(\rho)$ appears as summand of the minimal projective resolution of $\MM(\delta)$; or,
        \item For all $\rho \in \NP(\delta) \cap \NP(\eta)$ with an endpoint $e$ such that $e \in \NP(\eta)_0^{{\rsquare}}$, we have, for some $i>1$, $\Ext^i(\MM(\eta), \MM(\rho)) \neq 0$; and,
    \end{enumerate}
\end{enumerate}
\end{lemma}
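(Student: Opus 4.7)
The plan is to invoke Proposition \ref{prop:CombidescripNproj}, which classifies each $\rho \in \NP(\delta)$ into one of three types relative to $\delta$: (i) $\rho$ crosses $\delta$ (overlap extension, $\Ext^1$); (ii) $\MM(\rho)$ is a summand in the minimal projective resolution of $\MM(\delta)$; or (iii) $\rho$ and $\delta$ share an endpoint $w$ with $\rho \prec_w \delta$, yielding either an arrow extension ($\Ext^1$, so $\ArExt(\rho,\delta) \neq \varnothing$) or a higher extension $\Ext^i$ with $i > 1$. Case (a) of the lemma covers types (i), (ii), and the arrow-extension portion of type (iii); case (b) covers the higher-$\Ext^i$ portion of type (iii).

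A first key observation is that, by Definition \ref{def:colourendpoints}, the endpoints of $\delta$ are colored in $\{\rsquare,\osquare\}$—never $\gsquare$. Hence if $\rho \in \NP(\delta) \cap \NP(\eta)$ has an endpoint $v \in \NP(\delta)_0^{\gsquare}$, then $v$ is not an endpoint of $\delta$; so if $\rho$ is of type (iii), the shared endpoint with $\delta$ must be the \emph{other} endpoint $w$ of $\rho$, colored in $\{\rsquare,\osquare\}$. This clean separation between ``green end'' and ``shared end'' is what permits us to dichotomize on the relationship with $\delta$ purely through the green endpoints.

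The main step is to prove the uniformity: either every qualifying $\rho$ is of type (i)/(ii)/(arrow-extension of (iii)), or every qualifying $\rho$ is of higher-$\Ext^i$ type. I would use Lemmas \ref{lem:connectedNP} and \ref{lem:inbetweenNP} to note that $\NP(\delta) \cap \NP(\eta)$ is $\Prj(\Delta^{\gpoint})$-fan-connected, so it forms a single contiguous fan-connected region. The positional hypothesis selects a consistent orientation between $\delta$ and $\eta$: when some $\rho'$ crosses one of $\delta,\eta$ or ends at one of their endpoints, $\delta$ is placed above $\eta$; otherwise $\delta$ lies to the left of $\eta$. This orientation fixes the laterality of $\col_\delta$ and $\col_\eta$ consistently via Convention \ref{conv:matchablematching}. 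Within the connected intersection region, the green-endpoint projectives of $\NP(\delta)$ either all descend from a crossing/resolution path to $\delta$ (case (a)), or they all fan out from a single endpoint of $\delta$ where $\rho \prec \delta$ (case (b)). Mixing the two types would force two separate connected components of projectives below $\delta$ that bridge to $\eta$, which, combined with the disk structure of $(\pmb{\Sigma},\mathcal{M})$ having no inner cells, contradicts fan-connectedness of $\NP(\delta) \cap \NP(\eta)$ as in the proof of Lemma \ref{lem:coincidecoloration}.

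The main obstacle will be the exhaustive case analysis required to verify that no mixed configuration survives under all possible shapes of strongly matching pairs—this is where the detailed information from \S\ref{ss:weakmatchabilityconfig} on how projectives connect around a common endpoint (Lemmas \ref{lem:angleconfigNPscolor}--\ref{lem:impossibleorder}) gets reused to rule out hybrid arrangements. Assertion (ii) then follows by the dual argument, exchanging the roles of $\delta$ and $\eta$ and replacing $\NP(\delta)_0^{\gsquare}$ by $\NP(\eta)_0^{\rsquare}$, using the symmetric positional hypothesis.
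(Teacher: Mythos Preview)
Your approach is broadly correct and shares the paper's overall strategy: classify the projectives via Proposition~\ref{prop:CombidescripNproj}, then argue that the two cases cannot coexist within the connected intersection $\NP(\delta)\cap\NP(\eta)$. The paper's execution is more direct and uses different tools, however. Rather than invoking fan-connectedness abstractly and then the weak-matchability Lemmas~\ref{lem:angleconfigNPscolor}--\ref{lem:impossibleorder} (which are unnecessary here), the paper observes that the projectives of type~(b) (higher $\Ext^i$, $i>1$) lie on the far side of a \emph{large} source or target cell of $\delta$, while those of type~(a) lie on the near side; it then uses Lemma~\ref{lem:allexclusivityhyp} to argue that the common part $\NP(\delta)\cap\NP(\eta)$ cannot cross such a large cell (since $\delta$ and $\eta$ would then need a common endpoint there), so the two types are geometrically separated. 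The paper also treats separately the residual configuration where $\delta$ and $\eta$ are strongly matchable only through a $\rho'$ sharing an endpoint with both (the ``$\delta$ on the left of $\eta$'' case of the hypothesis): there $\NP(\delta)\cap\NP(\eta)$ reduces to a single accordion, and the dichotomy is trivial. Your sketch does not isolate this case, and your appeal to the weak-matchability section is a detour---the large-cell argument via Lemma~\ref{lem:allexclusivityhyp} is both shorter and what the paper actually does.
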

\begin{proof}  Assertion $(ii)$ is dual to $(i)$.
Let $(\delta, \eta) \in \Accord'^2$ be a pair of noncrossing accordions such that $\NP(\delta) \cap \NP(\eta) \neq \varnothing$ with 
strongly matching colorations. Assume first that there exists $\rho'$ that makes the colorations matchable and either $\rho'$ crosses $\delta$ or $\eta$, or whose endpoints are endpoints of $\delta$ or $\eta$, and without loss of generality that $\delta$ is above $\eta$.
Let $\rho \in \NP(\delta) \cap \NP(\eta)$ with an endpoint $e$ such that $e \in \NP(\delta)_0^{{\gsquare}}$. Due to the construction of the set $\NP(\delta)$ the module $M(\rho)$ is either a summand of the minimal projective resolution of $M(\delta)$ or we have for some $i\geq 1$ $\Ext^i(\MM(\delta), \MM(\rho)) \neq 0$. We now prove that all projectives fall into one of the two cases. Using \cref{lem:allexclusivityhyp}, the common part of neighboring projectives is not allowed to cross a large projective cell, as they do not have a common endpoint. The two previous sets of projectives appear on each side of a large cell, indicating that the two aforementioned cases are mutually exclusive.
Now consider the last case where $\delta$ and $\eta$ are strongly matchable and share a common endpoint with $\rho$. The curve  $\rho$ is the unique element of $\NP(\delta) \cap \NP(\eta)$. It follows from the construction of $\NP(\delta)$ that $\rho$ lies in exactly one of the two cases.
\end{proof}

\begin{lemma} \label{lem:sourceproxyindep}
Let $(\delta, \eta) \in \Accord'^2$ be a pair of noncrossing accordions such that $\NP(\delta) \cap \NP(\eta) \neq \varnothing$ with strongly matching colorations. Assume that $\delta$ is above $\eta$ if there exists $\rho$ that makes the colorations matchable, and such that $\delta$ and $\rho$ do not share a common endpoint. Otherwise, consider $\delta$ on the left of $\eta$. The following assertions hold:
\begin{enumerate}[label=$(\roman*)$, itemsep =1mm]
    \item for all $v \in \NP(\eta)_0 \cap \NP(\delta)_0^{{\gsquare}}$, if $v$ is an endpoint of a projective curve that has a higher extension with $\delta$, we have $\s_{(\delta,\eta)}(v) =  w_R(\delta)$, else $\s_{(\delta,\eta)}(v) =  s(\delta)$; and,
    \item for all $w \in \NP(\delta)_0 \cap \NP(\eta)_0^{{\rsquare}}$, if $v$ is an endpoint of a projective curve that has a higher extension with $\eta$, we have $\t_{(\delta,\eta)}(w) = w_L(\eta)$, otherwise $\t_{(\delta,\eta)}(w) = t(\eta)$.
\end{enumerate}
\end{lemma}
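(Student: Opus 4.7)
The plan is to reduce both claims to a single case by duality and then identify the dichotomy of \cref{rem:useful_up_s} with the one in the present statement, using the uniformity provided by \cref{lem:placeofcommonprojectivevertices}.

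First, I would observe that assertion $(ii)$ follows from $(i)$ by swapping the roles of $\delta$ and $\eta$ together with the source/target swap built into \cref{def:sourceproxy}. I therefore concentrate on $(i)$. Fixing $v \in \NP(\eta)_0 \cap \NP(\delta)_0^{{\gsquare}}$, \cref{rem:useful_up_s} already gives $\s_{(\delta,\eta)}(v) = \s_{(\delta)}(v)$ equal to $s(\delta)$ when some projective $\rho \in \Prj(\Delta^{\gpoint})$ with $v$ as endpoint crosses $\delta$, and equal to $w_R(\delta)$ otherwise. What remains is to show that the ``crossing'' alternative corresponds precisely to the statement's ``$v$ is not the endpoint of a projective with a higher extension with $\delta$''.

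The key input is \cref{lem:placeofcommonprojectivevertices}. Since $v$ lies in both $\NP(\delta)_0$ and $\NP(\eta)_0$, I would first invoke \cref{lem:inbetweenNP}$(iii)$ on a pair of projectives from $\NP(\delta)$ and $\NP(\eta)$ incident to $v$ to produce some $\mu \in \NP(\delta) \cap \NP(\eta)$ having $v$ as endpoint. Under the laterality assumption in the statement, \cref{lem:placeofcommonprojectivevertices}$(i)$ then says that every such $\mu$ falls uniformly into either case (a) (overlap extension or summand of the minimal projective resolution) or case (b) (higher extension). Translating through \cref{prop:CombidescripNproj}, case (a) corresponds to the presence of a projective near $v$ crossing $\delta$ (either $\mu$ itself or a projective adjacent to $\mu$ across a cell boundary crossed by $\delta$), while case (b) forces $v$ to be a common endpoint of $\mu$ and $\delta$ at which every projective is smaller than $\delta$, which precludes any crossing of $\delta$ by a projective through $v$.

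The main obstacle I anticipate is promoting the uniformity from $\NP(\delta) \cap \NP(\eta)$ to all projectives of $\NP(\delta)$ incident to $v$: in case (a) one needs an actual crossing produced by a projective through $v$, and in case (b) one must exclude any such crossing. The first half is handled by unpacking \cref{prop:CombidescripNproj}$(ii)$, since a projective on the border of a cell crossed by $\delta$ is always adjacent, along that cell, to a projective crossing $\delta$; fan-connectedness of $\NP(\delta)$ (\cref{lem:connectedNP}) keeps this adjacent projective in $\NP(\delta)$ with $v$ still as endpoint. The second half follows from the construction of $\col_\delta$ in \cref{def:colourendpoints}: a ${\gsquare}$ coloration at an endpoint of $\delta$ can only arise when every projective at $v$ lies below $\delta$ in the cyclic order $\prec_v$, and this order condition is incompatible with any of those projectives crossing $\delta$. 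Combining both halves yields the desired equivalence and the two prescribed values $s(\delta)$ and $w_R(\delta)$, giving the lemma.
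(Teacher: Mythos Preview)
Your proposal is correct and follows essentially the same approach as the paper: reduce to one assertion by duality, then apply \cref{rem:useful_up_s} and match its crossing/non-crossing dichotomy to the higher-extension dichotomy in the statement. The paper's proof is terser and does not explicitly cite \cref{lem:placeofcommonprojectivevertices} or \cref{lem:inbetweenNP}, but your invocation of them to secure the uniformity (ruling out that some projective through $v$ crosses $\delta$ while another has a higher extension) is exactly the substance that the paper's argument relies on implicitly.
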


\begin{proof}
The assertions are equivalent by duality. As $\delta$ and $\eta$ are two noncrossing accordions, for any $v \in \NP(\eta)_0\cap\NP(\delta)_0^{{\gsquare}}$, there exists an accordion $\nu \in \Prj(\Delta^{\gpoint})$  such that $v$ is an endpoint of $\nu$. If $\nu$ is such that $\Ext^1(\MM(\delta),\MM(\nu))\neq 0$,  using \cref{rem:useful_up_s}, we get that $\s_{(\delta,\eta)}(v) = s(\delta)$. If $\nu$ has higher extensions with $\delta$ then $\s_{(\delta,\eta)}(v) = w_R(\delta)$ as an admissible curve cannot go further due to the definition of an accordion.
\end{proof}

Therefore, we simply call \new{upper source of $(\delta,\eta)$} the one associated to any $v \in \NP(\eta)_0 \cap \NP(\delta)_0^{{\gsquare}}$, and we denote it by \new{$\s_{(\delta, \eta)}$}.  We also call $\t_{(\delta, \eta)}$ the \new{upper target for $(\delta, \eta)$} similarly.

\begin{definition} \label{def:CoZcompl}
A pair $(\delta, \eta) \in \Accord'^2$ \new{admits a $\CoZ$-completion} if the following assertions hold:
\begin{enumerate}[label=$\bullet$,itemsep=1mm]
\item $\delta$ and $\eta$ are not crossing; and,
\item $\col_\delta$ and $\col_\eta$ are strongly matching.
\end{enumerate}

In such a case, up to exchanging $\delta$ and $\eta$, we may assume $\delta$ is above $\eta$. We define the $\rpoint$-arc $\xi$ by $s(\xi)=\s_{(\delta,\eta)}$ and $t(\xi)=\t_{(\delta,\eta)}$. We will seet that $\xi$ is an accordion, 
and call it the \new{$\CoZ$-completion} of $(\delta,\eta)$.
\end{definition}

\begin{lemma} \label{lem:welldefinedCoZ}
Let $(\delta, \eta) \in \Accord'^2$. Assume that $(\delta, \eta)$ admits a $\CoZ$-completion $\xi$. Then $\xi$ is an accordion and $\xi \in \opResAc'(\delta,\eta)$.
\end{lemma}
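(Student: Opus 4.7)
Assume without loss of generality that $\delta$ sits above $\eta$, and write $\s=\s_{(\delta,\eta)}$, $\t=\t_{(\delta,\eta)}$. By \cref{lem:sourceproxyindep,rem:useful_up_s}, the vertex $\s$ is either $s(\delta)$ or $w_R(\delta)$, the choice being governed by the type of projective witness in $\NP(\delta)\cap\NP(\eta)$ with green endpoint, and dually for $\t\in\{t(\eta),w_L(\eta)\}$. Fix any $v\in\NP(\eta)_0\cap\NP(\delta)_0^{\gsquare}$ and $w\in\NP(\delta)_0\cap\NP(\eta)_0^{\rsquare}$ witnessing $\s$ and $\t$ respectively; by definition of the upper source/target and \cref{thm:res_clo_1}, there exist accordions $\alpha\in\opResAc'(\delta)$ with $s(\alpha)=\s$, $t(\alpha)=v$ and $\beta\in\opResAc'(\eta)$ with $s(\beta)=w$, $t(\beta)=\t$.

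\emph{Step 1: $\xi$ is an accordion.} We produce the $\rpoint$-arc $\xi$ geometrically by concatenating $\alpha$ from $\s$ to $v$, then a path through the common strip $\NP(\delta)\cap\NP(\eta)$ joining $v$ to $w$, and finally $\beta$ from $w$ to $\t$; by strong matchability the colorations agree on the common part, so this path is well defined up to homotopy. We then verify the conditions of \cref{prop:arcsaccordions} cell by cell. In every cell of $\pmb{\Gamma}(\Prj(\Delta^{\gpoint}))$ crossed by $\alpha$ or $\beta$, $\xi$ enters and leaves through adjacent $\rpoint$-arcs (this is inherited from $\alpha$ and $\beta$, which are themselves accordions). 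In the intermediate cells of the common strip, the traversal rule is forced by the matching coloration, which prevents $\xi$ from enclosing the distinguished red point of the cell exactly as it prevents $\delta$ and $\eta$ from doing so.

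\emph{Step 2: $\xi\in\opResAc'(\delta,\eta)$.} We produce $\xi$ from $\alpha$ and $\beta$ by a finite sequence of operations preserved by the resolving closure (syzygies, arrow/overlap extensions), invoking \cref{prop:folkloreresaccord}(v) and \cref{lem:IdealsclosedSyzygies}. When $v=w$, the accordions $\alpha$ and $\beta$ share that endpoint and the strong-matchability laterality forces $\beta\prec_{v}\alpha$, so \cref{prop:geom_ext} yields $\ArExt(\alpha,\beta)=\{\xi\}$; since $\alpha\in\opResAc(\delta)$ and $\beta\in\opResAc(\eta)$, we obtain $\xi\in\opResAc(\delta,\eta)$. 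When $v\neq w$, they are joined by a chain of common projective accordions, and we iterate: at each step we pair the current accordion with the appropriate syzygy of $\alpha$ or $\beta$ (still in the closure) and take the resulting arrow extension, producing a sequence of accordions in $\opResAc'(\delta,\eta)$ whose final term is $\xi$.

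\emph{Main obstacle.} The delicate case is when $\s=w_R(\delta)$ or $\t=w_L(\eta)$, that is, the endpoint is dictated by a higher-syzygy effect rather than a direct projective crossing. Here one must check that $\xi$ terminates at precisely the admissible vertex of the relevant large source/target cell permitted by \cref{prop:arcsaccordions}\ref{2Accord}, and that this choice is forced simultaneously by the definition of $w_R(\delta)$ (resp.\ $w_L(\eta)$), by the coloration (\cref{def:colourendpoints}), and by the description of admissible endpoints in \cref{lem:conditionsResAc}. This compatibility check, together with verifying that the iterative construction in Step 2 indeed terminates with $\xi$ and not an arc with a different endpoint, is the main combinatorial work of the proof.
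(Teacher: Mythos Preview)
Your proposal is correct and follows essentially the same strategy as the paper: build $\xi$ from accordions already known to lie in $\opResAc'(\delta)$ and $\opResAc'(\eta)$ via operations (extensions with projectives, arrow/overlap extensions) preserved by the resolving closure.

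Two minor points of comparison. First, your case ``$v=w$'' is in fact vacuous: since the colorations match, a point cannot satisfy $\col_\delta(v)={\gsquare}$ and $\col_\eta(v)={\rsquare}$ simultaneously, so you are always in the iterative case. Second, the paper avoids your iteration entirely by a single overlap extension: it takes $\iota\in\opResAc'(\delta)$ with $s(\iota)=\s_{(\delta,\eta)}$ and $t(\iota)=v$ for some $v\in\NP(\eta)_0\cap\NP(\delta)_0^{\gsquare}$, observes that $\iota$ \emph{crosses} the accordion $\vartheta_2\in\opResAc'(\eta)$ running from $s(\eta)$ to $\t_{(\delta,\eta)}$, and reads off $\xi$ directly as an element of $\OvExt(\iota,\vartheta_2)\cup\OvExt(\vartheta_2,\iota)$. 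This is slicker than threading through a chain of projectives, though your iterated arrow-extension argument is equally valid and perhaps makes the mechanism more transparent.
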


\begin{proof}
The curve $\xi$ is built through successive extensions. The accordions $\delta$ and $\eta$ are strongly matchable. Thus $\rho$ admits an extension either by $\delta$ or by a non-projective summand of its highest syzygy. The summands are precisely described by \cref{rem:useful_up_s}. Thus the curve $\psi$ such that $s(\psi)=\s_{(\delta,\eta)}$ and $t(\psi)=v$ where $v$ is the green endpoint of $\rho$ is an accordion and botain through an extension of $\delta$ and  $\rho$. Considering $\psi$ and $\eta$ we obtain $\xi$ in the same way. 

Then we can check that $\xi \in \Accord'$. This follows from the fact that $\xi$ must cross the projective accordions that are crossed by $\delta$ after $\s_{(\delta,\eta)}$  until i reaches $\NP(\delta)\cap\NP(\eta)$, and then $\xi$ crosses projective accordions crossed by $\eta$.

Consider $\vartheta_1 \in \Accord$ such that $s(\vartheta_1)=\s_{(\delta,\eta)}$ and $t(\vartheta_1)=t(\delta)$. Similarly, consider $\vartheta_2 \in \Accord$ such that $s(\vartheta_2)=s(\eta)$ and $t(\vartheta_2)=\t_{(\delta,\eta)}$. By definition of  $\opResAc'(\delta)$, for any $v\in \NP(\eta)_0\cap \NP(\delta)_0^{{\gsquare}}$, there exists an accordion $\iota$ such that $s(\iota) = s(\vartheta_1)$, and $t(\iota) = v$. We also have $\iota \in \opResAc(\vartheta_1)$. In addition, $\iota$ crosses $\vartheta_2$. Therefore $\xi \in \opResAc'(\iota, \vartheta_2)$, and we get that $\xi \in \opResAc'(\delta,\eta)$. 
\end{proof}

\begin{definition} \label{def:CoZ}
For any $(\delta, \eta) \in \Accord'^2$, we define the \new{$\CoZ$-move} of $(\delta,\eta)$ as follows:
\[\CoZ(\delta,\eta) = \begin{cases}
\{\delta, \eta, \xi\} & \text{if } (\delta,\eta) \text{ admits a }\CoZ\text{-completion } \xi; \\
\{\delta,\eta\} & \text{otherwise.}
\end{cases}\]
\end{definition}

\begin{ex}
   In \cref{fig:Co-Z}, we represent the upper source and the upper target of a given pair $(\delta, \eta) \in \mathscr{A}$ in the  $\gpoint$-dissected marked surface $\Surf(Q,R)$ seen in \cref{fig:ExExclcusivity}. Then we construct the accordion $\xi$ which is the $\CoZ$-completion of $(\delta,\eta)$.
\begin{figure}[ht!]
\centering 
    \begin{tikzpicture}[mydot/.style={
					circle,
					thick,
					fill=white,
					draw,
					outer sep=0.5pt,
					inner sep=1pt
				}, scale = 1]
		\tikzset{
		osq/.style={
        rectangle,
        thick,
        fill=white,
        append after command={
            node [
                fit=(\tikzlastnode),
                orange,
                line width=0.3mm,
                inner sep=-\pgflinewidth,
                cross out,
                draw
            ] {}}}}
		\draw[line width=0.7mm,black] (0,0) ellipse (4cm and 1.5cm);
		\foreach \X in {0,1,...,23}
		{
		\tkzDefPoint(4*cos(pi/12*\X),1.5*sin(pi/12*\X)){\X};
		};
		
		\draw[line width=0.9mm ,bend left =60,red](1) edge (3);
		\draw[line width=0.9mm ,bend right =60,red](5) edge (3);
		\draw[line width=0.9mm ,bend right =30,red](5) edge (23);
		\draw[line width=0.9mm ,bend left =20,red](21) edge (23);
		\draw[line width=0.9mm ,bend left =20,red](7) edge (9);
		\draw[line width=0.9mm ,bend left =40,red](19) edge (21);
		\draw[line width=0.9mm ,bend left =30,mypurple,densely dashdotted](9) edge (19);
		\draw[line width=0.9mm ,bend left =40,mypurple,densely dashdotted](9) edge (17);
		\draw[line width=0.9mm ,bend left =40,red](9) edge (15);
		\draw[line width=0.9mm ,bend left =40,red](9) edge (11);
		\draw[line width=0.9mm ,bend left=30,red](13) edge (15);
		
		\draw[line width=0.7mm ,bend right=10,blue, loosely dashed](5) edge (17);
		\draw[line width=0.7mm ,bend left=10,blue, loosely dashed](7) edge (13);
		\draw[line width=0.7mm ,bend left=10,orange, densely dashdotted](5) edge (13);
		
		\filldraw [fill=mypurple,opacity=0.1] (9) to [bend left=30] (19) to [bend left=10] (17) to [bend right=40] cycle ;

		\foreach \X in {1,3,...,23}
		{
		\tkzDrawPoints[fill =red,size=4,color=red](\X);
		};

		\tkzDefPoint(-2.1,0.6){gamma};
		\tkzLabelPoint[blue](gamma){\Large $\delta$}
		\tkzDefPoint(-.8,0.1){gamma};
		\tkzLabelPoint[blue](gamma){\Large $\eta$}
		\tkzDefPoint(-0.2,1.5){gamma};
		\tkzLabelPoint[orange](gamma){\Large $\xi$}
		\tkzDefPoint(-4.2,-0.5){s};
		\tkzLabelPoint[red](s){\Large $\pmb{s}_{(\delta,\eta)}$}
		\tkzDefPoint(1,2.1){t};
		\tkzLabelPoint[red](t){\Large $\pmb{t}_{(\delta,\eta)}$}
    \end{tikzpicture}
\caption{\label{fig:Co-Z} Construction of $\xi \in \Accord$ the $\CoZ$-completion of $(\delta, \eta)$.}
\end{figure}
\end{ex}

\begin{prop}
\label{prop:Co-Z}
Let $(\delta, \eta) \in \Accord'^2$ be such that they admit a $\CoZ$-completion $\xi$. If $(\delta,\eta)$ satisfies \ref{4E}, then $\{\delta, \eta,\xi\} \in \Anti(\Accord', \Accordleq)$.
\end{prop}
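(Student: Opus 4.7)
The plan is to establish pairwise incomparability in $(\Accord', \Accordleq)$ of the three accordions $\delta$, $\eta$, and $\xi$.

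First, the incomparability of $\delta$ and $\eta$ will be immediate: condition \ref{4E} directly negates \ref{0E}, and since the hypothesis of a $\CoZ$-completion guarantees that $\col_\delta$ and $\col_\eta$ are strongly matching (in particular matchable, with $\NP(\delta) \cap \NP(\eta) \neq \varnothing$), \cref{prop:0E} will apply and rule out both directions of comparability between $\delta$ and $\eta$.

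For the incomparabilities involving $\xi$, I will assume without loss of generality that $\delta$ is above $\eta$, so that by \cref{rem:useful_up_s} and \cref{lem:sourceproxyindep} one has $s(\xi) \in \{s(\delta), w_R(\delta)\}$ sitting in $\delta$'s source-side region and $t(\xi) \in \{t(\eta), w_L(\eta)\}$ sitting in $\eta$'s target-side region. By \cref{thm:res_clo_1}, to establish each of the four required non-comparabilities, it suffices to test one well-chosen endpoint of one accordion against the relevant color classes of $\NP$ of the other. Concretely: exclusivity of $t(\eta)$ in $(\delta,\eta)$ will force $t(\xi) \notin \NP(\delta)_0^{\gsquare} \cup \NP(\delta)_0^{\osquare}$ and hence $\xi \not\Accordleq \delta$; exclusivity of $s(\delta)$ will analogously give $\xi \not\Accordleq \eta$; exclusivity of $t(\delta)$, combined with a description of $\NP(\xi)_0$ as contained in $\NP(\delta)_0 \cup \NP(\eta)_0$ inherited from $\xi$'s construction, will preclude $\delta \Accordleq \xi$; and symmetrically, exclusivity of $s(\eta)$ will preclude $\eta \Accordleq \xi$.

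The main obstacle lies in the borderline subcase of exclusivity where a vertex $v$ belongs to both $\NP(\delta)_0$ and $\NP(\eta)_0$ but is colored orange in exactly one of $\col_\delta$, $\col_\eta$. In this regime, set-theoretic considerations alone will not suffice; I would need to exploit the finer constraints of \cref{lem:conditionsResAc}, in particular conditions \ref{3Accord} and \ref{4Accord}, combined with the fact that $w_L$ and $w_R$ mark the deepest permissible source/target vertices inside large cells, in order to rule out that the candidate comparability actually produces an accordion in the relevant $\opResAc'$.
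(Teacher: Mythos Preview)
Your overall strategy is sound, and the incomparability of $\delta$ and $\eta$ is handled exactly as the paper does. However, for the incomparabilities involving $\xi$ your route diverges from the paper's and leaves the gap you yourself flag.

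The paper does not descend to the level of color classes and \cref{thm:res_clo_1}. Instead, it reapplies \cref{prop:0E} to the \emph{new} pair $(\delta,\xi)$: it suffices to exhibit one endpoint of $\xi$ exclusive to $\xi$ in $(\delta,\xi)$ and one endpoint of $\delta$ exclusive to $\delta$ in $(\delta,\xi)$, and then by symmetry the pair $(\eta,\xi)$ is handled identically. The crucial observation making this painless is that the neighbouring projective accordions of $\xi$ on the $\eta$-side \emph{coincide} with those of $\eta$ past the common part, so the exclusivity of $t(\xi)=\t_{(\delta,\eta)}$ in $(\delta,\xi)$ is inherited directly from the exclusivity of $t(\eta)$ in $(\delta,\eta)$, and the exclusivity of $t(\delta)$ in $(\delta,\xi)$ is inherited from its exclusivity in $(\delta,\eta)$ for the same reason. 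Two lines, no case analysis.

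Your approach instead tries to verify the four non-comparabilities separately via endpoint-color tests, which is essentially unpacking \cref{prop:0E} by hand. The ``borderline subcase'' you identify (a vertex orange for one coloration but not the other) is precisely what the exclusivity language absorbs automatically: exclusivity is defined so that $v \in \NP(\delta)_0^{\osquare} \setminus \NP(\eta)_0^{\osquare}$ already counts as exclusive, and \cref{prop:0E} then delivers the incomparability without appealing to \cref{lem:conditionsResAc}. So the gap in your argument is real but easily closed: rather than pushing through the color analysis, transfer exclusivity from $(\delta,\eta)$ to $(\delta,\xi)$ using the structural fact that $\NP(\xi)$ agrees with $\NP(\eta)$ beyond the intersection, and invoke \cref{prop:0E} once more.
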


\begin{proof}
As $(\delta,\eta)$ does not satisfy \ref{0E}, we only have to check that $\xi$ is not comparable to $\delta$ and $\eta$. Up to symmetry, proving one of the two is enough. We prove that $\delta$ and $\xi$ are not comparable using \cref{prop:0E}. We have that $t(\xi)=\t_{(\delta,\eta)}$ is exclusive in the pair $(\delta,\xi)$ as it was exclusive in the pair $(\delta,\eta)$. Then $t(\delta)$ is exclusive in the pair $(\delta,\eta)$, and thus exclusive in the pair $(\delta,\xi)$ as the neighboring projective accordions of $\xi$ are the ones of $\eta$ after the intersection. Therefore $(\delta, \xi)$ does not satisfy \ref{0E}, and we are done.
\end{proof}

\begin{remark} \label{rem:co-ZandEAccord}
Indeed, we prove that, following the \cref{algo:Rescatmotivation}, if we begin with an antichain $\mathfrak{B}$ such that $\delta, \eta \in \mathfrak{B}_0$, then $\xi$ should be in $\langle \mathfrak{B}_i \rangle$ for some $i \geqslant 1$.
\end{remark}

\subsection{KE'-move}
\label{ss:KE'}
The following move combines a $\CoZ$-move and a $\KE$-move. It could happen that $(\delta,\eta)$ admits a $\CoZ$-move $\xi$ contained in $\opResAc'(\delta)$, but $\KE(\xi,\eta)$ is not contained in $\opResAc'(\delta) \cup \opResAc'(\eta)$. This can happen when there are non-trivial extensions between syzygies of $\MM(\delta)$ and $\MM(\eta)$, for instance. This third move takes this phenomenon into account.

\begin{definition} \label{def:KE'compl}
Let $(\delta, \eta) \in \Accord'^2$ be such that $(\delta,\eta)$ admits a $\CoZ$-completion $\xi \in \Accord$. We say that $(\delta, \eta)$ \new{admits a $\KE'$-completion} whenever both:
\begin{enumerate}[label=$\bullet$,itemsep=1mm]
    \item $(\delta,\eta)$ satisfies \ref{3E}: $s(\delta)$ is the unique non exclusive vertex in $(\delta,\eta)$;
    \item $s(\xi) = s(\delta)$ and $t(\xi) = t(\eta)$; and,
    \item the $\rpoint$-arc $\kappa$ such that $s(\kappa) = t(\eta)$ and $t(\kappa) = t(\delta)$ is in $\Accord$.
\end{enumerate}
 In such a case, we define the \new{$\KE'$-completion} of $(\delta,\eta)$ to be the accordion $\kappa$. 
\end{definition}

\begin{definition} \label{def:KE'}
For any $(\delta, \eta) \in \Accord'^2$, we define the \new{$\KE'$-move} of $(\delta,\eta)$ as follows:
\[\KE'(\delta,\eta) = \begin{cases}
\{\delta,\eta,\kappa\}& \text{if } (\delta, \eta) \text{ admits a }\KE'\text{-completion } \kappa;\\
\{\delta,\eta\} & \text{otherwise.}
\end{cases}\]
\end{definition}

\begin{ex}
We represent the general configuration of a pair $(\delta,\eta) \in \Accord'^2$ such that $(\delta,\eta)$ admits a $\KE'$-completion $\kappa$ in \cref{fig:3EforKE}.
\end{ex}

\begin{remark} \label{lem:KE'andKE}
Let $(\delta,\eta) \in \Accord'^2$ be such that $(\delta, \eta)$ admits a $\CoZ$-completion $\xi$. If $(\delta, \eta)$ admits a $\KE'$-completion $\kappa$, then $\kappa$ is the $\KE$-completion of $\{\delta,\xi\}$. 
\end{remark}

\begin{prop}
\label{prop:3EforKE}
Let $(\delta, \eta) \in \Accord'^2$ be such that $(\delta,\eta)$ admits a $\CoZ$-completion $\xi$. If, moreover, $(\delta,\eta)$ admits a $\KE'$-completion, then $\mathfrak{E}_{\opResAc'(\delta,\eta)} = \KE'(\delta,\eta)$
\end{prop}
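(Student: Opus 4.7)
My plan is to proceed in three steps: first exhibit $\kappa$ as an element of $\opResAc'(\delta,\eta)$, then verify that $\{\delta,\eta,\kappa\}$ forms an antichain of $(\Accord',\Accordleq)$, and finally show that $\opResAc(\delta)\cup\opResAc(\eta)\cup\opResAc(\kappa)$ already exhausts $\opResAc(\delta,\eta)$. Combined with \cref{prop:candecompres} (translated to the geometric language), this yields the desired equality $\mathfrak{E}_{\opResAc'(\delta,\eta)}=\KE'(\delta,\eta)=\{\delta,\eta,\kappa\}$.

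For the first step, \cref{lem:welldefinedCoZ} guarantees $\xi\in\opResAc'(\delta,\eta)$. Since $(\delta,\eta)$ admits a $\KE'$-completion, \cref{lem:KE'andKE} tells us that $\kappa$ is precisely the $\KE$-completion of $\{\delta,\xi\}$. By \cref{def:completeaskernelorextension}, $\MM(\kappa)$ therefore arises either as the kernel of an epimorphism between $\MM(\delta)$ and $\MM(\xi)$ or as the middle term of a non-split short exact sequence between them; in both cases the resolving closure axioms (\ref{R2} and \ref{R3}) force $\kappa\in\opResAc'(\delta,\eta)$.

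For the antichain step, I will apply \cref{prop:0E} to each pair involving $\kappa$. By hypothesis $(\delta,\eta)$ satisfies \ref{3E} with $s(\delta)$ the unique non-exclusive vertex; in particular $t(\delta)$, $s(\eta)$ and $t(\eta)$ are each exclusive to their associated curves. Since $\kappa$ inherits its endpoints from $\delta$ and $\eta$ (namely $s(\kappa)=t(\eta)$ and $t(\kappa)=t(\delta)$), and since the neighboring projective accordions of $\kappa$ are read off the triangle formed by $\delta$, $\xi$, and $\kappa$ (cf.\ \cref{rem:KETriangle}), the exclusivity status of these endpoints carries over to the pairs $(\delta,\kappa)$ and $(\eta,\kappa)$: in $(\delta,\kappa)$ the vertex $s(\kappa)=t(\eta)$ remains exclusive to $\kappa$ while $t(\delta)=t(\kappa)$ forces the other exclusivity via the \ref{3E} hypothesis, so \ref{0E} fails and $\delta,\kappa$ are incomparable. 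The same reasoning applies to $(\eta,\kappa)$, so $\{\delta,\eta,\kappa\}$ is indeed an antichain.

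For the third and main step, set $\mathscr{U}=\opResAc(\delta)\cup\opResAc(\eta)\cup\opResAc(\kappa)$; by \cref{prop:folkloreresaccord}(v) it is already closed under syzygies, so I only need to check closure under extensions between accordions $\vartheta\Accordleq\delta$, $\varsigma\Accordleq\eta$, $\tau\Accordleq\kappa$ living in different monogeneous pieces. The case $(\vartheta,\varsigma)$ is the crux: their interactions are governed by the matchable colorations $\col_\delta$ and $\col_\eta$, and because the triangle $(\delta,\xi,\kappa)$ situates $\kappa$ exactly along the boundary between the supports of $\NP(\delta)$ and $\NP(\eta)$ on the non-exclusive side, every $\OvExt$ or $\ArExt$ produced between $\vartheta$ and $\varsigma$ must have its endpoints in $\NP(\kappa)_0$ with the right coloration, hence lies in $\opResAc(\kappa)$. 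The pairs involving $\tau$ are handled analogously by replaying the \ref{2E} analysis of \cref{prop:SmallEV3} on $(\delta,\kappa)$ and $(\eta,\kappa)$, both of which share a common endpoint with $\kappa$. The main obstacle is precisely this case analysis of extensions between $\opResAc(\delta)$ and $\opResAc(\eta)$: one must rule out extensions escaping into curves not in $\mathscr{U}$, which relies on carefully tracking how the exclusivity of three out of four endpoints constrains the positions of common neighboring projective accordions on either side of $\kappa$.
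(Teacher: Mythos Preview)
Your overall strategy is reasonable, but you miss the paper's key shortcut and your third step remains a sketch where the real work lies.

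The paper's proof pivots on one observation you do not make: $\xi \Accordleq \eta$. Indeed, $s(\xi)=s(\delta)$ is by hypothesis the unique non-exclusive vertex of $(\delta,\eta)$, and $t(\xi)=t(\eta)$; so both endpoints of $\xi$ are non-exclusive to $\xi$ in $(\xi,\eta)$, and \cref{prop:0E} gives $\xi\in\opResAc'(\eta)$. Once this is in hand, the pair $(\delta,\xi)$ shares the endpoint $s(\delta)=s(\xi)$ and satisfies \ref{2E}, so \cref{cor:KEmove} immediately yields $\mathfrak{E}_{\opResAc'(\delta,\xi)}=\KE(\delta,\xi)=\{\delta,\xi,\kappa\}$. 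The paper then just writes
\[
\opResAc(\delta,\eta)=\bigcup_{\nu\in\KE(\delta,\xi)\cup\{\eta\}}\opResAc(\nu),
\]
which, after absorbing $\opResAc(\xi)\subseteq\opResAc(\eta)$, is exactly $\opResAc(\delta)\cup\opResAc(\eta)\cup\opResAc(\kappa)$. The entire extension-closure verification is thus outsourced to the already-proven \ref{2E} case.

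By contrast, your third step attempts a direct case analysis on pairs $(\vartheta,\varsigma)$ without passing through $\xi$. Your assertion that every $\OvExt$ or $\ArExt$ between such $\vartheta$ and $\varsigma$ ``must have its endpoints in $\NP(\kappa)_0$ with the right coloration, hence lies in $\opResAc(\kappa)$'' is not correct as stated: many of these extensions land back in $\opResAc(\delta)$ or $\opResAc(\eta)$, not in $\opResAc(\kappa)$. What you actually need is that they land in the \emph{union}, and establishing that directly amounts to redoing the analysis of \cref{prop:SmallEV3} for the pair $(\delta,\eta)$ --- precisely the work the paper avoids by recognising $\xi\Accordleq\eta$ and invoking \cref{cor:KEmove}. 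Your antichain step is also somewhat hand-wavy; in the paper's route it comes for free, since $\{\delta,\xi,\kappa\}$ is an antichain by \cref{cor:KEmove} and replacing $\xi$ by the strictly larger $\eta$ preserves incomparability with $\delta$ and $\kappa$.
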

\begin{proof}
See \cref{fig:3EforKE} for an illustration of the situation. We prove that \[\opResAc(\delta, \eta) = \bigcup_{\nu \in \KE(\delta, \xi) \cup \lbrace\eta\rbrace} \opResAc(\nu).\] It will induce the desired result.
\begin{figure}[!ht]
        \centering
       \begin{tikzpicture}[mydot/.style={
					circle,
					thick,
					fill=white,
					draw,
					outer sep=0.5pt,
					inner sep=1pt
				}, scale = 1]
		\tikzset{
		osq/.style={
        rectangle,
        thick,
        fill=white,
        append after command={
            node [
                fit=(\tikzlastnode),
                orange,
                line width=0.3mm,
                inner sep=-\pgflinewidth,
                cross out,
                draw
            ] {}}}}
		\draw [line width=0.7mm,domain=50:130] plot ({4*cos(\x)}, {1.5*sin(\x)});
        \draw [line width=0.7mm,domain=230:330] plot ({4*cos(\x)}, {1.5*sin(\x)});
		\foreach \X in {0,1}
		{
		\tkzDefPoint(4*cos(pi/5*\X +pi/3),1.5*sin(pi/5*\X + pi/3)){\X};
		};
		\foreach \X in {2,3}
		{
		\tkzDefPoint(4*cos(pi/5*(\X-2) +4*pi/3),1.5*sin(pi/5*(\X-2) + 4*pi/3)){\X};
		};
		
		\draw[line width=0.7mm ,bend right=15,orange, dashed](-2,1.3) edge (1.3,-1.42);
		
		\draw[line width=0.7mm ,bend right=30,blue, loosely dotted](1) edge (3,-1);
		
		\draw[line width=0.7mm ,bend right=30,darkgreen, dash pattern={on 5pt off 2pt on 1pt off 2pt}](1) edge (1.3,-1.42);
		
		\draw[line width=0.7mm ,bend right=-10,purple, dash pattern={on 3pt off 2pt on 2pt off 2pt}](1.3,-1.42) edge (3,-1);
		
		\draw [line width=0.9mm, mypurple,dash pattern={on 10pt off 2pt on 5pt off 2pt}, bend right=20,opacity=0.5] (1) edge (2);
		\draw [line width=0.9mm,mypurple,domain=60:96, dash pattern={on 10pt off 2pt on 5pt off 2pt},opacity=0.5] plot ({4*cos(\x)}, {1.5*sin(\x)});
		\draw [line width=0.9mm, mypurple,dash pattern={on 10pt off 2pt on 5pt off 2pt}, bend right=20,opacity=0.5] (0) edge (3);
		\draw [line width=0.9mm,mypurple,domain=240:276, dash pattern={on 10pt off 2pt on 5pt off 2pt},opacity=0.5] plot ({4*cos(\x)}, {1.5*sin(\x)});
		
        \filldraw [fill=mypurple,opacity=0.1] (1) to [bend right=20] (2) to [bend right=10] (3) to [bend left=20] (0) to [bend right=10] cycle ;
        
		\foreach \X in {0,...,3}
		{
		\tkzDrawPoints[fill =red,size=4,color=red](\X);
		};
		
		\tkzDrawPoint[fill =red,size=4,color=red](-2,1.3);
		\tkzDrawPoint[fill =red,size=4,color=red](1.3,-1.42);
		\tkzDrawPoint[fill =red,size=4,color=red](3,-1);

		\tkzDefPoint(-2,1.1){gammaM};
		\tkzLabelPoint[orange](gammaM){\Large $\eta$}
		\tkzDefPoint(1.5,0){gammaP};
		\tkzLabelPoint[blue](gammaP){\Large $\delta$}
		\tkzDefPoint(-0.6,0.8){rho};
		\tkzLabelPoint[darkgreen](rho){\Large $\xi$}
		\tkzDefPoint(1.7,-0.8){ups};
		\tkzLabelPoint[purple](ups){\Large $\kappa$}
    \end{tikzpicture}
        \caption{\label{fig:3EforKE} General configuration of the $\KE'$-completion $\kappa$ of $(\delta,\eta)$. The shaded area contains the projective accordions in $\NP(\delta) \cap \NP(\eta)$. The accordion $\xi$ is the $\CoZ$-completion of $(\delta,\eta)$. }
\end{figure}
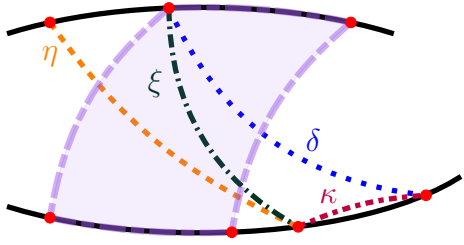
Note that $\xi \Accordleq \eta$ by \cref{prop:0E}. Then, if it exists, the $\KE$-completion of $(\xi,\eta)$ is in $\opResAc(\eta)$. By \cref{prop:folkloreresaccord} $(iv)$, we only have to check that $\bigcup_{\nu \in \KE(\delta, \xi) \cup \lbrace\eta\rbrace} \opResAc(\nu)$ is closed under taking extensions. It is sufficient to note that $\mathfrak{E}_{\opResAc'(\xi,\delta)} = \{\xi,\delta,\kappa\} = \KE(\delta,\xi)$ by \cref{cor:KEmove}.
\end{proof}

\subsection{V-move}
\label{ss:V-move}
This fourth move encodes (higher) extensions between $\MM(\delta)$ and $\MM(\eta)$. It extracts a non-projective accordion $\varsigma \in \opResAc'(\delta,\eta)$ such that $\delta \Accordlneq \varsigma$ or $\eta \Accordlneq \varsigma$.

\begin{definition} \label{def:Vcompl}
Let $(\delta, \eta) \in \Accord'^2$. We say that $(\delta, \eta)$ \new{admits a $\V$-completion} if the following hold:
\begin{enumerate}[label=$\bullet$,itemsep=1mm]
    \item $\col_\delta$ and $\col_\eta$ strongly match;
    \item $\delta$ and $\eta$ are crossing or share a common endpoint;
    \item $\{\delta, \eta\} \in \Anti(\Accord', \Accordleq)$; and,
    \item $(\delta, \eta)$ satisfies \ref{3E}: $s(\eta)$ is not exclusive to $\eta$ in $(\delta, \eta)$ . 
\end{enumerate}
In such a case, we define the \new{$\V$-completion} of $(\delta, \eta)$ to be the accordion $\varsigma$ such that $s(\varsigma)=\s_{(\delta,\eta)}$ and $t(\varsigma)=t(\eta)$
\end{definition}

\begin{definition} \label{def:V}
For any $(\delta, \eta) \in \Accord'^2$, we define the \new{$\V$-move} of $(\delta,\eta)$ as follows:
\[\V(\delta,\eta) = \begin{cases}
\{\delta, \varsigma\} & \text{if } (\delta,\eta) \text{ admits a }\V\text{-completion } \varsigma; \\
\{\delta,\eta\} & \text{otherwise.}
\end{cases}\]
\end{definition}
\begin{figure}[!ht]
\centering 
    \begin{tikzpicture}[mydot/.style={
					circle,
					thick,
					fill=white,
					draw,
					outer sep=0.5pt,
					inner sep=1pt
				}, scale = 1]
		\tikzset{
		osq/.style={
        rectangle,
        thick,
        fill=white,
        append after command={
            node [
                fit=(\tikzlastnode),
                orange,
                line width=0.3mm,
                inner sep=-\pgflinewidth,
                cross out,
                draw
            ] {}}}}
		\draw[line width=0.7mm,black] (0,0) ellipse (4cm and 1.5cm);
		\foreach \X in {0,1,...,23}
		{
		\tkzDefPoint(4*cos(pi/12*\X),1.5*sin(pi/12*\X)){\X};
		};
        
		\draw[line width=0.9mm ,bend left =60,red](1) edge (3);
		\draw[line width=0.9mm ,bend right =60,red](5) edge (3);
		\draw[line width=0.9mm ,bend right =30,red](5) edge (23);
		\draw[line width=0.9mm ,bend left =20,red](21) edge (23);
		\draw[line width=0.9mm ,bend left =20,red](7) edge (9);
		\draw[line width=0.9mm ,bend left =40,red](19) edge (21);
		\draw[line width=0.9mm ,bend left =30,mypurple,densely dashdotted](9) edge (19);
		\draw[line width=0.9mm ,bend left =40,mypurple,densely dashdotted](9) edge (17);
		\draw[line width=0.9mm ,bend left =40,mypurple, densely dashdotted](9) edge (15);
		\draw[line width=0.9mm ,bend left =40,red](9) edge (11);
		\draw[line width=0.9mm ,bend left=30,red](13) edge (15);
		
		\draw[line width=0.7mm ,bend right=40,blue, loosely dashed](11) edge (5);
		\draw[line width=0.7mm ,bend left=40,blue, loosely dashed](7) edge (15);
		\draw[line width=0.7mm ,bend left=40,orange, densely dashed](7) edge (11);

		\filldraw [fill=mypurple,opacity=0.1] (9) to [bend left=40] (15) to [bend right=10] (19) to [bend right=30] cycle ;

		\foreach \X in {1,3,...,23}
		{
		\tkzDrawPoints[fill =red,size=4,color=red](\X);
		};

		\tkzDefPoint(-1.4,0.7){gamma};
		\tkzLabelPoint[blue](gamma){\Large $\eta$}
		\tkzDefPoint(-0.3,0.9){gamma};
		\tkzLabelPoint[blue](gamma){\Large $\delta$}
		\tkzDefPoint(-2.8,0.75){s};
		\tkzLabelPoint[orange](s){\Large $\varsigma$}
    \end{tikzpicture}
\caption{\label{fig:V} Construction of $\varsigma \in \Accord$, the $\V$-completion of $(\delta,\eta)$, in the case where $\delta$ and $\eta$ cross. }
\end{figure}

\begin{figure}[!ht]
\centering 
    \begin{tikzpicture}[mydot/.style={
					circle,
					thick,
					fill=white,
					draw,
					outer sep=0.5pt,
					inner sep=1pt
				}, scale = 1]
		\tikzset{
		osq/.style={
        rectangle,
        thick,
        fill=white,
        append after command={
            node [
                fit=(\tikzlastnode),
                orange,
                line width=0.3mm,
                inner sep=-\pgflinewidth,
                cross out,
                draw
            ] {}}}}
		\draw[line width=0.7mm,black] (0,0) ellipse (4cm and 1.5cm);
		\foreach \X in {0,1,...,23}
		{
		\tkzDefPoint(4*cos(pi/12*\X),1.5*sin(pi/12*\X)){\X};
		};
		
		\draw[line width=0.9mm ,bend left =25,red](7) edge (9);
		\draw[line width=0.9mm ,bend left =10,red](9) edge (15);
		\draw[line width=0.9mm ,bend left =25,red](15) edge (17);
		\draw[line width=0.9mm ,bend left =20,red](17) edge (19);
		\draw[line width=0.9mm ,bend left =15,mypurple,densely dashdotted](19) edge (5);
		\draw[line width=0.9mm ,bend left =15,mypurple,densely dashdotted](21) edge (5);
		\draw[line width=0.9mm ,bend left =15,red](21) edge (3);
		\draw[line width=0.9mm ,bend left =15,red](21) edge (1);
		\draw[line width=0.9mm ,bend left =15,red](23) edge (1);
		\draw[line width=0.9mm ,bend left =15,red](9) edge (11);
		\draw[line width=0.9mm ,bend left =15,red](11) edge (13);
		
		\draw[line width=0.7mm ,bend right=20,blue, loosely dashed](13) edge (5);
		\draw[line width=0.7mm ,bend right=20,blue, loosely dashed](5) edge (1);
		\draw[line width=0.7mm ,bend left=20,orange, densely dashed](17) edge (1);

		\filldraw [fill=mypurple,opacity=0.1] (5) to [bend right=15] (19) to [bend right=10] (21) to [bend left=15] cycle ;

		\foreach \X in {1,3,...,23}
		{
		\tkzDrawPoints[fill =red,size=4,color=red](\X);
		};

		\tkzDefPoint(-1.3,0.7){gamma};
		\tkzLabelPoint[blue](gamma){\Large $\delta$}
		\tkzDefPoint(1.6,1){gamma};
		\tkzLabelPoint[blue](gamma){\Large $\eta$}
		\tkzDefPoint(0.2,-.6){s};
		\tkzLabelPoint[orange](s){\Large $\varsigma$}
    \end{tikzpicture}
\caption{\label{fig:Vspecial} Construction of $\varsigma \in \Accord$, the $\V$-completion of $(\delta,\eta)$, in the case where $\delta$ and $\eta$ do not cross.}
\end{figure}

\begin{prop}
\label{V}
Let $(\delta, \eta) \in \Accord'^2$ be such that $(\delta, \eta)$ admits a $\V$-completion $\varsigma$. Then $\varsigma \in \opResAc'(\delta, \eta)$, and $\eta \Accordleq \varsigma$. 
\end{prop}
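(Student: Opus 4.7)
The plan is to prove the two assertions in order. For $\varsigma \in \opResAc'(\delta,\eta)$, I will construct $\varsigma$ geometrically through a short sequence of extensions (and, if necessary, one syzygy of $\MM(\delta)$), invoking the closure under syzygies and extensions with projective accordions given by Proposition~\ref{prop:folkloreresaccord}$(v)$. For $\eta \Accordleq \varsigma$, I will apply Proposition~\ref{prop:0E}, reducing the claim to verifying non-exclusivity of the endpoints of $\eta$ in the pair $(\eta, \varsigma)$.

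For the first claim, the strong matchability of $\col_\delta$ and $\col_\eta$ provides a projective accordion $\rho \in \NP(\delta) \cap \NP(\eta)$ whose endpoints $v,w$ satisfy $\col_\delta(v) = \col_\eta(v) = \rsquare$ and $\col_\delta(w) = \col_\eta(w) = \gsquare$, up to the matching convention. By Lemma~\ref{rem:useful_up_s}, $\s_{(\delta,\eta)}$ equals either $s(\delta)$ or $w_R(\delta)$. In the first case, the arrow or overlap extension of $\MM(\delta)$ and $\MM(\rho)$ described by Proposition~\ref{prop:geom_ext} produces an intermediate accordion $\iota \in \opResAc(\delta)$ with $s(\iota) = s(\delta)$ and target the green endpoint of $\rho$; a further extension with an accordion in $\opResAc(\eta)$ obtained dually from $\MM(\eta)$ and $\MM(\rho)$ yields $\varsigma$. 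In the second case, $w_R(\delta)$ appears as the source of a non-projective summand of a higher syzygy of $\MM(\delta)$ (see the discussion preceding Definition~\ref{def:sygaccord}), and the same extension procedure, started from that summand, gives $\varsigma$. This closely mirrors the construction used in the proof of Lemma~\ref{lem:welldefinedCoZ}, and every intermediate accordion stays inside $\opResAc'(\delta,\eta)$.

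For the second claim, I apply Proposition~\ref{prop:0E} to $(\eta, \varsigma)$: it suffices to check that neither endpoint of $\eta$ is exclusive to $\eta$ in $(\eta, \varsigma)$. Since $t(\varsigma) = t(\eta)$, the target is trivially shared. The hypothesis \ref{3E} on $(\delta,\eta)$ gives that $s(\eta)$ is non-exclusive to $\eta$ in $(\delta,\eta)$, so with the matching convention $s(\eta) \in \NP(\delta)_0^{\rsquare} \cup \{s(\delta)\}$. I will then argue that $\NP(\eta) \subseteq \NP(\varsigma)$ by tracking the geometric construction of $\varsigma$: its left portion follows the neighboring projectives of $\delta$ up to the common part with $\eta$, and its right portion follows those of $\eta$ up to $t(\eta)$. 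Thus every projective in $\NP(\eta)$ is, by Proposition~\ref{prop:CombidescripNproj}, still neighboring for $\varsigma$, and the coloration $\col_\varsigma$ restricts to $\col_\eta$ on $\NP(\eta)_0$ (up to possible changes to $\osquare$ or $\psquare$ in large source or target cells of $\varsigma$ that do not affect the source of $\eta$). Hence $s(\eta) \in \NP(\varsigma)_0^{\rsquare} \cup \{s(\varsigma)\}$, and Theorem~\ref{thm:res_clo_1} delivers $\eta \in \opResAc(\varsigma)$.

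The main obstacle will be the careful combinatorial case analysis distinguishing the two geometric configurations of the pair (crossing, as in Figure~\ref{fig:V}, versus sharing a common endpoint without crossing, as in Figure~\ref{fig:Vspecial}) and, within each, the possible placements of large source or target cells of $\delta$ that involve $\psquare$-colored vertices. The techniques to handle these subtleties mirror those used in the proofs of Lemma~\ref{lem:welldefinedCoZ} and Proposition~\ref{prop:3EforKE}, and Proposition~\ref{prop:arcsaccordions} should be invoked at each step to verify that intermediate $\rpoint$-arcs are genuine accordions.
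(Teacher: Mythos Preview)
Your proposal is essentially correct, and for the second claim ($\eta \Accordleq \varsigma$) you follow the same route as the paper, namely verifying the hypotheses of Proposition~\ref{prop:0E} for the pair $(\eta,\varsigma)$.

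For the first claim, however, you take a detour that the paper avoids. You build $\varsigma$ in two steps, first extending $\delta$ by a projective $\rho \in \NP(\delta)\cap\NP(\eta)$ to obtain an intermediate accordion $\iota$, and then extending again with something in $\opResAc(\eta)$. The paper instead exploits the very hypothesis that distinguishes the $\V$-completion from the $\CoZ$-completion: here $\delta$ and $\eta$ themselves cross or share a common endpoint. In the crossing case this forces $\s_{(\delta,\eta)} = s(\delta)$ (a common projective crossed by both certifies this via Lemma~\ref{rem:useful_up_s}), and then $\varsigma$ is literally one of the two accordions in $\OvExt(\delta,\eta)\cup\OvExt(\eta,\delta)$, so it lies in $\opResAc'(\delta,\eta)$ in a single step. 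In the common-endpoint case the paper identifies that endpoint as $s(\eta)=t(\delta)$ and obtains $\varsigma$ as an arrow extension of $\eta$ with $\delta$ (or with the relevant syzygy summand of $\delta$ when $\s_{(\delta,\eta)}=w_R(\delta)$). Your two-step construction, modelled on Lemma~\ref{lem:welldefinedCoZ}, is tailored to the non-crossing $\CoZ$ setting; it can be made to work here, but it obscures the simpler picture and forces you to justify that the intermediate accordions $\iota$, $\iota'$ really admit the needed extension producing $\varsigma$, a step you leave implicit.
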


\begin{proof}
Let $\delta, \eta, \varsigma \in \Accord'$ be as assumed. If $\delta$ and $\eta$ are crossing, we note that $\varsigma \in \OvExt(\delta, \eta) \cup \OvExt(\eta,\delta)$, we hence have that $\varsigma \in \opResAc(\delta, \eta)$. If $\delta$ and $\eta$ share a common endpoint since the endpoints are in a \ref{3E} configuration it is $s(\eta)=t(\delta)$. In such a configuration $\delta$ and $\eta$ either admit an arrow extension or $\eta$ has an arrow extension with a non-projective summand of the highest syzygy of $\delta$. This curve is precisely $\varsigma$ thus $\varsigma \in \opResAc(\delta, \eta)$. Note that $\varsigma$ crosses accordions forming $\NP(\delta)\cap\NP(\eta)$ and thus it is non-projecive.

As $s(\eta)$ is not exclusive for $\delta$ in $(\delta, \eta)$, we have that $s(\eta)\in \NP(\delta)_0^{{\rsquare}}$. Therefore $s(\delta)\in \NP(\varsigma)_0^{{\rsquare}}$. By \cref{prop:0E}, we have that $\eta \Accordleq \varsigma$. 
\end{proof}

\begin{cor}
\label{cor:Vantichain}
    Let $(\delta, \eta) \in \Accord'^2$ be such that $(\delta, \eta)$ admits a $\V$-completion $\varsigma$. Then $\lbrace\delta,\varsigma\rbrace$ forms an antichain.
\end{cor}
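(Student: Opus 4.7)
The plan is to combine \cref{V}, which already gives $\varsigma \in \opResAc'(\delta,\eta)$ and $\eta \Accordleq \varsigma$, with the exclusivity characterization of \cref{prop:0E}, in order to rule out both comparisons $\delta \Accordleq \varsigma$ and $\varsigma \Accordleq \delta$ separately.

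First I would handle $\varsigma \not\Accordleq \delta$ by a quick transitivity argument: if $\varsigma \Accordleq \delta$ held, then $\eta \Accordleq \varsigma \Accordleq \delta$ would contradict the hypothesis that $\{\delta,\eta\}$ is an antichain. This part is essentially free.

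The main work is to show $\delta \not\Accordleq \varsigma$. By \cref{prop:0E}, it is enough to exhibit an endpoint of $\delta$ that is exclusive to $\delta$ in the pair $(\delta,\varsigma)$; my candidate is $t(\delta)$. Since $(\delta,\eta)$ satisfies \ref{3E} with $s(\eta)$ as the unique non-exclusive vertex, both $s(\delta)$ and $t(\delta)$ are exclusive to $\delta$ in $(\delta,\eta)$, so one of the three disjunctive clauses of \cref{def:exclusive} holds for $t(\delta)$. The pink case $t(\delta) \in \NP(\delta)_0^{\psquare}$ transfers to $(\delta,\varsigma)$ instantly, because being pink depends only on $\delta$. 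The remaining two cases, $t(\delta) \notin \NP(\eta)_0$ and $t(\delta) \in \NP(\delta)_0^{\osquare}\setminus\NP(\eta)_0^{\osquare}$, both reduce to checking that the same statement holds with $\eta$ replaced by $\varsigma$.

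For this last transfer, I will use the geometric construction of $\varsigma$ from \cref{def:Vcompl}: since $t(\varsigma)=t(\eta)$ and $\varsigma$ is obtained by following $\delta$ until the common neighbouring region $\NP(\delta)\cap\NP(\eta)$ and then following $\eta$ all the way to $t(\eta)$, the portion of $\NP(\varsigma)$ located around $t(\varsigma)$ coincides with the corresponding portion of $\NP(\eta)$ around $t(\eta)$, with identical colorations. The point $t(\delta)$ either sits in this target region (so its status in $\NP(\varsigma)$ is inherited from $\NP(\eta)$) or lies on the source side of $\varsigma$, where it inherits its status from $\NP(\delta)$; in either possibility, $t(\delta)$ cannot gain membership in $\NP(\varsigma)_0$, nor change from pink/exclusive-orange to non-exclusive, without the same already being true in $\NP(\eta)$. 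Hence $t(\delta)$ remains exclusive in $(\delta,\varsigma)$, and \cref{prop:0E} yields $\delta\not\Accordleq\varsigma$.

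I expect the main obstacle to be the last step: one must verify, for each of the two geometric sub-configurations (crossing $\delta,\eta$ as in \cref{fig:V} versus shared endpoint $t(\delta)=s(\eta)$ as in \cref{fig:Vspecial}), that the construction of $\varsigma$ does not unexpectedly enlarge $\NP(\varsigma)_0$ near $t(\delta)$ with a red or green vertex that was absent in $\NP(\eta)_0$. The easiest way to discharge this is to use \cref{rem:useful_up_s,lem:sourceproxyindep} together with \cref{prop:CombidescripNproj} to describe $\NP(\varsigma)$ in terms of $\NP(\delta)$ and $\NP(\eta)$ directly; after this combinatorial bookkeeping, the transfer of exclusivity is immediate, which closes the argument.
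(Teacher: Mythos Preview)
Your argument is correct and follows essentially the same route as the paper's proof: both rule out $\varsigma \Accordleq \delta$ via transitivity through $\eta \Accordleq \varsigma$, and both rule out $\delta \Accordleq \varsigma$ by invoking \cref{prop:0E} and checking that $t(\delta)$ remains exclusive to $\delta$ in the pair $(\delta,\varsigma)$. The only difference is presentation: the paper argues by contradiction and compresses the transfer step into a single clause (``as $\eta \Accordleq \varsigma$, this implies that $t(\delta)$ is not exclusive to $\delta$ in $(\delta,\eta)$''), whereas you unpack the three disjuncts of \cref{def:exclusive} and justify the transfer via the explicit geometric description of $\NP(\varsigma)$ near $t(\varsigma)=t(\eta)$. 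Your more detailed case analysis is not needed, but it is not wrong either.
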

\begin{proof}
    By \cref{V} that $\eta\Accordleq\varsigma$, and so, as $\{\delta,\eta\} \in \Anti(\Accord', \Accordleq)$, it is enough to show that $\delta\not\Accordleq\varsigma$. 
    
    Assume by absurdity that $\delta \Accordleq \varsigma$. By \cref{prop:0E}, $s(\delta) = s(\varsigma)$ and $t(\delta)$ are not exclusive to $\delta$ in $(\delta,\varsigma)$. As $\eta \Accordleq \varsigma$, this implies that $t(\delta)$ is not exclusive to $\delta$ in $(\delta,\eta)$, which is a contradiction to the fact that $(\delta,\eta)$ satisfies \ref{3E}, with $s(\eta)$ non-exclusive to $\eta$ in $(\delta,\eta)$.
\end{proof}

\begin{remark} \label{rem:Vnotgenerating}
    In the case where $(\delta,\eta)$ admits a $\V$-move $\varsigma$, it could happen that $\opResAc(\delta,\eta) \neq \opResAc(\varsigma) \cup \opResAc(\delta)$, meaning that $\mathfrak{E}_{\opResAc'(\delta,\eta)} \neq \{\delta,\eta\}$ (See \cref{fig:Vnotenough}).
\end{remark}

\begin{figure}[!ht]
        \centering
       \begin{tikzpicture}[mydot/.style={
					circle,
					thick,
					fill=white,
					draw,
					outer sep=0.5pt,
					inner sep=1pt
				}, scale = 1]
		\tikzset{
		osq/.style={
        rectangle,
        thick,
        fill=white,
        append after command={
            node [
                fit=(\tikzlastnode),
                orange,
                line width=0.3mm,
                inner sep=-\pgflinewidth,
                cross out,
                draw
            ] {}}}}
		\draw [line width=0.7mm] (0,0) ellipse (4cm and 1.5cm);
		\foreach \X in {0,...,45}
		{
		\tkzDefPoint(4*cos(pi/23*\X),1.5*sin(pi/23*\X)){\X};
		};
		
	    \draw[line width=0.9mm ,bend right =30,red] (39) edge (4);
	    \draw[line width=0.9mm ,bend right =20,red] (39) edge (8);

	    \draw[line width=0.9mm ,bend right =10,red] (24) edge (8);
	    \draw[line width=0.9mm ,bend left =20,red] (24) edge (30);
	    
	    \draw[line width=0.9mm ,bend left =10,red] (8) edge (14);
	    
	    \draw [line width=0.9mm, mypurple,dash pattern={on 10pt off 2pt on 5pt off 2pt}, bend right=15,opacity=0.8] (30) edge (4);
        \draw [line width=0.9mm, darkgreen,dash pattern={on 5pt off 2pt on 1pt off 2pt}, bend right=15,opacity=0.8] (30) edge (8);
		\draw [line width=0.9mm,blue, dashdotted, bend right = 30, opacity=0.8]  (14) edge (4);
		\draw [line width=0.9mm,orange, loosely dotted, bend right = 30, opacity=0.8]  (30) edge (14);
        
		\foreach \X in {4,8,14,24,30,39}
		{
		\tkzDrawPoints[fill =red,size=4,color=red](\X);
		};

		\tkzDefPoint(2.5,1){gammaM};
		\tkzLabelPoint[blue](gammaM){\Large $\delta$}
		\tkzDefPoint(1.7,-0.3){gammaP};
		\tkzLabelPoint[mypurple](gammaP){\Large $\varsigma$}
		\tkzDefPoint(-1.8,.7){rho};
		\tkzLabelPoint[orange](rho){\Large $\kappa$}
        \tkzDefPoint(-.3,.2){rho};
		\tkzLabelPoint[darkgreen](rho){\Large $\eta$}
    \end{tikzpicture}
        \caption{\label{fig:Vnotenough}In this situation, the pair $(\delta,\eta)$ admits a $\V$-completion $\varsigma$, and the pair $(\delta,\varsigma)$ admits a $\KE$-completion $\kappa$, and by \cref{V,cor:KEmove}, we have $\mathfrak{E}_{\opResAc'(\delta,\eta)} = \{\delta,\varsigma, \kappa\} \neq \V(\delta,\eta)$.}
    \end{figure}
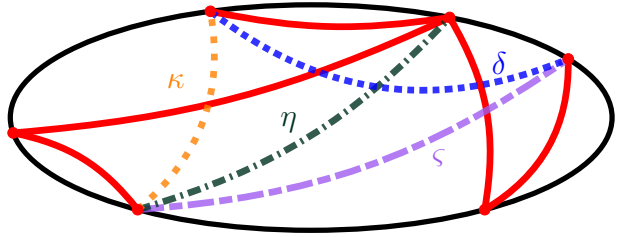

\subsection{X-move}
\label{sss:X-move}
This fifth move extracts the behavior of non-trivial overlap extensions between $\delta$ and $\eta$, where the accordions appearing as middle terms of said extensions are not in $\opResAc'(\delta) \cup \opResAc'(\eta)$.

\begin{definition} \label{def:Xcompl}
Let $(\delta, \eta) \in \Accord'^2$. We say that $(\delta,\eta)$ \new{admits an $\X$-completion} if the following assertions hold:
\begin{enumerate}[label=$\bullet$, itemsep=1mm]
    \item $\delta$ and $\eta$ are crossing; 
    \item $\{\delta, \eta\} \in \Anti(\Accord', \Accordleq)$; and,
    \item $(\delta, \eta)$ satisfies \ref{4E}.
\end{enumerate}
In such a case, we define the \new{$\X$-completion} of $(\delta, \eta)$ as the pair $(\varrho_1, \varrho_2 )\in \Accord^2$ such that $s(\varrho_1)=s(\delta)$, $t(\varrho_1)=t(\eta)$, $s(\varrho_2)=s(\eta)$ and $t(\varrho_2)=t(\delta)$.
\end{definition}

\begin{remark} \label{rem:crossingmatching}
    By \cref{lem:tomatchablecap}, if $\delta$ and $\eta$ are crossing, then $\col_\delta$ and $\col_\eta$ are strongly matchable.
\end{remark}

\begin{definition} \label{def:X}
For any $(\delta, \eta) \in \Accord'^2$, we define the \new{$\X$-move} of $\{\delta,\eta\}$ as follows:
\[\X(\delta,\eta) = \begin{cases}
\{\delta, \eta, \varrho_1, \varrho_2\} & \text{if } \{\delta,\eta\} \text{ admits a }\X\text{-completion } (\varrho_1,\varrho_2); \\
\{\delta,\eta\} & \text{otherwise.}
\end{cases}\]
\end{definition}

\begin{ex} \label{ex:X}
We give an example of a pair $(\delta, \eta) \in \Accord'^2$ which admits an $\X$-completion in \cref{fig:X}.
\end{ex}

\begin{figure}[!ht]
\centering 
    \begin{tikzpicture}[mydot/.style={
					circle,
					thick,
					fill=white,
					draw,
					outer sep=0.5pt,
					inner sep=1pt
				}, scale = 1]
		\tikzset{
		osq/.style={
        rectangle,
        thick,
        fill=white,
        append after command={
            node [
                fit=(\tikzlastnode),
                orange,
                line width=0.3mm,
                inner sep=-\pgflinewidth,
                cross out,
                draw
            ] {}}}}
		\draw[line width=0.7mm,black] (0,0) ellipse (4cm and 1.5cm);
		\foreach \X in {0,1,...,23}
		{
		\tkzDefPoint(4*cos(pi/12*\X),1.5*sin(pi/12*\X)){\X};
		};
		
		\draw[line width=0.9mm ,bend left =60,red](1) edge (3);
		\draw[line width=0.9mm ,bend right =60,red](5) edge (3);
		\draw[line width=0.9mm ,bend right =30,red](5) edge (23);
		\draw[line width=0.9mm ,bend left =20,red](21) edge (23);
		\draw[line width=0.9mm ,bend left =20,red](7) edge (9);
		\draw[line width=0.9mm ,bend left =40,red](19) edge (21);
		\draw[line width=0.9mm ,bend left =30,mypurple,densely dashdotted](9) edge (19);
		\draw[line width=0.9mm ,bend left =40,mypurple,densely dashdotted](9) edge (17);
		\draw[line width=0.9mm ,bend left =40,mypurple, densely dashdotted](9) edge (15);
		\draw[line width=0.9mm ,bend left =40,red](9) edge (11);
		\draw[line width=0.9mm ,bend left=30,red](13) edge (15);
		
		\draw[line width=0.7mm ,bend right=40,blue, loosely dashed](11) edge (5);
		\draw[line width=0.7mm ,bend left=40,blue, loosely dashed](7) edge (13);
		\draw[line width=0.7mm ,bend left=40,orange, densely dashed](7) edge (11);
		\draw[line width=0.7mm ,bend right=30,orange, densely dashed](13) edge (5);

		\filldraw [fill=mypurple,opacity=0.1] (9) to [bend left=40] (15) to [bend right=10] (19) to [bend right=30] cycle ;

		\foreach \X in {1,3,...,23}
		{
		\tkzDrawPoints[fill =red,size=4,color=red](\X);
		};

		\tkzDefPoint(-1.3,0.7){gamma};
		\tkzLabelPoint[blue](gamma){\Large $\delta$}
		\tkzDefPoint(-0.3,0.9){gamma};
		\tkzLabelPoint[blue](gamma){\Large $\eta$}
		\tkzDefPoint(0.7,1){gamma};
		\tkzLabelPoint[orange](gamma){\Large $\varrho_1$}
		\tkzDefPoint(-2.9,0.75){s};
		\tkzLabelPoint[orange](s){\Large $\varrho_2$}
    \end{tikzpicture}
\caption{\label{fig:X} Construction of  the $\X$-completion $(\varrho_1, \varrho_2) \in \Accord$ of $(\delta,\eta)$.}
\end{figure}
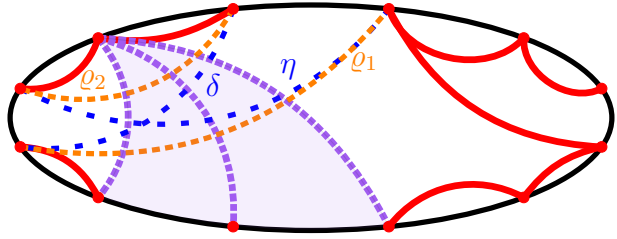

\begin{prop}
\label{prop:X}
Let $(\delta, \eta) \in \Accord'^2$ which admit an $\X$-completion $(\varrho_1,\varrho_2)$.
Then $\varrho_1, \varrho_2 \in \opResAc'(\delta, \eta)$ and $\{\delta, \eta, \varrho_1, \varrho_2\} \in \Anti(\Accord', \Accordleq)$.
\end{prop}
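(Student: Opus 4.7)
The plan has two parts. For the first part, I will show $\varrho_1, \varrho_2 \in \opResAc'(\delta,\eta)$ using the geometric description of overlap extensions. Since $\delta$ and $\eta$ cross, \cref{prop:geom_ext} identifies $\{\varrho_1, \varrho_2\}$ as the set of accordions obtained by resolving the crossing, i.e. the indecomposable summands of the middle term of a non-split overlap extension between $\MM(\delta)$ and $\MM(\eta)$. Hence $\{\varrho_1,\varrho_2\} \subseteq \OvExt(\delta,\eta) \cup \OvExt(\eta,\delta)$, and since $\opResAc(\delta,\eta)$ is closed under extensions (\cref{prop:folkloreresaccord}$(v)$ combined with the fact that it is a resolving set), we obtain $\varrho_1,\varrho_2 \in \opResAc(\delta,\eta)$. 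Non-projectivity is immediate: because $\delta$ and $\eta$ cross, they share some $\rho \in \NP(\delta) \cap \NP(\eta)$ which they both cross, and by the construction of the $\X$-completion, both $\varrho_1$ and $\varrho_2$ also cross $\rho$; hence neither lies in $\Prj(\Delta^{\gpoint})$.

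For the second part, I will establish that $\{\delta,\eta,\varrho_1,\varrho_2\}$ is an antichain. Since $\{\delta,\eta\} \in \Anti(\Accord',\Accordleq)$ by assumption, it suffices to check incomparability for the five pairs $(\delta,\varrho_1)$, $(\delta,\varrho_2)$, $(\eta,\varrho_1)$, $(\eta,\varrho_2)$, $(\varrho_1,\varrho_2)$. By \cref{prop:0E}, two accordions whose neighboring projectives intersect and whose colorations are matchable are incomparable if and only if their pair fails \ref{0E}, namely neither of them has both endpoints non-exclusive. The key structural observation is that the crossing of $\delta$ and $\eta$ splits each of $\NP(\varrho_1)$ and $\NP(\varrho_2)$ into two halves: along the source half, $\NP(\varrho_1)$ coincides with the corresponding portion of $\NP(\delta)$ (since $s(\varrho_1) = s(\delta)$), and along the target half, it coincides with the corresponding portion of $\NP(\eta)$ (since $t(\varrho_1) = t(\eta)$); symmetrically for $\NP(\varrho_2)$. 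Colorations glue consistently at the crossing because $\col_\delta$ and $\col_\eta$ are (strongly) matching by \cref{lem:tomatchablecap}.

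Granted this decomposition, the exclusivity data from \ref{4E} transfers as follows. For $(\delta,\varrho_1)$: the shared endpoint $s(\delta) = s(\varrho_1)$ is of course not exclusive, but $t(\delta)$ is exclusive to $\delta$ in $(\delta,\eta)$ by \ref{4E}, and since $\NP(\varrho_1)$ near its target half mirrors $\NP(\eta)$, $t(\delta)$ remains exclusive to $\delta$ in $(\delta,\varrho_1)$; dually $t(\varrho_1) = t(\eta)$, exclusive to $\eta$ in $(\delta,\eta)$, remains exclusive to $\varrho_1$ in $(\delta,\varrho_1)$. Therefore $(\delta,\varrho_1)$ fails \ref{0E} and the two are incomparable. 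The same argument, exchanging the roles of source/target and $\delta/\eta$, handles $(\delta,\varrho_2)$, $(\eta,\varrho_1)$, $(\eta,\varrho_2)$. For $(\varrho_1,\varrho_2)$, the endpoints $\{s(\delta), t(\eta)\}$ and $\{s(\eta), t(\delta)\}$ are disjoint (they are the four exclusive vertices in $(\delta,\eta)$), and each of these four vertices inherits its exclusivity on the appropriate side of the crossing.

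The main obstacle will be making precise the gluing of $\col_\delta$ and $\col_\eta$ into $\col_{\varrho_1}$ and $\col_{\varrho_2}$, and verifying that every clause of \cref{def:exclusive} (membership in $\NP_0$, the orange versus orange condition, the pink condition) is compatible with this gluing. Once the two halves of $\NP(\varrho_i)$ are identified with subsets of $\NP(\delta) \cup \NP(\eta)$ and the colorations carried across unambiguously, the five incomparability checks reduce to the already-available exclusivity granted by \ref{4E}, and the result follows.
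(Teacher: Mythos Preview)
Your proposal is correct and follows essentially the same approach as the paper: both parts argue that $\varrho_1,\varrho_2$ arise as overlap-extension middle terms (hence lie in $\opResAc(\delta,\eta)$) and are non-projective because they cross a projective in $\NP(\delta)\cap\NP(\eta)$, and both establish the antichain property by checking that no pair among $\{\delta,\eta,\varrho_1,\varrho_2\}$ satisfies \ref{0E} via \cref{prop:0E}. The paper's proof is much terser on the antichain part, simply asserting that none of the pairs satisfy \ref{0E}, whereas you spell out the gluing of $\NP(\varrho_i)$ from the two halves of $\NP(\delta)$ and $\NP(\eta)$ and the inheritance of exclusivity from \ref{4E}; this elaboration is a faithful unpacking of what the paper leaves implicit.
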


\begin{proof}
 By construction, we have $\varrho_1,\varrho_2 \in \OvExt(\delta,\eta) \cup \OvExt(\eta,\delta)$. Thus $\varrho_1,\varrho_2 \in \opResAc(\delta,\eta)$. As $\varrho_1$ and $\varrho_2$ contain all the projective accordions in $\NP(\delta) \cap \NP(\eta)$ within their own neighboring projectives, we have that $\varrho_1,\varrho_2$ are either crossing a projective or filling an angle of projective accordions. These accordions are therefore non-projective, as $(\pmb{\Sigma}, \mathcal {M})$ must be a disc without punctures, and projective accordions cannot cross themselves or form an inner cell. Thus $\varrho_1,\varrho_2\in\opResAc'(\delta,\eta)$.
 Moreover, none of the pairs $(\vartheta_1, \vartheta_2)$ of accordions in $\{\delta, \eta, \varrho_1,\varrho_2\}$,  satisfy \ref{0E}. Therefore $\{\delta, \eta, \varrho_1,\varrho_2\} \in \Anti(\Accord',\Accordleq)$.
\end{proof}

\subsection{W-move}
\label{ss:S'mov}
This sixth move comes from \cref{prop:specialcaseofmatching}. It will be relevant when a pair $(\delta,\eta) \in \Accord'^2$ is such that $\NP(\delta) \cap \NP(\eta) \neq \varnothing$ and $\col_\delta$ and $\col_\eta$ are weakly matchable and $\delta$ and $\eta$ are without a common endpoint. It allows us to reduce  to the strongly matchable case which is already known. Recall that all the configurations of weak matchability are listed in \cref{prop:specialcaseofmatching}.
\begin{prop}\label{prop:rho+approxwmatch}
 Let $(\delta,\eta) \in \Accord'^2$ be such that $\col_\delta$ and $\col_\eta$ are weakly matchable with no common endpoint. Keeping the notations of \cref{prop:specialcaseofmatching}. The curves $\delta$ and $\eta$ admit an $\Smov(\rho_+)$-approximation (\cref{def:Sapprox}).
\end{prop}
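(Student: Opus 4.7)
My plan is to verify directly the two conditions listed in Definition~\ref{def:Sapprox} for the pair $(\delta,\eta)$ relative to the projective accordion $\rho_+$ produced by Proposition~\ref{prop:specialcaseofmatching}. Since that proposition guarantees $\rho_+ \in \NP(\delta) \cap \NP(\eta)$, the second bullet point of Definition~\ref{def:Sapprox} holds automatically, via its first sub-condition. Consequently, the entire content of the proof reduces to checking that $\{\delta,\eta\}$ is an antichain in $(\Accord',\Accordleq)$.

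To rule out $\delta \Accordleq \eta$, I would invoke the accordion $\rho_-$ from Proposition~\ref{prop:specialcaseofmatching}, which lies in $\NP(\delta) \setminus \NP(\eta)$. Since $\delta \Accordleq \eta$ would force $\NP(\delta) \subseteq \NP(\eta)$ (via Lemma~\ref{lem:ResleqandAccordleq} and Theorem~\ref{thm:res_clo_1}), this is an immediate contradiction.

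To rule out $\eta \Accordleq \delta$, I would argue by contradiction. If $\eta \Accordleq \delta$, then Proposition~\ref{prop:0E} implies that $(\delta,\eta)$ satisfies \ref{0E}; more precisely, both endpoints of $\eta$ are non-exclusive in $(\delta,\eta)$. Combined with the hypothesis that $\delta$ and $\eta$ share no common endpoint, this forces $s(\eta)$ and $t(\eta)$ to lie in $\NP(\delta)_0 \setminus \{s(\delta),t(\delta)\}$ with colors compatible with $\col_\eta$ via matching. I would then analyze condition $(v)$ of Proposition~\ref{prop:specialcaseofmatching}: $\eta$ either has $v$ as an endpoint, has $w$ as an endpoint, or crosses $\rho_+$. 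By Lemma~\ref{lem:angleconfigNPscolor}, one has $\col_\delta(v) = {\osquare}$; moreover, since $\rho_+$ itself does not make the colorations matchable while $\col_\eta(v) \in \{{\rsquare},{\gsquare}\}$, condition \ref{M1} forces $\col_\delta(w) \in \{{\osquare},{\psquare}\}$. These coloration constraints, combined with Lemma~\ref{lem:conditionsResAc} applied to $\eta$ viewed as an element of $\opResAc'(\delta)$, should eliminate the two subcases where $v$ or $w$ is an endpoint of $\eta$.

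The main obstacle will be the remaining subcase, namely when $\eta$ crosses $\rho_+$. In this scenario, $\eta$ picks up neighboring projective accordions on the side of $\rho_+$ opposite to the cell $C$ containing both $\rho_-$ and $\rho_+$. Using Lemma~\ref{lem:connectedNP} to track the fan-connectivity of $\NP(\eta)$ across $\rho_+$, and contrasting it with the fact that $\delta$ cannot extend to that side of $\rho_+$ (which is precisely why $\rho_-$, rather than a symmetric witness, plays the matchability-witnessing role in the weakly matchable configuration, by Corollary~\ref{cor:numberofpairsrho+rho-}), one should produce a projective accordion in $\NP(\eta) \setminus \NP(\delta)$, contradicting $\eta \Accordleq \delta$.
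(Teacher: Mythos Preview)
Your plan is correct and follows the same two-step structure as the paper: verify both bullets of Definition~\ref{def:Sapprox} for $\rho_+$. The paper's proof is, however, much terser. For the second bullet, you invoke the first sub-condition ($\rho_+ \in \NP(\delta) \cap \NP(\eta)$, which is literally part of the conclusion of Proposition~\ref{prop:specialcaseofmatching}); the paper instead invokes the third sub-condition, observing that $\col_\eta(v) \in \{\rsquare, \gsquare\}$ for the common endpoint $v$ of $\rho_-$ and $\rho_+$ (this is Lemma~\ref{lem:angleconfigNPscolor}). Your route here is the more direct one. For the antichain bullet, the paper writes only ``by definition'' and offers no further justification, whereas you lay out a case analysis. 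Your $\delta \not\Accordleq \eta$ argument is fine: the implication $\xi \Accordleq \zeta \Rightarrow \NP(\xi) \subseteq \NP(\zeta)$ is used freely elsewhere in the paper (see the proof of Corollary~\ref{prop:noNPs}), though strictly speaking it is not an immediate consequence of the two results you cite. Your $\eta \not\Accordleq \delta$ argument, while plausible in outline, is considerably more elaborate than anything the paper supplies; the authors evidently regard the antichain property as immediate from weak matchability (presumably via Proposition~\ref{prop:0E} and the structural constraints of Section~\ref{ss:weakmatchabilityconfig}), so you may wish to look for a shorter contradiction before committing to the case split on Proposition~\ref{prop:specialcaseofmatching}(v).
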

\begin{proof}
 Let $(\delta,\eta) \in \Accord'^2$ be such that $\col_\delta$ and $\col_\eta$ are weakly matchable with no common endpoints. By definition, we have that $\{\delta,\eta\} \in \Anti(\Accord', \Accordleq)$. All the configurations are listed in \cref{prop:specialcaseofmatching}. Moreover, by denoting $v$ the common endpoint of $\rho_+$ and $\rho_-$, we have $ \col_\eta(v) \in \{{\gsquare},{\rsquare}\}$. This implies the claimed result.
\end{proof}

\begin{lemma} \label{lem:onlyonerho+isenough}
    Let $(\delta, \eta) \in \Accord'^2$ be such that $\col_\delta$ and $\col_\eta$ are weakly matchable. Assume that there exists $\rho \in \NP(\delta) \setminus \NP(\eta)$ that makes the colorations matchable. Assume there are exactly two pairs $(\rho_+^{(1)}, \rho_-^{(1)})$ and $(\rho_+^{(2)}, \rho_-^{(2)})$ that satisfy \cref{prop:specialcaseofmatching}. Then the $\Smov(\rho_+^{(1)})$-approximation and the $\Smov(\rho_+^{(2)})$-approxiation of $(\delta,\eta)$ are equal.
\end{lemma}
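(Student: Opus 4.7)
Plan: The starting point is Corollary~\cref{cor:numberofpairsrho+rho-}, which in the two-pair case gives us the structural information that $\rho_+^{(1)}$ and $\rho_+^{(2)}$ share a common endpoint $w\in\mathcal{M}_\rpoint$ with $\col_\delta(w)=\psquare$ and that $\rho_+^{(1)}\precdot_w\rho_+^{(2)}$ in $\Prj(\Delta^{\gpoint})$. Denote by $v_i$ the other endpoint of $\rho_+^{(i)}$, which by \cref{prop:specialcaseofmatching} coincides with the common endpoint of $\rho_+^{(i)}$ and $\rho_-^{(i)}$. Applying \cref{lem:angleconfigNPscolor} to each $\rho_-^{(i)}\in\NP(\delta)\setminus\NP(\eta)$ yields $\col_\delta(v_i)=\osquare$ and $\col_\eta(v_i)\in\{\rsquare,\gsquare\}$.

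With these colorations at hand, the $\eta$-component of the two approximations coincides immediately: the endpoint $v_i$ of $\rho_+^{(i)}$ lies in $\NP(\eta)_0$ with color in $\{\rsquare,\gsquare\}$, so \cref{def:Sapprox} forces $\varsigma_{\rho_+^{(i)}}^{(\eta)}=\eta$ for both $i=1,2$. For the $\delta$-component, the two endpoints of $\rho_+^{(i)}$ satisfy $\col_\delta(\{w,v_i\})=\{\psquare,\osquare\}\subseteq\{\osquare,\psquare\}$, so no endpoint is red or green, and \cref{def:Sapprox} gives $\xi_{\rho_+^{(i)}}^{(\delta)}=\Syg_{(\delta,\rho_+^{(i)})}$. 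The remaining task is therefore to verify $\Syg_{(\delta,\rho_+^{(1)})}=\Syg_{(\delta,\rho_+^{(2)})}$.

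For this final equality, the crucial fact is that $\col_\delta(w)=\psquare$ means $w$ is the pink (i.e.\ last intermediate) vertex of a large cell $C$ of $\delta$, which is either $C_L$, $C_R$, or the unique cell containing $\delta$. When one unfolds \cref{def:sygaccord} on each $\rho_+^{(i)}$, the relevant endpoint of $\rho_+^{(i)}$ with respect to $\delta$'s laterality is precisely $w$, which is colored $\psquare$; hence the ``pink branch'' of the definition is activated in both cases, collapsing the variable endpoint to $w_L(\delta)$ (resp.\ $w_R(\delta)$), a quantity depending only on $\delta$. Combined with the fixed endpoint $s(\delta)$ (resp.\ $t(\delta)$), this yields the same $\rpoint$-arc regardless of $i$, hence the equality of the two syzygy accordions.

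The main obstacle lies in the last step: one must rule out the potential asymmetry where $w$ plays the role of source for one of the $\rho_+^{(i)}$ and of target for the other, which would exchange the $w_L(\delta)$ and $w_R(\delta)$ branches and break the equality. This requires a local analysis at $w$ using that $\mathcal{M}\subset\partial\pmb{\Sigma}$, the uniqueness of the pink vertex in each large cell, and the adjacency $\rho_+^{(1)}\precdot_w\rho_+^{(2)}$ in $\prec_w$; together with the laterality induced by $\col_\delta$, these force $w$ to play a single role consistently for both projective accordions, closing the argument.
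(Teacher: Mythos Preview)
Your reduction to the equality $\Syg_{(\delta,\rho_+^{(1)})}=\Syg_{(\delta,\rho_+^{(2)})}$ is correct, and the treatment of the $\eta$-component is fine. The gap lies in the final step. You assert that ``the relevant endpoint of $\rho_+^{(i)}$ with respect to $\delta$'s laterality is precisely $w$'' for both $i$, and then propose to \emph{rule out} the asymmetry where $w$ is the target of one $\rho_+^{(i)}$ and the source of the other. But this asymmetry always occurs and cannot be ruled out: since $\col_\delta(w)=\psquare$, the only two projective accordions of $\NP(\delta)$ incident to $w$ are the two boundary arcs $\vartheta_1,\vartheta_2$ of the large cell $C$ (say $C=C_L$), with $s(\vartheta_1)=s(\delta)$, $t(\vartheta_1)=w=s(\vartheta_2)$, and $t(\vartheta_2)=w_L(\delta)$. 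These are precisely $\rho_+^{(1)}$ and $\rho_+^{(2)}$. In \cref{def:sygaccord} the pink branch fires for $\vartheta_1$ (since $t(\vartheta_1)=w$ is pink), yielding target $w_L(\delta)$; for $\vartheta_2$ the \emph{non}-pink branch fires (since $t(\vartheta_2)=w_L(\delta)$ is orange), returning $t(\vartheta_2)=w_L(\delta)$ directly. The two syzygy accordions therefore coincide as the arc from $s(\delta)$ to $w_L(\delta)$, but not for the reason you give. The case $C=C_R$ is dual.

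The paper sidesteps unpacking \cref{def:sygaccord} by observing that $\Syg_{(\delta,\rho)}$, for any $\rho$ in a fixed large cell of $\delta$, is the unique indecomposable non-projective summand of the highest syzygy of $\MM(\delta)$ associated with that cell; since \cref{cor:numberofpairsrho+rho-}$(iii)$ forces both $\rho_+^{(i)}$ into the same cell, the equality is immediate. Your explicit computation is a legitimate alternative once the asymmetry is handled as above.
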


\begin{proof}
    It follows from \cref{def:Sapprox} and \cref{cor:numberofpairsrho+rho-} $(iii)$. Indeed, the curve that enables the computation of the $\Smov(\rho_+^{(1)})$-approximation is a summand of the highest nonprojective syzygy of $\delta$. It is the same curve that enables the computation of the $\Smov(\rho_+^{(2)})$-approximation.
\end{proof}

\begin{definition}\label{def:S'}
Let $(\delta,\eta) \in \Accord'^2$ with no common endpoint, be such that $\col_\delta$ and $\col_\eta$ are weakly matchable. Assume there exists $\rho \in \NP(\delta) \setminus \NP(\eta)$ that makes the colorations matchable. We define the \new{$\W$-move} as follows :
\[
\W(\delta,\eta)=\{ \delta,\eta \} \curlyvee \All(\xi_{\rho_+}^{(\delta)},\eta)
\]
where:
\begin{enumerate}[label=$\bullet$, itemsep=1mm]
    \item the projective accordion $\rho_+$ is given by a pair $(\rho_+, \rho_-)$ satisfying the configuration from \cref{prop:specialcaseofmatching}; and,
    \item for any $(\xi, \varsigma) \in \Accord'^2$, $\All(\xi,\varsigma)$ is the join of $\KE(\xi,\varsigma)$, $\CoZ(\xi,\varsigma)$, $\KE'(\xi,\varsigma)$, $\V(\xi,\varsigma)$ and $\X(\xi, \varsigma)$.
\end{enumerate}
\end{definition}

\begin{prop}\label{prop:weaklytostrongly}
    Let $(\delta,\eta) \in \Accord'^2$ without a common endpoint. Assume that $\col_\delta$ and $\col_\eta$ are weakly matchable, and that there exists $\rho \in \NP(\delta) \setminus \NP(\eta)$ which makes the colorations matchable. Then \[\opResAc(\delta,\eta) = \opResAc(\delta) \cup \opResAc(\eta) \cup  \opResAc(\xi_{\rho_+}^{(\delta)},\eta) ,\]
where $\rho_+$ is the projective accordion of a pair $(\rho_+, \rho_-)$ satisfying \cref{prop:specialcaseofmatching}.
\end{prop}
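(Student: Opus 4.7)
Set $\mathscr{W} = \opResAc(\delta) \cup \opResAc(\eta) \cup \opResAc(\xi_{\rho_+}^{(\delta)},\eta)$. The inclusion $\mathscr{W} \subseteq \opResAc(\delta,\eta)$ is essentially formal: by definition $\xi_{\rho_+}^{(\delta)}$ is either $\delta$ itself or $\Syg_{(\delta,\rho_+)}$, and in either case $\xi_{\rho_+}^{(\delta)} \Accordleq \delta$, so $\opResAc(\xi_{\rho_+}^{(\delta)},\eta) \subseteq \opResAc(\delta,\eta)$; the other two summands are trivially contained. For the reverse inclusion, I will show that $\mathscr{W}$ is a resolving set containing $\delta$ and $\eta$, which via \cref{conv:usethisthm} reduces to verifying the three conditions \ref{Res1}, \ref{Res2}, \ref{Res3}.

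Conditions \ref{Res1} and \ref{Res3} are inherited for free. Indeed, each of the three summands of $\mathscr{W}$ contains $\Prj(\Delta^{\gpoint})$ by \cref{thm:res_clo_1}, and, at the level of non-projective accordions, each summand is an ideal of $(\Accord',\Accordleq)$; hence $\mathscr{W} \cap \Accord'$ is itself an ideal, and by \cref{prop:folkloreresaccord}(v) it is closed under non-projective syzygies and under extensions involving projectives. So the entire task is to prove extension closure of $\mathscr{W}$ under $\OvExt$ and $\ArExt$ among non-projective accordions.

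Let $\vartheta,\varsigma \in \mathscr{W} \cap \Accord'$ be a non-comparable pair producing a non-trivial arrow or overlap extension. Each of the three monogeneous or bigeneous pieces of $\mathscr{W}$ is extension-closed by \cref{thm:res_clo_1} and the definition of $\opResAc$ for a pair. Therefore we may assume that one of them lies strictly in $\opResAc'(\delta)$ and the other in $\opResAc'(\eta)$, neither being in $\opResAc'(\xi_{\rho_+}^{(\delta)},\eta)$; say $\vartheta \Accordleq \delta$ and $\varsigma \Accordleq \eta$. By \cref{prop:matchableextorepi}, $\col_\vartheta$ and $\col_\varsigma$ are matchable. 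The heart of the proof is the claim $\vartheta \Accordleq \xi_{\rho_+}^{(\delta)}$: once established, it forces $(\vartheta,\varsigma) \in \opResAc'(\xi_{\rho_+}^{(\delta)}) \times \opResAc'(\eta)$, so any of their extensions lies in $\opResAc(\xi_{\rho_+}^{(\delta)},\eta) \subseteq \mathscr{W}$.

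To prove the claim, let $\sigma$ be a projective accordion making $\col_\vartheta$ and $\col_\varsigma$ matchable, and consider the endpoint $v$ of $\rho_+$ and $\rho_-$ coming from \cref{prop:specialcaseofmatching}, where $\col_\eta(v) \in \{\rsquare,\gsquare\}$ and $\col_\delta(v) = \osquare$ (\cref{lem:angleconfigNPscolor}). If $\vartheta \not\Accordleq \xi_{\rho_+}^{(\delta)}$, then $\col_\vartheta(v) = \osquare$ by \cref{lem:upsourceuptargetexist,rem:useful_up_s}, and I will argue that $\sigma$ cannot exist. Applying \cref{lem:angleconfigNPsplace,lem:angleconfigNPsnonmatchable,lem:impossibleorder} to the pair $(\vartheta,\varsigma)$ shows that any matchable-inducing projective at $v$ must sit between $\rho_-$ and $\rho_+$ in the order $\prec_v$; by \cref{cor:ProjAccordweakmatch} and \cref{cor:numberofpairsrho+rho-}, the only candidates are precisely the pairs $(\rho_-,\rho_+)$ (or the second pair listed in \cref{cor:numberofpairsrho+rho-}(iii)), which by \cref{lem:onlyonerho+isenough} give the same approximation. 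At any other location, the fact that $\NP(\vartheta) \subseteq \NP(\delta)$ and $\NP(\varsigma) \subseteq \NP(\eta)$ inherits the non-strong-matchability of $(\delta,\eta)$ to $(\vartheta,\varsigma)$. A direct application of \cref{prop:bestchoiceformatchability}, adapted to the weakly matchable setting but using exactly the same topological argument (no inner cell in $(\pmb{\Sigma},\mathcal{M})$, and fan-connectivity of neighboring-projective sets from \cref{lem:connectedNP,lem:inbetweenNP}), then pins $\vartheta$ to the maximal syzygy of $\delta$ relative to $\rho_+$, namely $\xi_{\rho_+}^{(\delta)}$.

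The main obstacle will be the final step: transporting \cref{prop:bestchoiceformatchability}, which is stated for non-matchable $(\delta,\eta)$, to the present weakly-matchable situation. I expect the proof to proceed verbatim since the proposition's argument only uses (i) that $\rho_+$ lies in the boundary of a large cell, and (ii) that $\vartheta$ having $v$ coloured ${\rsquare}$ or ${\gsquare}$ forces $\vartheta$ to share an endpoint with $\delta$ and to have $\NP(\xi_{\rho_+}^{(\delta)}) \subseteq \NP(\vartheta)$; both features are configuration-local and do not depend on the precise matchability status of the enveloping pair $(\delta,\eta)$. Once this transport is granted, combining the claim with the ideal-closure properties completes the extension closure of $\mathscr{W}$ and yields $\opResAc(\delta,\eta) \subseteq \mathscr{W}$.
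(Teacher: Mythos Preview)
Your overall architecture matches the paper's: set up $\mathscr{W}$, observe the easy inclusion, then prove $\mathscr{W}$ is resolving by reducing (via \cref{prop:folkloreresaccord}(v)) to the single claim that whenever $\vartheta \Accordleq \delta$, $\varsigma \Accordleq \eta$ admit an arrow or overlap extension, one has $\vartheta \Accordleq \xi_{\rho_+}^{(\delta)}$. Where you diverge is in the proof of that claim, and this is where the argument breaks down.

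The paper does not attempt to transport \cref{prop:bestchoiceformatchability}. Instead it argues geometrically, splitting on whether $\vartheta$ and $\varsigma$ cross or share a common endpoint. In the crossing case, both curves cross a common projective, which then lies in $\NP(\delta)\cap\NP(\eta)$; the configuration of \cref{prop:specialcaseofmatching} (cf.\ \cref{fig:ReducedWeakConfig}) forces any $\vartheta\in\opResAc'(\delta)$ crossing such a projective to lie below $\xi_{\rho_+}^{(\delta)}$. In the common-endpoint case, the geometry pins that endpoint to one of three explicit vertices $v,w,x$ on $\partial C$, and each is dispatched by hand (for $w$ the curve is $\rho_+$ itself; for $x$ one is a syzygy below $\xi_{\rho_+}^{(\delta)}$; for $v$ either $\vartheta$ is already below or $\vartheta=\xi_{\rho_+}^{(\delta)}$).

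Your route has concrete gaps. First, the step ``if $\vartheta\not\Accordleq\xi_{\rho_+}^{(\delta)}$ then $\col_\vartheta(v)=\osquare$'' presupposes $v\in\NP(\vartheta)_0$, which is not guaranteed for an arbitrary $\vartheta\in\opResAc'(\delta)$; \cref{lem:upsourceuptargetexist} and \cref{rem:useful_up_s} do not deliver this. Second, the assertion that ``non-strong-matchability of $(\delta,\eta)$ is inherited by $(\vartheta,\varsigma)$ at any other location'' is not justified: the colorations $\col_\vartheta$ and $\col_\delta$ differ in general, so an endpoint that was $\osquare/\psquare$ for $\delta$ may well be $\rsquare/\gsquare$ for $\vartheta$, producing a strongly-matchable pair away from $v$. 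Third, \cref{prop:bestchoiceformatchability} requires that the fixed $\rho$ lie in $\NP(\vartheta)$ and make $\col_\vartheta,\col_\varsigma$ matchable; you do not verify either hypothesis for $\rho_+$, and your contradiction framing (``$\sigma$ cannot exist'') is inconsistent with then invoking the proposition to conclude $\vartheta=\xi_{\rho_+}^{(\delta)}$. The paper's direct case split avoids all of this by never leaving the concrete cell $C$ bounded by $\rho_-,\rho_+$.
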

\begin{proof}
    By assumption, we have that $(\xi_{\rho_+}^{(\delta)}, \eta)$ is the $\Smov(\rho_+)$-approximation of $(\delta, \eta)$. Thus, \[\opResAc(\delta) \cup \opResAc(\eta) \cup  \opResAc(\xi_{\rho^+},\eta)\subseteq \opResAc(\delta,\eta). \] To prove the converse, we will show that $\opResAc(\delta) \cup \opResAc(\eta) \cup  \opResAc(\xi_{\rho_+^{(\delta)}},\eta)$ is a resolving set. First, this set is closed under syzygies by \cref{lem:IdealsclosedSyzygies}. To check the closure under extensions, it is sufficient to show that, for any pair $(\xi, \varsigma) \in \Accord'^2$, such that $\xi \Accordleq \delta$ and $\varsigma \Accordleq \eta$,  $\xi\in\opResAc(\delta)$ admit an arrow or an overlap extension with $\varsigma \in\opResAc(\eta)$ only if $\xi\Accordleq \xi_{\rho_+}^{(\delta)}$. 
    
    Let $\xi$ and $\varsigma$ be as stated. It follows that either:
    \begin{enumerate}[label=$\bullet$, itemsep=1mm]
        \item $\xi$ crosses $\varsigma$; or,
        \item $\xi$ and $\varsigma$ share a common endpoint.
    \end{enumerate}  
    
    The first case is easily deduced from \cref{prop:specialcaseofmatching} as curves crossing $\NP(\eta)$ are below of $\xi_{\rho_+}^{(\delta)}$ using the conventions of \cref{fig:ReducedWeakConfig}. Such an $\Xi$ is in $\opResAc(\xi_{\rho_+}^{(\delta)})$ by \cref{thm:res_clo_1}. 
    The second case appears in the rightmost case of \cref{fig:specialmatchreduction} through curves ending in $v,w$ or the next vertex. However such a $\xi$ is below the syzygy of $\delta$ starting at the same vertex and thus $\xi\Accordleq \xi_{\rho_+}^{(\delta)}$. 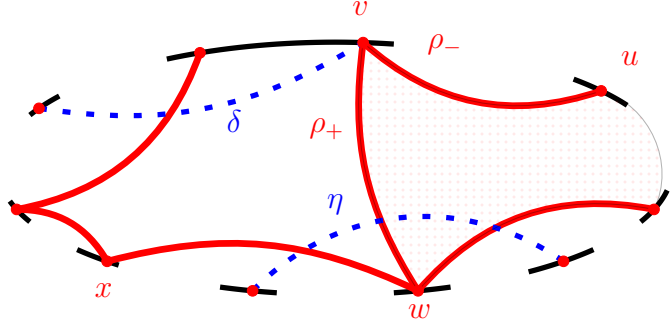
\begin{figure}[!ht]
\centering 
    \begin{tikzpicture}[scale=1.4,mydot/.style={
					circle,
					thick,
					fill=white,
					draw,
					outer sep=0.5pt,
					inner sep=1pt
				}, scale = .8]
		\tikzset{
		osq/.style={
        rectangle,
        thick,
        fill=white,
        append after command={
            node [
                fit=(\tikzlastnode),
                orange,
                line width=0.3mm,
                inner sep=-\pgflinewidth,
                cross out,
                draw
            ] {}}}}
        	\foreach \X in {0,1,...,37}
		{
		\tkzDefPoint(4*cos(pi/19*\X),1.5*sin(pi/19*\X)){\X};
		};
        \draw [line width=0.7mm,domain=30:45] plot ({4*cos(\x)}, {1.5*sin(\x)});
        \draw [line width=0.7mm,domain=80:120] plot ({4*cos(\x)}, {1.5*sin(\x)});
        \draw [line width=0.7mm,domain=145:155] plot ({4*cos(\x)}, {1.5*sin(\x)});
        \draw [line width=0.7mm,domain=195:205] plot ({4*cos(\x)}, {1.5*sin(\x)});
        \draw [line width=0.7mm,domain=220:230] plot ({4*cos(\x)}, {1.5*sin(\x)});
        \draw [line width=0.7mm,domain=250:260] plot ({4*cos(\x)}, {1.5*sin(\x)});
        \draw [line width=0.7mm,domain=280:290] plot ({4*cos(\x)}, {1.5*sin(\x)});
        \draw [line width=0.7mm,domain=305:320] plot ({4*cos(\x)}, {1.5*sin(\x)});
        \draw [line width=0.7mm,domain=335:350] plot ({4*cos(\x)}, {1.5*sin(\x)});

		\draw[line width=0.9mm ,bend left=30,red](12) edge (21);
		\draw[line width=0.9mm ,bend left =30,red](21) edge (24);
            \draw[line width=0.9mm ,bend left =20,red](24) edge (30);
            \draw[line width=0.9mm ,bend left =30,red](30) edge (36);
            \draw[line width=0.9mm ,bend left =20,red](30) edge (9);
            \draw[line width=0.9mm ,bend right =30,red](9) edge (4);
		
		\draw[line width=0.7mm,blue,bend right=20, loosely dashed](16) edge (9);
            \draw[line width=0.7mm,blue,bend left=40, loosely dashed](27) edge (33);

            \filldraw [pattern=dots, pattern color=red,opacity=0.3] (9) to [bend right=30] (4) to [bend left=50] (36) to [bend right=30] (30) to [bend left=20] cycle ;

		\foreach \X in {4,9,12,16,21,24,27,30,33,36}
		{
		\tkzDrawPoints[fill =red,size=4,color=red](\X);
		};

		\tkzDefPoint(-1.2,0.8){g};
		\tkzLabelPoint[blue](g){\Large $\delta$}
            \tkzDefPoint(0,-.2){h};
		\tkzLabelPoint[blue](h){\Large $\eta$}
            \tkzDefPoint(-.1,.7){i};
		\tkzLabelPoint[red](i){\Large $\rho_+$}
            \tkzDefPoint(1.3,1.7){j};
		\tkzLabelPoint[red](j){\Large $\rho_-$}
            \tkzDefPoint(3.5,1.5){j};
		\tkzLabelPoint[red](j){\Large $u$}
            \tkzDefPoint(0.3,2.1){l};
		\tkzLabelPoint[red](l){\Large $v$}
            \tkzDefPoint(1,-1.5){l};
		\tkzLabelPoint[red](l){\Large $w$}
            \tkzDefPoint(-2.75,-1.25){l};
		\tkzLabelPoint[red](l){\Large $x$}
    \end{tikzpicture}
\caption{\label{fig:Wmovesituation} Illustration of the situation.}
\end{figure} In \cref{fig:Wmovesituation}, the accordion $\xi$ must end at $v$, at $w$, or at the next vertex of the cell that we denote by $x$. If $\xi$ ends at $x$ it follows from \cref{thm:res_clo_1} that $\xi\Accordleq \xi_{\rho_+}^{(\delta)}$ . The only curve $\xi\in\opResAc(\delta)$ with endpoint $w$ is $\rho_+$ and thus the extension between $\rho_+$ and $\varsigma$ lies in $\opResAc(\eta)$. The last option is to consider $\xi$ admitting $v$ as an endpoint. If $v=s(\xi)$ the curve $\xi$ is such that $\xi\Accordleq \xi_{\rho_+}^{(\delta)}$ by \cref{thm:res_clo_1}. If $t(\xi)=v$, the completion of the triangle must be admissible and thus $s(\xi)=x$. It follows that $\xi=\xi_{\rho_+}^{(\delta)}$.
\end{proof}

\subsection{S-move}
\label{ss:S}

This seventh and last move comes from \cref{prop:Sapproxisenough}. It will be relevant to apply it to any pair $(\delta,\eta) \in \Accord'^2$ such that $\NP(\delta) \cap \NP(\eta) \neq \varnothing$ and $\col_\delta$ and $\col_\eta$ are not matchable.

\begin{definition} \label{def:S}
Let $(\delta,\eta) \in \Accord'^2$. Then we define the \new{$\Smov$-move} on $(\delta,\eta)$ as follows:
\[\Smov(\delta,\eta) = \{\delta,\eta\} \curlyvee \left( \bigcurlyvee_\rho \All^{+}(\xi_\rho,\varsigma_\rho) \right),\]
where:
\begin{enumerate}[label=$\bullet$,itemsep=1mm]
    \item the big join is indexed by $\rho \in \NP(\delta) \cup \NP(\eta)$ such that $(\delta,\eta)$ admits an $\Smov(\rho)$-approximation $(\xi_\rho,\varsigma_\rho)$; and,
    \item for any $(\xi,\varsigma) \in \Accord'^2$, $\All^{+}(\xi,\varsigma)$ is the join of $\KE(\xi,\varsigma)$, $\CoZ(\xi,\varsigma)$, $\KE'(\xi,\varsigma)$, $\V(\xi,\varsigma)$, $\X(\xi, \varsigma)$ and $\W (\xi,\varsigma)$.
\end{enumerate}
\end{definition}
\begin{prop} \label{prop:S}
Let $(\delta,\eta) \in \Accord'^2$ be such that $\NP(\delta) \cap \NP(\eta) \neq \varnothing$, and $\col_\delta$ and $\col_\eta$ are not matchable. If, for all $\nu \in \Smov(\delta,\eta)$, we have $\nu \Accordleq \delta$ or $\nu \Accordleq \eta$, then $\mathfrak{E}_{\opResAc'(\delta,\eta)} = \{\delta,\eta\}$.
\end{prop}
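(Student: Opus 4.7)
The plan is to combine the reduction to the matchable case given by \cref{prop:Sapproxisenough} with the fact that the matchable sub-pieces are controlled by the moves collected in $\All^+$, and then use the hypothesis on $\Smov(\delta,\eta)$ to absorb everything into $\opResAc(\delta) \cup \opResAc(\eta)$.

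First I would verify that $\{\delta,\eta\}$ is actually an antichain of $(\Accord',\Accordleq)$. If, say, $\delta\Accordleq\eta$, then by \cref{prop:0E} the colorations $\col_\delta$ and $\col_\eta$ would be matchable, contradicting the hypothesis. Thus the pair is non-comparable and the claimed $\mathfrak{E}_{\opResAc'(\delta,\eta)} = \{\delta,\eta\}$ at least makes sense as an antichain.

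Next I would invoke \cref{prop:Sapproxisenough} to write
\[\opResAc(\delta,\eta) = \opResAc(\delta)\cup\opResAc(\eta)\cup\bigcup_{\rho}\opResAc(\xi_\rho^{(\delta)},\varsigma_\rho^{(\eta)}),\]
where $\rho$ ranges over the projective accordions in $\NP(\delta)\cup\NP(\eta)$ for which $(\delta,\eta)$ admits an $\Smov(\rho)$-approximation $(\xi_\rho^{(\delta)},\varsigma_\rho^{(\eta)})$. By \cref{lem:Sapproxmatchablecase}, each such pair has matchable colorations, so it falls within the scope of the previously analysed moves: depending on whether the matchability is strong or weak, and on the exclusivity configuration \ref{0E}--\ref{4E}, the upper join-decomposition of $\opResAc'(\xi_\rho^{(\delta)},\varsigma_\rho^{(\eta)})$ is described by the corresponding piece of $\All^+(\xi_\rho^{(\delta)},\varsigma_\rho^{(\eta)})$ (using \cref{prop:0E,cor:KEmove,prop:3EforKE,V,prop:X,prop:Co-Z,prop:weaklytostrongly}). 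Consequently
\[\opResAc(\xi_\rho^{(\delta)},\varsigma_\rho^{(\eta)}) = \bigcup_{\nu\in\All^+(\xi_\rho^{(\delta)},\varsigma_\rho^{(\eta)})}\opResAc(\nu).\]

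Now I would apply the hypothesis. By \cref{def:S}, every element appearing in any $\All^+(\xi_\rho^{(\delta)},\varsigma_\rho^{(\eta)})$ is dominated, in $(\Accord',\Accordleq)$, by some element of $\Smov(\delta,\eta)$; by assumption each such maximal element is itself $\Accordleq \delta$ or $\Accordleq \eta$. Therefore $\opResAc(\nu)\subseteq\opResAc(\delta)\cup\opResAc(\eta)$ for every $\nu\in\All^+(\xi_\rho^{(\delta)},\varsigma_\rho^{(\eta)})$ and every $\rho$, so the big union collapses and
\[\opResAc(\delta,\eta) = \opResAc(\delta)\cup\opResAc(\eta).\]
Combined with the antichain observation from the first step, the uniqueness statement of \cref{cor:maingeommotivation} forces $\mathfrak{E}_{\opResAc'(\delta,\eta)} = \{\delta,\eta\}$.

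The most delicate point will be the step that identifies $\All^+(\xi_\rho,\varsigma_\rho)$ as generating the matchable piece: it requires packaging the previous move propositions into one statement, paying attention to the weakly-matchable case (handled by the $\W$-move via \cref{prop:weaklytostrongly}) and to the situation in \ref{0E} where one of $\xi_\rho^{(\delta)},\varsigma_\rho^{(\eta)}$ is already comparable to the other. Everything else is a direct manipulation of unions of principal ideals, and the antichain conclusion follows purely formally from the absorption property.
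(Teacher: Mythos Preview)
Your plan is correct and follows exactly the route the paper intends: the paper's own proof is the one-liner ``The result follows from \cref{prop:Sapproxisenough},'' and your proposal is a faithful unpacking of what that line means, including the antichain check via \cref{prop:0E} and the absorption argument using the definition of $\Smov(\delta,\eta)$. The delicate point you flag --- that $\All^+(\xi_\rho,\varsigma_\rho)$ really does generate $\opResAc'(\xi_\rho,\varsigma_\rho)$ --- is exactly the content the paper later spells out in the proof of \cref{prop:moveclosedandantichain}, so your identification of it as the crux is on target.
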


\begin{proof}
The result follows from \cref{prop:Sapproxisenough}.
\end{proof}

	\section{Resolving subcategories for gentle trees}
	\label{sec:TreePart2}
	\pagestyle{plain}

In this section, we state and prove our main result. We show that the subsets in $\Accord'$ that are closed under the moves introduced in \cref{sec:Moves} are in one-to-one correspondence with the resolving sets of \cref{def:resolv}. We then give a combinatorial algorithm, mixing \cite[Algorithm 2.14]{DS251} and \cref{algo:Rescatmotivation}, which allows to construct any resolving closure geometrically. 

Recall that we fix a gentle tree $(Q,R)$, and we set $(\pmb{\Sigma}, \mathcal{M},\Delta^{\gpoint}) = \Surf(Q,R)$, $\Accord = \mathscr{A}(\Surf(Q,R))$, and $\Accord' = \Accord \setminus \Prj(\Delta^{\gpoint})$.

\subsection{Move-closed sets and resolving sets}
\label{ss:Movecomplete}

\begin{definition} \label{def:moveclos}
Let $\mathfrak{B} \subseteq \Accord'$. We say that $\mathfrak{B}$ is \new{move-closed} if $\mathfrak{B}$ is closed under $\KE$-moves, $\W$-moves, $\CoZ$-moves, $\V$-moves and $\X$-moves. We say that $\mathfrak{B}$ is \new{strongly move-closed} if $\mathfrak{B}$ is move-closed, and is closed under $\KE'$-moves and $\Smov$-moves.
\end{definition}

\begin{prop} \label{prop:moveclosedandRes}
Let $\mathfrak{B} \subseteq \Accord'$. Assume that $\mathfrak{B}$ is an  ideal of $(\Accord',\Accordleq)$. Then $\mathfrak{B}$ is move-closed if and only if $\mathfrak{B} \cup \Prj(\Delta^{\gpoint})$ is a resolving set of $\mathfrak{A}$.
\end{prop}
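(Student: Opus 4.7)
The plan is to prove both implications using \cref{thm:equivres}, reducing the closure under extensions to closures under the specific moves introduced in \cref{sec:Moves}, while exploiting the ideal hypothesis on $\mathfrak{B}$ to obtain syzygy-closure and extension-closure with projectives for free via \cref{prop:folkloreresaccord}$(v)$. Throughout, writing $\mathscr{S}=\mathfrak{B}\cup\Prj(\Delta^{\gpoint})$, note that $\mathscr{S}$ already satisfies \ref{Res1} and \ref{Res3}, so the whole question is about closure under extensions between two non-projective accordions in $\mathfrak{B}$.

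For the implication $\mathfrak{B}$ move-closed $\Rightarrow$ $\mathscr{S}$ resolving, I would fix an arbitrary pair $(\delta,\eta)\in\mathfrak{B}^2$ and, following the partition recorded in \cref{tab:Allmovessummary}, show that every accordion appearing in a short exact sequence with ends among $\{\MM(\delta),\MM(\eta)\}$ already lies in $\mathscr{S}$. The case $\NP(\delta)\cap\NP(\eta)=\varnothing$ is immediate by \cref{prop:noNPs}. If $\col_\delta$ and $\col_\eta$ are strongly matchable, I split according to the exclusivity classification of \cref{lem:allexclusivityhyp}: under \ref{0E} one accordion lies in $\opResAc'(\text{the other})\subseteq\mathfrak{B}$ (\cref{prop:0E}); under \ref{2E} the $\KE$-move absorbs the unique arrow/kernel completion (\cref{cor:KEmove}); under \ref{3E} the $\CoZ$- and $\V$-moves handle the possible completions, and the $\KE'$-completion, when it exists, is by \cref{lem:KE'andKE} the $\KE$-completion of $\{\delta,\xi\}$ where $\xi$ is the $\CoZ$-completion, so it is captured by iterating $\CoZ$- then $\KE$-closure (no separate $\KE'$-closure is needed); under \ref{4E} the crossing subcase is handled by the $\X$-move (\cref{prop:X}) and the non-crossing subcase by the $\CoZ$-move. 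When the colorations are only weakly matchable (no common endpoint), the $\W$-move reduces to an $\Smov(\rho_{+})$-approximation lying in $\mathfrak{B}$ thanks to the ideal property, and one applies the strongly matchable analysis to the approximation (\cref{prop:weaklytostrongly}). Finally, when the colorations are not matchable, \cref{prop:Sapproxisenough} expresses $\opResAc(\delta,\eta)$ in terms of the $\Smov(\rho)$-approximations, each of which is in $\mathfrak{B}$ (ideal) and has strongly matchable colorations by \cref{lem:Sapproxmatchablecase}, so the previous analysis applied to the approximations shows that no new accordion is introduced. Combining these, $\mathscr{S}$ is closed under extensions, hence resolving by \cref{thm:equivres}.

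For the converse, I would assume $\mathscr{S}=\mathfrak{B}\cup\Prj(\Delta^{\gpoint})$ is resolving and verify each move in turn. For any $(\delta,\eta)\in\mathfrak{B}^2$, the propositions of \cref{sec:Moves} locate each completion inside $\opResAc'(\delta,\eta)$: the $\KE$-completion is defined as a kernel or middle term of a short exact sequence between $\MM(\delta)$ and $\MM(\eta)$ (\cref{def:completeaskernelorextension}); the $\CoZ$-completion belongs to $\opResAc'(\delta,\eta)$ by \cref{lem:welldefinedCoZ}; the $\V$-completion by \cref{V}; the $\X$-completion by \cref{prop:X}; and the $\W$-move lands in $\opResAc'(\delta,\eta)$ via \cref{prop:weaklytostrongly}. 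Since $\mathscr{S}$ is resolving and contains $\delta,\eta$, it contains $\opResAc(\delta,\eta)$, and since every completion is non-projective, it lies in $\mathfrak{B}$. This proves $\mathfrak{B}$ is move-closed.

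The main obstacle is bookkeeping in the forward direction, namely ensuring that the list of moves $\{\KE,\W,\CoZ,\V,\X\}$ is genuinely exhaustive for extension closure given the ideal hypothesis; the apparent omission of $\KE'$- and $\Smov$-moves from \cref{def:moveclos} is justified precisely because the former is a consequence of iterated $\CoZ+\KE$ (\cref{lem:KE'andKE}), and the latter reduces via \cref{prop:Sapproxisenough} to matchable pairs of $\Smov$-approximations which themselves sit inside $\mathfrak{B}$ because $\mathfrak{B}$ is an ideal. Keeping this reduction clean is the delicate point of the argument.
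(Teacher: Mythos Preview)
Your proposal is correct, but for the forward implication you take a substantially longer route than the paper. The paper argues directly that the moves are precisely the geometric operations encoding middle terms of extensions: overlap-extension results are recovered by $\X$- and $\V$-moves (and lie in $\opResAc'(\delta)\cup\opResAc'(\eta)\subseteq\mathfrak{B}$ whenever the exclusivity hypotheses of those moves fail), while arrow-extension results are recovered by $\KE$- and $\V$-moves. Combined with the ideal hypothesis, which handles syzygies and extensions by projectives via \cref{prop:folkloreresaccord}$(v)$, this yields resolving immediately through \cref{thm:equivres}. You instead run through the entire case partition of \cref{tab:Allmovessummary}, essentially carrying out inside this proposition the analysis that the paper postpones to \cref{prop:moveclosedandantichain}. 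In particular, your treatment of the non-matchable case via $\Smov$-approximations is unnecessary here: by the contrapositive of \cref{prop:matchableextorepi} there are simply no overlap or arrow extensions between such a pair, so extension-closure is vacuous in that case. What your approach buys is that most of the work for \cref{prop:moveclosedandantichain} is already done; what the paper's approach buys is a three-line argument that cleanly separates the two propositions. Your converse direction matches the paper's.
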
  

\begin{proof}
If $\mathfrak{B} \subset \Accord'$ is such that $\mathfrak{B}\cup \Prj(\Delta^{\gpoint})$ is a resolving set, then $\mathfrak{B}$ is an ideal by \cref{prop:folkloreresaccord}. Note that the $\KE$-move adds kernels or extensions that are non-projective and thus accordions in $\mathfrak{B}$. Using \cref{lem:welldefinedCoZ}, we obtain the closure under $\CoZ$-move. By \cref{V}, we have the closure under $\V$-move.  \cref{prop:X} ensures the closure under $\X$-move. The $\W$-move is a combination of these previous moves applied to syzygies. Thus,  $\mathfrak{B}$ is move-closed.

Let us assume that $\mathfrak{B}$ is a move-closed ideal. By \cref{lem:IdealsclosedSyzygies}, $\mathfrak{B} \cup \Prj(\Delta^{\gpoint})$ is closed under taking syzygies. Consider $\delta, \eta\in\mathfrak{B} \cup \Prj(\Delta^{\gpoint})$. If, for instance, $\eta \in \Prj(\Delta^{\gpoint})$, then the accordions in $\ArExt(\delta,\eta)$ or in $\OvExt(\delta,\eta)$ are smaller than $\delta$ for $\Accordleq$. So they are in $\mathfrak{B}$. Assume that $\delta$ and $\eta$ are both non-projective accordions. Then accordions arising from overlap extensions are also obtained by $\X$-moves and $\V$-moves, and those arising from arrow extensions by $\KE$-moves and $\V$-moves. So $\mathfrak{B}$ is closed under extensions. By \cref{thm:equivres}, $\mathfrak{B} \cup \Prj(\Delta^{\gpoint})$ is a resolving set. 
\end{proof}


According to the proof, and combined with \cref{lem:IdealsclosedSyzygies}, we obtain the following result.

\begin{cor} \label{cor:KEVXonly}
    Let $\mathfrak{B} \in \mathscr{J}(\Accord', \Accordleq)$. Then $\mathfrak{B}$ is move-closed if and only if $\mathfrak{B}$ is closed under $\KE$-moves, $\V$-moves and $\X$-moves.
\end{cor}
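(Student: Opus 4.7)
The plan is to reduce the statement to \cref{prop:moveclosedandRes}, showing that closure under $\KE$, $\V$, and $\X$ is already enough to verify that $\mathfrak{B} \cup \Prj(\Delta^{\gpoint})$ is resolving. The forward implication is immediate from \cref{def:moveclos}: a move-closed set is by definition closed under all five moves, hence in particular under $\KE$, $\V$, and $\X$.

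For the converse, I would assume $\mathfrak{B} \in \mathscr{J}(\Accord', \Accordleq)$ is closed under $\KE$-, $\V$-, and $\X$-moves, and aim to apply the reverse direction of \cref{prop:moveclosedandRes}, namely to prove that $\mathfrak{B} \cup \Prj(\Delta^{\gpoint})$ is a resolving set. By \cref{thm:equivres} this boils down to checking closure under syzygies and under extensions of indecomposable objects. Closure under syzygies is immediate from \cref{lem:IdealsclosedSyzygies} since $\mathfrak{B}$ is an ideal. For extensions, I would split on whether the two accordions involved are projective, exactly as in the proof of \cref{prop:moveclosedandRes}: if at least one is projective, the accordions produced by $\ArExt$ and $\OvExt$ are $\Accordleq$-smaller than the non-projective one by \cref{prop:folkloreresaccord}~$(v)$, hence lie in $\mathfrak{B}$; otherwise both are non-projective, and the accordions arising from arrow extensions are produced by $\KE$- and $\V$-moves, while those from overlap extensions are produced by $\X$- and $\V$-moves.

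The key point I want to highlight is that the extension-closure argument transcribed above does not invoke the $\CoZ$- nor the $\W$-move at all: these two moves produce accordions that are already accounted for by iterated applications of $\KE$, $\V$, $\X$ together with the automatic syzygy-closure and projective-extension-closure of any ideal. Concretely, the $\CoZ$-completion is built in \cref{lem:welldefinedCoZ} as a sequence of two extensions of $\delta$ and $\eta$ with projective accordions and with each other, each of which is captured by one of the three basic moves combined with \cref{lem:IdealsclosedSyzygies} and \cref{prop:folkloreresaccord}~$(v)$; similarly, the $\W$-move is written in \cref{def:S'} in terms of $\All$ applied to an $\Smov(\rho_+)$-approximation, whose first coordinate $\xi_{\rho_+}^{(\delta)}$ is either $\delta$ itself or a syzygy summand of it, hence already in $\mathfrak{B}$, and the remaining $\KE'$ ingredient of $\All$ is recovered via \cref{lem:KE'andKE} from $\CoZ$ plus $\KE$.

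The main obstacle will be bookkeeping rather than conceptual: one must be careful that every intermediate accordion produced when unfolding $\CoZ$ and $\W$ (in particular the auxiliary $\psi$ from the proof of \cref{lem:welldefinedCoZ}, and the syzygy approximations $\xi_{\rho_+}^{(\delta)}$) is indeed in $\mathfrak{B}$ before one applies $\KE$, $\V$, or $\X$ to it. Once this is checked, both directions combine with \cref{prop:moveclosedandRes} to yield the equivalence.
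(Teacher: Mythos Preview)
Your proposal is correct and follows essentially the same approach as the paper: observe that in the proof of \cref{prop:moveclosedandRes}, the implication ``$\KE$-, $\V$-, $\X$-closed ideal $\Rightarrow$ resolving'' uses only those three moves (plus syzygy-closure from \cref{lem:IdealsclosedSyzygies}), and then feed this back through the other direction of \cref{prop:moveclosedandRes} to conclude move-closed.

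One remark: your third and fourth paragraphs are superfluous. Once you have shown that $\mathfrak{B}\cup\Prj(\Delta^{\gpoint})$ is resolving, \cref{prop:moveclosedandRes} immediately gives that $\mathfrak{B}$ is move-closed; there is no need to verify $\CoZ$- or $\W$-closure by hand, nor to track the intermediate accordions $\psi$ or $\xi_{\rho_+}^{(\delta)}$. The ``bookkeeping obstacle'' you anticipate simply does not arise.
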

    
Based on the same ideas, we can state a stronger result.

\begin{cor} \label{cor:moveandstronglymove}
Let $\mathfrak{B} \in \mathscr{J}(\Accord', \Accordleq)$. Then $\mathfrak{B}$ is  strongly move-closed if and only if $\mathfrak{B}$ is move-closed if and only if $\mathfrak{B}$ is closed under $\KE$-moves,  $\V$-move and $\X$-move
\end{cor}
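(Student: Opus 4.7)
The plan is to structure the proof around the observation that this corollary extends \cref{cor:KEVXonly} by adding the equivalence ``move-closed $\iff$ strongly move-closed'' for ideals. The implication ``strongly move-closed $\Rightarrow$ move-closed'' is immediate from \cref{def:moveclos}, and the middle equivalence ``move-closed $\iff$ closed under $\KE$-, $\V$-, and $\X$-moves'' is exactly \cref{cor:KEVXonly}. So the real work is to show that a move-closed ideal $\mathfrak{B}$ is automatically closed under $\KE'$-moves and $\Smov$-moves.

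The $\KE'$-closure is the simpler of the two, and I would handle it first. Given $(\delta,\eta) \in \mathfrak{B}^2$ admitting a $\KE'$-completion $\kappa$, \cref{def:KE'compl} guarantees that $(\delta,\eta)$ also admits a $\CoZ$-completion $\xi$, which lies in $\mathfrak{B}$ by $\CoZ$-closure (part of being move-closed). Then \cref{lem:KE'andKE} identifies $\kappa$ with the $\KE$-completion of the pair $(\delta,\xi)$, and closure under $\KE$-moves puts $\kappa \in \mathfrak{B}$.

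The main step is $\Smov$-closure, where the ideal hypothesis becomes essential. For each $\rho \in \NP(\delta)\cup\NP(\eta)$ for which $(\delta,\eta)\in \mathfrak{B}^2$ admits an $\Smov(\rho)$-approximation $(\xi_\rho^{(\delta)}, \varsigma_\rho^{(\eta)})$, the construction in \cref{def:Sapprox} together with \cref{def:sygaccord} expresses each component either as $\delta$ (respectively $\eta$) itself, or as a syzygy accordion of $\delta$ (respectively $\eta$); in either case it is $\Accordleq$-smaller. Being an ideal, $\mathfrak{B}$ therefore contains both approximants. Having already established $\KE'$-closure, $\mathfrak{B}$ is closed under every move entering into $\All^+$, so $\All^+(\xi_\rho^{(\delta)}, \varsigma_\rho^{(\eta)}) \subseteq \mathfrak{B}$. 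Taking the join over all admissible $\rho$ and adjoining $\{\delta,\eta\}$ yields $\Smov(\delta,\eta) \subseteq \mathfrak{B}$.

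The only point requiring care, and the potential obstacle in this plan, is the absence of circularity: the $\W$-move entering $\All^+$ could in principle invoke another $\Smov$-move. I will verify using \cref{def:S'} that $\W$ only invokes the moves $\KE$, $\CoZ$, $\KE'$, $\V$, $\X$ gathered in $\All$ -- neither $\W$ itself nor $\Smov$ appears there -- so the order ``$\KE'$ first, then $\Smov$'' is well-founded and the argument closes without recursion.
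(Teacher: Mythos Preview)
Your proposal is correct and follows essentially the same approach as the paper: reduce the $\KE'$-move to a $\KE$-move applied to the pair involving the $\CoZ$-completion via \cref{lem:KE'andKE}, and reduce the $\Smov$-move to the other moves applied to the $\Smov(\rho)$-approximants, which lie in the ideal $\mathfrak{B}$ because they are $\Accordleq$-below $\delta$ and $\eta$. Your explicit check that $\W$ only invokes $\All$ (not $\All^+$) to avoid circularity is a useful detail that the paper's proof leaves implicit.
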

\begin{proof}
    It is sufficient to note that:
    \begin{enumerate}[label=$\bullet$,itemsep=1mm]
        \item Assume that $(\delta,\eta)\in\Accord'^2$  admits both a $\CoZ$-completion $\xi$, and a $\KE'$-completion $\kappa$. Then, by \cref{lem:KE'andKE}, $\kappa$ is the $\KE$-completion of $(\xi,\eta)$; and
        \item Applying a $\Smov$-move on $(\delta,\eta) \in (\Accord')^2$, which is relevant only if $\col_\delta$ and $\col_\eta$ are not matchable, consists in applying all the five other moves on pairs of accordions $(\xi,\varsigma)$ such that $\xi \Accordleq \delta$ and $\varsigma \Accordleq \eta$.
    \end{enumerate}
    So, any move-closed ideal is a strongly move-closed ideal.
    Therefore, the desired equivalence follows from \cref{prop:moveclosedandRes}, and \cref{cor:KEVXonly}.
\end{proof}

Recall from \cref{prop:1to1antiideal} that $\Theta_{\Accord'}$ associates the ideal generated by this antichain.
\begin{prop} \label{prop:moveclosedandantichain}
Let $\mathfrak{B} \in \mathscr{J}(\Accord',\Accordleq)$, and $\mathfrak{D} = \Theta_{\Accord'}^{-1}(\mathfrak{B}) \in\Anti(\Accord',\Accordleq)$. The following assertions are equivalent:
\begin{enumerate}[label=$(\roman*)$,itemsep=1mm]
    \item $\mathfrak{B}$ is move-closed;
    \item For any $(\delta,\eta) \in \mathfrak{D}^2 $, the sets $\KE(\delta,\eta)$, $\CoZ(\delta,\eta)$, $\KE'(\delta,\eta)$, $\V(\delta,\eta)$, $\X(\delta,\eta)$, $\W(\delta, \eta)$ and $\Smov(\delta,\eta)$ are subsets of $\mathfrak{B}$.
\end{enumerate}
\end{prop}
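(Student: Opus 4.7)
The implication $(i)\Rightarrow(ii)$ is immediate: since $\mathfrak{D}\subseteq\mathfrak{B}$, every pair in $\mathfrak{D}^2$ is a pair in $\mathfrak{B}^2$, and by move-closedness the seven move sets attached to such a pair sit in $\mathfrak{B}$.

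The substantive direction is $(ii)\Rightarrow(i)$. The plan is to reduce move-closedness on $\mathfrak{B}^2$ to an assertion about pairs taken only from the generating antichain $\mathfrak{D}$. Given $(\delta',\eta')\in \mathfrak{B}^2$, pick $\delta,\eta\in\mathfrak{D}$ with $\delta'\Accordleq\delta$ and $\eta'\Accordleq\eta$, which is possible since $\mathfrak{D}$ generates $\mathfrak{B}$ as an order ideal. Every accordion produced by any of the seven moves applied to $(\delta',\eta')$ lies in $\opResAc'(\delta',\eta')\subseteq\opResAc'(\delta,\eta)$ by monotonicity. It is thus enough to establish the inclusion $\opResAc'(\delta,\eta)\subseteq \mathfrak{B}$ for every $(\delta,\eta)\in \mathfrak{D}^2$. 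Because $\mathfrak{B}$ is an ideal and $\opResAc'(\delta,\eta)=\langle \mathfrak{E}_{\opResAc'(\delta,\eta)}\rangle$ by \cref{cor:maingeommotivation}, this is further reduced to showing $\mathfrak{E}_{\opResAc'(\delta,\eta)}\subseteq \mathfrak{B}$.

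This last step I would prove by a case analysis along \cref{tab:Allmovessummary}. The elementary configurations are handled directly: the disjoint neighbourhood case by \cref{prop:noNPs}; the strongly matchable configuration \ref{2E} by \cref{cor:KEmove}; the \ref{4E} configurations by \cref{prop:X} (crossing) and \cref{prop:Co-Z} (non-crossing); whereas \ref{0E} cannot occur for distinct members of the antichain $\mathfrak{D}$ by \cref{prop:0E}. The non-matchable and weakly matchable configurations reduce, through \cref{prop:Sapproxisenough} and \cref{prop:weaklytostrongly} respectively, to matchable sub-problems on the approximation pairs $(\xi_\rho,\varsigma_\rho)$ and $(\xi_{\rho_+}^{(\delta)},\eta)$; by the very design of the $\Smov$- and $\W$-moves as joins of $\All^+$ and $\All$ applied to these sub-pairs, hypothesis (ii) delivers the required containments directly.

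The delicate point, which I expect to be the main technical obstacle, is configuration \ref{3E}, where the $\V$-move alone may fail to generate $\mathfrak{E}_{\opResAc'(\delta,\eta)}$ because a further $\KE$-completion of $(\delta,\varsigma)$ can be required, as in \cref{rem:Vnotgenerating}. The resolution uses an antichain-collapsing argument available precisely when $(\delta,\eta)\in\mathfrak{D}^2$: by \cref{V}, the $\V$-completion satisfies $\eta\Accordleq\varsigma$, and by hypothesis (ii), $\varsigma\in \mathfrak{B}$; writing $\varsigma\Accordleq\zeta$ for some $\zeta\in\mathfrak{D}$ yields $\eta\Accordleq\zeta$ with $\eta,\zeta\in\mathfrak{D}$, so that $\eta=\zeta$ and hence $\varsigma=\eta$. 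The further $\KE$-completion is then a $\KE$-completion of $(\delta,\eta)$ itself, lying in $\KE(\delta,\eta)\subseteq \mathfrak{B}$ by (ii). The analogous collapse applies to every completion forced to sit above $\delta$ or $\eta$, covering in particular the non-crossing \ref{3E} case via the pair $(\CoZ,\KE')$ of \cref{prop:3EforKE}. This closes the case analysis and completes the proof.
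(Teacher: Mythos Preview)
Your overall strategy matches the paper's: reduce $(ii)\Rightarrow(i)$ to showing $\mathfrak{E}_{\opResAc'(\delta,\eta)}\subseteq\mathfrak{B}$ for all $(\delta,\eta)\in\mathfrak{D}^2$, then run through the case table. The reduction step and most of the elementary cases (\ref{0E}, \ref{2E}, non-matchable via $\Smov$, weakly matchable via $\W$, non-crossing \ref{3E} via $\KE'$) are handled as in the paper.

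There is, however, a genuine gap in your treatment of the \ref{4E} configurations. Citing \cref{prop:X} and \cref{prop:Co-Z} only establishes that $\X(\delta,\eta)$ and $\CoZ(\delta,\eta)$ are antichains contained in $\opResAc'(\delta,\eta)$; neither proposition claims that these sets equal $\mathfrak{E}_{\opResAc'(\delta,\eta)}$. The paper does further work here: for non-crossing \ref{4E} it shows that $(\delta,\xi)$ and $(\xi,\eta)$ both satisfy \ref{2E}, whence $\mathfrak{E}_{\opResAc'(\delta,\eta)}=\KE(\delta,\xi)\curlyvee\KE(\xi,\eta)$ via \cref{cor:KEmove}; for crossing \ref{4E} the analogous identity holds with $\varrho_1,\varrho_2$ in place of $\xi$. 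Only after these identities are established can one argue that the full upper join-decomposition lands in $\mathfrak{B}$. Your collapse mechanism does not apply here, since in \ref{4E} the completions $\xi,\varrho_1,\varrho_2$ are incomparable to both $\delta$ and $\eta$, so they need not equal an element of $\mathfrak{D}$.

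A smaller point on the crossing \ref{3E} case: your collapse correctly forces $\varsigma=\eta$, but the paper then observes this is a contradiction (the case is vacuous for pairs in $\mathfrak{D}^2$ under hypothesis~(ii)), whereas you continue as if $\varsigma=\eta$ were a live possibility. If it were, you would still owe a computation of $\mathfrak{E}_{\opResAc'(\delta,\eta)}$ in that situation, which you have not supplied; recognising the contradiction is the cleaner exit.
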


\begin{proof}
It is obvious that $(i)$ implies $(ii)$, by \cref{cor:moveandstronglymove}.

Let $\mathfrak{B} \in \mathscr{J}(\Accord',\Accordleq)$ satisfy $(ii)$. Set $\mathfrak{D} = \Theta_{\Accord'}^{-1}(\mathfrak{B})$. The result is clear if $\mathfrak{B}$ is either empty or a principal ideal. Moreover, $\mathfrak{B}$ is an ideal, so it is closed under non-projective syzygies. To show that $\mathfrak{B}$ is move-closed, we will show that $\mathfrak{B}$ is a resolving set by using \cref{cor:maingeommotivation}. More precisely, it is enough to show that for any $(\delta, \eta) \in \Theta_{\Accord'}^{-1}(\mathfrak{B})^2$, we have $\opResAc'(\delta,\eta) \subseteq \mathfrak{B}$.

Assume that $\#\mathfrak{D} \geqslant 2$. Let $(\delta, \eta) \in \mathfrak{D}^2$ be such that $\delta \neq \eta$. First, if we have $\NP(\delta) \cap \NP(\eta) = \varnothing$, then, by \cref{prop:noNPs}, $\mathfrak{E}_{\opResAc'(\delta,\eta)} = \{\delta, \eta\}$. Assume that $\NP(\delta) \cap \NP(\eta) \neq \varnothing$.

Assume that $\col_\delta$ and $\col_\eta$ match. 
As $\delta$ and $\eta$ are not comparable, $\{\delta, \eta\}$ does not satisfy \ref{0E} by \cref{prop:0E}. Then, we can distinguish the cases below by \cref{lem:allexclusivityhyp}.
\begin{enumerate}[label=$\bullet$, itemsep=1mm]
    \item If $(\delta,\eta)$ satisfies \ref{2E}, by \cref{cor:KEmove}, we have that either $\mathfrak{E}_{\opResAc'(\delta,\eta)} = \KE(\delta,\eta)$ or $\mathfrak{E}_{\opResAc'(\delta,\eta)} = \KE(\eta,\delta)$, and in both cases $\mathfrak{E}_{\opResAc'(\delta,\eta)} 
    \subseteq \mathfrak{B}$ by assumption $(ii)$.
    \item If $\delta$ and $\eta$ are weakly matchable and $(\delta,\eta)$  does not satisfy \ref{2E} we are reduced to the strongly matchable case by using a $\W$-move . Indeed, up to exchanging $\delta$ and $\eta$ and in the notations of \cref{prop:weaklytostrongly}, we replace $\lbrace\delta,\eta\rbrace$ by  the antichain $\lbrace\eta,\xi_{\rho_+}^{(\delta)}\rbrace$, which is strongly matchable. The antichain $\lbrace\eta,\xi_{\rho_+}^{(\delta)}\rbrace$ then falls into one of the cases treated below.
    \item If $\{\delta,\eta\}$ are strongly matchable, satisfy \ref{3E}, and:
    \begin{enumerate}[label=$\bullet$, itemsep=1mm]
        \item if $\delta$ and $\eta$ are not crossing, by assuming without loss of generality that $\delta$ is above $\eta$, $(\delta, \eta)$ admits a $\CoZ$-completion $\xi$. By \cref{prop:Co-Z}, we have that $\xi \in \opResAc'(\delta, \eta)$, and  $\eta \Accordleq \xi$. In addition, by assuming without loss of generality that $s(\delta)$ is the unique non-exclusive vertex of $(\delta,\eta)$, we have that $(\delta,\eta)$ admits a $\KE'$-completion $\kappa$. Then $\KE'(\delta,\eta) = \mathfrak{E}_{\opResAc'(\delta, \eta)}$ by \cref{prop:3EforKE}, and so $\mathfrak{E}_{\opResAc'(\delta, \eta)} \subseteq \mathfrak{B}$ by assumption $(ii)$;
        \item if $\delta$ and $\eta$ are crossing, then $(\delta,\eta)$ admits a $\V$-completion $\varsigma$. By \cref{prop:SmallEV3}, we get that $\varsigma \in \opResAc'(\delta, \eta)$, and $\delta \Accordleq \varsigma$. By assumption $(ii)$, $\varsigma \in \mathfrak{B}$, and $\delta$ is maximal in $\mathfrak{B}$  this implies that $\varsigma = \delta$. This is a contradiction. 
    \end{enumerate}
    \item If $(\delta,\eta)$ are strongly matchable, satisfy \ref{4E}, and:
    \begin{enumerate}[label=$\bullet$, itemsep=1mm]
        \item if $\delta$ and $\eta$ are not crossing, $\{\delta, \eta\}$ admits a $\CoZ$-completion $\xi$. By \cref{prop:Co-Z}, we have that $\xi \in \opResAc'(\delta, \eta)$, and  $\{\delta,\eta,\xi\} \in \Anti(\Accord',\Accordleq)$, and therefore $\{\delta,\eta,\xi\} \bleq \mathfrak{E}_{\opResAc'(\delta,\eta)}$. Moreover, by applying $\KE$-moves on $(\delta,\xi)$ and on $(\xi,\eta)$, we have that \[\KE(\delta,\xi) \curlyvee \KE(\xi,\eta)  = \mathfrak{E}_{\opResAc'(\delta, \xi)} \curlyvee \mathfrak{E}_{\opResAc'(\xi,\eta)} = \mathfrak{E}_{\opResAc'(\delta,\eta)},\] as $\xi \in \opResAc'(\delta,\eta)$. Therefore, by assumption $(ii)$, $\mathfrak{E}_{\opResAc'(\delta,\eta)} \subseteq \mathfrak{B}$.
        \item if $\delta$ and $\eta$ are crossing, then $(\delta,\eta)$ admits a $\X$-completion $(\varrho_1,\varrho_2)$. By \cref{prop:X}, we have that $\varrho_1,\varrho_2 \in \opResAc'(\delta,\eta)$, and $\{\delta, \eta, \varrho_1, \varrho_2\} \in \Anti(\Accord', \Accordleq)$. Moreover, by applying $\KE$-moves on $(\delta,\varrho_i)$ and on $(\varrho_i,\eta)$, for $i \in \{1,2\}$, we have that \[
        \begin{split}
            & \KE(\delta,\varrho_1) \curlyvee \KE(\varrho_1,\eta) \curlyvee \KE(\delta,\varrho_2) \curlyvee \KE(\varrho_2,\eta) \\
            =\ & \mathfrak{E}_{\opResAc'(\delta, \varrho_1)} \curlyvee \mathfrak{E}_{\opResAc'(\varrho_1,\eta)} \curlyvee \mathfrak{E}_{\opResAc'(\delta, \varrho_2)} \curlyvee \mathfrak{E}_{\opResAc'(\varrho_2,\eta)} \\
            =\ & \mathfrak{E}_{\opResAc'(\delta,\eta)},
        \end{split}\] as $\varrho_1,\varrho_2 \in \opResAc'(\delta,\eta)$. Therefore, by assumption $(ii)$, $\mathfrak{E}_{\opResAc'(\delta,\eta)} \subseteq \mathfrak{B}$.
    \end{enumerate}
\end{enumerate}
In all cases, those elements obtained by applying moves are in $\opResAc'(\delta,\eta)$. 

Assume that $\col_\delta$ and $\col_\eta$ are not matchable. The only move we can apply is an $\Smov$-move. For any $\rho \in \NP(\delta) \cap \NP(\eta)$, as the $\Smov(\rho)$-approximation $(\xi_\rho^{(\delta)},\varsigma_\rho^{(\eta)})$ is a pair of accordions with matchable colorations, we have \[\All^{+}(\xi_\rho^{(\delta)}, \varsigma_\rho^{(\eta)}) = \mathfrak{E}_{\opResAc'(\xi_\rho^{(\delta)},\varsigma_\rho^{(\eta)})}.\]
Then, by \cref{prop:Sapproxisenough}, we have that $\Smov(\delta,\eta) = \mathfrak{E}_{\opResAc'(\delta,\eta)}$. By our assumption on $\Theta_{\Accord'}^{-1}(\mathfrak{B})$, we obtain $\opResAc'(\delta,\eta) \subseteq \mathfrak{B}$.

It follows that $\opResAc'(\mathfrak{D})=\bigcup_{(\delta,\eta)}\opResAc'(\delta,\eta)\subseteq\mathfrak{B}$. 
Moreover, we have that \[\mathfrak{B} = \bigcup_{\delta \in \mathfrak{D}} \opResAc'(\delta) \subseteq \opResAc'(\mathfrak{D}).\]
Hence, $\mathfrak{B}$ is resolving and is move-closed by \cref{prop:moveclosedandRes}.
\end{proof}

\begin{remark} \label{rem:antichains}
In the previous proof, we note that the $\KE'$-move and $\Smov$-move must be considered for the antichains that generate the move-closed ideals of $(\Accord',\Accordleq)$.
\end{remark}

\subsection{Algorithm on the geometric model}
\label{ss:Algo}

As all the configuration of pairs of non-comparable accordions $(\delta,\eta)$ in $(\mathscr{A} \setminus \Prj(\Delta^{\gpoint}), \Accordleq)$ are known, we can make our algorithm explicit.

Let us give a first algorithm which, given some nonempty subset $\mathfrak{B} \subseteq \Accord'$,  returns the antichain $\mathfrak{D} \in \Anti(\Accord', \Accordleq)$ such that $\langle \mathfrak{B} \rangle_{\Accord'} = \langle \mathfrak{D} \rangle_{\Accord'}$.

\begin{algo} \label{algo:joinantichain} Let $\mathfrak{D}$ be a nonempty subset of $(\Accord', \Accordleq)$.
\begin{enumerate}[label = $(\arabic*)$,itemsep=1mm]
    \item Set $\mathfrak{D}_0 = \mathfrak{D}$.
    \item At the $i$th iteration of the algorithm, we denote by $\mathfrak{D}_i$ the subset we are working with.
    \item  If there exists $(\delta,\eta) \in \mathfrak{D}_i^2$ such that:
    \begin{enumerate}[label=$\bullet$, itemsep=1mm]
        \item $\col_\delta$ and $\col_\eta$ are matching; and,
        \item $s(\eta)$ and $t(\eta)$ are not exclusive to $\eta$ in $(\delta,\eta)$;
    \end{enumerate}
    then, we set: $\mathfrak{D}_{i+1} = \mathfrak{D}_i \setminus \{ \eta \}$.
    \item Otherwise, we are done.
\end{enumerate}
\end{algo}

\begin{lemma} \label{lem:joinantichain} 
    Let $(Q,R)$ be a gentle tree, and $\Surf(Q,R) = (\pmb{\Sigma}, \mathcal{M}, \Delta^{\gpoint})$ be its associated $\gpoint$-dissected marked disc. For any $ \varnothing \neq \mathfrak{D} \subseteq \Accord'$, \cref{algo:joinantichain} returns the antichain $\displaystyle \Theta_{\Accord'}^{-1} \left(\langle \mathfrak{D} \rangle_{\Accord'} \right)$.
\end{lemma}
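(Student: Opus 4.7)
The plan is to verify three properties of \cref{algo:joinantichain}, from which the statement follows: termination, preservation of the generated ideal at every iteration, and the antichain property of the output. Once these are in hand, \cref{prop:1to1antiideal} gives the identification with $\Theta_{\Accord'}^{-1}(\langle \mathfrak{D} \rangle_{\Accord'})$ for free.

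First, termination is immediate: each iteration satisfying the condition of step $(3)$ strictly decreases $|\mathfrak{D}_i|$, and $\mathfrak{D}_0 = \mathfrak{D}$ is finite (because $(Q,R)$ is representation-finite, hence $\Accord'$ is finite). Second, for ideal preservation, the main move is to reinterpret the geometric removal condition as an order relation. Suppose $(\delta,\eta) \in \mathfrak{D}_i^2$ triggers step $(3)$. Then $\col_\delta$ and $\col_\eta$ match, which by \cref{def:match} forces $\NP(\delta) \cap \NP(\eta) \neq \varnothing$; moreover, $s(\eta)$ and $t(\eta)$ are non-exclusive to $\eta$ in $(\delta,\eta)$, which is exactly saying that $(\delta,\eta)$ satisfies \ref{0E} with the choice $\varsigma = \eta$. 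Hypothesis $(i)$ of \cref{prop:0E} is therefore met, and its conclusion gives $\eta \Accordleq \delta$, so $\langle \eta \rangle_{\Accord'} \subseteq \langle \delta \rangle_{\Accord'}$ and $\langle \mathfrak{D}_{i+1} \rangle_{\Accord'} = \langle \mathfrak{D}_i \rangle_{\Accord'}$.

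Third, for the antichain property I would argue by contrapositive. If the output $\mathfrak{D}_N$ contained two distinct comparable elements, say $\eta \Accordleq \delta$, then the equivalence in \cref{prop:0E} furnishes matchable (hence, up to conjugation fixed by \cref{conv:matchablematching}, matching) colorations, nonempty intersection $\NP(\delta) \cap \NP(\eta)$, and condition \ref{0E}; that is, one of the two accordions has both endpoints non-exclusive to itself in the pair. In the first subcase the algorithm would have selected the pair $(\delta,\eta)$ and removed $\eta$ at iteration $N$; in the second subcase it would have selected $(\eta,\delta)$ and removed $\delta$. Both contradict the stopping criterion, so $\mathfrak{D}_N$ is an antichain. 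Since $\mathfrak{D}_N \in \Anti(\Accord',\Accordleq)$ and $\langle \mathfrak{D}_N \rangle_{\Accord'} = \langle \mathfrak{D} \rangle_{\Accord'}$, \cref{prop:1to1antiideal} identifies $\mathfrak{D}_N$ with $\Theta_{\Accord'}^{-1}(\langle \mathfrak{D} \rangle_{\Accord'})$.

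The only delicate point is the bookkeeping between the two possible colorations of $\NP(\eta)_0$ (a coloration and its conjugate). The algorithm condition refers to \emph{matching} $\col_\delta$ and $\col_\eta$, while \cref{prop:0E} is stated for \emph{matchable} pairs; but by \cref{lem:coincidecoloration} and \cref{conv:matchablematching}, once one fixes $\col_\delta$ the matching coloration of $\NP(\eta)_0$ is uniquely determined, so the two formulations agree and no case split is required. This is the only possible obstacle, and it is dispensed with by the conventions already in force in the paper.
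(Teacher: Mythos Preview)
Your proof is correct and follows the same approach as the paper, which simply cites \cref{prop:0E}; you have merely unpacked that citation into its three constituent verifications (termination, ideal preservation, antichain property of the output). One small remark: when you invoke \cref{prop:0E} in the ideal-preservation step, its statement only yields comparability, not the specific direction $\eta \Accordleq \delta$; the direction comes from the \emph{proof} of \cref{prop:0E}, where the assumption that $s(\eta),t(\eta)$ are non-exclusive to $\eta$ is shown to force $\eta \in \opResAc(\delta)$.
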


\begin{proof}
    The result follows from \cref{prop:0E}.
\end{proof}

\begin{remark} \label{rem:joinantichain}
    For all $p \in \mathbb{N}^*$, \cref{algo:joinantichain} allows to compute $\mathfrak{B}_1 \curlyvee \cdots \curlyvee \mathfrak{B}_p$ for any $p$-tuple $(\mathfrak{B}_1, \ldots, \mathfrak{B}_p) \in \Anti(\Accord', \Accordleq)^p$, by inputting $\mathfrak{D} = \mathfrak{B}_1 \cup \cdots \cup \mathfrak{B}_p$.
\end{remark}

Now we give an algorithm which, given any $\mathfrak{D} \subseteq \Accord'$, returns the antichain $\mathfrak{E}$ such that $\opResAc'(\mathfrak{D}) = \langle \mathfrak{E} \rangle_{\Accord'}$.

\begin{algo} \label{algo:resclosalgo} Let $\mathfrak{D}$ be an antichain in $(\Accord', \Accordleq)$.
\begin{enumerate}[label = $(\arabic*)$,itemsep=1mm]
    \item Set $\mathfrak{D}_0 = \mathfrak{D}$.
    \item At the $i$th iteration of the algorithm, we denote by $\mathfrak{D}_i$ the antichains we are working with.
    \item For any $(\delta,\eta) \in \mathfrak{D}_i^2$ such that $\delta \neq \eta$, we define $\mathfrak{K}(\delta,\eta)$ as follows:
    \begin{enumerate}[label=$(3\alph*)$, itemsep=1mm]
        \item if $\NP(\delta) \cap \NP(\eta) = \varnothing$, then $\mathfrak{K}(\delta,\eta) = \varnothing$;
        \item if $\NP(\delta) \cap \NP(\eta) \neq \varnothing$, and $\col_\delta$ and $\col_\eta$ are not matchable, then $\mathfrak{K}(\delta,\eta) = \Smov(\delta,\eta)$;
        \item if $\NP(\delta) \cap \NP(\eta) \neq \varnothing$, and $\col_\delta$ and $\col_\eta$ are weakly matchable, then $\mathfrak{K}(\delta,\eta) = \W(\delta,\eta)$; and
        \item  otherwise, we set \[\mathfrak{K}(\delta,\eta) = \KE(\delta,\eta) \curlyvee \CoZ(\delta,\eta) \curlyvee \KE'(\delta,\eta) \curlyvee \V(\delta,\eta) \curlyvee \X(\delta,\eta).\]
    \end{enumerate}
    \item  We compute
    \[\mathfrak{D}_{i+1} = \mathfrak{D}_i \curlyvee \left(\bigcurlyvee_{(\delta,\eta) \in \mathfrak{D}_i^2, \delta \neq \eta} \mathfrak{K}(\delta,\eta)\right)\]
    \item If $\mathfrak{D}_{i+1} \neq \mathfrak{D}_i$, we come back to step $(3)$.
    \item Otherwise, we return $\mathfrak{D}_i$.
\end{enumerate}
\end{algo}

\begin{theorem} \label{thm:Resclosalgosucceed}
Let $(Q,R)$ be a gentle tree, and $\Surf(Q,R) = (\pmb{\Sigma}, \mathcal{M}, \Delta^{\gpoint}$) be its associated dualisable $\gpoint$-dissected marked disc. For any antichain $\mathfrak{\mathfrak{D}}$ in $(\Accord, \Accordleq)$, \cref{algo:resclosalgo} ends, and returns the unique antichain $\mathfrak{E}$ such that \[\opResAc(\mathfrak{D}) = \opResAc(\mathfrak{E}) = \bigcup_{\delta \in \mathfrak{E}} \opResAc(\delta).\]
\end{theorem}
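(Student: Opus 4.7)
The argument will have two main parts: termination, and correctness of the returned antichain.

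\textbf{Termination.} First, I will check that the sequence $(\mathfrak{D}_i)_{i \in \mathbb{N}}$ produced by \cref{algo:resclosalgo} is non-decreasing for the order $\bleq$ on $\Anti(\Accord',\Accordleq)$. This is built into the definition of step~$(4)$, since $\mathfrak{D}_{i+1}$ is obtained by joining $\mathfrak{D}_i$ with other antichains. Because $\Accord'$ is finite (the surface $(\pmb{\Sigma},\mathcal{M})$ has finitely many marked points), so is $\Anti(\Accord',\Accordleq)$, and any non-decreasing chain in a finite poset eventually stabilises. This gives a least index $N$ for which $\mathfrak{D}_{N+1} = \mathfrak{D}_N$; call this antichain $\mathfrak{E}$.

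\textbf{Every $\mathfrak{D}_i$ is contained in $\opResAc'(\mathfrak{D})$.} I prove by induction on $i$ that $\langle \mathfrak{D}_i\rangle_{\Accord'} \subseteq \opResAc'(\mathfrak{D})$. The base case is clear. For the inductive step, fix $(\delta,\eta)\in\mathfrak{D}_i^{\,2}$ and consider $\mathfrak{K}(\delta,\eta)$ as in step~$(3)$. Case~$(3a)$ contributes nothing. In case~$(3d)$, each of $\KE(\delta,\eta)$, $\CoZ(\delta,\eta)$, $\KE'(\delta,\eta)$, $\V(\delta,\eta)$ and $\X(\delta,\eta)$ consists of accordions that lie in $\opResAc'(\delta,\eta)$: this is exactly the content of \cref{cor:KEmove}, \cref{lem:welldefinedCoZ}, \cref{prop:3EforKE}, \cref{V} and \cref{prop:X} respectively. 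Case~$(3c)$, the $\W$-move, is handled via \cref{prop:weaklytostrongly}, which expresses $\opResAc(\delta,\eta)$ as the union of $\opResAc(\delta)$, $\opResAc(\eta)$ and $\opResAc(\xi_{\rho_+}^{(\delta)},\eta)$; the latter unfolds, by its definition, into moves already bounded above by $\opResAc'(\delta,\eta)$. Case~$(3b)$, the $\Smov$-move, is controlled by \cref{prop:Sapproxisenough} in the same spirit. So $\mathfrak{K}(\delta,\eta)\subseteq \opResAc'(\delta,\eta)\subseteq \opResAc'(\mathfrak{D})$ in every case, and hence $\langle \mathfrak{D}_{i+1}\rangle_{\Accord'}\subseteq \opResAc'(\mathfrak{D})$.

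\textbf{The stable antichain generates a resolving set.} When the algorithm halts at $\mathfrak{E}=\mathfrak{D}_N=\mathfrak{D}_{N+1}$, for every pair $(\delta,\eta)\in\mathfrak{E}^2$ the antichain $\mathfrak{K}(\delta,\eta)$ satisfies $\mathfrak{K}(\delta,\eta)\bleq \mathfrak{E}$, i.e.~all the relevant completion accordions lie in $\langle \mathfrak{E}\rangle_{\Accord'}$. This is precisely condition~$(ii)$ of \cref{prop:moveclosedandantichain} once one distinguishes the matchability regimes of $(\delta,\eta)$ according to cases $(3a)$--$(3d)$: in the not-matchable case the $\Smov$-move itself incorporates the $\KE$, $\CoZ$, $\KE'$, $\V$, $\X$ and $\W$ moves applied to $\Smov(\rho)$-approximations, while in the weakly matchable case the $\W$-move reduces, via \cref{prop:rho+approxwmatch} and \cref{prop:weaklytostrongly}, to the strongly matchable situation where all five remaining moves are explicitly part of $\mathfrak{K}(\delta,\eta)$. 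Therefore \cref{prop:moveclosedandantichain} applies, and $\langle \mathfrak{E}\rangle_{\Accord'}$ is a move-closed ideal of $(\Accord',\Accordleq)$.

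\textbf{Conclusion.} By \cref{prop:moveclosedandRes}, the set $\langle \mathfrak{E}\rangle_{\Accord'}\cup \Prj(\Delta^{\gpoint})$ is resolving. Since it contains $\mathfrak{D}$ (as $\mathfrak{D}=\mathfrak{D}_0\bleq\mathfrak{E}$), it contains $\opResAc(\mathfrak{D})$. The reverse inclusion $\langle \mathfrak{E}\rangle_{\Accord'}\subseteq \opResAc'(\mathfrak{D})$ was proved in the induction above. Hence $\opResAc(\mathfrak{D})=\langle \mathfrak{E}\rangle_{\Accord'}\cup \Prj(\Delta^{\gpoint})$, and unfolding $\langle \mathfrak{E}\rangle_{\Accord'}=\bigcup_{\delta\in\mathfrak{E}}\opResAc'(\delta)$ via \cref{thm:res_clo_1} yields $\opResAc(\mathfrak{D})=\bigcup_{\delta\in\mathfrak{E}}\opResAc(\delta)$. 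Uniqueness of $\mathfrak{E}$ then follows from \cref{cor:maingeommotivation}, which identifies this antichain with $\mathfrak{E}_{\opResAc'(\mathfrak{D})}$. The main delicate point is to be certain that the case analysis in step~$(3)$ of the algorithm exhausts all the move configurations needed by \cref{prop:moveclosedandantichain}; this is where the book-keeping of \cref{sec:Matchable} (strong versus weak matchability, non-matchability) has to be married carefully with the exclusivity cases \ref{0E}--\ref{4E} from \cref{sec:Moves}.
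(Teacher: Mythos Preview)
Your proof is correct and follows essentially the same route as the paper's, which dispatches the theorem in one sentence by citing \cref{prop:moveclosedandRes,prop:moveclosedandantichain}, \cref{thm:EndAlgo}, and \cref{cor:maingeommotivation}. You have unpacked that citation into its constituent steps (termination, containment of each $\mathfrak{D}_i$ in $\opResAc'(\mathfrak{D})$, move-closedness of the stable ideal, and uniqueness), and the ``delicate point'' you flag about matching the case analysis of step~$(3)$ with the hypotheses of \cref{prop:moveclosedandantichain} is exactly what the proof of that proposition already handles internally.
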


\begin{proof}
This result follows from \cref{prop:moveclosedandRes,prop:moveclosedandantichain}, \cref{thm:EndAlgo}, and \cref{cor:maingeommotivation}.
\end{proof}
  
\subsection{An example}
\label{ss:examplePart1}

Let us consider $(Q,R)$ to be the gentle tree drawn in \cref{fig:Exsurf1}. We also give its Res-poset $(\pmb{\ind \setminus \proj}, \Resleq)$, and its $\rpoint$-dissected marked surface $(\pmb{\Sigma}, \mathcal{M}, \Prj(\Delta^{\gpoint}))$.

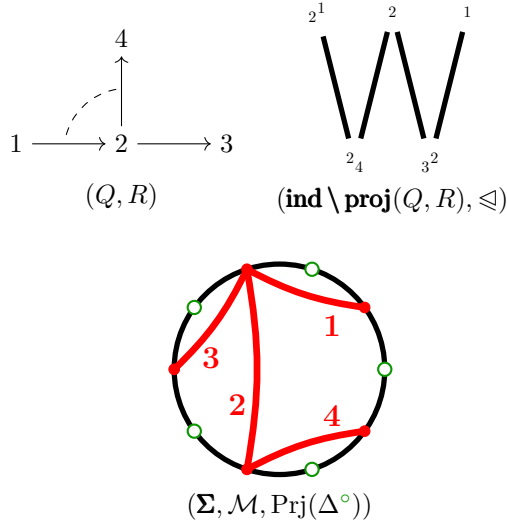
\begin{figure}[!ht]
\centering 
    \begin{tikzpicture}
    \begin{scope}[->,scale=1.4]
		\node (a) at (0,0) {$1$};
		\node (b) at (1,0) {$2$};
		\node (c) at (2,0) {$3$};
		\node (d) at (1,1) {$4$};
		
		\draw (a) -- (b);
		\draw (b) -- (c);
		\draw (b) -- (d);

		\draw[dashed,-] ([xshift=-.05cm,yshift=.35cm]b.north) arc[start angle = 100, end angle = 170, x radius=.6cm, y radius =.6cm];
		
		\node at (1,-.5) {$(Q,R)$};
    \end{scope}
    
    \begin{scope}[xshift=5cm,yshift=-1.25cm]
	
	    \node (b) at (-0.5,1) {\scalebox{0.6}{$
	    \begin{tikzpicture}[baseline={(0,-.2)},scale=0.2]
	    \node at (0,0){$2$};
	    \node at (1,-1){$4$};
	    \end{tikzpicture}$}};
	    \node (c) at (0.5,1)  {\scalebox{0.6}{$
	    \begin{tikzpicture}[baseline={(0,-.2)},scale=0.2]
	    \node at (0,0){$2$};
	    \node at (-1,-1){$3$};
	    \end{tikzpicture}$}};
	    \node (e) at (0,3) {\scalebox{0.6}{$
	    \begin{tikzpicture}[baseline={(0,-.1)},scale=0.2]
	    \node at (0,0){$2$};
	    \end{tikzpicture}$}};
	    \node (f) at (-1,3) {\scalebox{0.6}{$
	    \begin{tikzpicture}[baseline={(0,-.2)},scale=0.2]
	    \node at (0,0){$1$};
	    \node at (-1,-1){$2$};
	    \end{tikzpicture}$}};
	    \node (h) at (1,3) {\scalebox{0.6}{$\begin{tikzpicture}[baseline={(0,-.1)},scale=0.2]
	    \node at (0,0){$1$};
	    \end{tikzpicture}$}};

		\draw[line width=0.7mm,black] (b) -- (e);
		\draw[line width=0.7mm,black] (b) -- (f);
		\draw[line width=0.7mm,black] (c) -- (e);
		\draw[line width=0.7mm,black] (c) -- (h);
		
		\node at (0,.5) {$(\pmb{\ind \setminus \proj}(Q,R), \Resleq)$};
	\end{scope}
	\begin{scope}[xshift=3.5cm,yshift=-3cm,mydot/.style={
					circle,
					thick,
					fill=white,
					draw,
					outer sep=0.5pt,
					inner sep=1pt
				}, scale = 0.7]
		\tikzset{
		osq/.style={
        rectangle,
        thick,
        fill=white,
        append after command={
            node [
                fit=(\tikzlastnode),
                orange,
                line width=0.3mm,
                inner sep=-\pgflinewidth,
                cross out,
                draw
            ] {}}}}
		\draw[line width=0.7mm,black] (0,0) ellipse (2cm and 2cm);
		\foreach \X in {0,1,...,9}
		{
		\tkzDefPoint(2*cos(pi/5*\X),2*sin(pi/5*\X)){\X};
		};

		\draw[line width=0.9mm ,bend left =10,red](1) edge (3);
		\draw[line width=0.9mm ,bend left =10,red](3) edge (7);
		\draw[line width=0.9mm ,bend left =10,red](3) edge (5);
		\draw[line width=0.9mm ,bend left =10,red](7) edge (9);

		\foreach \X in {1,3,5,7,9}
		{
		\tkzDrawPoints[fill =red,size=4,color=red](\X);
		};
		
		\foreach \X in {0,2,4,6,8}
		{
		\tkzDrawPoints[size=5,dark-green,mydot](\X);
		};

		\tkzDefPoint(1,1.2){gamma};
		\tkzLabelPoint[red](gamma){\Large $\mathbf{1}$}
		\tkzDefPoint(-0.8,-0.3){gamma};
		\tkzLabelPoint[red](gamma){\Large $\mathbf{2}$}
		\tkzDefPoint(-1.3,0.6){gamma};
		\tkzLabelPoint[red](gamma){\Large $\mathbf{3}$}
		\tkzDefPoint(1,-0.5){gamma};
		\tkzLabelPoint[red](gamma){\Large $\mathbf{4}$}
		\node at (0,-2.6) {$(\pmb{\Sigma}, \mathcal{M}, \Prj(\Delta^{\gpoint}))$};
	\end{scope}
    \end{tikzpicture}
\caption{\label{fig:Exsurf1} The gentle tree $(Q,R)$ with the Res-relation order on its non-projective indecomposable representations seen in \cref{fig:Resrelex}, and the $\rpoint$-dissected marked surface associated to $(Q,R)$.}
\end{figure}

We compute two resolving subcategories to illustrate the use of \cref{algo:resclosalgo}. We identify the notation for strings and the one for accordions.

\begin{ex} \label{ex:Rescalc1} Let us consider $\delta = \begin{tikzpicture}[baseline={(0,-.2)},scale=0.2]
	    \node at (0,0){$2$};
	    \node at (-1,-1){$3$};
	    \end{tikzpicture}$, and $\eta = \begin{tikzpicture}[baseline={(0,-.2)},scale=0.2]
	    \node at (0,0){$1$};
	    \node at (-1,-1){$2$};
	    \end{tikzpicture}$. 
    
We compute $\opResAc(\delta,\eta)$. See \cref{fig:Exsurf2} to illustrate the following explanations.
\begin{figure}[!ht]
\centering 
    \begin{tikzpicture}
    \begin{scope}
	    \node (b) at (-0.5,1) {\scalebox{0.6}{$
	    \begin{tikzpicture}[baseline={(0,-.2)},scale=0.2]
	    \node at (0,0){$2$};
	    \node at (1,-1){$4$};
	    \end{tikzpicture}$}};
	    \node (c) at (0.5,1)  {\scalebox{0.6}{$\eta =
	    \begin{tikzpicture}[baseline={(0,-.2)},scale=0.2]
	    \node at (0,0){$2$};
	    \node at (-1,-1){$3$};
	    \end{tikzpicture}$}};
	    \node (e) at (0,3) {\scalebox{0.6}{$ \varsigma =
	    \begin{tikzpicture}[baseline={(0,-.1)},scale=0.2]
	    \node at (0,0){$2$};
	    \end{tikzpicture}$}};
	    \node (f) at (-1,3) {\scalebox{0.6}{$ \delta =
	    \begin{tikzpicture}[baseline={(0,-.2)},scale=0.2]
	    \node at (0,0){$1$};
	    \node at (-1,-1){$2$};
	    \end{tikzpicture}$}};
	    \node (h) at (1,3) {\scalebox{0.6}{$\begin{tikzpicture}[baseline={(0,-.1)},scale=0.2]
	    \node at (0,0){$1$};
	    \end{tikzpicture}$}};

		\draw[line width=0.7mm,black] (b) -- (e);
		\draw[line width=0.7mm,black] (b) -- (f);
		\draw[line width=0.7mm,black] (c) -- (e);
		\draw[line width=0.7mm,black] (c) -- (h);
		
		\node at (0,.5) {$(\pmb{\ind \setminus \proj}(Q,R), \Resleq)$};
	\end{scope}
	\begin{scope}[xshift=5cm,yshift=2cm,mydot/.style={
					circle,
					thick,
					fill=white,
					draw,
					outer sep=0.5pt,
					inner sep=1pt
				}, scale = 0.7]
		\tikzset{
		osq/.style={
        rectangle,
        thick,
        fill=white,
        append after command={
            node [
                fit=(\tikzlastnode),
                orange,
                line width=0.3mm,
                inner sep=-\pgflinewidth,
                cross out,
                draw
            ] {}}}}
		\draw[line width=0.7mm,black] (0,0) ellipse (2cm and 2cm);
		\foreach \X in {0,1,...,9}
		{
		\tkzDefPoint(2*cos(pi/5*\X),2*sin(pi/5*\X)){\X};
		};
		
		\draw[line width=0.9mm ,bend left =10,red](1) edge (3);
		\draw[line width=0.9mm ,bend left =10,mypurple,densely dashdotted](3) edge (7);
		\draw[line width=0.9mm ,bend left =10,red](3) edge (5);
		\draw[line width=0.9mm ,bend left =10,red](7) edge (9);
		
		\draw[line width=0.7mm ,bend right=10,blue, loosely dashed](1) edge (5);
		\draw[line width=0.7mm ,bend left=10,blue, loosely dashed](3) edge (9);
	
		\draw[line width=0.7mm ,bend right=10,orange, dash pattern={on 10pt off 2pt on 5pt off 2pt}](5) edge (9);

		\foreach \X in {3}
		{
		\tkzDrawPoints[fill =red,size=4,color=red](\X);
		};
		
		\foreach \X in {1,5,7,9}
		{
		\tkzDrawPoints[size=6,color=mypurple](\X);
		};
		
		\foreach \X in {0,2,4,6,8}
		{
		\tkzDrawPoints[size=5,dark-green,mydot](\X);
		};

		\tkzDefPoint(-1.1,0.5){gamma};
		\tkzLabelPoint[blue](gamma){\Large $\eta$}
		\tkzDefPoint(0.8,-0.1){i};
		\tkzLabelPoint[blue](i){\Large $\delta$}
		\tkzDefPoint(0,-0.2){j};
		\tkzLabelPoint[orange](j){\Large $\varsigma$}
		\node at (0,-2.6) {$(\pmb{\Sigma}, \mathcal{M}, \Prj(\Delta^{\gpoint}))$};
	\end{scope}
    \end{tikzpicture}
\caption{\label{fig:Exsurf2} In this example, $(\delta,\eta)$ admits a $\V$-completion $\varsigma$, and by \cref{V}, we have $\delta \protect\Accordlneq \varsigma$ and $\opResAc(\delta, \eta) = \opResAc(\varsigma) \cup \opResAc(\eta)$.}
\end{figure}

First, $\NP(\delta) \cap \NP(\eta) = \left\{ \rho = \begin{tikzpicture}[baseline={(0,-.2)},scale=0.2]
	    \node at (0,0){$2$};
	    \node at (1,-1){$4$};
            \node at (-1,-1){$3$};
	    \end{tikzpicture} \right\}$. We drew $\rho$ as the purple densely dashed curve.
        
Then, $\col_\delta$ and $\col_\eta$ are matchable as the common endpoint $v$ of $\rho$ and $\delta$ is neither colored in ${\osquare}$ nor in ${\psquare}$ for both colorations. We consider the induced laterality such that $\delta$ is above $\eta$; in particular, the common endpoint of $\delta$ and $\rho$ is $s(\delta)$.

The pair $(\delta,\eta)$ satisfies \ref{3E}, and $s(\delta)$ is not exclusive in $(\delta,\eta)$. This also gives  $\{\delta,\eta\} \in \Anti(\Accord', \Accordleq)$. Therefore, $(\delta,\eta)$ admits a $\V$-completion $\varsigma = \begin{tikzpicture}[baseline={(0,-.1)},scale=0.2]
	    \node at (0,0){$2$};
	    \end{tikzpicture}$. We drew $\varsigma$ as the orange dashed curve.

By \cref{prop:SmallEV3}, we have that $\delta \Accordlneq \varsigma$. We have then to restart our calculation with the pair $(\eta,\varsigma)$. We invite the reader to check that $\{\eta,\varsigma\}$ is an antichain of $(\Accord',\Accordleq)$ which satisfy $(ii)$ of \cref{prop:moveclosedandantichain}. Therefore we can conclude that $\opResAc(\delta,\eta) = \opResAc(\varsigma) \cup \opResAc(\eta)$.
\end{ex}

\begin{ex} \label{ex:Rescalc2}
Let us consider $\delta = \begin{tikzpicture}[baseline={(0,-.2)},scale=0.2]
	    \node at (0,0){$2$};
	    \node at (1,-1){$4$};
	    \end{tikzpicture}$, and $\eta = \begin{tikzpicture}[baseline={(0,-.1)},scale=0.2]
	    \node at (0,0){$1$};
	    \end{tikzpicture}$. 
    
We compute $\opResAc(\delta,\eta)$. See \cref{fig:Exsurf3} for an illustration of the following explanations.
\begin{figure}[!ht]
\centering 
    \begin{tikzpicture}
    \begin{scope}
	
	    \node (b) at (-0.5,1) {\scalebox{0.6}{$ \delta =
	    \begin{tikzpicture}[baseline={(0,-.2)},scale=0.2]
	    \node at (0,0){$2$};
	    \node at (1,-1){$4$};
	    \end{tikzpicture}$}};
	    \node (c) at (0.5,1)  {\scalebox{0.6}{$ \varsigma_\rho^{(\eta)} = 
	    \begin{tikzpicture}[baseline={(0,-.2)},scale=0.2]
	    \node at (0,0){$2$};
	    \node at (-1,-1){$3$};
	    \end{tikzpicture}$}};
	    \node (e) at (0,3) {\scalebox{0.6}{$   \begin{tikzpicture}[baseline={(0,-.1)},scale=0.2]
	    \node at (0,0){$2$};
	    \end{tikzpicture}$}};
	    \node (f) at (-1,3) {\scalebox{0.6}{$ 
	    \begin{tikzpicture}[baseline={(0,-.2)},scale=0.2]
	    \node at (0,0){$1$};
	    \node at (-1,-1){$2$};
	    \end{tikzpicture}$}};
	    \node (h) at (1,3) {\scalebox{0.6}{$\eta = \begin{tikzpicture}[baseline={(0,-.1)},scale=0.2]
	    \node at (0,0){$1$};
	    \end{tikzpicture}$}};

		\draw[line width=0.7mm,black] (b) -- (e);
		\draw[line width=0.7mm,black] (b) -- (f);
		\draw[line width=0.7mm,black] (c) -- (e);
		\draw[line width=0.7mm,black] (c) -- (h);
		
		\node at (0,.5) {$(\pmb{\ind \setminus \proj}(Q,R), \Resleq)$};
	\end{scope}
	\begin{scope}[xshift=5cm,yshift=2cm,mydot/.style={
					circle,
					thick,
					fill=white,
					draw,
					outer sep=0.5pt,
					inner sep=1pt
				}, scale = 0.7]
		\tikzset{
		osq/.style={
        rectangle,
        thick,
        fill=white,
        append after command={
            node [
                fit=(\tikzlastnode),
                orange,
                line width=0.3mm,
                inner sep=-\pgflinewidth,
                cross out,
                draw
            ] {}}}}
		\draw[line width=0.7mm,black] (0,0) ellipse (2cm and 2cm);
		\foreach \X in {0,1,...,9}
		{
		\tkzDefPoint(2*cos(pi/5*\X),2*sin(pi/5*\X)){\X};
		};
		
		\draw[line width=0.9mm ,bend left =10,red](1) edge (3);
		\draw[line width=0.9mm ,bend left =10,mypurple,densely dashdotted](3) edge (7);
		\draw[line width=0.9mm ,bend left =10,red](3) edge (5);
		\draw[line width=0.9mm ,bend left =10,mypurple,densely dashdotted](7) edge (9);
		
		\draw[line width=0.7mm ,bend right=10,blue, loosely dashed](1) edge (9);
		\draw[line width=0.7mm ,bend left=10,blue, loosely dashed](5) edge (7);
	
		\draw[line width=0.7mm ,bend right=10,purple, dash pattern={on 5pt off 2pt on 1pt off 2pt}](9) edge (3);

		\foreach \X in {1,3,5,7,9}
		{
		\tkzDrawPoints[fill =red,size=4,color=red](\X);
		};
		
		\foreach \X in {0,2,4,6,8}
		{
		\tkzDrawPoints[size=5,dark-green,mydot](\X);
		};

		\tkzDefPoint(-1.2,0){gamma};
		\tkzLabelPoint[blue](gamma){\Large $\delta$}
		\tkzDefPoint(1.2,1){i};
		\tkzLabelPoint[blue](i){\Large $\eta$}
		\tkzDefPoint(0.5,0){s};
		\tkzLabelPoint[purple](s){\Large $\varsigma_{\rho}^{(\eta)}$};
		\node at (0,-2.6) {$(\pmb{\Sigma}, \mathcal{M}, \Prj(\Delta^{\gpoint}))$};
	\end{scope}
    \end{tikzpicture}
\caption{\label{fig:Exsurf3} In this example, $\eta$ and$\delta$ do not admit matchable colorations eventhough $\NP(\delta) \cap \NP(\eta)\neq \varnothing$: we use the $\Smov$-move to calculate $\opResAc(\delta,\eta)$. }
\end{figure}

First, $\NP(\delta) \cap \NP(\eta) = \left\{ \rho = \begin{tikzpicture}[baseline={(0,-.2)},scale=0.2]
	    \node at (0,0){$2$};
            \node at (-1,-1){$3$};
	    \node at (1,-1){$4$};
	    \end{tikzpicture}, \rho' = \begin{tikzpicture}[baseline={(0,-.1)},scale=0.2]
	    \node at (0,0){$4$};
	    \end{tikzpicture} \right\}$, we drew $\rho$ and $\rho'$ as purple densely dashed curves.

Then, given a coloration $\col_\eta$ of $\NP(\eta)_0$, we can check that $\col_{\eta}$ sends the endpoints of $\rho$ and $\rho'$ into $\{{\osquare}, {\psquare}\}$. Therefore, $\col_\eta$ and $\col_\delta$ are not matchable.  We will use an $\Smov$-move on $(\delta,\eta)$. 

The $\Smov(\rho)$-approximation of $(\delta,\eta)$ is $(\delta, \varsigma_\rho^{(\eta)})$ where $\varsigma_\rho^{(\eta)} =\begin{tikzpicture}[baseline={(0,-.2)},scale=0.2]
	    \node at (0,0){$2$};
            \node at (-1,-1){$3$};
	    \end{tikzpicture}$. The $\Smov(\rho')$-approximation $(\delta,\eta)$ gives the same pair of non-projective accordions. By \cref{prop:Sapproxisenough}, $\opResAc(\delta,\eta) = \opResAc(\delta) \cup \opResAc(\eta) \cup \opResAc(\delta,\varsigma_\rho^{(\eta)})$.

We note that $\delta$ and $\varsigma_\rho^{(\eta)}$ are not crossing and do not share a common endpoint, so $(\delta,\varsigma_\rho^{(\eta)})$ does not admit a $\KE$-move. Moreover, they do not cross any accordions in $\Prj(\Delta^{\gpoint})$. So $(\delta,\varsigma_\rho^{(\eta)})$ does not admit a $\CoZ$-move. Using our previous observations, we can also check that $(\delta,\varsigma_\rho^{(\eta)})$ does not admit a $\V$-move, nor an  $\X$-move. Therefore, by \cref{thm:EndAlgo}, $\opResAc(\delta, \varsigma_\rho^{(\eta)}) = \opResAc(\delta) \cup \opResAc(\varsigma_\rho^{(\eta)})$.

Hence we obtain $\opResAc(\delta,\eta) = \opResAc(\delta) \cup \opResAc(\eta)$, as $\varsigma_\rho^{(\eta)} \Accordleq \eta$.
\end{ex}

	
	\section*{Acknowledgements}
	
    B.D. thanks the \emph{Institut des Sciences Mathematiques (UQAM)} and the \emph{Engineering and Physical Sciences Research Council (EP/W007509/1))} for their partial funding support. The authors acknowledge the \emph{CHARMS program grant (ANR-19-CE40-0017-02)} for their funding support.
    
	The authors thank Yann Palu and Baptiste Rognerud (especially for our discussions on \cref{sec:ResPoset}) for their discussions and advice throughout this work.

	\bibliography{Article}
	\bibliographystyle{alpha}
	
\end{document}